\def\@tocline#1#2#3#4#5#6#7{\relax
  \ifnum #1>\c@tocdepth 
  \else
    \par \addpenalty\@secpenalty\addvspace{#2}%
    \begingroup \hyphenpenalty\@M
    \@ifempty{#4}{%
      \@tempdima\csname r@tocindent\number#1\endcsname\relax
    }{%
      \@tempdima#4\relax
    }%
    \parindent\z@ \leftskip#3\relax \advance\leftskip\@tempdima\relax
    \rightskip\@pnumwidth plus4em \parfillskip-\@pnumwidth
    #5\leavevmode\hskip-\@tempdima
      \ifcase #1
       \or\or \hskip 1em \or \hskip 2em \else \hskip 3em \fi%
      #6\nobreak\relax
    \hfill\hbox to\@pnumwidth{\@tocpagenum{#7}}\par
    \nobreak
    \endgroup
  \fi}
\renewcommand{\a}{\alpha}
\renewcommand{\b}{\beta}
\newcommand{\s}{\sigma}
\renewcommand{\O}{\Omega}
\newcommand{\normeq}{\trianglelefteqslant}
\newcommand{\<}{\langle}
\renewcommand{\>}{\rangle}
\newcommand{\la}{\langle}
\newcommand{\ra}{\rangle}
\renewcommand{\to}{\rightarrow}
\newcommand{\leqs}{\leqslant}
\newcommand{\geqs}{\geqslant}
\newcommand{\vs}{\vspace{2mm}}
\newcommand{\fpr}{\mbox{{\rm fpr}}}
\newcommand{\what}{\widehat}
\newcommand{\imod}[1]{\allowbreak\mkern4mu({\operator@font mod}\,\,#1)}
\newtheorem{theorem}{Theorem} 
\newtheorem{conje}{Conjecture}
\newtheorem{thm}{Theorem}[section] 
\newtheorem{prop}[thm]{Proposition} 
\newtheorem{lem}[thm]{Lemma}
\newtheorem{cor}[thm]{Corollary}
\theoremstyle{definition}
\newtheorem{rem}[thm]{Remark}
\newtheorem{remk}{Remark}
\newtheorem*{deff}{Definition}
\newtheorem{defn}[thm]{Definition}
\begin{document}

\title[The regularity number of a finite group]{On the regularity number of a finite group \\ and other base-related invariants}

\author{Marina Anagnostopoulou-Merkouri}
\address{M. Anagnostopoulou-Merkouri, School of Mathematics, University of Bristol, Bristol BS8 1UG, UK}
\email{marina.anagnostopoulou-merkouri@bristol.ac.uk}

\author{Timothy C. Burness}
\address{T.C. Burness, School of Mathematics, University of Bristol, Bristol BS8 1UG, UK}
\email{t.burness@bristol.ac.uk}

\begin{abstract}
A $k$-tuple $(H_1, \ldots, H_k)$ of core-free subgroups of a finite group $G$ is said to be 
regular if $G$ has a regular orbit on the Cartesian product $G/H_1 \times \cdots \times G/H_k$. The regularity number of $G$, denoted $R(G)$, is the smallest positive integer $k$ with the property that every such $k$-tuple is regular. In this paper, we develop some general methods for studying the regularity of subgroup tuples in arbitrary finite groups, and we determine the precise regularity number of all almost simple groups with an alternating or sporadic socle. For example, we prove that $R(S_n) = n-1$ and $R(A_n) = n-2$. We also formulate and investigate natural generalisations of several well-studied problems on base sizes for finite permutation groups, including conjectures due to Cameron, Pyber and Vdovin. For instance, we extend earlier work of Burness, O'Brien and Wilson by proving that $R(G) \leqs 7$ for every almost simple sporadic group, with equality if and only if $G$ is the Mathieu group ${\rm M}_{24}$. We also show that every triple of soluble subgroups in an almost simple sporadic group is regular, which generalises recent work of Burness on base sizes for transitive actions of sporadic groups with soluble point stabilisers.
\end{abstract}

\date{\today}

\maketitle

\setcounter{tocdepth}{2}
\tableofcontents

\section{Introduction}\label{s:intro}

Let $G$ be a finite group and let $H_1, \ldots, H_k$ be a collection of core-free subgroups of $G$, allowing repetitions. Consider the natural action of $G$ on the Cartesian product 
\[
X = G/H_1 \times \cdots \times G/H_k
\]
and observe that $G$ has a regular orbit on $X$ if and only if  
\[
\bigcap_{i=1}^k H_i^{g_i} = 1
\]
for some elements $g_i \in G$. We introduce the following definition.

\begin{deff}
A $k$-tuple $(H_1, \ldots, H_k)$ of core-free subgroups of $G$ is \emph{regular} if $G$ has a regular orbit on $X$, and \emph{non-regular} otherwise. 
\end{deff}

For $k = 2$ or $3$, we will refer to regular and non-regular pairs or triples, respectively. Clearly, the regularity of a given tuple is independent of the ordering of the subgroups in the tuple. In addition, $(H_1, \ldots, H_k)$ is regular if and only if $(H_1^{g_1}, \ldots, H_k^{g_k})$ is regular for some $g_i \in G$, so we are free to replace each $H_i$ by any conjugate. Given some group-theoretic property $\mathcal{P}$ of a subgroup of $G$, such as maximal, soluble or nilpotent, we will say that $(H_1, \ldots, H_k)$ has property $\mathcal{P}$ if every component subgroup in the tuple has this property (we will always assume that the subgroups in any given tuple are core-free). It will also be convenient to say that such a $k$-tuple is \emph{conjugate} if each $H_i$ is conjugate to $H_1$. 

Recall that if $G \leqs {\rm Sym}(\O)$ is a permutation group on a set $\O$, then a subset of $\O$ is a \emph{base} if its pointwise stabiliser in $G$ is trivial. So a conjugate tuple $(H_1, \ldots, H_k)$ is regular if and only if $G$ has a base of size $k$ with respect to its natural action on $G/H_1$. Of course, the latter is equivalent to the bound $k \geqs b(G,H_1)$, where $b(G,H_1)$ is the \emph{base size} of $G$, which is the minimal size of a base for the action of $G$ on $G/H_1$. There is a very substantial literature on bases for finite permutation groups stretching all the way back to the nineteenth century, finding an extensive range of applications and connections to other areas of group theory and combinatorics (we refer the reader to the survey articles \cite{BC,LSh3,Maroti} and \cite[Section 5]{Bur181} for further details). In view of this connection, we introduce the following invariants.

\begin{deff}
Let $G$ be a finite group.

\vspace{1mm}

\begin{itemize}\addtolength{\itemsep}{0.3\baselineskip}
\item[{\rm (i)}] The \emph{regularity number} of $G$, denoted $R(G)$, is the minimal positive integer $k$ such that every $k$-tuple of core-free subgroups of $G$ is regular.
\item[{\rm (ii)}] The \emph{base number} of $G$, denoted $B(G)$, is the minimal positive integer $k$ such that $b(G,H) \leqs k$ for every core-free subgroup $H$ of $G$.
\end{itemize}
\end{deff}

\begin{remk}\label{r:main1}
Let us record some immediate remarks on these definitions:

\vspace{1mm}

\begin{itemize}\addtolength{\itemsep}{0.3\baselineskip}
\item[{\rm (a)}] Clearly, we have $B(G) \leqs R(G)$. It turns out that there are examples where this  inequality is strict. For instance, if $G$ is the sporadic simple group ${\rm M}_{11}$, then the main theorem of \cite{BOW} gives $B(G) = 4$, but we find that $R(G)=5$. Indeed, $G$ has maximal subgroups $H = {\rm M}_{10} = A_6.2$ and $K = {\rm L}_{2}(11)$, and one can check that the $4$-tuple $(H,H,K,K)$ is non-regular. In fact, we will show that there are infinitely many finite simple groups with $R(G) \geqs B(G)+1$ (see Proposition \ref{p:simple}). Moreover, in Proposition \ref{p:simple2} we prove that if $G = {\rm GL}_n(2)$ with $n \geqs 5$, then $B(G) = n$ and $R(G) \geqs 2(n-1)$, so the difference $R(G) - B(G)$ can be arbitrarily large.

\item[{\rm (b)}] Notice that if $H$ and $K$ are subgroups of $G$, with $H$ nontrivial and $K$ core-free, then $H \cap K^g < H$ for some $g \in G$. As a consequence, it follows that $R(G) \leqs \ell(G)$, where $\ell(G)$ is the \emph{length} of $G$ (recall that this is the length of the longest chain $G=G_0 > G_1 > \cdots > G_{\ell} = 1$ of subgroups of $G$). In particular, $R(G) \leqs \log_2|G|$.

\item[{\rm (c)}] In \cite{Cameron2014}, Cameron introduces several base-related invariants of a finite group $G$. Let us define $b_2(G)$ to be the maximum, over all faithful permutation representations of $G$, of the maximal size of a minimal base (recall that a base is \emph{minimal} if no proper subset is a base). Since $B(G)$ is the minimal size of a minimal base over all faithful \emph{transitive} permutation representations of $G$, we have $B(G) \leqs b_2(G)$. In 
\cite[Corollary 3.3]{Cameron2014}, Cameron proves that $b_2(G) \leqs \sigma(G)$ (also see Proposition \ref{p:indep}), where $\sigma(G)$ is the maximal size of an \emph{independent subset} of $G$ (that is, $\s(G)$ is the maximal size of a subset $S$ of $G$ such that $x \not\in \la S \setminus \{x\}\ra$ for all $x \in S$). By combining this result with a theorem of Whiston \cite{Whiston} on independent subsets of symmetric groups, it follows that  
\[
B(G) = b_2(G) = \sigma(G) = n-1
\]
for $G = S_n$ (see Corollary \ref{c:sn}).

\item[{\rm (d)}] Two related invariants of a finite group were studied in \cite{BGL1,BGL2}. The \emph{intersection number} of $G$ is defined to be the minimal number of maximal subgroups of $G$ whose intersection coincides with the Frattini subgroup of $G$. And for a group with trivial Frattini subgroup, the \emph{base number} was defined in \cite{BGL2} to be the minimal base size over all faithful primitive permutation representations of $G$. The main results in \cite{BGL1,BGL2} include sharp upper bounds on these invariants for almost simple groups.
\end{itemize}
\end{remk}

A wide range of related invariants can be defined in this setting. Indeed, if $\mathcal{P}$ is a group-theoretic property satisfied by certain core-free subgroups of $G$ (such as maximality, solubility, nilpotency, etc.) then we can define $R_{\mathcal{P}}(G)$ and $B_{\mathcal{P}}(G)$ by restricting to tuples comprising subgroups with property $\mathcal{P}$. The flexibility of this set-up allows us to succinctly describe a number of well-studied problems concerning bases for transitive groups, leading to natural extensions and open problems in the more general regularity setting. Let us highlight three main examples:

\vs

\noindent \emph{Vdovin's conjecture.} 
Let $G$ be a finite group with trivial soluble radical and let $B_{{\rm sol}}(G)$ be the maximal base size $b(G,H)$ over all core-free soluble subgroups $H$ of $G$. Then a conjecture of Vdovin (see \cite[Problem 17.41(b)]{Kou}) asserts that $B_{{\rm sol}}(G) \leqs 5$, which is a strengthening of an earlier conjecture of Babai, Goodman and Pyber (see Conjecture 6.6 in \cite{BGP}). The example $G = S_8$ with $H = S_4 \wr S_2$ shows that the bound in Vdovin's conjecture would be best possible (in fact, there are infinitely many examples with $b(G,H) = 5$). Although Vdovin's conjecture remains open, there has been some recent progress towards a positive solution. For example, the main theorem of \cite{B21} establishes the bound $B_{{\rm sol \, max}}(G) \leqs 5$ with respect to soluble maximal subgroups, and Vdovin \cite{Vdovin} has reduced the conjecture to almost simple groups (recall that a finite group $G$ is \emph{almost simple} if there exists a non-abelian simple group $T$ such that $T \normeq G \leqs {\rm Aut}(T)$; here $T$ is the \emph{socle} of $G$). In the latter setting, the conjecture for groups with an alternating or sporadic socle is resolved in \cite{Bay2, B23}, but the general problem for groups of Lie type remains open. 

\vs

\noindent \emph{Cameron's conjecture.}
In a different direction, a highly influential conjecture of Cameron and Kantor \cite{CK} from 1993 asserts that there exists an absolute constant $c$ such that $B_{{\rm ns}}(G) \leqs c$ for every almost simple group $G$. Here $B_{{\rm ns}}(G)$ denotes the maximal base size $b(G,H)$ over all \emph{non-standard} maximal subgroups $H$ of $G$ (roughly speaking, a maximal subgroup $H$ is \emph{standard} if $G$ is either a classical group and $H$ acts reducibly on the natural module, or $G = S_n$ or $A_n$ and $H$ acts intransitively or imprimitively on $\{1, \ldots, n\}$, otherwise $H$ is \emph{non-standard}). This conjecture was proved by Liebeck and Shalev \cite{LSh2} (with an undetermined constant) using a powerful probabilistic approach for studying bases. In response, Cameron \cite[p.122]{CamPG} conjectured that $7$ is the optimal bound, with $B_{{\rm ns}}(G)= 7$ if and only if $G = {\rm M}_{24}$. Cameron's conjecture was proved in a series of papers by Burness et al. \cite{B07,BGS,BLS,BOW}.

\vs

\noindent \emph{Pyber's conjecture.} 
Another intensively studied conjecture on base sizes was proposed by Pyber in the early 1990s \cite[p.207]{Pyber}. Before stating the conjecture, first observe that if $G$ is a finite transitive permutation group of degree $n$ with point stabiliser $H$, then 
\[
b(G,H) \geqs \log_{n}|G|.
\]
Pyber's conjecture asserts that all primitive groups admit small bases in the sense that there exists an absolute constant $c$ such that $b(G,H) \leqs c\log_{n}|G|$ for every finite primitive group $G$ of degree $n$. Building on earlier work by several authors (see \cite{BS,LSh14,Seress}, for example), the proof of Pyber's conjecture was completed by Duyan et al. in \cite{DHM}. This was extended in \cite{HaLM}, where the main theorem gives the explicit bound
\[
b(G,H) \leqs 2\log_{n}|G| + 24.
\]

Turning to the base number, suppose $G$ is a finite group with a core-free maximal subgroup and let $m(G)$ be the minimal index of such a subgroup (in other words, $m(G)$ is the minimal degree of a faithful primitive permutation representation of $G$). Then 
\[
B_{{\rm max}}(G) \geqs \log_{m(G)}|G|,
\]
where $B_{{\rm max}}(G)$ is the maximal base size of $G$ over all primitive faithful permutation representations (and we can define $R_{{\rm max}}(G)$ with respect to tuples of core-free maximal subgroups of $G$). And the positive solution to Pyber's conjecture implies that $B_{{\rm max}}(G) \leqs c\log_{m(G)}|G|$ for some absolute constant $c$.  

Here we propose natural extensions of the above base size conjectures of Pyber, Cameron and Vdovin in the more general regularity setting.

\begin{conje}\label{con:main}
\mbox{ }\vspace{1mm}
\begin{itemize}\addtolength{\itemsep}{0.3\baselineskip}
\item[{\rm (i)}] There exists an absolute constant $c$ such that 
\[
R_{{\rm max}}(G) \leqs c\log_{m(G)}|G|
\]
for every finite group $G$ with a core-free maximal subgroup.
\item[{\rm (ii)}] We have $R_{\rm ns}(G) \leqs 7$ for every finite almost simple group $G$, with equality if and only if $G = {\rm M}_{24}$. 
\item[{\rm (iii)}] We have $R_{\rm sol}(G) \leqs 5$ for every finite group $G$ with trivial soluble radical.
\end{itemize}
\end{conje}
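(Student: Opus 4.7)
The plan is to attack each part by adapting techniques from the corresponding base-size problem, unified by a fixed-point-ratio framework. For a $k$-tuple $(H_1, \ldots, H_k)$ of core-free subgroups of $G$, write $\fpr(x, G/H) = |x^G \cap H|/|x^G|$ and set
\[
Q(H_1, \ldots, H_k) = \sum_{1 \neq x \in G} \prod_{i=1}^k \fpr(x, G/H_i).
\]
A standard counting argument shows that the proportion of $(g_1, \ldots, g_k) \in G^k$ with $\bigcap_i H_i^{g_i} \neq 1$ is at most $Q$, so the tuple is regular whenever $Q < 1$. The common task is to establish uniform bounds on $Q$ over the relevant families of tuples, noting that the base-size versions of each conjecture correspond to the restricted case $H_1 = \cdots = H_k$.

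For part (ii), first reduce to almost simple $G$ and split into alternating, sporadic, and Lie type socles. The sporadic case is one of the main results of this paper, with $G = {\rm M}_{24}$ providing the extremal example. For alternating socles, the identity $R(S_n) = n-1$ resolves small $n$; for large $n$, combine the non-standard fpr bounds of \cite{B07} with the observation that non-standard maximal subgroups of $S_n$ and $A_n$ have large index. For Lie type, apply the uniform bound $\fpr(x, G/H) \leqs |x^G|^{-1/2 + \e}$ for non-standard maximal $H$ established in the Cameron-Kantor resolution \cite{BGS,BLS}, and show $Q < 1$ for every $7$-tuple of such subgroups. Part (iii) parallels this outline: apply the regularity analogue of Vdovin's reduction \cite{Vdovin} to almost simple socles, invoke the paper's results for alternating and sporadic socles (for the latter, every triple of soluble subgroups is already regular, which implies the bound of $5$), and for Lie type extend the methods of \cite{B21} from soluble maximal subgroups to arbitrary core-free soluble subgroups within the $Q$-sum framework.

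Part (i) is the deepest: carry out an O'Nan-Scott-style reduction to the almost simple case in the spirit of Seress \cite{Seress} and Duyan-Halasi-Maroti \cite{DHM}, then bound $Q$ via the Liebeck-Shalev probabilistic method, together with the refined estimates of \cite{HaLM}. The principal obstacle across all three parts is that $Q$ mixes fpr's from distinct coset actions, so single-subgroup estimates must be strengthened to control worst-case combinations. In particular, tuples in which one $H_i$ has very large index while another has very small index force a delicate balance between the terms in the $Q$-sum, and the required sharper class-by-class fpr estimates for Lie type groups of unbounded rank seem likely to be the technical bottleneck, especially for (i) where the constant $c$ must eventually be made explicit and for (ii) where the exact threshold $7$ is claimed.
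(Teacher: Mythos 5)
The statement you are asked to prove is labelled as a \emph{conjecture} in the paper, and it is genuinely open: the authors only establish special cases (parts (ii) and (iii) for almost simple groups with alternating or sporadic socle, and a version of (i) for $S_n$ and $A_n$), explicitly deferring groups of Lie type to later work and leaving the general reduction questions untouched. Your submission is a research programme rather than a proof, and several of its individual steps are themselves unresolved problems of essentially the same depth as the conjecture. Concretely: in part (iii) you invoke ``the regularity analogue of Vdovin's reduction,'' but Vdovin's theorem reduces the base-size conjecture $B_{\rm sol}(G)\leqs 5$ (conjugate tuples) to almost simple groups; no such reduction is known for arbitrary tuples $(H_1,\ldots,H_5)$ of pairwise non-conjugate soluble subgroups, and the paper does not claim one. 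Similarly, for part (i) the O'Nan--Scott analysis behind Pyber's conjecture is tied to the primitive type of a \emph{single} action $G/H$; a tuple of core-free maximal subgroups can mix actions of different O'Nan--Scott types, and no reduction machinery exists for that situation. Note also that Pyber's theorem gives $B_{\rm max}(G)\leqs c\log_{m(G)}|G|$ but not $R_{\rm max}(G)\leqs c\log_{m(G)}|G|$, and the paper's Proposition \ref{p:simple2} shows $R(G)-B(G)$ can be arbitrarily large, so part (i) does not follow from the known base-size results.

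For part (ii), the fixed point ratio bound you cite is not uniform in the form you state, and even where such bounds hold they do not by themselves yield the exact threshold $7$ with equality only for ${\rm M}_{24}$: the base-size proof of Cameron's conjecture required extensive case-by-case and computational analysis, and the regularity version is strictly harder because a non-regular $6$-tuple may mix subgroups from different classes, so one must control every worst-case combination $\prod_j\fpr(x,G/H_j)$ rather than a single power $\fpr(x,G/H)^k$. This is exactly the obstacle you identify in your final paragraph, but identifying the bottleneck is not the same as overcoming it. In short, your outline correctly mirrors the strategy the authors use for the cases they do settle (the probabilistic bound via Lemma \ref{l:fpr} is their Lemma, and the sporadic and alternating results are Theorems \ref{t:main1} and \ref{t:main2}), but it does not constitute a proof of the conjecture, which remains open.
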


In this paper, we develop some general techniques for studying the regularity number of a finite group and we take the first steps towards establishing the above conjectures for almost simple groups with socle an alternating or sporadic group. These results will be extended to groups of Lie type in a follow up paper by Anagnostopoulou-Merkouri. We now state our main results, first focussing on symmetric and alternating groups. 

Let $G$ be an almost simple group with socle $T = A_n$. We say that a core-free subgroup $H$ of $G$ is \emph{primitive} if $H \cap T$ acts primitively on the set $\{1, \ldots, n\}$, and we define $B_{{\rm prim}}(G)$ and $R_{{\rm prim}}(G)$ accordingly. Similarly, we define $R_{{\rm intrans}}(G)$ and $R_{{\rm sol \, max}}(G)$ in terms of tuples of intransitive and soluble maximal subgroups of $G$, respectively. (Of course, if $G$ does not have a core-free primitive subgroup, then  $B_{{\rm prim}}(G)$ and $R_{{\rm prim}}(G)$ are undefined, and similarly for $R_{{\rm sol \, max}}(G)$. For example, no proper subgroup of $A_{34}$ acts primitively on $\{1, \ldots, 34\}$.)

\begin{theorem}\label{t:main1}
Let $G$ be an almost simple group with socle $T = A_n$.

\vspace{1mm}

\begin{itemize}\addtolength{\itemsep}{0.3\baselineskip}
\item[{\rm (i)}] For $G \in \{S_n,A_n\}$ we have $R(G) = R_{{\rm intrans}}(G) = B(G) = n-|S_n:G|$.
\item[{\rm (ii)}] Suppose $G$ has a core-free primitive subgroup. Then $R_{\rm prim}(G) \leqs 6$, with equality if and only if $G = A_8$. Moreover, if $n\geqs 13$, then $R_{\rm prim}(G) = 2$.
\item[{\rm (iii)}] Suppose $G$ has a soluble maximal subgroup. Then $R_{{\rm sol\, max}}(G) \leqs 5$, with equality if and only if $G = S_8$. Moreover, if $n \geqs 17$, then $R_{{\rm sol\, max}}(G) = 2$.
\end{itemize}
\end{theorem}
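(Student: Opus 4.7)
The proof divides into three parts; Part~(i) requires the most substantial new argument, while Parts~(ii) and~(iii) combine asymptotic order estimates with finite case analysis.

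For Part~(i), the lower bound chain $R(G) \geqs R_{{\rm intrans}}(G) \geqs B(G) \geqs n - |S_n:G|$ is immediate from the base size $b(G, G_\omega) = n - |S_n:G|$ of the natural degree-$n$ action of $G$: the point stabiliser $G_\omega$ is intransitive, so the $(n-|S_n:G|-1)$-tuple consisting of copies of $G_\omega$ is a non-regular tuple of intransitive subgroups. For the matching upper bound $R(G) \leqs n - |S_n:G|$, I would first reduce to tuples of core-free maximal subgroups, since replacing each $H_i$ by a maximal overgroup $M_i \geqs H_i$ only enlarges the pointwise intersections and hence preserves non-regularity. The natural strategy is then to establish the general inequality $R(G) \leqs \sigma(G)$: combined with Whiston's theorem $\sigma(S_n) = n-1$ (invoked in Remark~\ref{r:main1}(c)) and the analogous fact $\sigma(A_n) = n-2$, this gives the claimed bound. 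To prove $R(G) \leqs \sigma(G)$, I would argue contrapositively, in the spirit of Cameron's proof of $b_2(G) \leqs \sigma(G)$: given a non-regular $k$-tuple $(H_1, \ldots, H_k)$ of core-free maximal subgroups, choose $g_i \in G$ minimising the intersection $K = \bigcap_i H_i^{g_i}$ (which is non-trivial by hypothesis), fix $1 \neq x \in K$, and then use the minimality of $K$ and the maximality of each $H_i^{g_i}$ to extract elements $y_i \in H_i^{g_i}$ so that $\{x, y_1, \ldots, y_k\}$ is an independent subset of $G$ of size $k+1$, contradicting $k+1 > \sigma(G)$.

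Parts~(ii) and~(iii) are treated by a uniform split on the size of $n$. For Part~(ii), when $n \geqs 13$, Mar\'{o}ti-type bounds on the order of primitive subgroups of $S_n$ make $|H|$ small enough relative to $|G|$ that a straightforward union-bound argument over non-identity elements shows that any two core-free primitive subgroups admit conjugates with trivial intersection, yielding $R_{{\rm prim}}(G) = 2$. For $n \leqs 12$ the proof is a case analysis using the explicit classification of primitive subgroups of $S_n$ and $A_n$; the extremal case $G = A_8$, in which the exceptional primitive subgroups include $\mathrm{AGL}_3(2)$ and $\PSL_2(7)$, requires an explicit non-regular $5$-tuple together with a verification (likely by computer algebra) that every $6$-tuple is regular, establishing $R_{{\rm prim}}(A_8) = 6$. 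Part~(iii) proceeds in the same pattern: soluble maximal subgroups of $S_n$ and $A_n$ are classified and occur only for restricted values of $n$, so an order-counting argument handles $n \geqs 17$, and a case check disposes of the remainder. Equality $R_{{\rm sol\,max}}(S_8) = 5$ is realised by the non-regular $4$-tuple of conjugates of $S_4 \wr S_2$ mentioned in the discussion of Vdovin's conjecture.

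The principal obstacle is the upper bound in Part~(i): converting the minimality of $K$ into an explicit construction of an independent set of size $k+1$ requires careful use of the maximality of each $H_i^{g_i}$ together with a greedy choice of the $y_i$'s, so that each successive $y_i$ lies outside the subgroup generated by $x$ and the previously chosen $y_j$'s. The small-$n$ case analyses in Parts~(ii) and~(iii) are finite but intricate and are likely to require computer verification for the extremal cases $A_8$ and $S_8$.
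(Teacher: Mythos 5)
Your reduction of part (i) to the inequality $R(G) \leqs \sigma(G)$ is an attractive idea, but the Cameron-style argument you sketch does not deliver it, and this is a genuine gap. Here is the issue. Given a non-regular $k$-tuple and conjugates $H_i^{g_i}$ with $K = \bigcap_i H_i^{g_i}$ nontrivial and of minimal order, one can indeed show (using only that each $H_i$ is core-free, not maximality) that $\bigcap_{j \ne i} H_j^{g_j} \not\leqs H_i^{g_i}$ for each $i$, and choosing $y_i \in \bigcap_{j \ne i} H_j^{g_j} \setminus H_i^{g_i}$ gives an independent set $\{y_1, \ldots, y_k\}$ exactly as in Cameron's proof of $b_2(G) \leqs \sigma(G)$. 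But this set has size $k$, not $k+1$, so it only yields $R(G) \leqs \sigma(G) + 1$, i.e. $R(S_n) \leqs n$ and $R(A_n) \leqs n-1$ — one worse than the theorem in both cases. Your proposed fix is to append an element $1 \ne x \in K$, and while $y_i \notin \langle x, y_j : j \ne i\rangle \leqs H_i^{g_i}$ does hold, there is no mechanism forcing $x \notin \langle y_1, \ldots, y_k\rangle$: already for $G = S_n$ with all $H_i = S_{n-1}$ and $k = n-2$, a careless choice of the $y_i$ (say, $3$-cycles) generates a subgroup containing $K$ or even all of $A_n$, and nothing in the minimality of $K$ or the maximality of the $H_i^{g_i}$ rules this out in general. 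The "greedy choice" you invoke is precisely where an argument is missing, and it is telling that the paper proves only $B(G) \leqs \sigma(G)$ and never claims $R(G) \leqs \sigma(G)$; instead it establishes the upper bound $R(S_n) \leqs n-1$, $R(A_n) \leqs n-2$ by a long explicit construction (neighbourhoods of subsets for intransitive stabilisers, colour sequences of partitions for imprimitive ones) showing that suitable conjugates of any $(n-2)$-tuple intersect inside $\langle(\a,\b)\rangle$, after which one further subgroup kills the transposition. If you want to pursue the independence-number route you must either prove $R(G) \leqs \sigma(G)$ for $S_n$ and $A_n$ by some new device, or abandon it.

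Parts (ii) and (iii) follow the paper's strategy in outline (asymptotic fixed-point-ratio estimates plus computer verification for small $n$, with the extremal cases $A_8$ and $S_8$ checked explicitly), but your "straightforward union bound from Mar\'oti's order bound" is not enough on its own: the crude estimate $\widehat{Q}(G,\tau) \leqs |H_1||H_2|\sum_i |x_i^G|^{-1}$ is dominated by the class of transpositions and is far larger than $1$. One also needs the Guralnick–Magaard minimal degree bound to guarantee that every prime order element of a primitive subgroup has large support, hence lies in a large conjugacy class, together with separate treatment of the exceptional families of large primitive subgroups ($S_l \wr S_k$ in product action and $S_l$ on $k$-subsets), where an additional count of involutions is required. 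Finally, for (iii) the clean reduction for $n \geqs 17$ is not order-counting but the observation that every soluble maximal subgroup is then primitive, so (iii) follows directly from (ii).
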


\begin{remk}\label{r:main2}
Let us record some comments on the statement of Theorem \ref{t:main1}.

\vspace{1mm}

\begin{itemize}\addtolength{\itemsep}{0.3\baselineskip}
\item[{\rm (a)}] For $T = A_6$ we note that $R(G) = B(G) = 4$ if $G = {\rm PGL}_2(9)$, ${\rm M}_{10}$ or $A_6.2^2$.

\item[{\rm (b)}] Suppose $G = S_n$. Clearly, if $H = S_{n-1}$ is intransitive then the $(n-2)$-tuple $(H, \ldots, H)$ is non-regular since the intersection of any $n-2$ conjugates of $H$ will contain a transposition. More generally, we can determine all the maximal intransitive non-regular $(n-2)$-tuples for $S_n$ (see Proposition \ref{p:snmax}); up to ordering and conjugacy, they are of the form $(H, \ldots, H, K)$, where $H = S_{n-1}$ and $K = S_k \times S_{n-k}$ for some $1 \leqs k \leqs n/2$. We refer the reader to Remark \ref{r:snmax} for further comments.

\item[{\rm (c)}] Notice that the natural actions of $S_n$ and $A_n$ on $\{1, \ldots, n\}$ immediately give the lower bounds $B(S_n) \geqs n-1$ and $B(A_n) \geqs n-2$. So as a corollary to Theorem \ref{t:main1}, we deduce that $B(S_n) =n-1$ and $B(A_n) = n-2$. Following Cameron \cite{Cameron2014}, we can also show that $B(S_n) =n-1$ by combining the bound $B(S_n) \leqs \sigma(S_n)$ with Whiston's theorem on independent subsets in \cite{Whiston}, as discussed in Remark \ref{r:main1}(c). Similarly, we get $B(A_n) = n-2$ (see Corollary \ref{c:sn}).

\item[{\rm (d)}] We have $m(G) = n$ and thus Theorem \ref{t:main1}(i) yields
\[
R(G) = n-|S_n:G| < 2\log_{m(G)}|G|
\]
for all $G \in \{S_n,A_n\}$ with $n \geqs 5$. In particular, this establishes part (i) of Conjecture \ref{con:main} for all symmetric and alternating groups. 

\item[{\rm (e)}] The bound in (ii) extends the main theorem of \cite{BGS} by establishing a stronger form of Cameron's conjecture (see Conjecture \ref{con:main}(ii)) for all almost simple groups with socle an alternating group. We refer the reader to Table \ref{tab:prim} in Section \ref{ss:prim} for a complete list of the groups $G$ with $R_{{\rm prim}}(G) = k > 2$, together with an example of a non-regular primitive $(k-1)$-tuple. The exact value of $B_{{\rm prim}}(G)$ is determined in \cite{BGS} and we deduce that $R_{{\rm prim}}(G) \ne B_{{\rm prim}}(G)$ if and only if $G = A_8$.

\item[{\rm (f)}] Part (iii) extends the main theorem of \cite{B21} for groups with socle $A_n$ and it establishes a special case of Conjecture \ref{con:main}(iii). We have computed the exact value of $R_{{\rm sol\, max}}(G)$ in all cases and the groups with $R_{{\rm sol\, max}}(G) = k > 2$ are recorded in Table \ref{tab:maxsol} (see Section \ref{ss:prim}) along with an example of a non-regular $(k-1)$-tuple of the form $(H, \ldots, H)$. In particular, we have $R_{{\rm sol \, max}}(G) = B_{{\rm sol \, max}}(G)$ for all $n \geqs 5$.
\end{itemize}
\end{remk}

\vs

In \cite{Bay2}, Baykalov proves that $B_{{\rm sol}}(G) \leqs 5$ for every almost simple group $G$ with socle $A_n$ and a special case of Conjecture \ref{con:main}(iii) asserts that $R_{{\rm sol}}(G) \leqs 5$. With the aid of {\sc Magma} \cite{magma}, we have verified the latter bound for the groups with $n \leqs 11$ and we find that $R_{{\rm sol}}(G) = 5$ if and only if $G = S_8$ (and up to conjugacy, the only non-regular soluble $4$-tuple is $(H,H,H,H)$ with $H = S_4 \wr S_2$). We speculate that the following much stronger conjecture holds in this setting.

\begin{conje}
We have $R_{{\rm sol}}(S_n) = R_{{\rm sol}}(A_n) = 2$ for all sufficiently large $n$. 
\end{conje}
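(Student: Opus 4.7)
The plan is to use the fixed-point-ratio probabilistic method. Given core-free soluble subgroups $H_{1}, H_{2}$ of $G \in \{S_{n}, A_{n}\}$, the pair $(H_{1},H_{2})$ is regular provided
\[
Q(H_{1},H_{2}) := \sum_{C}\frac{|C\cap H_{1}|\,|C\cap H_{2}|}{|C|}<1,
\]
where $C$ ranges over the nontrivial conjugacy classes of $G$, and the aim is to establish this uniformly for all such pairs when $n$ is sufficiently large. Since the regularity of $(H_{1},H_{2})$ is inherited under passage to supergroups, and since every proper soluble subgroup of $G$ is core-free for $n\geqs 5$, one may assume $H_{1}$ and $H_{2}$ are maximal among the soluble subgroups of $G$; such subgroups fall into a small number of families up to conjugacy, namely intransitive direct products, transitive imprimitive wreath products, and transitive primitive affine groups $\mathrm{AGL}_{d}(p)$ when $n=p^{d}$.

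The analysis then splits according to the types of $H_{1}$ and $H_{2}$ and the support size $k$ of elements in the conjugacy class $C$. For $k$ bounded, bound $|C\cap H_{i}|$ by exploiting the wreath or affine structure of $H_{i}$, and compare with $|C|$ of order $n^{k}/|C_{S_{k}}(x)|$. For $k$ large (say $k>n^{1/2}$), the Dixon bound $|H_{i}|\leqs 24^{(n-1)/3}$ forces a negligible contribution because $|C|$ grows super-exponentially in $k$. The intermediate regime is handled by Stirling-type estimates on multinomial coefficients arising from base-group elements of wreath products.

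The main obstacle is the case in which both $H_{1}$ and $H_{2}$ are transitive imprimitive with a small block size ($2$ or $3$); here the contribution of ``short'' elements to $Q(H_{1},H_{2})$ can come close to (or even exceed) the threshold allowed by the simple first-moment estimate. For instance, the transpositions inside $H_{1}=H_{2}=S_{2}\wr K$ alone contribute $(n/2)^{2}/\binom{n}{2}\sim 1/2$, and products of $j$ base-group transpositions contribute a further quantity that must be controlled uniformly in $j$. The plan is to exploit the freedom of conjugating $H_{2}$: after suitable choice of $g$, the block systems of $H_{1}$ and $H_{2}^{g}$ can be arranged to be transversal, which forces the base-group intersection to be correspondingly small. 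Quantitatively this can be approached via an inclusion--exclusion argument on common block refinements, or by adapting the character-theoretic fixed-point-ratio bounds for soluble subgroups developed in \cite{Bay2, B21}.

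Finally, the conjecture for $A_{n}$ is handled by the same method applied internally, with the necessary modifications to account for the possible splitting of $S_{n}$-conjugacy classes upon restriction to $A_{n}$; the key fpr estimates transfer with only minor loss. The primitive case $\mathrm{AGL}_{d}(p)$ is straightforward because such subgroups have order polynomial in $n=p^{d}$ and are thus negligible against $|S_{n}|$, and the intransitive case admits an inductive treatment via the direct-product structure. Hence the crux of the conjecture lies in the detailed analysis of pairs of transitive imprimitive maximal soluble subgroups with small block size, which is precisely where the existing techniques, sharp enough to give $R_{{\rm sol\, max}}(G)=2$ for $n\geqs 17$, would need to be refined to cover arbitrary (not necessarily maximal) soluble subgroups.
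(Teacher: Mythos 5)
This statement is a conjecture which the paper explicitly leaves open: the authors offer only computational evidence ($R_{\rm sol}(G)\leqs 5$ for $n\leqs 11$, with equality only for $S_8$), the observation that $b(S_{21},(S_4\wr S_4)\times S_4)=2$, and Dixon's bound $|H|\leqs 24^{(n-1)/3}$ as heuristic support. So there is no proof in the paper to compare against, and your proposal does not close the gap either -- it is a plan whose crucial step is left unresolved, as you yourself acknowledge.

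The concrete gap is at the point you identify as the ``main obstacle,'' and the proposed fix does not work as stated. The first-moment bound $\widehat{Q}(G,\tau)<1$ of Lemma \ref{l:fpr} provably fails for pairs of imprimitive soluble subgroups with block size $2$: if $H_1=H_2=2\wr K$ preserves a $2$-partition, then $H_i$ contains all $n/2$ block transpositions and all $\binom{n/2}{j}$ products of $j$ of them, so the contribution to $\widehat{Q}$ from these elements alone is asymptotically $\sum_{j\geqs 1}(2^jj!)^{-1}=e^{1/2}-1\approx 0.65$ before any other classes are counted, and the total can exceed $1$. Your remedy -- ``exploit the freedom of conjugating $H_2$'' so that the two block systems are transversal -- is incompatible with the probabilistic framework: $Q(G,\tau)$ is by construction an average over \emph{all} conjugates, so it cannot be improved by selecting a favourable one. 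To use a favourable conjugate you must abandon the first-moment estimate and give a direct combinatorial proof that $H_1\cap H_2^{g}=1$ for a specific $g$, in the spirit of Lemma \ref{lem:2sets}; but that lemma only pins the intersection down to the stabiliser of a $2$-set, not to the identity, and your ``inclusion--exclusion on common block refinements'' is not developed to the point where one could check it. A secondary issue is the claim that maximal soluble subgroups of $S_n$ fall into ``a small number of families'' with the imprimitive ones being full wreath products $S_a\wr S_b$: a maximal soluble imprimitive subgroup is in general a proper subgroup of $S_a\wr S_b$ (which is insoluble for $a\geqs 5$ or $b\geqs 5$), and its intersection with a given conjugacy class depends on which elements of the base group it actually contains, so the class-intersection estimates cannot be read off from the wreath product alone.
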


As recorded in Table \ref{tab:maxsol}, we have $R_{{\rm sol \, max}}(G) = 3$ for $G = S_{16}$ since $b(G,H) = 3$ when $H = S_4 \wr S_4$. More generally, if $G = S_{16+m}$ and $H = (S_4 \wr S_4) \times S_m < G$ with $m \leqs 4$, then with the aid of {\sc Magma} \cite{magma} we can show that $b(G,H) = 3$. We also note that $b(G,H) = 2$ when $G = S_{21}$ and $H = (S_4 \wr S_4) \times S_4$, which leads us to speculate that $R_{{\rm sol}}(S_n) = R_{{\rm sol}}(A_n) = 2$ for all $n \geqs 21$. In the context of the above conjecture, it is also worth recalling a theorem of Dixon \cite{Dixon67}, which states that $|H| \leqs 24^{(n-1)/3}$ for every soluble subgroup $H$ of $G = S_n$ (with equality if $n = 4^d$ and $H = S_4 \wr S_4 \wr \cdots \wr S_4$ is the iterated wreath product of $d$ copies of $S_4$). In particular, Dixon's bound implies that $|H|^3 < |G|$ for all $n \geqs 59$.

\vs

For nilpotent subgroups, Zenkov \cite{Zen21} uses the Classification of Finite Simple Groups to prove that $R_{{\rm nilp}}(G) \leqs 3$ for every finite group $G$ with trivial Fitting subgroup. This bound is best possible (for instance, if $G = S_8$ and $H$ is a Sylow $2$-subgroup, then $b(G,H) = 3$) and interest in this problem can be traced all the way back to work of Passman in the 1960s. For example, in \cite{Pass}, Passman proves that if $G$ is a finite $p$-soluble group and $P$ is a Sylow $p$-subgroup, then there exist $x,y \in G$ such that $P \cap P^x \cap P^y = O_p(G)$ is the $p$-core of $G$. 

In \cite{Zenkov14}, Zenkov proves that every nilpotent pair of subgroups of $S_n$ or $A_n$ (for $n \geqs 5$) is regular, with the single exception of the symmetric group $S_8$. Computations in the low degree groups lead us to propose the following conjecture.

\begin{conje}
If $G = S_n$ or $A_n$ with $n \geqs 13$, then every pair of subgroups $(H,K)$ is regular when $H$ is nilpotent and $K$ is soluble.
\end{conje}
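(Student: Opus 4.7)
The plan is to attack the conjecture via the standard probabilistic reformulation. Writing $\Omega = G/K$, the pair $(H,K)$ is regular precisely when $H$ has a regular orbit on $\Omega$; bounding the set of points with nontrivial $H$-stabiliser by a union bound gives the sufficient condition
\[
Q(G,H,K) := \sum_{1 \ne x \in H} \frac{|x^G \cap K|}{|x^G|} < 1,
\]
since $|\mathrm{Fix}_\Omega(x)| = |x^G \cap K| \cdot |G|/(|x^G| \cdot |K|)$. Because $Q$ is monotonic under inclusion in each argument and the core-free property passes to subgroups, it suffices to establish this inequality when $H$ is a maximal nilpotent and $K$ is a maximal soluble core-free subgroup of $G$.

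I would then exploit structural information on both sides. A nilpotent $H \leqs S_n$ decomposes as $H = \prod_p H_p$ with each Sylow subgroup satisfying $|H_p| \leqs p^{(n - s_p(n))/(p-1)}$, where $s_p(n)$ is the base-$p$ digit sum, and all $H$-orbits on $\{1,\ldots,n\}$ have prime-power length. For $K$, Dixon's theorem gives $|K| \leqs 24^{(n-1)/3}$, and the O'Nan--Scott theorem together with the classification of soluble primitive groups restricts the transitive constituents of $K$ very sharply. The main estimate then proceeds by partitioning $Q(G,H,K)$ according to $G$-conjugacy class $\mathcal{C}$:
\[
Q(G,H,K) = \sum_{\mathcal{C}} |H \cap \mathcal{C}| \cdot \frac{|\mathcal{C} \cap K|}{|\mathcal{C}|}.
\]
Classes $\mathcal{C}$ of elements with large support (say, moving at least $cn$ points for a suitable constant) contribute negligibly, because $|\mathcal{C}|$ grows superexponentially in $n$ while $|H|$ and $|K|$ are at most exponential in $n$. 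Classes of small support require a finer treatment, controlling $|H \cap \mathcal{C}|$ via the number of short $H$-orbits and $|\mathcal{C} \cap K|$ via careful counting of short cycles in soluble subgroups, in the spirit of the fixed-point-ratio estimates of Liebeck--Shalev \cite{LSh2}.

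The main obstacle I expect lies in the intermediate regime where both $H$ and $K$ are dominated by their $2$-parts: precisely, when $H$ is essentially a Sylow $2$-subgroup (an iterated wreath product of $C_2$'s) and $K$ is close to the Dixon-extremal iterated wreath product of $S_4$'s. Here both subgroups contain many elements of $2$-power cycle type, so $Q$ is concentrated on the $2$-singular classes and the bounds must be genuinely sharp; this is the regime in which Zenkov's single exception $(S_8, S_4 \wr S_2)$ lives. For the smallest values of $n$, say $13 \leqs n \leqs 20$, I would verify the conjecture by direct computation in {\sc Magma} \cite{magma}, mirroring the low-degree soluble and nilpotent checks already performed earlier in the paper, and then deploy the asymptotic estimate for $n \geqs 21$ (where the base-size calculation for $(S_4 \wr S_4) \times S_m$ suggests the conjecture becomes considerably more tractable).
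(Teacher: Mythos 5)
The statement you are addressing is posed in the paper as an open \emph{conjecture}: the authors offer only low-degree computations as evidence, together with the observation that $S_{12}$ admits a non-regular pair (Sylow $2$-subgroup, $S_4 \wr S_3$) showing the bound $n \geqs 13$ would be sharp. There is no proof in the paper to compare against, and your submission is a research plan rather than a proof: none of the estimates are actually carried out, and the regime you correctly single out as the crux --- $H$ essentially a Sylow $2$-subgroup and $K$ close to Dixon's extremal iterated wreath product of copies of $S_4$ --- is left entirely unresolved. This is not a presentational gap but a substantive one: for classes $\mathcal{C}$ of involutions of support $m$ one has $|\mathcal{C}| = n!/(2^{m/2}(m/2)!(n-m)!) \approx n^{m}$ for small $m$, while $|H \cap \mathcal{C}|$ and $|\mathcal{C} \cap K|$ can both be of genuinely exponential size in $m$ for these $2$-local subgroups, so the union bound $\widehat{Q}(G,(H,K)) < 1$ is expected to \emph{fail} for many such pairs when $n$ is anywhere near $13$ (indeed it must fail at $n = 12$, where a non-regular pair exists). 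The probabilistic method of Lemma \ref{l:fpr} is only a sufficient criterion, so where it fails one needs a different argument --- either a constructive one in the spirit of the combinatorial lemmas the paper uses for intransitive and imprimitive tuples (e.g.\ Lemma \ref{lem:2sets}), or exhaustive computation --- and your proposal supplies neither.

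Two further points. First, your reduction to maximal nilpotent and maximal soluble subgroups is sound in principle (monotonicity of $Q$ under inclusion, and every proper subgroup of $A_n$, or of $S_n$ not containing $A_n$, is core-free), but the maximal soluble subgroups of $S_n$ are not classified in a form that makes the counts $|\mathcal{C} \cap K|$ for small-support classes tractable; a soluble transitive subgroup can have an arbitrarily long chain of imprimitivity systems, and ``the O'Nan--Scott theorem restricts the transitive constituents sharply'' does not by itself yield usable bounds on short cycles. Second, the proposed {\sc Magma} verification for $13 \leqs n \leqs 20$ is itself a serious undertaking (the paper's authors only report checking $R_{\rm sol}(G) \leqs 5$ for $n \leqs 11$), so even the base of your intended induction-by-computation is not established. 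In short, the proposal identifies the right difficulty but does not overcome it, and the statement remains, as in the paper, a conjecture.
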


Note that $G = S_{12}$ has a non-regular pair $(H,K)$, where $H$ is a Sylow $2$-subgroup and $K = S_4 \wr S_3$, so the conjectured bound $n \geqs 13$ would be best possible.

\vs

Now let us state our main result for almost simple sporadic groups, which establishes a strong form of Conjecture \ref{con:main} in this setting (in part (ii), $c = 1$ or $2$).

\begin{theorem}\label{t:main2}
Let $G$ be an almost simple sporadic group with socle $T$.

\vspace{1mm}

\begin{itemize}\addtolength{\itemsep}{0.3\baselineskip}
\item[{\rm (i)}] The exact values of $B(G)$ and $R(G)$ are recorded in Table \ref{tab:RG_sporadic}. In particular, $R(G) \leqs 7$, with equality if and only if $G = {\rm M}_{24}$.
\item[{\rm (ii)}] We have $R_{{\rm sol}}(G) \leqs 3$, with equality if 
\[
G = {\rm M}_{11}, \, {\rm M}_{12}.c, \, {\rm M}_{22}.c, \, {\rm M}_{23}, \, {\rm M}_{24}, \, {\rm J}_2.c, \, {\rm HS}.2, \, {\rm Co}_2, \, {\rm Fi}_{22}.c \mbox{ or } {\rm Fi}_{23}.
\]
\item[{\rm (iii)}] Suppose $G$ has a soluble maximal subgroup. Then $R_{{\rm sol \, max}}(G) \leqs 3$, with equality if and only if $T = {\rm M}_{11}$, ${\rm M}_{12}$, ${\rm J}_2$, ${\rm Fi}_{22}$ or ${\rm Fi}_{23}$.
\end{itemize}
\end{theorem}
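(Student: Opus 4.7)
The plan is to reduce everything to tuples of maximal subgroups and then combine a probabilistic criterion based on fixed point ratios with direct computation in {\sc Magma} for the smaller sporadic socles. The key reduction is the observation that if each $H_i$ is contained in a maximal subgroup $M_i$ of $G$, then $\bigcap_i H_i^{g_i} \leqs \bigcap_i M_i^{g_i}$, so regularity of $(M_1, \ldots, M_k)$ implies regularity of $(H_1, \ldots, H_k)$. The main probabilistic tool is the standard estimate
\[
\widehat{Q}(G; H_1, \ldots, H_k) := \sum_{x} \prod_{i=1}^{k} \fpr(x, G/H_i),
\]
where $x$ ranges over representatives of the conjugacy classes of prime-order elements of $G$ and $\fpr(x, G/H) = |x^G \cap H|/|x^G|$; whenever this quantity is strictly less than $1$, some choice of conjugates yields $\bigcap_i H_i^{g_i}=1$ and so $(H_1,\ldots,H_k)$ is regular. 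The values of $|x^G \cap H|$ are computed from character tables of $G$ and $H$ via the standard permutation character formula.

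For part (i), the lower bound $R(G) \geqs B(G)$ is immediate, and exact values of $B(G)$ are supplied by the resolution of Cameron's conjecture in \cite{B07,BGS,BLS,BOW}. To pin down $R(G)$, I would proceed group by group. For the smaller sporadic socles (the Mathieu groups, the first three Janko groups, ${\rm HS}$, ${\rm McL}$, ${\rm He}$, ${\rm Ru}$, ${\rm Suz}$, ${\rm Co}_2$, ${\rm Co}_3$, ${\rm Fi}_{22}$, ${\rm O'N}$), direct {\sc Magma} computation is feasible: enumerate the short list of $G$-classes of core-free maximal subgroups and test tuples of the relevant size for regularity. For the larger socles (${\rm Co}_1$, ${\rm Fi}_{23}$, ${\rm Fi}_{24}'$, ${\rm HN}$, ${\rm Th}$, ${\rm Ly}$, ${\rm J}_4$, ${\rm B}$, ${\rm M}$), enumeration is out of reach and $\widehat{Q}<1$ is verified using refined fixed point ratio estimates from the literature. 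The lower-bound witnesses recorded in Table~\ref{tab:RG_sporadic} are explicit non-regular tuples of maximal subgroups; in particular, for $G = {\rm M}_{24}$ the $6$-tuple used in \cite{BOW} to prove $B(G)=7$ also serves to give $R(G)\geqs 7$.

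For part (iii), the soluble maximal subgroups of almost simple sporadic groups have been fully classified in \cite{B21}, yielding a short explicit list for each $G$. The bound $R_{{\rm sol\,max}}(G) \leqs 3$ is then proved by verifying $\widehat{Q}(G;M_1,M_2,M_3) < 1$ for every unordered triple of soluble maximal subgroups, and equality for the five listed socles is witnessed by exhibiting a specific non-regular pair $(M,M)$ together with a base of size $3$ for the action of $G$ on $G/M$. Part (ii) is subtler because a soluble subgroup need not lie in a soluble maximal subgroup; here I would combine the reduction above with the bound $b(G,H) \leqs 2$ for transitive sporadic actions with soluble point stabiliser established in \cite{B23}, so that the only triples requiring further attention are those whose maximal overgroups lie in the list from (iii). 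These are handled by the same probabilistic criterion, with a residual short list of exceptional triples treated directly in {\sc Magma}.

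The principal obstacle is the treatment of the Monster and Baby Monster in part (i): the list of maximal subgroups of ${\rm M}$ has only recently been completed and exhaustive enumeration is infeasible, so that $R(G) \leqs B(G)$ for these groups hinges on very tight fixed point ratio estimates together with isolating the small number of pairs of conjugacy classes for which $\widehat{Q}$ is close to $1$. A secondary difficulty appears in part (ii): controlling soluble subgroups of $G$ that are \emph{not} contained in any soluble maximal subgroup requires bounds on their index in their maximal overgroups and careful verification that these (possibly non-soluble) overgroups still admit a regular orbit on the relevant product of coset spaces.
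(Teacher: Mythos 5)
Your overall architecture for parts (i) and (iii) — reduce to tuples of (soluble) maximal subgroups, run {\sc Magma} on the small socles, and fall back on a fixed point ratio criterion for the large ones — matches the paper. But there are two concrete problems. First, your regularity criterion is mis-stated: the correct quantity (Lemma \ref{l:fpr}) is $\widehat{Q}(G,\tau) = \sum_{i}|x_i^G|\prod_{j}\fpr(x_i,G/H_j)$, summed over class representatives \emph{with the class size as a weight}, because the union bound must run over all prime-order elements of $G$, not one per class. As written, your sum $\sum_x \prod_i \fpr(x,G/H_i)$ can be far below $1$ while $G$ has no regular orbit, so the criterion does not imply regularity.

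Second, and more seriously, your treatment of part (ii) does not close. You invoke "$b(G,H)\leqs 2$ for transitive sporadic actions with soluble point stabiliser" from \cite{B23}, but the theorem there gives only $B_{\rm sol}(G)\leqs 3$, and indeed $b(G,H)=3$ occurs (e.g.\ for ${\rm Co}_2$, ${\rm Fi}_{22}$, ${\rm Fi}_{23}$), which is exactly why the theorem asserts equality $R_{\rm sol}(G)=3$ for those groups. More fundamentally, passing from a soluble subgroup $H$ to a maximal overgroup $M$ destroys the problem: $M$ is typically insoluble and enormous (e.g.\ $2.\mathbb{B}<\mathbb{M}$, or ${\rm Fi}_{23}<{\rm Fi}_{24}'$ where the conjugate maximal triple is non-regular since $b({\rm Fi}_{24}',{\rm Fi}_{23})=5$), so regularity of the triple of overgroups simply fails and tells you nothing. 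The paper resolves this with two devices you are missing: a recursive reduction (Lemma \ref{l:easy}) showing that a soluble triple with a component inside a subgroup $M$ of trivial soluble radical is regular whenever $R_{\rm sol}(M)\leqs 3$, which lets one bootstrap from $R_{\rm sol}({\rm Co}_2)$, $R_{\rm sol}({\rm Fi}_{23})$, etc.; and a controlled descent through the subgroup lattice (the sets $\mathcal{S}_d(H)$ of Definition \ref{d:SH}) producing a finite collection of subgroups, not necessarily maximal or soluble, whose conjugates cover all soluble subgroups and whose triples can be tested by random search. For the Baby Monster this still leaves the maximal subgroup $2.{}^2E_6(2){:}2$, whose own maximal subgroups must be enumerated via Wilson's classification and bounded one class at a time using $|x^G\cap H|$ estimates; your proposal acknowledges this difficulty but offers no mechanism for it.
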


\begin{remk}\label{r:main3}
Some comments on the statement of Theorem \ref{t:main2} are in order.

\vspace{1mm}

\begin{itemize}\addtolength{\itemsep}{0.3\baselineskip}
\item[{\rm (a)}] In part (i), the base number of every almost sporadic group $G$ can be read off from the main theorem of \cite{BOW}, which gives $b(G,H)$ for every core-free maximal subgroup $H$. As an immediate consequence of part (i), we observe that $R(G) \leqs B(G)+1$, with equality if and only if $G = {\rm M}_{11} $ or ${\rm M}_{12}$. 

\item[{\rm (b)}] In Table \ref{tab:RG_sporadic} (see Section \ref{ss:spor_reg}) we list all the \emph{large} non-regular $k$-tuples with $k = R(G)-1$, up to conjugacy and ordering (large tuples are defined in Definition \ref{d:large}), with the exception of the Baby Monster and the Monster (see Remark \ref{r:RG_spor} for comments on the latter two cases). It follows that  the only non-regular $6$-tuple arises when $G={\rm M}_{24}$ and every component is conjugate to $H = {\rm M}_{23}$.

\item[{\rm (c)}] A more detailed result on $R_{\rm sol}(G)$ is given in Proposition \ref{p:nonregsol}. In particular, we have computed $R_{\rm sol}(G)$ precisely unless $T$ is one of the following groups:
\[
{\rm Co}_{1}, \, {\rm HN}, \, {\rm J}_{4}, \, {\rm Ly}, \, {\rm Th}, \, {\rm Fi}_{24}', \, \mathbb{B}, \, \mathbb{M}.
\]
In addition, for some of the groups with $R_{\rm sol}(G) = 3$ we present a non-regular soluble pair in Table \ref{tab:spor_sol_pairs}. The complete list of non-regular soluble maximal pairs is recorded in Table \ref{table-sporadics} (see Proposition \ref{p:spor_maxsol}).

\item[{\rm (d)}] In part (iii), note that every almost simple sporadic group $G$ has a soluble maximal subgroup unless $G = {\rm M}_{12}$, ${\rm M}_{12}.2$, ${\rm M}_{24}$ or ${\rm HS}$. For the latter groups, $R_{{\rm sol\, max}}(G)$ is not defined.

\item[{\rm (e)}]  We refer the reader to Remark \ref{r:sol_nip} for some comments on the existence of non-regular pairs $(H,K)$ with $H$ nilpotent and $K$ soluble. It is worth noting that we are only aware of examples for the groups ${\rm M}_{22}.2$ and ${\rm J}_2.2$. 
\end{itemize}
\end{remk}

\vs

We conclude this introduction by briefly commenting on the proofs of Theorems \ref{t:main1} and \ref{t:main2}, which involve a combination of probabilistic, combinatorial and computational methods. 

Let $G$ be a finite group, let $\tau = (H_1, \ldots, H_k)$ be a $k$-tuple of core-free subgroups and let  
\[
Q(G,\tau) = \frac{|\{(\alpha_1, \ldots, \alpha_k) \in \Gamma \,:\, \bigcap_{i = 1}^{k}G_{\alpha_i}\neq 1\}|}{|\Gamma|}
\]
be the probability that a randomly chosen tuple in $\Gamma = G/H_1 \times \cdots \times G/H_k$ is not contained in a regular orbit of $G$, with respect to the uniform distribution on $\Gamma$. Notice that $\tau$ is regular if and only if $Q(G, \tau) < 1$. 

By adapting earlier work of Liebeck and Shalev \cite{LSh2} on base sizes, we can estimate $Q(G,\tau)$ in terms of fixed point ratios (see Lemma \ref{l:fpr}) and this provides a powerful tool for establishing the regularity of $\tau$. Indeed, this approach is at the heart of our proof of parts (ii) and (iii) in Theorem \ref{t:main1}, where we can appeal to earlier work of Guralnick and Magaard \cite{GM} and Mar\'{o}ti \cite{Maroti02} in order to derive  appropriate bounds on the relevant fixed point ratios. It is also a key ingredient in our proof of Theorem \ref{t:main2}, especially in our analysis of some of the larger sporadic groups. For example, if $G = \mathbb{M}$ is the Monster then we can use the \textsf{GAP} Character Table Library \cite{GAPCTL} to show that $Q(G,\tau)<1$ for every maximal triple $\tau$, and by combining this with the main theorem of \cite{BOW} we deduce that $R(\mathbb{M}) = 3$ and $R_{\rm sol}(\mathbb{M}) \leqs 3$.

In stark contrast, our proof of part (i) of Theorem \ref{t:main1} for $G = S_n$ or $A_n$ is essentially constructive, and it relies on a highly combinatorial analysis of the stabilisers of subsets and partitions of $\{1, \ldots, n\}$. Here one of the key results is Lemma \ref{lem:2sets}, which shows that if $H_1$ and $H_2$ are the stabilisers in $G$ of uniform partitions, then we can typically find an element $g \in G$ such that $H_1 \cap H_2^g$ fixes any given $2$-element subset of $\{1, \ldots, n\}$.

Our proof of Theorem \ref{t:main2} on sporadic groups is entirely computational, working with {\sc Magma} \cite{magma} and \textsf{GAP} \cite{GAP}. For example, if $\tau = (H_1, \ldots, H_k)$ is a maximal $k$-tuple then we can often use {\sc Magma} to construct $G$ and each $H_i$ as subgroups of some symmetric group $S_n$ (typically where $n = m(G)$ is the minimal degree of a faithful primitive permutation representation of $G$) and then use random search to identify elements $g_i \in G$ with $\bigcap_i H_i^{g_i} = 1$. On the other hand, if we are seeking to show that $\tau$ is non-regular, then in many cases we can directly calculate the orbits of $G$ on $G/H_1 \times \cdots \times G/H_k$ in order to confirm the non-existence of a regular orbit. Of course, these computations are not feasible for some of the larger sporadic groups and they require a different approach. For instance, the probabilistic method outlined above is often a very useful tool in this setting, working closely with the detailed information on sporadic groups available in the \textsf{GAP} Character Table Library \cite{GAPCTL} and the Web Atlas \cite{WebAt}.

\vs

\noindent \textbf{Notation.} Our notation is all fairly standard. Let $G$ be a finite group and let $n$ be a positive integer. We will write $C_n$, or just $n$, for a cyclic group of order $n$ and $G^n$ will denote the direct product of $n$ copies of $G$. An unspecified extension of $G$ by a group $H$ will be denoted by $G.H$; if the extension splits then we may write $G{:}H$. We adopt the standard notation for simple groups of Lie type from \cite{KL}.

\vs

\noindent \textbf{Organisation.} We begin in Section \ref{s:prel} by presenting  a number of preliminary results, which we will need for the proofs of our main theorems. For example, the probabilistic technique highlighted above is introduced in Section \ref{ss:prob}, and we provide a brief discussion of some of our main computational methods in Section \ref{ss:comp} (referring the reader to the supplementary file \cite{AB_comp} for more details). Our proof of Theorem \ref{t:main1} for $S_n$ and $A_n$ is given in Section \ref{s:sym}, where the details are divided into two subsections. In Section \ref{ss:prim} we focus on the tuples $(H_1, \ldots, H_k)$, where each $H_i$ acts primitively on $\{1, \ldots, n\}$, using fixed point ratio estimates to prove parts (ii) and (iii) of Theorem \ref{t:main1}. These results feed in to our proof of Theorem \ref{t:main1}(i) in Section \ref{ss:regnumber}, which requires an in-depth analysis of the tuples involving intransitive and imprimitive subgroups. Finally, we present our proof of Theorem \ref{t:main2} in Section \ref{s:spor}, handling parts (i) and (ii) in Sections \ref{ss:spor_reg} and \ref{ss:spor_sol}, respectively.

\vs

\noindent \textbf{Acknowledgements.} First of all, we thank an anonymous referee for their careful reading of the paper and several helpful comments. We thank Thomas Breuer, Derek Holt, J\"{u}rgen M\"{u}ller and Eamonn O'Brien for their generous assistance with some of the computations involving sporadic groups. We also thank Peter Cameron for helpful comments and for drawing our attention to his paper \cite{Cameron2014}. The first author thanks the Heilbronn Institute for Mathematical Research for providing financial support during her doctoral studies at the University of Bristol. 

\section{Preliminaries}\label{s:prel}

In this section, we record several preliminary results that we will need in the proofs of our main theorems. More precisely, in Section \ref{ss:prob} we adapt a probabilistic approach due to Liebeck and Shalev \cite{LSh2} for bounding the base size of a finite permutation group, which gives a useful technique for determining if a given subgroup tuple is regular. In Section \ref{ss:indep}, we highlight a connection between the base and independence numbers of a finite group, which was first observed by Cameron \cite{Cameron2014}, and we use this to compute the base numbers $B(S_n)$ and $B(A_n)$. Next, in Section \ref{ss:almost} we record several results that apply in the special case where $G$ is almost simple. This includes Proposition \ref{p:simple}, which shows that there are infinitely many finite simple groups with $B(G) < R(G)$. Finally, in Section \ref{ss:comp} we briefly describe some of the main computational methods we employ in this paper, with further details provided in the supplementary file \cite{AB_comp}.

\subsection{Probabilistic methods}\label{ss:prob}

Recall that if $G \leqs {\rm Sym}(\O)$ is a transitive permutation group with point stabiliser $H$, then
\[
{\rm fpr}(x,\O) = \frac{|C_{\O}(x)|}{|\O|} = \frac{|x^G \cap H|}{|x^G|}
\]
is the \emph{fixed point ratio} of $x \in G$, where $C_{\O}(x) = \{ \a \in \O \,:\, \a^x = \a\}$ is the set of fixed points of $x$ and $x^G$ is the conjugacy class of $x$ in $G$. Note that $\fpr(x, \O)$ is the probability that $x$ fixes a randomly chosen element of $\O$ (with respect to the uniform distribution on $\O$).

In \cite{LSh2}, Liebeck and Shalev introduced a powerful probabilistic method, based on fixed point ratio estimates, for bounding the base size of a finite permutation group. They used this approach to resolve the Cameron-Kantor base size conjecture from \cite{CK}, which asserts that there is an absolute constant $c$ such that $B_{{\rm ns}}(G) \leqs c$ for every almost simple group $G$ (here $B_{{\rm ns}}(G)$, as defined in Section \ref{s:intro},  is the maximal base size $b(G,H)$ over all \emph{non-standard} maximal subgroups $H$ of $G$). The same approach was at the heart of the proof of Cameron's conjecture in the sequence of papers \cite{B07,BGS,BLS,BOW}, which established the optimal bound $B_{{\rm ns}}(G) \leqs 7$ in this setting. 

The following key lemma is a natural generalisation and it encapsulates our probabilistic approach for studying the regularity of subgroup tuples.

\begin{lem}\label{l:fpr}
Let $G$ be a finite group and let $\tau = (H_1, \ldots, H_k)$ be a core-free tuple of subgroups of $G$. Then $\tau$ is regular if 
\[
\widehat{Q}(G,\tau) := \sum_{i=1}^t|x_i^G| \cdot \left(\prod_{j=1}^k {\rm fpr}(x_i, G/H_j)\right) < 1,
\]
where $x_1, \ldots, x_t$ is a set of representatives of the conjugacy classes in $G$ of elements of prime order.
\end{lem}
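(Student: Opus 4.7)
The plan is to verify the equivalent inequality $Q(G,\tau) \leqs \widehat Q(G,\tau)$, where $Q(G,\tau)$ is the quantity defined immediately before the statement of the lemma; since $\tau$ is regular if and only if $Q(G,\tau) < 1$, the conclusion follows. The argument is a direct generalisation of the Liebeck--Shalev probabilistic estimate for base sizes (see \cite{LSh2}) to the product action on $\Gamma = G/H_1 \times \cdots \times G/H_k$.

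First I would carry out a Cauchy-style reduction to prime-order elements. A finite group is trivial if and only if it contains no element of prime order, so $\bigcap_{i=1}^k G_{\alpha_i} \neq 1$ precisely when some prime-order $x \in G$ lies in $G_{\alpha_i}$ for every $i$. Hence
\[
\Bigl\{(\alpha_1,\ldots,\alpha_k)\in\Gamma \,:\, \bigcap_{i=1}^k G_{\alpha_i} \neq 1\Bigr\} = \bigcup_{x} E_x,
\]
where the union runs over prime-order elements $x$ of $G$ and $E_x$ denotes the set of tuples fixed by $x$ in every coordinate.

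Next, for a fixed prime-order $x$, the product structure of $\Gamma$ together with independence of the coordinates under the uniform distribution gives
\[
\frac{|E_x|}{|\Gamma|} = \prod_{j=1}^{k} {\rm fpr}(x, G/H_j),
\]
since ${\rm fpr}(x,G/H_j)$ is by definition the probability that a uniformly chosen coset in $G/H_j$ is fixed by $x$. Applying the union bound and then grouping the prime-order elements by $G$-conjugacy --- the key point being that ${\rm fpr}(x,G/H_j)$ is constant on a conjugacy class --- yields
\[
Q(G,\tau) \leqs \sum_{x \text{ of prime order}} \frac{|E_x|}{|\Gamma|} = \sum_{i=1}^{t} |x_i^G| \prod_{j=1}^{k} {\rm fpr}(x_i, G/H_j) = \widehat Q(G,\tau),
\]
as required.

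The proof is essentially a routine union bound, so there is no serious technical obstacle; the only subtleties are the elementary reduction to prime-order elements (which is what makes the sum in $\widehat Q(G,\tau)$ finite and efficient to estimate in practice) and the identification of ${\rm fpr}(x, G/H_j)$ with the probability that $x$ fixes a single coordinate, both of which are standard in the Liebeck--Shalev framework.
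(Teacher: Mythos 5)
Your proposal is correct and follows essentially the same route as the paper: reduce to showing $Q(G,\tau) \leqs \widehat{Q}(G,\tau)$, use the existence of prime-order elements in nontrivial groups to restrict attention to prime-order $x$, compute the probability that $x$ fixes a random tuple as the product of fixed point ratios, and finish with a union bound grouped by conjugacy class. No gaps.
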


\begin{proof}
Set $\Gamma_i = G/H_i$ and $\Gamma = \Gamma_1\times \cdots \times \Gamma_k$. Let 
\[
Q(G, \tau) = \frac{|\{(\alpha_1, \ldots, \alpha_k) \in \Gamma \,:\, \bigcap_{i = 1}^{k}G_{\alpha_i}\neq 1\}|}{|\Gamma|}
\]
be the probability that a randomly chosen tuple in $\Gamma$ is not contained in a regular orbit. Then $\tau$ is regular if and only if $Q(G, \tau) < 1$, and thus it suffices to show that $Q(G, \tau) \leqs \widehat{Q}(G, \tau)$. 

A $k$-tuple $(\alpha_1, \ldots, \alpha_k) \in \Gamma$ is not in a regular $G$-orbit if and only if there exists an element $x \in G$ of prime order such that $\alpha_i^x = \alpha_i$ for all $i$. Now the probability that an element $x \in G$ fixes a random element of $\Gamma$ is equal to 
\[
\frac{|\{(\alpha_1, \ldots, \alpha_k) \in \Gamma \,:\, \alpha_i^x = \alpha_i \mbox{ for all $i$} \}|}{|\Gamma|} = |\Gamma|^{-1}\prod_{j=1}^k |C_{\Gamma_j}(x)| = \prod_{j = 1}^k \fpr(x, \Gamma_j)
\]
and thus
\[
Q(G, \tau) \leqs \sum_{x \in \mathcal{P}} \left( \prod_{j = 1}^k {\rm fpr}(x, \Gamma_j)\right) = \sum_{i = 1}^t |x_i^G| \cdot \left( \prod_{j = 1}^k {\rm fpr}(x_i, \Gamma_j)\right) = \widehat{Q}(G,\tau),
\]
where $\mathcal{P} = \bigsqcup_{i=1}^t x_i^G$ is the set of prime order elements of $G$. The result follows.
\end{proof}

The following elementary result generalises \cite[Lemma 2.1]{B07}. It provides a useful way to produce an upper bound on the expression $\what{Q}(G,\tau)$ in Lemma \ref{l:fpr}.

\begin{lem}\label{l:favbound}
Let $G$ be a finite group and let $\tau = (H_1, \ldots, H_k)$ be a core-free tuple of subgroups of $G$. Suppose $x_1, \ldots, x_m$ are elements in $G$ such that $|x_i^G| \geqs B$ for all $i$ and $\sum_{i = 1}^m |x_i^G\cap H_j| \leqs A_j$ for all $j$. Then 
\[
\sum_{i = 1}^m |x_i^G| \cdot \left( \prod_{j = 1}^k {\rm fpr}(x_i, G/H_j)\right) \leqs B^{1 - k}\cdot \prod_{j = 1}^kA_j
\]
\end{lem}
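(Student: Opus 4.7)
The plan is to unpack the definition of fixed point ratio, split out a factor of $B^{1-k}$ using the lower bound on class sizes, and then exchange a sum of products for a product of sums.

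First I would rewrite the summand. Since $\fpr(x_i, G/H_j) = |x_i^G \cap H_j|/|x_i^G|$, each term becomes
\[
|x_i^G| \cdot \prod_{j=1}^k {\rm fpr}(x_i, G/H_j) = |x_i^G|^{1-k}\prod_{j=1}^k |x_i^G \cap H_j|.
\]
Next, since $1-k \leqs 0$ and $|x_i^G| \geqs B > 0$, we have $|x_i^G|^{1-k} \leqs B^{1-k}$, so
\[
\sum_{i=1}^m |x_i^G|\cdot\prod_{j=1}^k {\rm fpr}(x_i, G/H_j) \leqs B^{1-k}\sum_{i=1}^m \prod_{j=1}^k |x_i^G \cap H_j|.
\]

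The main (and only nontrivial) step is to establish that
\[
\sum_{i=1}^m \prod_{j=1}^k a_{ij} \leqs \prod_{j=1}^k \sum_{i=1}^m a_{ij}
\]
for nonnegative reals $a_{ij} := |x_i^G \cap H_j|$. This is immediate on expanding the right-hand side: the resulting sum ranges over all index tuples $(i_1, \ldots, i_k) \in \{1, \ldots, m\}^k$, and the left-hand side corresponds to the subfamily with $i_1 = \cdots = i_k$. Since every term is nonnegative, the inequality follows. (Alternatively, one can view this as an instance of the generalised Hölder inequality.)

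Finally, applying the hypothesis $\sum_{i=1}^m |x_i^G \cap H_j| \leqs A_j$ to each of the $k$ factors yields
\[
B^{1-k}\sum_{i=1}^m\prod_{j=1}^k |x_i^G \cap H_j| \leqs B^{1-k}\prod_{j=1}^k \sum_{i=1}^m |x_i^G \cap H_j| \leqs B^{1-k}\prod_{j=1}^k A_j,
\]
which is the desired bound. No case analysis or estimate is needed beyond the elementary expansion step, so I do not anticipate a real obstacle; the generalisation from the $k=2$ case treated in \cite[Lemma 2.1]{B07} to arbitrary $k$ simply requires the sum-of-products versus product-of-sums observation.
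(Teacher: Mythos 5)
Your proof is correct and follows essentially the same route as the paper's: rewrite each term as $|x_i^G|^{1-k}\prod_j |x_i^G\cap H_j|$, bound $|x_i^G|^{1-k}$ by $B^{1-k}$ using $1-k\leqs 0$, and then replace the sum of products by the product of sums before applying the hypotheses. The only difference is that you spell out the justification for the sum-of-products versus product-of-sums inequality, which the paper takes as read.
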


\begin{proof}
We have    
\[
\sum_{i = 1}^m |x_i^G| \cdot \left( \prod_{j = 1}^k {\rm fpr}(x_i, G/H_j) \right) =  \sum_{i = 1}^m |x_i^G|^{1 - k} \cdot \left( \prod_{j = 1}^k |x_i^G \cap H_j|\right)\leqs B^{1 - k} \cdot \left( \sum_{i = 1}^m \prod_{j = 1}^k |x_i^G \cap H_j| \right)
\]
and thus
\[
\sum_{i = 1}^m |x_i^G| \cdot \left( \prod_{j = 1}^k {\rm fpr}(x_i, G/H_j) \right) \leqs B^{1 - k} \cdot \left( \prod_{j = 1}^k \sum_{i = 1}^m  |x_i^G \cap H_j| \right) \leqs B^{1 - k} \cdot \prod_{j = 1}^{k} A_j,
\]
as claimed.
\end{proof}

\subsection{Independent sets}\label{ss:indep}

A subset $S$ of a finite group $G$ is \emph{independent} if none of its elements lie in the subgroup generated by the others (that is to say, $x \not\in \la S \setminus \{x\}\ra$ for all $x \in S$). For example, $\{(1,2), (2,3), \ldots, (n-1,n)\}$ is an independent generating set for $G = S_n$. 

We define the \emph{independence number} of $G$, denoted $\s(G)$, to be the maximal size of an independent subset of $G$.

\begin{prop}\label{p:indep}
We have $B(G) \leqs \sigma(G)$ for every finite group $G$.
\end{prop}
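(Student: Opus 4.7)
My plan is to take an arbitrary core-free subgroup $H$ of $G$ and construct an independent subset of $G$ of size $b = b(G,H)$; since this bound then holds for every such $H$, it yields $\sigma(G) \geqs B(G)$.

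First I would fix a minimum base $\{\alpha_1, \ldots, \alpha_b\}$ for the action of $G$ on $G/H$, so that $\bigcap_{i=1}^b G_{\alpha_i} = 1$. By minimality, no proper subset is a base, so for each index $i$ the subgroup
\[
K_i := \bigcap_{j \neq i} G_{\alpha_j}
\]
is nontrivial. I would then pick an arbitrary nontrivial element $x_i \in K_i$ for each $i$.

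The key step is to verify that $S := \{x_1, \ldots, x_b\}$ is independent (in particular, that the $x_i$ are pairwise distinct). The point is that for every $j \neq i$ we have $x_j \in K_j \leqs G_{\alpha_i}$, hence $\langle S \setminus \{x_i\}\rangle \leqs G_{\alpha_i}$. On the other hand, $x_i \in K_i$ lies in every $G_{\alpha_j}$ with $j \neq i$, so if $x_i$ also lay in $G_{\alpha_i}$ it would lie in $\bigcap_{j=1}^b G_{\alpha_j} = 1$, contradicting $x_i \neq 1$. Therefore $x_i \notin G_{\alpha_i}$, which forces $x_i \notin \langle S \setminus \{x_i\}\rangle$ and, in particular, $x_i \neq x_j$ for $j \neq i$. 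Thus $S$ is an independent subset of $G$ of size $b$, giving $\sigma(G) \geqs b(G,H)$. Taking the maximum over core-free $H$ yields $\sigma(G) \geqs B(G)$.

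There is no real obstacle here beyond keeping the definitions straight: a minimum base is automatically a minimal base, and the structural fact that drives the argument is simply that the removal of any single stabiliser from the intersection leaves a nontrivial group, while elements drawn from these residual intersections fix all ``other'' base points but not their own. This is exactly the dichotomy needed to make $\langle S \setminus \{x_i\}\rangle$ a proper subgroup of $G$ missing $x_i$.
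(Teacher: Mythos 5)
Your argument is correct and is essentially the same as the paper's: both define the residual intersections $K_i = \bigcap_{j\neq i} G_{\alpha_j}$, pick nontrivial elements $x_i \in K_i$, and use the fact that $\langle S\setminus\{x_i\}\rangle \leqs G_{\alpha_i}$ while $x_i \notin G_{\alpha_i}$ to conclude independence. The only cosmetic difference is that the paper works with an arbitrary minimal base of any faithful action (thereby also establishing Cameron's bound $b_2(G)\leqs\sigma(G)$), whereas you specialise to a minimum base for $G/H$ with $H$ core-free, which suffices for $B(G)\leqs\sigma(G)$.
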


\begin{proof}
Suppose $G$ acts faithfully on a set $\O$ and let $\mathcal{B} = \{\a_1, \ldots, \a_b\}$ be a minimal base for $G$ (in the sense that no proper subset of $\mathcal{B}$ is a base). In \cite{Cameron2014}, Cameron reveals a beautiful connection with lattices in order  to establish the bound $b \leqs \sigma(G)$ (see \cite[Proposition 3.2]{Cameron2014}). We can also argue directly as follows. 

For all $i \in \{1, \ldots, b\} = [b]$, let 
\[
H_i = \bigcap_{j \in [b] \setminus \{i\}} G_{\a_j}
\]
and note that the minimality of $\mathcal{B}$ implies that each $H_i$ is nontrivial. Fix a nontrivial element $g_i \in H_i$ and observe that $g_i \not\in G_{\a_i}$. Then the $g_i$ are distinct and we claim that $\{g_1, \ldots, g_b\}$ is an independent subset of $G$. To see this, set $K_i = \la g_1, \ldots, g_{i - 1}, g_{i + 1}, \ldots, g_b \ra$ for all $i \in [b]$. Since $g_j \in G_{\a_i}$ for all $j \ne i$, we have $K_i \leqs G_{\a_i}$ for all $i$. But $g_i \not\in G_{\a_i}$ and thus $g_i \not\in K_i$. This justifies the claim and we conclude that $b \leqs \sigma(G)$. 

In particular, $B(G) \leqs \sigma(G)$, as required. 
\end{proof}

We can now combine this observation with a theorem of Whiston \cite{Whiston} to compute the base numbers of $S_n$ and $A_n$ (also see \cite[Corollary 3.4]{Cameron2014}).

\begin{cor}\label{c:sn}
For $n \geqs 5$ we have $B(S_n) = n-1$ and $B(A_n) = n-2$.
\end{cor}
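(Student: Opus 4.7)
The plan is to sandwich $B(S_n)$ and $B(A_n)$ between matching lower and upper bounds. For the lower bounds, take the natural faithful transitive actions of $G \in \{S_n, A_n\}$ on $\{1,\ldots,n\}$: the pointwise stabiliser of any set of $n-2$ points in $S_n$ contains a transposition, and the pointwise stabiliser of any set of $n-3$ points in $A_n$ contains a $3$-cycle, so
\[
B(S_n) \geqs b(S_n, S_{n-1}) = n-1, \qquad B(A_n) \geqs b(A_n, A_{n-1}) = n-2.
\]

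For the upper bounds, apply Proposition \ref{p:indep} to reduce to bounding independence numbers:
\[
B(S_n) \leqs \sigma(S_n), \qquad B(A_n) \leqs \sigma(A_n).
\]
The first of these is equal to $n-1$ by Whiston's theorem \cite{Whiston}, which settles the symmetric case immediately.

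For the alternating case, I would exploit the stronger form of Whiston's theorem, which asserts that every independent subset of $S_n$ of maximal size $n-1$ actually generates $S_n$. Any independent subset $S \subseteq A_n$ is a fortiori independent in $S_n$, so $|S| \leqs n-1$ by Whiston; and if equality held then $\langle S \rangle = S_n$, contradicting $S \subseteq A_n$. Hence $\sigma(A_n) \leqs n-2$, giving the matching upper bound $B(A_n) \leqs n-2$.

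No serious obstacle is expected here: the lower bounds are immediate from the natural action, and the upper bounds follow by direct appeal to Proposition \ref{p:indep} together with Whiston's theorem (in its stronger ``generation'' form for the $A_n$ case). The only subtlety is invoking the correct strengthening of \cite{Whiston} to pass from $\sigma(S_n) = n-1$ to $\sigma(A_n) \leqs n-2$; if one preferred to avoid this, one could alternatively cite the analogue of Whiston's result for alternating groups directly.
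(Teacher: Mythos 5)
Your proposal is correct and follows essentially the same route as the paper: lower bounds from the natural degree-$n$ actions, upper bounds via Proposition \ref{p:indep}, and the deduction $\sigma(A_n) \leqs n-2$ from the generation clause in Whiston's theorem (an independent set of size $n-1$ must generate $S_n$, hence cannot lie in $A_n$). No gaps.
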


\begin{proof}
By considering the natural action of $S_n$ and $A_n$ of degree $n$, we see that $B(S_n) \geqs n-1$ and $B(A_n) \geqs n-2$. We now combine this trivial observation with the main theorem of \cite{Whiston}, which states that every independent subset in $S_n$ has at most $n-1$ elements, with equality only if the subset generates $S_n$ (for example, $\{(1,2), (2,3), \ldots, (n-1,n)\}$ is an independent subset of maximal size). It follows that $\sigma(S_n) = n-1$ and $\sigma(A_n) \leqs n-2$, and we conclude by applying Proposition \ref{p:indep}.  
\end{proof}

\begin{rem}
Let $G = S_n$ or $A_n$, and let $H$ be a core-free maximal subgroup of $G$. Through the combined efforts of several authors, the base size $b(G,H)$ is now known in all cases:

\vspace{1mm}

\begin{itemize}\addtolength{\itemsep}{0.3\baselineskip}
\item[{\rm (a)}] $H$ primitive on $\{1, \ldots, n\}$: See \cite{BGS}
\item[{\rm (b)}] $H$ imprimitive: See \cite{BGL2,JJ,MS}
\item[{\rm (c)}] $H$ intransitive: See \cite{dVRD,H12,MS0}
\end{itemize}
\end{rem}

\subsection{Almost simple groups}\label{ss:almost}

Let $G$ be a finite almost simple group with socle $T$, which means that  
\[
T \normeq G \leqs {\rm Aut}(T),
\]
where we identify the non-abelian simple group $T$ with its group of inner automorphisms. Let us define the regularity and base number of $G$, denoted $R(G)$ and $B(G)$ respectively, as in Section \ref{s:intro}, and recall that $R(G) \geqs B(G)$. We begin by recording the following trivial observation, which applies in the general setting.

\begin{lem}\label{l:triv}
Let $G$ be a finite group and let $\tau = (H_1, \ldots, H_k)$ be a $k$-tuple of core-free subgroups of $G$. Then $\tau$ is non-regular if 
\[
\prod_{i=1}^k |H_i| > |G|^{k-1}.
\]
\end{lem}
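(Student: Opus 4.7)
The plan is to prove this by a direct counting argument on the size of the product space. By definition, $\tau$ is regular precisely when $G$ has a regular orbit on $\Gamma = G/H_1 \times \cdots \times G/H_k$, and any such orbit has cardinality $|G|$. So a necessary condition for regularity is $|\Gamma| \geqs |G|$.

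The first step is simply to compute $|\Gamma|$. Since the action is on a Cartesian product of coset spaces, we have
\[
|\Gamma| = \prod_{i=1}^{k} |G : H_i| = \frac{|G|^k}{\prod_{i=1}^{k} |H_i|}.
\]
The second step is to rearrange the necessary condition $|\Gamma| \geqs |G|$: this is equivalent to $\prod_i |H_i| \leqs |G|^{k-1}$. Contrapositively, the hypothesis $\prod_i |H_i| > |G|^{k-1}$ forces $|\Gamma| < |G|$, so $\Gamma$ cannot contain any orbit of size $|G|$, and therefore $\tau$ must be non-regular.

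There is no real obstacle here; the statement is a purely numerical pigeonhole observation, and the entire argument fits in two displayed lines. The only thing worth flagging is that the hypothesis is strict, which matches the strict inequality $|\Gamma| < |G|$ needed to rule out a regular orbit — equality $\prod_i |H_i| = |G|^{k-1}$ would only say $|\Gamma| = |G|$, which is consistent with (but does not guarantee) regularity.
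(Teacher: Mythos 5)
Your proof is correct and is essentially identical to the paper's one-line argument: a regular orbit has size $|G|$, so regularity forces $|G| \leqs \prod_i |G:H_i| = |G|^k/\prod_i|H_i|$, and the hypothesis violates this. Nothing further is needed.
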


\begin{proof}
Clearly, $G$ has a regular orbit on $G/H_1 \times \cdots \times G/H_k$ only if 
\[
|G| \leqs \prod_{i=1}^k |G:H_i|
\]
and the result follows.
\end{proof}

\begin{prop}\label{p:low}
If $G$ is a finite almost simple group, then $B(G) \geqs 3$.
\end{prop}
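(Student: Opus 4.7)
The plan is to exhibit a single core-free subgroup $H$ of $G$ for which $H \cap H^g \ne 1$ for all $g \in G$; this immediately forces $b(G,H) \geqs 3$ and hence $B(G) \geqs 3$. The natural candidate will be a maximal subgroup of the socle of smallest index, viewed as a subgroup of $G$.

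Let $T$ denote the nonabelian simple socle of $G$, and let $M$ be a maximal subgroup of $T$ of smallest index $m(T)$. The key input, which can be verified case by case using the standard tables of minimal faithful permutation degrees (e.g.\ those in \cite{KL} or the Atlas), is the inequality $m(T)^2 < |T|$ for every finite nonabelian simple group $T$; for $T = A_n$ this reduces to $n^2 < n!/2$ for $n \geqs 5$, and the tightest case overall is $T = A_5$ with $m(T)^2 = 25 < 60 = |T|$. View $M$ as a subgroup of $G$. It is core-free there, since the core $\bigcap_{g \in G} M^g$ is a normal subgroup of $G$ contained in $T$ and is therefore normal in the simple group $T$, so it is $1$ or $T$; since $M$ is proper in $T$ the core must be trivial.

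Finally, for any $g \in G$ the conjugate $M^g$ still lies in $T^g = T$ (as $T \triangleleft G$), so applying the product formula inside $T$ yields
\[
|M \cap M^g| \geqs \frac{|M| \cdot |M^g|}{|T|} = \frac{|M|^2}{|T|} > 1,
\]
which gives $M \cap M^g \ne 1$ for every $g \in G$ and hence $b(G,M) \geqs 3$, as required. The only non-routine ingredient is the inequality $m(T)^2 < |T|$, but this is far from tight outside of a small handful of very small simple groups and in each family can be read off from the classification of maximal subgroups, so this step is the only real obstacle and is essentially a bookkeeping matter.
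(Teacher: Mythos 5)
Your argument is correct, and it is the same counting argument the paper uses: exhibit one core-free subgroup that is too large for a regular orbit on pairs to exist. The paper simply quotes \cite[Proposition 2.5]{BG23} for a core-free $H \leqs G$ with $|H|^2 > |G|$ and applies Lemma \ref{l:triv}, whereas you construct the witness explicitly as a minimal-index maximal subgroup $M$ of the socle $T$ and run the product formula inside $T$ rather than inside $G$. That last refinement is worth noting: when $|G:T|>1$ the inequality $|M|^2 > |G|$ may fail even though $|M|^2 > |T|$ holds, so computing $|M \cap M^g| \geqs |M|^2/|T|$ (using $M, M^g \leqs T$) is exactly the right fix, and it means you only need the bound $m(T)^2 < |T|$ for \emph{simple} groups. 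The one genuinely non-elementary input in your version — that every non-abelian simple group has a proper subgroup of order exceeding $|T|^{1/2}$ — is on the same footing as the result cited in the paper (both ultimately rest on the classification and the known lists of minimal degrees), so your case-by-case appeal is a legitimate, if heavier-looking, substitute for the citation.
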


\begin{proof}
By \cite[Proposition 2.5]{BG23}, $G$ has a core-free subgroup $H$ with $|H|^2 > |G|$ and thus $(H,H)$ is a non-regular pair by Lemma \ref{l:triv}.
\end{proof}

It will be useful to introduce the following definition.

\begin{defn}\label{d:mG}
Let $G$ be an almost simple group with socle $T$. We define $\mathcal{M}(G)$ to be the set of core-free subgroups of $G$ that are maximal in some overgroup of $T$. 
\end{defn}

For example, $H = {\rm AGL}_3(2) \in \mathcal{M}(S_8)$ since it is a maximal subgroup of $A_8$ (and note that $H$ is not contained in a core-free maximal subgroup of $S_8$). This definition allows us to present the following elementary lemma, which tells us that in the almost simple setting, we only need to consider tuples of subgroups in $\mathcal{M}(G)$ in order to bound $R(G)$ from above.

\begin{lem}\label{lem:maxreduction}
Let $G$ be an almost simple group with socle $T$. Then $R(G) \leqs k$ if and only if every $k$-tuple of subgroups in $\mathcal{M}(G)$ is regular.
\end{lem}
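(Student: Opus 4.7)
The forward direction is immediate from the definitions: every subgroup in $\mathcal{M}(G)$ is, by design, core-free in $G$, so if $R(G) \leqs k$ then every $k$-tuple of subgroups in $\mathcal{M}(G)$ is automatically regular.

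For the converse, the natural plan is to show that every core-free subgroup $H$ of $G$ is contained in some $M \in \mathcal{M}(G)$. Once this is established, the argument concludes routinely: given a $k$-tuple $(H_1, \ldots, H_k)$ of core-free subgroups, I choose $M_i \in \mathcal{M}(G)$ with $H_i \leqs M_i$ and apply the hypothesised regularity of $(M_1, \ldots, M_k)$ to obtain conjugates $g_i$ with $\bigcap_i M_i^{g_i} = 1$; since $H_i^{g_i} \leqs M_i^{g_i}$, the same conjugates witness regularity of $(H_1, \ldots, H_k)$, giving $R(G) \leqs k$.

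To embed a given core-free $H$ in an element of $\mathcal{M}(G)$, I would set $G_0 = HT$, which is an overgroup of $T$, and take any maximal subgroup $M$ of $G_0$ containing $H$. By construction $M$ is maximal in an overgroup of $T$, so the only point to verify is that $M$ is core-free in $G$. If $T \leqs M$ then $M \geqs HT = G_0$, contradicting $M < G_0$; hence $T \not\leqs M$. The key structural fact I will invoke is that in an almost simple group $G$ with socle $T$, every nontrivial normal subgroup contains $T$: this follows because $C_G(T) = 1$ (using $G \leqs \Aut(T)$ and $Z(T) = 1$) forces any normal $N \normeq G$ with $N \cap T = 1$ to lie in $C_G(T)$, while simplicity of $T$ leaves only $N \cap T = T$ as the alternative. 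Applying this to the core of $M$ in $G$ — a normal subgroup of $G$ contained in $M$ — combined with $T \not\leqs M$, forces the core to be trivial.

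The only substantive ingredient is the last observation about normal subgroups of almost simple groups, which is entirely standard; the rest is bookkeeping around the definitions of $\mathcal{M}(G)$ and regularity. I anticipate no real obstacle beyond being careful that the chosen $M$ is core-free in the \emph{ambient} group $G$ and not merely in $G_0$, which is precisely what the socle argument delivers.
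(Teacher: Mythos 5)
Your proof is correct and follows the same route as the paper: reduce the general tuple to a tuple in $\mathcal{M}(G)$ by embedding each core-free $H_i$ in some $M_i \in \mathcal{M}(G)$ and using $\bigcap_i H_i^{g_i} \leqs \bigcap_i M_i^{g_i}$. The paper simply asserts the existence of such an $M_i$ without comment, whereas you supply the (correct) justification via $G_0 = HT$ and the fact that every nontrivial normal subgroup of an almost simple group contains the socle.
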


\begin{proof}
The forward implication is clear, so let us assume every $k$-tuple of subgroups in $\mathcal{M}(G)$ is regular and let $\tau = (H_1, \ldots, H_k)$ be an arbitrary core-free tuple. Then each $H_i$ is contained in a subgroup $M_i \in \mathcal{M}(G)$ and the regularity of $(M_1, \ldots, M_k)$ immediately implies that $\tau$ is also regular. 
\end{proof}

\begin{rem}\label{r:large}
Notice that $|G:T| \leqs 2$ for every almost simple group $G$ we are considering in Theorems \ref{t:main1} and \ref{t:main2}, with the single exception of the special case $G = A_6.2^2$. If $G = T.2$ and $H<T$ is maximal, then $H$ is contained in a core-free maximal subgroup of $G$ if $N_G(H) \ne H$. So it follows that $R(G) \leqs k$ if and only if every $k$-tuple of subgroups in $\mathcal{M}'(G)$ is regular, where the latter comprises the core-free maximal subgroups of $G$, together with any maximal subgroups $H$ of $T$ with $N_G(H) = H$. This minor refinement of Lemma \ref{lem:maxreduction} will be useful in the proof of Theorem \ref{t:main2} (see the discussion at the start of Section \ref{ss:spor_reg}).
\end{rem}

The following easy observation will be useful when we study soluble tuples for sporadic groups in Section \ref{ss:spor_sol}.

\begin{lem}\label{l:easy}
Let $G$ be a finite group and let $H$ be a subgroup of $G$ with trivial soluble radical and $R_{\rm sol}(H) \leqs k$. Then every core-free soluble $k$-tuple of subgroups of $G$ of the form $\tau = (H_1, \ldots, H_k)$ with $H_1 \leqs H$ is regular.
\end{lem}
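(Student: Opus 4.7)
The plan is to transfer the regularity problem for $\tau$ in $G$ to a regularity problem for a related tuple inside $H$, where we may invoke the hypothesis $R_{\rm sol}(H) \leqs k$.

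First, I would construct a $k$-tuple $(K_1, \ldots, K_k)$ of core-free soluble subgroups of $H$ as follows. Set $g_1 = 1$ and $K_1 = H_1$, which lies in $H$ by assumption. For each $i \geqs 2$, choose any $g_i \in G$ (for instance $g_i = 1$) and let $K_i = H \cap H_i^{g_i}$. Each $K_i$ is soluble, being a subgroup of the soluble group $H_i^{g_i}$.

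The key step is to verify that each $K_i$ is core-free in $H$. The normal core of $K_i$ in $H$ is a normal subgroup of $H$ contained in the soluble group $K_i$, hence a normal soluble subgroup of $H$; since the soluble radical of $H$ is trivial, this core must be trivial. (In particular, no $K_i$ equals $H$, since otherwise $H$ would be soluble and hence trivial.) Having established this, the hypothesis $R_{\rm sol}(H) \leqs k$ produces elements $h_1, \ldots, h_k \in H$ with $\bigcap_{i=1}^k K_i^{h_i} = 1$.

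To finish, set $f_1 = h_1$ and $f_i = g_i h_i$ for $i \geqs 2$. Since $H_1^{f_1} \leqs H$, the intersection $\bigcap_i H_i^{f_i}$ is contained in $H$, and for each $i \geqs 2$ this forces
\[
H_1^{f_1} \cap H_i^{f_i} \leqs H \cap H_i^{g_i h_i} = (H \cap H_i^{g_i})^{h_i} = K_i^{h_i},
\]
using $H^{h_i} = H$ since $h_i \in H$. Combined with $H_1^{f_1} = K_1^{h_1}$, this gives $\bigcap_i H_i^{f_i} \leqs \bigcap_i K_i^{h_i} = 1$, so $\tau$ is regular. The only delicate point in the argument is the core-freeness of the $K_i$ inside $H$, and that is precisely where the trivial soluble radical hypothesis is used; everything else is a bookkeeping exercise with the inclusions and the fact that the $h_i$ normalise $H$.
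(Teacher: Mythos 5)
Your proof is correct and follows essentially the same route as the paper: intersect the components $H_2,\ldots,H_k$ with $H$ to obtain a soluble $k$-tuple of subgroups of $H$, apply $R_{\rm sol}(H)\leqs k$, and unwind the conjugations. The only difference is that you spell out the core-freeness of the $K_i$ in $H$ via the trivial soluble radical, a point the paper asserts without elaboration.
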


\begin{proof}
Consider the $k$-tuple $\tau' = (H_1, H_2 \cap H, \ldots, H_k \cap H)$ of subgroups of $H$. Since $R_{\rm sol}(H) \leqs k$ and each component of $\tau'$ is a core-free soluble subgroup of $H$, it follows that $\tau'$ is regular. Therefore, there exist elements $x_i \in H$ such that 
\[
1 = H_1^{x_1} \cap \bigcap_{i=2}^k (H_i \cap H)^{x_i} = \bigcap_{i=1}^k H_i^{x_i}
\]
and the result follows.
\end{proof}

As noted in Remark \ref{r:main1}(a), there are examples where the trivial bound $B(G) \leqs R(G)$ is a strict inequality. In fact, we can show that there are infinitely many simple groups with this property. 

\begin{prop}\label{p:simple}
There are infinitely many finite simple groups $G$ with $B(G) < R(G)$.
\end{prop}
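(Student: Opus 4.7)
The plan is to exhibit an infinite family of simple groups with $R(G) \geqs B(G)+1$ by taking $G_n = \mathrm{GL}_n(2) = \PSL_n(2)$ for $n \geqs 5$. I will take the identity $B(G_n) = n$ as given (this is part of the content of Proposition \ref{p:simple2}, which is already signalled in Remark \ref{r:main1}(a) and the surrounding discussion), so the task reduces to exhibiting, for each $n \geqs 5$, a single non-regular $n$-tuple of core-free subgroups of $G_n$. Once this is done, we have $R(G_n) \geqs n+1 > n = B(G_n)$, and since the $G_n$ are pairwise non-isomorphic simple groups, the proposition follows.

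The tuple I propose to use is $\tau = (P_1, \ldots, P_1, P_{n-1})$, consisting of $n-1$ copies of a point stabiliser $P_1$ together with one hyperplane stabiliser $P_{n-1}$; both are maximal parabolic subgroups of $G_n$ acting on $V = \mathbb{F}_2^n$. Because each component of $\tau$ can be replaced by any conjugate, an arbitrary realisation of $\tau$ is specified by a choice of nonzero vectors $v_1, \ldots, v_{n-1} \in V$ and a hyperplane $H \subset V$, and the goal is to show that the subgroup of $G_n$ fixing each $v_i$ pointwise and preserving $H$ setwise is nontrivial, no matter how these data are chosen.

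The generic case is when $v_1, \ldots, v_{n-1}$ are linearly independent and span a hyperplane $W$. Picking any $w \in V \setminus W$, the pointwise stabiliser of $\{v_1, \ldots, v_{n-1}\}$ is precisely $\{M(u) : u \in W\}$, of order $2^{n-1}$, where $M(u)$ fixes $W$ pointwise and sends $w$ to $w+u$. A short elementary check then shows that $M(u)$ preserves a hyperplane $H$ setwise if and only if $u \in H \cap W$, so the full intersection has order at least $2^{n-2} \geqs 8$ and is in particular nontrivial. The remaining cases, where the $v_i$'s span a subspace $W$ of dimension $d < n-1$, can be handled by essentially the same argument applied to the induced action on $V/W$: the pointwise stabiliser of $W$ contains the unipotent radical of the parabolic stabilising $W$, together with a copy of $\mathrm{GL}(V/W)$, and a direct check shows that a large subgroup of this survives intersection with $G_H$.

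The only substantive step is the explicit calculation identifying, in the generic case, precisely which of the $2^{n-1}$ elements $M(u)$ preserve a given hyperplane $H$; beyond this, everything is routine bookkeeping. I do not foresee any serious obstacle, and the same construction is the starting point for the sharper estimate $R(G_n) \geqs 2(n-1)$ of Proposition \ref{p:simple2}, obtained by replacing the single hyperplane stabiliser with $n-1$ of them and forcing the common kernel $\bigcap_j (H_j \cap W)$ to vanish.
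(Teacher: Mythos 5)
Your construction is correct, but it follows a genuinely different route from the paper's own proof. The paper takes $G = {\rm SL}_3(q)$ with $q \geqs 3$ and $q \not\equiv 1 \imod{3}$, reads off $B(G) = 4$ from \cite{B07}, and shows by a direct orbit computation on the set of $2$-spaces that $(P_1,P_1,P_2,P_2)$ is non-regular; this keeps everything self-contained in rank $3$, with every group in the family having the same small base number. Your argument is instead essentially the one the paper deploys for the stronger Proposition \ref{p:simple2}: for $G = {\rm L}_n(2)$, the pointwise stabiliser of $n-1$ spanning vectors is $\{M(u) : u \in W\}$ of order $2^{n-1}$, and your check that $M(u)$ preserves $\ker\psi$ if and only if $\psi(u) = 0$ is exactly right (note that since $u$ already lies in $W$, the condition $u \in H \cap W$ is just $u \in H$), leaving a subgroup of order at least $2^{n-2}$ in the full intersection; the degenerate non-spanning case does follow from order considerations as you indicate. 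The cost of your route is that you must import the nontrivial upper bound $B({\rm L}_n(2)) \leqs n$, which you take from Proposition \ref{p:simple2} (itself resting on \cite{B07} for irreducible subgroups plus an explicit base of $n$ cyclically shifted $k$-spaces); since that proposition is proved independently there is no circularity, but your proof then becomes a corollary of half of a later, harder result rather than the independent low-rank example the paper chooses. One small correction to your closing aside: the non-regular tuple in Proposition \ref{p:simple2} uses $n-2$ (not $n-1$) hyperplane stabilisers alongside the $n-1$ point stabilisers, giving a $(2n-3)$-tuple and hence $R(G) \geqs 2(n-1)$; with $n-1$ hyperplane stabilisers the relevant $(n-1)\times(n-1)$ matrix can be invertible and the argument breaks down.
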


\begin{proof}
Let $G = {\rm SL}_3(q)$, where $q \geqs 3$ and $q \not\equiv 1 \imod{3}$. Let $V$ be the natural module and let $H = P_1$ be the stabiliser in $G$ of a $1$-dimensional subspace of $V$. Similarly, let $K$ be the stabiliser of a $2$-space. Then $b(G,H) = b(G,K) = 4$ and by applying the main theorem of \cite{B07} we deduce that $B(G) = 4$. So in order to prove the proposition, it suffices to establish the following claim:

\vs

\noindent \textbf{Claim.} \emph{$(H,H,K,K)$ is a non-regular $4$-tuple.}

\vs

To see this, let $\la v_1 \ra$ and $\la v_2 \ra$ be distinct $1$-spaces and choose $v_3$ so that $\b = \{v_1,v_2,v_3\}$ is a basis for $V$. Let $H_i$ be the stabiliser of $\la v_i \ra$ and let $L = H_1 \cap H_2$, so $|L| = q^2(q-1)^2$. Set $\O = G/K$ and identify $\O$ with the set of $2$-dimensional subspaces of $V$. It suffices to show that $b(L,\O) \geqs 3$. Equivalently, we need to show that no point stabiliser $L_{\a}$ (with $\a \in \O$) has a regular orbit on $\O$. 

Set $U_1 = \la v_1,v_2\ra$, $U_2 = \la v_1,v_3 \ra$ and $U_3 = \la v_2,v_3 \ra$. Then these $2$-spaces represent three distinct $L$-orbits, with respective lengths $1,q$ and $q$. In addition, the stabiliser in $L$ of $U_4 = \la v_1+v_2,v_3 \ra$ is the subgroup
\[
L_4 = \left\{\left(\begin{array}{lll}
a & 0 & b \\
0 & a & b \\
0 & 0 & a^{-2}
\end{array}\right) \,:\, a\in \mathbb{F}_{q}^{\times},\, b \in \mathbb{F}_q\right\}
\]
with respect to the basis $\b$, whence the $L$-orbit of $U_4$ has size $q(q-1)$. Since $|\O| = q^2+q+1$, we deduce that $L$ has exactly $4$ orbits on $\O$, with respective stabilisers $L_i = L_{U_i}$ for $i = 1,2,3,4$. 

So to prove the claim, it just remains to show that none of these point stabilisers has a regular orbit on $\O$. This is clear for $i=1,2,3$ since $|L_i| > |\O|$. So let us consider $L_4$. Clearly, the $L_4$-orbits of $U_1$ and $U_4$ both have length $1$, and it is straightforward to show that $U_2$ and $U_3$ lie in distinct $L_4$-orbits of length $q$. So these four orbits have already covered $2q+2$ points in $\O$ and we note that $|\O| - (2q+2) < |L_4|$. Therefore, $L_4$ does not have a regular orbit on $\O$ and the proof of the claim (and proposition) is complete.
\end{proof}

The previous proposition shows that there are infinitely many finite simple groups with $B(G) < R(G)$. This observation is extended in our next result, which demonstrates the existence of simple groups $G$ for which $R(G)-B(G)$ can be arbitrarily large. 

\begin{prop}\label{p:simple2}
Let $G = {\rm L}_n(2)$ with $n \geqs 5$. Then $B(G) = n$ and $R(G) \geqs 2(n-1)$.
\end{prop}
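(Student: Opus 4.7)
The plan has two parts. To establish $B(G) = n$, I would first prove the lower bound $b(G, P_1) = n$ using the natural action of $G$ on the $1$-dimensional subspaces of $V = \mathbb{F}_2^n$: a basis of $V$ is clearly a base, while any $n-1$ nonzero vectors span a subspace contained in some hyperplane $W$, and the transvections of $G$ with axis $W$ form a non-trivial group fixing each of them. For the matching upper bound I would use the elementary observation that $b(G, H) \leqs b(G, M)$ whenever $H \leqs M$ to reduce to core-free maximal subgroups of $G$, and then invoke the existing base-size results for ${\rm L}_n(2)$: the parabolic subgroups $P_k$ all satisfy $b(G, P_k) \leqs n$, while the remaining (non-subspace) maximal subgroups give base size at most $7$ by the main theorem of \cite{B07}.

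The substantive claim is $R(G) \geqs 2(n-1)$, which I would prove by exhibiting an explicit non-regular $(2n-3)$-tuple. Take $H = P_1$ and $K = P_{n-1}$, and consider
\[
\tau = (\underbrace{H, \ldots, H}_{n-1}, \underbrace{K, \ldots, K}_{n-2}).
\]
Since conjugates of $H$ are in bijection with the nonzero vectors of $V$ and conjugates of $K$ with the hyperplanes of $V$ (noting that over $\mathbb{F}_2$ the stabiliser of a $1$-space coincides with the stabiliser of its unique nonzero vector), the task reduces to showing that for every choice of $n-1$ nonzero vectors $v_1, \ldots, v_{n-1}$ and every choice of $n-2$ hyperplanes $U_1, \ldots, U_{n-2}$ of $V$, some non-identity element of $G$ simultaneously fixes each $v_i$ and preserves each $U_j$ setwise.

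I would exhibit such an element as a transvection. Fix any hyperplane $W$ of $V$ containing $\langle v_1, \ldots, v_{n-1}\rangle$; such a $W$ exists because the span has dimension at most $n-1$. For any nonzero $w \in W$, the transvection $t_w \colon v \mapsto v + \phi(v) w$, where $\phi$ is the unique linear form on $V$ with $\ker \phi = W$, fixes $W$ pointwise and hence each $v_i$; and an easy direct calculation (splitting into cases according to whether $U = W$) shows that $t_w$ preserves a hyperplane $U$ of $V$ setwise if and only if $w \in U$. It therefore suffices to locate a nonzero element of $W \cap U_1 \cap \cdots \cap U_{n-2}$, and this is an intersection of $n-1$ hyperplanes in the $n$-dimensional space $V$, so its dimension is at least one. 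The combinatorial core of the argument --- reducing everything to a single common axis $W$ and invoking a codimension count --- is entirely elementary, and the only external input comes from the existing base-size classification for ${\rm L}_n(2)$ needed to conclude $B(G) \leqs n$.
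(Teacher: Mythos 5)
Your argument for the main claim $R(G) \geqs 2(n-1)$ is correct and is essentially the proof in the paper: the same tuple of $n-1$ copies of $P_1$ and $n-2$ copies of $P_{n-1}$, the same elementary abelian group of transvections with a common axis $W \supseteq \langle v_1, \ldots, v_{n-1}\rangle$, and the same codimension count. The only difference is presentational: you verify directly that a transvection $t_w$ with axis $W$ preserves a hyperplane $U$ setwise if and only if $w \in U$, and then intersect $n-1$ hyperplanes to find a nonzero direction $w$, whereas the paper passes to the inverse-transpose image of the transvection group and phrases the same count as the nonvanishing of the kernel of an $(n-2)\times(n-1)$ matrix over $\mathbb{F}_2$. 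Your version is, if anything, slightly more direct.

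Two small points in the $B(G) = n$ part need attention. First, the bound ``at most $7$'' for the non-subspace (irreducible) maximal subgroups is not sufficient when $n = 5$ or $6$, since you need $b(G,H) \leqs n$. The main theorem of \cite{B07} in fact gives $b(G,H) \leqs 4$ for irreducible maximal subgroups of ${\rm L}_n(2)$, which is the bound the paper quotes and the one you should use. Second, the inequality $b(G,P_k) \leqs n$ for $2 \leqs k \leqs n/2$ is true but should not be left to an unspecified appeal to ``existing base-size results'': the paper proves it on the spot by exhibiting an explicit base, namely the $n$ cyclically shifted coordinate subspaces $V_i = \langle e_i, e_{i+1}, \ldots, e_{i+k-1}\rangle$ (indices modulo $n$), whose common stabiliser fixes each $\langle e_i\rangle$ and hence, over $\mathbb{F}_2$, fixes each $e_i$ and is trivial. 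If you prefer a citation you would need to name a concrete one; otherwise a short construction of this kind is owed.
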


\begin{proof}
First observe that $G = {\rm GL}(V)$, where $V$ is an $n$-dimensional vector space over $\mathbb{F}_2$. 

Let $H$ be a maximal subgroup of $G$. If $H$ acts irreducibly on $V$, then the main theorem of \cite{B07} implies that $b(G,H) \leqs 4$. Now assume $H$ is reducible, which means that we can identify $G/H$ with the set $\O$ of $k$-dimensional subspaces of $V$ for some positive integer $k \leqs n/2$. Clearly, if $k=1$ then $\O$ coincides with the set of nonzero vectors in $V$, in which case a subset of $\O$ is a base if and only if it is a basis and thus $b(G,H) = n$. So in order to prove that $B(G) = n$, it suffices to show that $b(G,H) \leqs n$ for all $2 \leqs k \leqs n/2$. To see this, fix a basis $\{e_1, \ldots, e_n\}$ for $V$ and define the following $k$-dimensional subspaces of $V$:
\begin{align*}
V_1 & = \la e_1,e_2,  \ldots, e_k \ra \\
V_2 & = \la e_2,e_3, \ldots, e_{k+1} \ra \\
\vdots & \\
V_n & = \la e_n, e_1, \ldots, e_{k-1} \ra
\end{align*}
Let $L_i$ be the setwise stabiliser of $V_i$ in $G$ and set $L = \bigcap_i L_i$. Now each  basis vector $e_i$ is contained in exactly $k$ of these subspaces, say $V_{a_1}, \ldots, V_{a_k}$, and one checks that $\bigcap_j V_{a_j} = \la e_i \ra$. Since we are working over $\mathbb{F}_2$, it follows that $L$ fixes each $e_i$ and thus $L = 1$. In other words, $\{V_1, \ldots, V_n\}$ is a base for $G$ and we conclude that $b(G,H) \leqs n$, as required.

Let $H$ and $K$ be the stabilisers of a $1$-space and an $(n-1)$-space, respectively. In order to show that $R(G) \geqs 2(n-1)$, it suffices to prove the following claim.

\vs

\noindent \textbf{Claim.} \emph{The $(2n-3)$-tuple $\tau = (H, \ldots, H, K, \ldots, K)$ comprising $n-1$ copies of $H$ and $n-2$ copies of $K$ is non-regular.}

\vs

To see this, let $g_1, \ldots, g_{n-1}$ be arbitrary elements in $G$ and set $L = \bigcap_{i}H^{g_i}$. Here $H^{g_i}$ is the stabiliser of a nonzero vector $v_i \in V$ and $L$ contains the following elementary abelian $2$-subgroup (with respect to an appropriate basis for $V$)
\[
J = \left\{ \left(\begin{array}{c|c}
I_{n-1} & A \\ \hline
0 & I_1 
\end{array}\right) \,:\, A \in \mathbb{F}_{2}^{n-1} \right\}
\]
Note that $L = J$ if the $v_i$ are linearly independent and we work with a basis of the form $\{v_1, \ldots, v_{n-1},v_{n}\}$ for $V$. 

So to prove the claim, it suffices to show that $b(J,\O) \geqs n-1$, where $J$ is defined as above (with respect to a basis $\{v_1, \ldots, v_{n-1},v_{n}\}$ for $V$) and $\O$ is the set of $(n-1)$-dimensional subspaces of $V$. Notice that if $\tau$ denotes the inverse-transpose graph automorphism of $G$, then we have $b(J,\O) = b(J^{\tau},\Gamma)$, where $\Gamma$ is the set of nonzero vectors in $V$, so we need to show that $b(J^{\tau},\Gamma) \geqs n-1$. 

Let $w_1, \ldots, w_{n-2}$ be arbitrary nonzero vectors in $V$ and write
\[
w_i = b_{i,1}v_1 + \cdots + b_{i,n-1}v_{n-1} + b_{i,n}v_n
\]
for all $i$. Let $B = (b_{i,j})$ be the $(n-2) \times (n-1)$ matrix corresponding to the coefficients of $v_j$ for $j < n$. Then the pointwise stabiliser of the $w_i$ in $J^{\tau}$ is the set of matrices of the form 
\[
\left(\begin{array}{c|c}
I_{n-1} & 0 \\ \hline
A & I_1 
\end{array}\right) \in J^{\tau}
\]
where $A = (a_1, \ldots, a_{n-1}) \in \mathbb{F}_{2}^{n-1}$ and $BA^T = 0$. Since the kernel of the linear map $\mathbb{F}_{2}^{n-1} \to \mathbb{F}_{2}^{n-2}$ corresponding to $B$ is obviously nonzero, it follows that the pointwise stabiliser of the $w_i$ in $J^\tau$ is nontrivial. In turn, this implies that $b(J^{\tau},\Gamma)  = b(J,\O) \geqs n-1$ and thus $\tau$ is non-regular as claimed.
\end{proof}

\begin{rem}
\mbox{ }
\begin{itemize}\addtolength{\itemsep}{0.3\baselineskip}
\item[{\rm (a)}] Notice that in Proposition \ref{p:simple2} we assume $G = {\rm L}_n(2)$ with $n \geqs 5$. For $n=3$ we get $B(G) = 3$ and $R(G) = 4$. However, the case $n=4$ is different since ${\rm L}_4(2) \cong A_8$ and thus $B(G) = R(G) = 6$ (as a special case of Theorem \ref{t:main1}(i)).

\item[{\rm (b)}] We expect $R(G) = 2(n-1)$ when $G = {\rm L}_n(2)$ and $n \geqs 3$, which would imply that $R(G) < 2B(G)$. In particular, this would be consistent with Conjecture \ref{con:main}(i), which asserts (as a special case) that there is an absolute constant $c$ such that $R(G) < c\cdot B(G)$ for every finite simple group $G$.
\end{itemize}
\end{rem}

\subsection{Computational methods}\label{ss:comp}

To conclude this preliminary section, we briefly discuss some of the main computational methods we apply in this paper. All of our computations are performed using {\sc Magma} \cite{magma} (version 2.28-4) and \textsf{GAP} \cite{GAP} (version 4.11.1). We refer the reader to the supplementary file \cite{AB_comp} for further details, including sample code.

Let $G$ be a finite group and let $\tau = (H_1, \ldots, H_k)$ be a $k$-tuple of core-free subgroups with $k \geqs 2$. As noted in Lemma \ref{l:triv}, $\tau$ is non-regular if $\prod_i |H_i|>|G|^{k-1}$.

\subsubsection{Random search}\label{ss:random}

If a suitable permutation or matrix representation of $G$ is available, then random search often provides an efficient way to show that $\tau$ is regular. Here the aim is to find elements $g_i \in G$, in terms of the given representation, such that
\[
\bigcap_{i=1}^k H_i^{g_i} = 1.
\]
Typically, we are interested in the case where $\tau$ is a maximal tuple, which means that each component $H_i$ is a core-free maximal subgroup of $G$.

Of course, the groups $S_n$ and $A_n$ are naturally defined in terms of a permutation representation. For an almost simple sporadic group $G \not\in \{{\rm J}_4, {\rm Ly}, {\rm Th}, \mathbb{B}, \mathbb{M}\}$, we use the {\sc Magma} function \texttt{AutomorphismGroupSimpleGroup} to construct $G$ as a permutation group on $n$ points, where $n = m(G)$ is the minimal degree of a faithful permutation representation.  Similarly, for $G \in \{{\rm J}_4, {\rm Ly}, {\rm Th}\}$ we can use the function \texttt{MatrixGroup} to construct $G$ as a matrix group. For example, this allows us to view the Lyons  group as a subgroup of ${\rm GL}_{111}(5)$. However, the largest sporadic groups $\mathbb{B}$ and $\mathbb{M}$ do not admit a permutation or matrix representation that is suitable for direct computation, so they will require a different approach (for example, $97239461142009186000 \sim 9.7 \times 10^{19}$ is the minimal degree of a faithful permutation representation of $\mathbb{M}$).

Given a suitable representation of $G$, we can use the {\sc Magma} function \texttt{MaximalSubgroups} to construct a complete set of representatives of the conjugacy classes of maximal subgroups of $G$. For example, this is effective for all $G \in \{S_n,A_n\}$ with $n \leqs 200$. And similarly for all sporadic groups $G$, unless 
$G \in \{ {\rm Fi}_{24}', {\rm Fi}_{24}, {\rm Th}\}$. For the latter groups, we can construct the maximal subgroups we will need to work with via explicit generators presented in the Web Atlas \cite{WebAt}, which are given as words in the standard generators for $G$ (these generators can also be constructed using the \textsf{GAP} package \texttt{AtlasRep} \cite{AR}). We can of course filter the output from \texttt{MaximalSubgroups} if we are just interested in soluble maximal subgroups, or the maximal subgroups of $S_n$ that act primitively on $\{1, \ldots, n\}$, etc. 

For some of the groups we are interested in, we can replace \texttt{MaximalSubgroups} by other {\sc Magma} functions, such as \texttt{SolubleSubgroups} and \texttt{NilpotentSubgroups}, which return a complete set of representatives of the conjugacy classes of soluble (respectively, nilpotent) subgroups of $G$ (not surprisingly, the effectiveness of the latter functions is more limited for groups of large order). We can also iteratively apply \texttt{MaximalSubgroups} in order to descend deeper into the subgroup lattice of $G$. For example, this will be a useful technique in the proof of Theorem \ref{t:main2}(ii), when we are seeking to show that $R_{\rm sol}(G) \leqs 3$ in the setting where $G$ is a sporadic group and the function \texttt{SolubleSubgroups} is unavailable (see the proofs of Lemmas \ref{l:solspor2} and \ref{l:solspor3}, for instance).

\subsubsection{Orbit computations}

Suppose we can construct $G$ and each $H_i$, in terms of a suitable permutation or matrix representation, and let us assume random search is inconclusive. Here it is useful to observe that $\tau$ is regular if and only if $H_1$ has a regular orbit on 
\[
Y = G/H_2 \times \cdots \times G/H_k.
\]
So assuming that the indices $|G:H_i|$ are not prohibitively large, we can use the {\sc Magma} function \texttt{CosetAction} to construct the action on $G$ on each set $G/H_i$ with $i \geqs 2$, which will allow us to calculate all of the orbits of $H_1$ on $Y$ and then  determine whether or not $\tau$ is regular. Note that in order to conclude that $\tau$ is non-regular, it suffices to find a collection of distinct non-regular $H_1$-orbits $Y_1, \ldots, Y_t$ such that $\sum_{i} |Y_i| > |Y| - |H_1|$.

In the special case $k=2$, we can also use \texttt{DoubleCosetRepresentatives} to calculate the size of every $(H_1,H_2)$ double coset in $G$, noting that $\tau$ is regular if and only if there exists a double coset of size $|H_1||H_2|$. 

\begin{rem}
There are a handful of maximal pairs $\tau = (H_1,H_2)$ for $G \in \{ {\rm J}_4, {\rm Ly}, {\rm Th}\}$ where we have $|H_1||H_2|< |G|$, random search is inconclusive and both of the above methods for determining whether or not $\tau$ is regular are ineffective due to the very large size of the indices $|G:H_1|$ and $|G:H_2|$. We thank Derek Holt, J\"{u}rgen M\"{u}ller and Eamonn O'Brien for resolving these special cases, which are presented in Lemma \ref{l:derek}. In particular, M\"{u}ller used the \textsf{GAP} package \textsc{Orb} \cite{Orb} to show that the maximal pair for $G = {\rm J}_4$ with
\[
H_1 = 2^{3+12}.(S_5 \times {\rm L}_3(2)),\;\; H_2 = 2^{1+12}.3.{\rm M}_{22}{:}2
\]
is non-regular (here we have $|G:H_1| = 131358148251$ and $|G:H_2| = 3980549947$).
\end{rem}
 
\subsubsection{Character theory}

If we have access to the character tables of $G$ and each component subgroup $H_i$, together with the fusion maps from $H_i$-classes to $G$-classes, then we can compute
\[
{\rm fpr}(x,G/H_i) = \frac{|x^G \cap H_i|}{|x^G|}
\]
precisely for all $x \in G$ and all $i$. In turn, this allows us to evaluate the expression 
$\what{Q}(G,\tau)$ in Lemma \ref{l:fpr}, recalling that $\tau$ is regular if $\what{Q}(G,\tau)<1$. This can be an effective way to produce a list of candidate non-regular $k$-tuples, which can then be analysed using other methods (such as random search, as described above).

The character table of every almost simple sporadic group $G$ is available in the \textsf{GAP} Character Table Library \cite{GAPCTL}. In addition, if $G \ne \mathbb{M}$, then  the character table of every maximal subgroup of $G$ is also available in \cite{GAPCTL} and we can use the \textsf{GAP} function \texttt{Maxes} to access the associated fusion maps on conjugacy classes. For $G \ne \mathbb{M}$, this allows us to compute $\what{Q}(G,\tau)$ for every maximal tuple $\tau$ (if $G = \mathbb{M}$, we can handle some, but not all, maximal tuples in the same way, using the function \texttt{NamesOfFusionSources} to access the relevant character tables and fusion maps). We will rely heavily on this probabilistic approach to prove Theorem \ref{t:main2} for the large sporadic groups, including $\mathbb{B}$ and $\mathbb{M}$ (see the proofs of Lemmas \ref{l:rG2}, \ref{l:rG3} and \ref{l:rG4}, for example).

\section{Symmetric and alternating groups}\label{s:sym}

In this section we assume $G$ is an almost simple group with socle $A_n$ and our aim is to prove Theorem \ref{t:main1}. We begin in Section \ref{ss:prim} by considering parts (ii) and (iii), focussing initially on primitive tuples (indeed, part (iii) quickly follows from part (ii)  since every soluble maximal subgroup of $G$ is primitive when $n \geqs 17$). And then in Section \ref{ss:regnumber} we turn our attention to the regularity number, proving part (i) of Theorem \ref{t:main1}. To do this, we first handle the tuples where each subgroup acts intransitively on $\{1, \ldots, n\}$ (see Section \ref{ss:intrans}). The imprimitive tuples are then treated in Section \ref{ss:imprim} and we bring everything together in Section \ref{ss:final}, where we complete the proof of Theorem \ref{t:main1}.

\subsection{Primitive subgroups}\label{ss:prim}

Let $G$ be an almost simple group with socle $T = A_n$ and recall that a subgroup $H$ of $G$ is said to be \emph{primitive} if $H \cap T$ acts primitively on the set $[n] = \{1, \ldots, n\}$. We define $R_{{\rm prim}}(G)$ to be the minimal positive integer $k$ such that every core-free primitive $k$-tuple of subgroups of $G$ is regular. Similarly, $B_{{\rm prim}}(G)$ is the maximal base size $b(G,H)$ over all core-free primitive subgroups $H$ of $G$. Of course, if $G$ does not have a core-free primitive subgroup, then $R_{{\rm prim}}(G)$ and $B_{{\rm prim}}(G)$ are undefined.

In this section we will prove parts (ii) and (iii) of Theorem \ref{t:main1}. As a consequence,  we establish a strong form of Conjecture \ref{con:main}(ii) for symmetric and alternating groups, and we take the first steps towards Conjecture \ref{con:main}(iii) in this setting.

Let us briefly outline our approach. We begin by handling the groups of small degree in Proposition \ref{p:n40} below, which are amenable to direct computation in {\sc Magma} \cite{magma}. Our method for dealing with the general case relies on the probabilistic approach described in Section \ref{ss:prob}. To do this, we will apply two key results in order to derive the required fixed point ratio estimates, which we combine in the statement of Theorem \ref{thm:maroti}. The first is a theorem of Mar\'{o}ti \cite{Maroti02} on the orders of core-free primitive subgroups of $S_n$. The second main ingredient is a theorem of Guralnick and Magaard \cite{GM} on the minimal degree of primitive subgroups, which we can use to obtain lower bounds on the conjugacy class sizes of prime order elements lying in a primitive subgroup. Our approach is similar to the one adopted in \cite{BGS}, where the main theorem states that $B_{{\rm prim}}(G) \leqs 5$, with $B_{{\rm prim}}(G) = 2$ if $n \geqs 13$. 

In order to establish the bound $R_{{\rm prim}}(G) \leqs k$, it suffices to show that every $k$-tuple of primitive subgroups in $\mathcal{M}(G)$ is regular, where $\mathcal{M}(G)$ is the set of core-free subgroups that are maximal in some subgroup of $G$ containing $T$ (see Definition \ref{d:mG} and Lemma \ref{lem:maxreduction}). In fact, if $G = S_n$ or $A_n$ then we only need to consider tuples involving subgroups in $\mathcal{M}'(G)$, which is the set of core-free maximal subgroups of $G$, together with any maximal subgroups $H$ of $T$ with $N_G(H) = H$ (see Remark \ref{r:large}).

We begin by establishing part (ii) of Theorem \ref{t:main1} for the groups with $n < 60$.

\begin{prop}\label{p:n40}
Let $G$ be an almost simple group with socle $T = A_n$, where $n < 60$, and assume $G$ has a core-free primitive subgroup.

\vspace{1mm}

\begin{itemize}\addtolength{\itemsep}{0.3\baselineskip}
\item[{\rm (i)}] We have $R_{\rm prim}(G) \leqs 6$, with equality if and only if $G = A_8$. 
\item[{\rm (ii)}] If $n\geqs 13$, then $R_{\rm prim}(G) = 2$. For $n \leqs 12$, the groups with $R_{{\rm prim}}(G) = k > 2$ are recorded in Table \ref{tab:prim}, together with a non-regular primitive tuple $\tau = (H_1, \ldots, H_{k-1})$.
\end{itemize} 
\end{prop}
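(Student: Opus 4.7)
The plan is a direct computational verification, since $n<60$ confines attention to finitely many groups. By Lemma \ref{lem:maxreduction} and Remark \ref{r:large}, it suffices to check regularity of primitive $k$-tuples whose components lie in $\mathcal{M}'(G)$. For each almost simple $G$ with socle $A_n$ and $5 \leqs n \leqs 59$, I would construct $G$ in its natural degree-$n$ representation (with the obvious adjustments for ${\rm PGL}_2(9)$, ${\rm M}_{10}$, and $A_6.2^2$) and use \texttt{MaximalSubgroups} together with a primitivity filter to enumerate the relevant subgroups.

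For $13 \leqs n \leqs 59$, the claim is $R_{\rm prim}(G)=2$. By Mar\'oti's theorem \cite{Maroti02}, every core-free primitive subgroup $H$ of $S_n$ satisfies a bound of the shape $|H|<n^{1+\lfloor\log_2 n\rfloor}$, so any primitive pair $(H_1,H_2)$ obeys $|H_1||H_2| \ll |G|$ and Lemma \ref{l:triv} poses no obstruction. Regularity is then established by random search: sample $g\in G$ and test whether $H_1\cap H_2^g=1$. For any pair where random search is inconclusive, I would fall back on enumerating $(H_1,H_2)$ double cosets via \texttt{DoubleCosetRepresentatives}, noting that the pair is regular precisely when a double coset of full size $|H_1||H_2|$ exists. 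In the small number of remaining cases, Lemma \ref{l:fpr} combined with the character-theoretic data available in the \textsf{GAP} Character Table Library yields $\widehat Q(G,\tau)<1$ outright.

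For $5\leqs n\leqs 12$, I would iterate over $k=2,3,\ldots$ and, for each unordered $k$-tuple of primitive subgroups in $\mathcal{M}'(G)$, decide regularity using the same mixture of Lemma \ref{l:triv}, random search and direct orbit computation; all indices are small enough that \texttt{CosetAction} gives the full orbit decomposition of $H_1$ on $G/H_2\times\cdots\times G/H_k$. The value $R_{\rm prim}(G)$ is then the smallest $k$ at which every such tuple is regular, and each exceptional entry of Table \ref{tab:prim} is obtained by recording a witnessing non-regular $(R_{\rm prim}(G)-1)$-tuple that surfaces during the enumeration.

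The chief obstacle is the case $G=A_8$, for which $R_{\rm prim}(A_8)=6$. Here one must both exhibit a non-regular primitive $5$-tuple (built from the two classes of ${\rm AGL}_3(2)\cong 2^3{:}{\rm L}_3(2)$ of index $15$ and the ${\rm L}_2(7)$-class, which have comparatively large order relative to $|A_8|$) and verify that every primitive $6$-tuple admits a regular orbit. The second half is delicate because random search can miss a regular orbit in a near-critical regime, so it must be supplemented by exact orbit computation, and the non-regular $5$-tuples have to be listed exhaustively up to conjugacy and reordering --- this combinatorial bookkeeping is the main source of case work in the proof.
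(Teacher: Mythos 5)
Your plan matches the paper's proof, which is exactly a routine {\sc Magma} computation: random search over primitive pairs in $\mathcal{M}'(G)$ for $13 \leqs n < 60$, and for $n \leqs 12$ an exhaustive determination of the non-regular pairs followed by iterative extension to triples, $4$-tuples and $5$-tuples (with orbit computations settling any case where random search is inconclusive), culminating in the verification that every primitive $6$-tuple for $A_8$ is regular. The only cosmetic slip is your mention of an ${\rm L}_2(7)$-class for $A_8$: the witnessing non-regular $5$-tuple in Table \ref{tab:prim} involves only the two classes of ${\rm AGL}_3(2)$, and the transitive ${\rm L}_2(7) < A_8$ is not needed (indeed it lies inside ${\rm AGL}_3(2)$), but this does not affect the argument.
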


{\scriptsize
\begin{table}
\[
\begin{array}{c l  l} \hline
R_{{\rm prim}}(G) & G & \tau \\ \hline
6 & A_8 &  ({\rm AGL}_3(2)_1, {\rm AGL}_3(2)_1,  {\rm AGL}_3(2)_1,  {\rm AGL}_3(2)_2,  {\rm AGL}_3(2)_2) \\
5 & S_6 &  ({\rm PGL}_2(5), {\rm PGL}_2(5), {\rm PGL}_2(5), {\rm PGL}_2(5)) \\
4 & A_6 &  ({\rm L}_2(5), {\rm L}_2(5), {\rm L}_2(5)) \\
& A_6.2 \ne S_6 &  ({\rm L}_2(5), {\rm L}_2(5), {\rm L}_2(5)) \\
& A_6.2^2 &  ({\rm L}_2(5), {\rm L}_2(5), {\rm L}_2(5)) \\ 
& A_7 & ({\rm L}_2(7)_1, {\rm L}_2(7)_1, {\rm L}_2(7)_2) \\
& S_8 &  ({\rm PGL}_2(7), {\rm PGL}_2(7), {\rm PGL}_2(7))\\
3 & A_5 &  ({\rm D}_{10}, {\rm D}_{10})\\
& S_5 &  ({\rm AGL}_1(5), {\rm AGL}_1(5))\\
& S_7 &  ({\rm L}_2(7), {\rm L}_2(7))\\
& A_9 &  ({\rm P\Gamma L}_2(8), {\rm P\Gamma L}_2(8))\\
& S_9 &  ({\rm AGL}_2(3), {\rm AGL}_2(3))\\
& S_{10} &  ({\rm P\Gamma L}_2(9), {\rm P\Gamma L}_2(9))\\
& A_{11} &  ({\rm M}_{11}, {\rm M}_{11} )\\
& S_{11} &  ({\rm M}_{11}, {\rm M}_{11})\\
& A_{12} &  ({\rm M}_{12}, {\rm M}_{12})\\
& S_{12} &  ({\rm M}_{12}, {\rm M}_{12})
\\ \hline
\end{array}
\]
\caption{Almost simple groups $G$ with socle $A_n$ and $R_{{\rm prim}}(G)  > 2$}
\label{tab:prim}
\end{table}
}

\begin{proof}
This is a routine {\sc Magma} \cite{magma} computation. For $n \geqs 13$, we can use random search (see Section \ref{ss:random}) to show that every primitive pair $(H_1,H_2)$ with $H_i \in \mathcal{M}'(G)$ is regular. Similarly, for $n \leqs 12$ it is straightforward to identify all the non-regular pairs $(H_1,H_2)$ with $H_i \in \mathcal{M}(G)$, which we can then use to determine any non-regular triples with components in $\mathcal{M}(G)$. If required, we can then extend the analysis to $4$-tuples and $5$-tuples, which allows us to deduce that every primitive $6$-tuple is regular.
\end{proof}

\begin{rem}\label{r:sn40}
We record some remarks on the statement of Proposition \ref{p:n40}.

\vspace{1mm}

\begin{itemize}\addtolength{\itemsep}{0.3\baselineskip}
\item[\rm (a)] Notice that if $n \leqs 12$, then $G = A_{10}$ is the only group with $R_{{\rm prim}}(G) = 2$. Here $H = {\rm M}_{10} = A_6.2$ represents the unique class of primitive maximal subgroups of $G$ and one checks that $b(G,H) = 2$, so every primitive pair for $G$ is regular.
\item[\rm (b)] By inspecting \cite{BGS}, we deduce that if $n < 60 $ then $R_{{\rm prim}}(G) = B_{{\rm prim}}(G)$ unless $G =A_7$ or $A_8$, where we have 
\begin{align*}
R_{{\rm prim}}(A_7) & = B_{{\rm prim}}(A_7) + 1 = 4 \\
R_{{\rm prim}}(A_8) & = B_{{\rm prim}}(A_8) + 2 = 6
\end{align*}
\item[\rm (c)] The group $G = A_8$ has two conjugacy classes of maximal subgroups isomorphic to ${\rm AGL}_3(2)$, represented by $H$ and $K$. Here $b(G,H) = b(G,K) = 4$, but we find that the $5$-tuple $(H,H,H,K,K)$ is non-regular, as indicated in the first row of Table \ref{tab:prim} (we use subscripts $1$ and $2$ to distinguish representatives of the two classes). 
\item[\rm (d)] Similarly, $G = A_7$ has two classes of maximal subgroups isomorphic to ${\rm L}_2(7)$ and $(H,H,K)$ is a non-regular triple, where $H$ and $K$ represent these two conjugacy classes (note that $b(G,H) = b(G,K) = 3$).
\end{itemize}
\end{rem}

In order to complete the proof of Theorem \ref{t:main1}(ii), we may assume $n \geqs 60$. So  $G = S_n$ or $A_n$, and our goal is to prove that $R_{\rm prim}(G) = 2$.

Recall that the \emph{minimal degree} of a subgroup $H$ of $G$, denoted $\mu(H)$, is the minimal number of points in $[n] = \{1, \ldots, n\}$ moved by a non-identity element of $H$. That is,
\[
\mu(H) = \min\{{\rm supp}(x) \,:\, 1 \ne x \in H\}
\]
where ${\rm supp}(x)$, the \emph{support} of $x$, is the number of points moved by $x$.

As explained above, we will complete the proof of Theorem \ref{t:main1}(iii) by applying the probabilistic approach encapsulated in Lemma \ref{l:fpr}, which relies on 
key theorems of Mar\'{o}ti \cite{Maroti02} and Guralnick and Magaard \cite{GM}. We combine (and slightly simplify) both results in the following statement.

\begin{thm}\label{thm:maroti}
Suppose $n \geqs 60$ and $H \in \mathcal{M}(G)$ is primitive. Then either 
\[
|H| < n^{1 + \lceil \log_2 n \rceil} \mbox{ and } \mu(H) \geqs n/2 - \sqrt{n},
\]
or one of the following holds:

\vspace{1mm}

\begin{itemize}\addtolength{\itemsep}{0.3\baselineskip}
\item[\rm (i)] $H = (S_l \wr S_k)\cap G$ in its product action on $n = l^k$ points, where $l \geqs 5$ and $k \geqs 2$, and we have $\mu(H) = 2l^{k-1}$.
\item[\rm (ii)] $H = S_l \cap G$ in its action on the $k$-element subsets of $\{1, \ldots, l\}$, so $n = \binom{l}{k}$ with $l\geqs 5$ and $2 \leqs k < l/2$, and we have $\mu(H) = 2\binom{l-2}{k-1}$.
\end{itemize}
\end{thm}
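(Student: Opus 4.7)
The plan is to combine two existing classification results: Mar\'{o}ti's theorem \cite{Maroti02} on the orders of primitive subgroups of $S_n$ will yield the order bound, and the Guralnick--Magaard classification \cite{GM} of primitive groups with small minimal degree will yield the bound on $\mu(H)$. The two listed exceptions (i) and (ii) then arise as the overlap of the exceptions to both results when $n \geqs 60$.

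First I would apply Mar\'{o}ti's theorem. It states that if $H \leqs S_n$ is primitive and does not contain $A_n$, then $|H| < n^{1 + \lceil \log_2 n\rceil}$ with only two families of exceptions: certain Mathieu groups acting on at most $24$ points, and subgroups of $S_l \wr S_k$ in its product action on $l^k$ points. Since $n \geqs 60$, the Mathieu exceptions are ruled out, so the order bound holds except possibly in case (i).

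Next I would invoke Guralnick--Magaard. Their classification provides a short list of primitive subgroups of $S_n$ with minimal degree less than $n/2$, and a direct inspection of that list, restricted to $n \geqs 60$, shows that the only configurations for which $\mu(H) < n/2 - \sqrt{n}$ are precisely (i) the product action and (ii) the $k$-subset action in the statement. The remaining entries of the list---a handful of affine, diagonal, and sporadic actions---either clear the $n/2 - \sqrt{n}$ threshold or occur only in degrees strictly less than $60$.

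It remains to compute $\mu(H)$ in the two exceptional cases. In (i), a transposition $(a,b)$ sitting in a single factor of the base group $(S_l)^k$ has support $2l^{k-1}$, and comparison against double transpositions in one factor (support $4l^{k-1}$) and coordinate swaps from $S_k$ (support $l^{k-1}(l-1) \geqs 4l^{k-1}$ when $l \geqs 5$) confirms this is the minimum. In (ii) a transposition $(a,b) \in S_l$ moves exactly the $2\binom{l-2}{k-1}$ subsets containing exactly one of $a, b$; a routine binomial comparison rules out $3$-cycles and all longer cycle structures. I expect the main obstacle to be the bookkeeping in the second step---working through each entry of the Guralnick--Magaard list and uniformly verifying the $n/2 - \sqrt{n}$ threshold for $n \geqs 60$---but this is a finite check rather than a conceptual difficulty.
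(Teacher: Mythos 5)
Your proposal is correct and follows essentially the same route as the paper: the order bound is read off from Mar\'{o}ti's theorem, and the minimal degree bound comes from inspecting the Guralnick--Magaard classification, where the only case needing the $-\sqrt{n}$ slack (rather than the clean $n/2$ bound) is the orthogonal group over $\mathbb{F}_2$ acting on hyperplanes. The paper simply quotes the exact values of $\mu(H)$ in cases (i) and (ii) from \cite[Theorem 1]{GM} rather than recomputing them, but your direct verification is accurate.
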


\begin{proof}
The bound on $|H|$ follows immediately from \cite[Theorem 1.1]{Maroti02}. Now let us turn to the minimal degree of $H$. By \cite[Theorem 1]{GM}, if we exclude the cases recorded in (i) and (ii), then either $\mu(H) \geqs n/2$, or $H = {\rm O}(V)$ is an orthogonal group over the field $\mathbb{F}_2$ acting on a set of hyperplanes of the natural module $V$. In each of the latter cases, the exact value of $\mu(H)$ is given in the statement of \cite[Theorem 1]{GM} and it is easy to check that the desired bound $\mu(H) \geqs n/2 - \sqrt{n}$ is satisfied.
\end{proof}

Next we seek lower bounds on the size of a conjugacy class $x^G$ defined in terms of the support of $x$. This will allow us to translate the minimal degree bound in Theorem \ref{thm:maroti} into a lower bound on $|x^G|$, which applies whenever $x$ is a prime order element contained in an appropriate primitive subgroup of $G$. The following result holds for all $n \geqs 5$.

\begin{lem}\label{lemma:cclasses1}
Let $G = S_n$ or $A_n$ with $n \geqs 5$ and suppose $x \in G$ has prime order $r$ with ${\rm supp}(x) = m$. Then $|x^G| \geqs f_2(m)$, with $|x^G| \geqs f_3(m)$ if $r$ is odd, where
\[
f_s(m)= \frac{n!}{s^{m/s}\lceil m/s \rceil!(n - m)!}.
\]
\end{lem}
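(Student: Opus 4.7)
The plan is to compute $|x^{S_n}|$ directly from the cycle structure, reduce the claim to an elementary inequality between products of the form $t^{m/t}(m/t)!$, and then treat $G = A_n$ via a splitting analysis.

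Since $x$ has prime order $r$ and support $m$, it must be a product of $m/r$ disjoint $r$-cycles fixing $n - m$ points; in particular $r \mid m$ and $C_{S_n}(x) \cong (C_r \wr S_{m/r}) \times S_{n-m}$, giving
\[
|x^{S_n}| = \frac{n!}{r^{m/r}(m/r)!(n-m)!}.
\]
For $G = S_n$ the inequality $|x^G| \geqs f_s(m)$ is therefore equivalent to $r^{m/r}(m/r)! \leqs s^{m/s}\lceil m/s \rceil!$. This is an equality whenever $r = s$ (since $r \mid m$ makes the ceiling redundant), which settles the cases $(r,s) = (2,2)$ and $(r,s) = (3,3)$. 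For odd $r \geqs 5$ the inequality for $s = 3$ (and hence for $s = 2$) follows from two monotonicity facts: $(\log t)/t$ is strictly decreasing on $[3, \infty)$, so $r^{m/r} \leqs 3^{m/3}$; and $(m/r)! \leqs \lceil m/3 \rceil!$ since $m/r \leqs m/3$. The only remaining subcase is $r = 3$, $s = 2$: writing $m = 3k$, we must show $3^k k! \leqs 2^{3k/2}\lceil 3k/2 \rceil!$, which I would establish by a short induction on $k$ (base case $k = 1$ is $3 \leqs 4\sqrt{2}$, and the inductive step reduces to checking that multiplication by $3(k+1)$ on the left is dominated by multiplication by $2^{3/2}$ times the appropriate product of consecutive integers on the right; alternatively, a Stirling estimate shows the ratio grows like $(3k/e)^{k/2}$).

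The case $G = A_n$ reduces to the previous one via the standard splitting criterion: an $S_n$-conjugacy class lying in $A_n$ splits into two $A_n$-classes of equal size precisely when its cycle type consists of distinct odd parts. For an element of prime order with cycle type $r^{m/r}1^{n-m}$, this forces $r$ odd, $m = r$, and $n - m \leqs 1$; combined with the hypothesis $n \geqs 5$ the splitting case reduces to $r \geqs 5$, $m = r$, $n \in \{r, r+1\}$. Outside this case $|x^{A_n}| = |x^{S_n}|$ and the estimates above transfer verbatim. Inside it $|x^{A_n}| = n!/\bigl(2r(n-m)!\bigr)$, and the required bounds become $2r \leqs s^{r/s}\lceil r/s \rceil!$ for $s \in \{2,3\}$; both inequalities hold for the base case $r = 5$ by direct computation, and for all larger primes the right-hand sides grow super-polynomially in $r$ while the left-hand side is linear.

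The only genuinely delicate step is the inductive comparison $3^k k! \leqs 2^{3k/2}\lceil 3k/2\rceil!$ in the case $r = 3$, $s = 2$; everything else is either an exact equality or falls out of transparent monotonicity statements and finite verifications.
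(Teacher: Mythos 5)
Your proof is correct and follows essentially the same route as the paper: both compute $|x^{S_n}|$ exactly via the centralizer, reduce the claim to the elementary comparison $r^{m/r}(m/r)! \leqs s^{m/s}\lceil m/s\rceil!$, and dispose of the $A_n$ case by noting that the class splits only when $x$ is an $n$- or $(n-1)$-cycle of odd prime length. You simply supply more detail for the elementary inequalities (which the paper dismisses as "easy to check") and a sharper statement of the splitting criterion.
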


\begin{proof}
First observe that we may assume $G = S_n$. To see this, suppose $G = A_n$ and recall that $|x^G| = |x^{S_n}|$ unless $x$ is an $r$-cycle and $r \in \{n-1,n\}$. If $r =n$ then $m=n$ and the desired conclusion holds since
\[
|x^G| = \frac{1}{2}(n-1)! > \frac{n!}{3^{n/3}\lceil n/3 \rceil!} > \frac{n!}{2^{n/2}\lceil n/2 \rceil!}
\]
And similarly if $r = n-1$. So for the remainder, we will assume $G = S_n$.

Suppose $x$ has cycle-type $(r^a,1^{n-ar})$ for some $a \geqs 1$. Then ${\rm supp}(x) = ar$ and 
\[
|x^G| = \frac{n!}{r^aa!(n - ar)!}.
\]
In particular, if $r=2$ then $|x^G| = f_2(m)$, so we may assume $r$ is odd.

Suppose there exists an element $y \in G$ of prime order $q<r$ with ${\rm supp}(y) = m$. Then it is easy to check that $q^{m/q}(m/q)! > r^{m/r}(m/r)!$ and thus $|x^G| > |y^G|$. In particular, if $m$ is even then we immediately deduce that $|x^G| \geqs f_2(m)$. And the same conclusion holds for $m$ odd since 
\[
2^{m/2}((m+1)/2)! \geqs r^{m/r}(m/r)!
\]
for every odd prime $r$ dividing $m$. Finally, in order to establish the bound $|x^G| \geqs f_3(m)$ it suffices to show that
\[
3^{m/3}\lceil m/3\rceil! \geqs r^{m/r}(m/r)!
\]
and this is straightforward to verify.
\end{proof}

We now bring the minimal degree bound in Theorem \ref{thm:maroti} in to play, which allows us to derive a lower bound on the conjugacy class sizes of elements of prime order contained in a primitive maximal subgroup of $S_n$ or $A_n$. In the statement, we refer to the function $f_s(m)$ defined in Lemma \ref{lemma:cclasses1}.

\begin{lem}\label{lemma:cclasses2}
Let $G = S_n$ or $A_n$ with $n \geqs 60$ and suppose $x \in H$ has prime order $r$, where $H \in \mathcal{M}(G)$ is primitive. 

\vspace{1mm}

\begin{itemize}\addtolength{\itemsep}{0.3\baselineskip}
\item[\rm (i)] We have $|x^G| \geqs f_2(\ell)$, with $|x^G| \geqs f_3(\ell)$ if $r$ is odd, where $\ell = \lceil 2\sqrt{n}\rceil$.
\item[\rm (ii)] If $\mu(H) \geqs n/2 - \sqrt{n}$, then $|x^G| \geqs f_2(\ell')$ with $\ell' = \lceil n/2 - \sqrt{n} \rceil$.
\end{itemize}
\end{lem}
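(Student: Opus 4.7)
The plan is to combine Lemma \ref{lemma:cclasses1} with lower bounds on the minimal degree $\mu(H)$. Since $x$ is a nontrivial element of $H$, we have $m := {\rm supp}(x) \geqs \mu(H)$, and Lemma \ref{lemma:cclasses1} yields $|x^G| \geqs f_2(m)$, with the stronger bound $|x^G| \geqs f_3(m)$ when $r$ is odd. So it suffices to establish two things: (a) the bound $\mu(H) \geqs \ell$ for part (i) and $\mu(H) \geqs \ell'$ for part (ii); and (b) the monotonicity estimate $f_s(m) \geqs f_s(\ell)$ (resp.\ $f_s(\ell')$) throughout the interval $m \in [\ell, n]$ (resp.\ $[\ell', n]$), for $s \in \{2,3\}$.

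For (a), part (ii) is immediate from the hypothesis $\mu(H) \geqs n/2 - \sqrt{n}$, which forces $m \geqs \ell'$. For part (i), I would invoke Theorem \ref{thm:maroti} and treat its three cases in turn. In the generic case, $\mu(H) \geqs n/2 - \sqrt{n}$, and the elementary inequality $n/2 - \sqrt{n} \geqs 2\sqrt{n}$ (valid for $n \geqs 36$, hence for $n \geqs 60$) gives $\mu(H) \geqs \ell$. In the product-action case, $n = l^k$ with $k \geqs 2$ forces $l \leqs \sqrt{n}$, so $\mu(H) = 2l^{k-1} = 2n/l \geqs 2\sqrt{n} \geqs \ell$. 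In the $k$-subset case, $\mu(H) = 2\binom{l-2}{k-1}$ with $n = \binom{l}{k}$, and the required inequality amounts to $\binom{l-2}{k-1}^2 \geqs \binom{l}{k}$; using the identity $\binom{l}{k}/\binom{l-2}{k-1} = l(l-1)/(k(l-k))$, this reduces to $\binom{l-2}{k-1} \geqs l(l-1)/(k(l-k))$, which a short case analysis confirms for all $(l,k)$ with $2 \leqs k < l/2$ and $\binom{l}{k} \geqs 60$ (the binding case is $k = 2$, where the constraint $\binom{l}{2} \geqs 60$ forces $l \geqs 12$, and the inequality becomes $l^2 - 7l + 8 \geqs 0$).

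For (b), computing $f_2(m+2)/f_2(m) = (n-m)(n-m-1)/(m+2)$ shows that $f_2$ is unimodal on $[0,n]$, with its peak near $m = n - \sqrt{n}$; hence its minimum on $[\ell, n]$ is $\min\{f_2(\ell), f_2(n)\}$, and a Stirling-type estimate confirms $f_2(n) \geqs f_2(\ell)$ for $n \geqs 60$. An entirely analogous analysis handles $f_3$ via the ratio $f_3(m+3)/f_3(m) = (n-m)(n-m-1)(n-m-2)/(3(\lceil m/3\rceil+1))$. I expect the main technical obstacle to be the combinatorial inequality in the $k$-subset case of Theorem \ref{thm:maroti}, but this ultimately reduces to a finite verification once the hypothesis $n \geqs 60$ is invoked, leaving only the routine monotonicity check of (b).
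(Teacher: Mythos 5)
Your proposal is correct and follows essentially the same route as the paper: the paper's proof also deduces $\mu(H)\geqs 2\sqrt{n}$ from Theorem \ref{thm:maroti}, applies Lemma \ref{lemma:cclasses1} to get $|x^G|\geqs f_s(m)$ with $m={\rm supp}(x)$, and then observes that these bounds are minimised at $m=\lceil 2\sqrt{n}\rceil$ over the admissible range. The only difference is that you spell out the case analysis for $\mu(H)\geqs 2\sqrt{n}$ and the unimodality of $f_s$, both of which the paper dismisses as straightforward.
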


\begin{proof}
Set $m = {\rm supp}(x)$. By applying Theorem \ref{thm:maroti}, we deduce that $\mu(H) \geqs 2\sqrt{n}$, which in turn implies that $\lceil 2\sqrt{n} \rceil \leqs m \leqs n$. By Lemma \ref{lemma:cclasses1}, it follows that $|x^G| \geqs f_2(m)$, with $|x^G| \geqs f_3(m)$ if $r$ is odd, and it is straightforward to show that both lower bounds are minimal when $m = \lceil 2\sqrt{n} \rceil$, which establishes (i). Part (ii) is entirely similar.
\end{proof}

We will also need an upper bound on the number of involutions in a primitive subgroup $H \in \mathcal{M}(G)$, which is denoted by $i_2(H)$.

\begin{lem}\label{lemma:frob-schur}
If $n \geqs 5$, then $i_2(H) \leqs 5^{(n - 1)/6}\sqrt{|H|}$ for every subgroup $H$ of $S_n$.
\end{lem}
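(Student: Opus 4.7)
The plan is to combine the Frobenius--Schur count of involutions with an estimate on the number of conjugacy classes of a subgroup of $S_n$, linked by a standard Cauchy--Schwarz step. Specifically, the idea is to bound $i_2(H)$ above by a quantity of the form $\sqrt{k(H)\cdot|H|}$, where $k(H)$ is the class number, and then use the known fact that $k(H)\leqs 5^{(n-1)/3}$ for any subgroup $H\leqs S_n$.

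First I would recall the Frobenius--Schur formula
\[
1 + i_2(H) \;=\; \sum_{\chi\in{\rm Irr}(H)} \nu(\chi)\,\chi(1) \;\leqs\; \sum_{\chi\in{\rm Irr}(H)} \chi(1),
\]
where $\nu(\chi)\in\{-1,0,1\}$ is the Frobenius--Schur indicator. The right-hand side is a sum of $k(H)$ terms, so by the Cauchy--Schwarz inequality together with the identity $\sum_\chi \chi(1)^2 = |H|$ we obtain
\[
\sum_{\chi\in{\rm Irr}(H)} \chi(1) \;\leqs\; \sqrt{k(H)}\cdot\sqrt{\sum_\chi \chi(1)^2} \;=\; \sqrt{k(H)\cdot|H|}.
\]
Combining these estimates yields
\[
i_2(H) \;\leqs\; \sqrt{k(H)\cdot|H|} - 1 \;<\; \sqrt{k(H)\cdot|H|}.
\]

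Next I would invoke the theorem of Kov\'acs and Robinson bounding the number of conjugacy classes in a permutation group: for every subgroup $H\leqs S_n$ with $n\geqs 2$ one has $k(H)\leqs 5^{(n-1)/3}$. Substituting this into the previous display gives the required bound
\[
i_2(H) \;\leqs\; \sqrt{5^{(n-1)/3}\cdot|H|} \;=\; 5^{(n-1)/6}\sqrt{|H|}.
\]

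The computation itself is short; the main point is simply to identify the two external inputs (Frobenius--Schur plus Cauchy--Schwarz on one side, and the class-number bound on the other) and assemble them. The only step requiring care is the citation of the class-number bound $k(H)\leqs 5^{(n-1)/3}$, which I expect to be the substantive ingredient drawn from the literature; everything else is formal.
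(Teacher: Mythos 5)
Your argument is correct and is essentially identical to the paper's proof: the standard bound $i_2(H) \leqs \sqrt{k(H)\,|H|}$ via Frobenius--Schur indicators (with the Cauchy--Schwarz step spelled out), combined with the class-number bound $k(H) \leqs 5^{(n-1)/3}$ for subgroups of $S_n$. The only correction needed is the attribution: that class-number bound is the main theorem of Garonzi and Mar\'{o}ti (the paper's reference \cite{GM15}), not Kov\'{a}cs--Robinson, whose older result gives only the much weaker bound $5^{n-1}$.
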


\begin{proof}
First recall that $i_2(H) \leqs \sqrt{k|H|}$, where $k$ is the number of conjugacy classes in $H$ (this standard bound is easily obtained by considering the Frobenius-Schur indicators of the complex irreducible characters of $H$). By the main theorem of \cite{GM15} we have $k \leqs 5^{(n - 1)/3}$ and the result follows.
\end{proof}

\begin{cor}\label{lemma:invols}
Let $G = S_n$ or $A_n$ with $n \geqs 60$ and suppose $H \in \mathcal{M}(G)$ is primitive as in part (i) or (ii) of Theorem \ref{thm:maroti}. Then 
\[
i_2(H) \leqs \lceil\sqrt{n}\rceil!(1+5^{(\sqrt{n} - 1)/6})^2.
\]
\end{cor}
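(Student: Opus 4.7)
The plan is to treat the two cases of Theorem~\ref{thm:maroti} separately, in each case viewing $H$ as sitting inside a much smaller symmetric group than $S_n$.

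In Case (ii) we have $H\leq S_l$ (the natural degree-$l$ action) with $n=\binom{l}{k}$ and $2\leq k<l/2$. Since $\binom{l}{k}\geq\binom{l}{2}$ for such $k$, we have $n\geq l(l-1)/2$, and hence $l\leq 1+\sqrt{2n}$. Applying Lemma~\ref{lemma:frob-schur} to $H\leq S_l$ gives $i_2(H)\leq 5^{(l-1)/6}\sqrt{l!}$, and the conclusion will then follow from a direct comparison with $\lceil\sqrt{n}\rceil!(1+5^{(\sqrt{n}-1)/6})^2$, using Stirling's approximation. The key asymptotic point is that $l$ is essentially of order $\sqrt{2n}$, so $\log\sqrt{l!}$ behaves like $(1/\sqrt{2})\sqrt{n}\log\sqrt{n}$, which for $n\geq 60$ is strictly dominated by $\log\lceil\sqrt{n}\rceil!\sim\sqrt{n}\log\sqrt{n}$; any remaining small values of $n$ can be checked directly.

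In Case (i) we have $H\leq S_l\wr S_k$ with $n=l^k$, $l\geq 5$ and $k\geq 2$, so that $l\leq\sqrt{n}$, with equality exactly when $k=2$. A direct enumeration shows that every involution of $S_l\wr S_k$ has the form $(x_1,\ldots,x_k;\sigma)$ with $\sigma\in S_k$ either the identity or an involution and $x_i x_{\sigma(i)}=1$ for all $i$; if $\sigma$ has $s$ fixed points and $t$ transpositions (so $s+2t=k$), the number of compatible tuples $(x_1,\ldots,x_k)$ is $(1+i_2(S_l))^s(l!)^t$. Writing $a=1+i_2(S_l)$ and $b=l!$, summing over the involutions of $S_k$ yields
\[
i_2(S_l\wr S_k)+1=\sum_{s+2t=k}\frac{k!}{s!\,2^t t!}\,a^s b^t.
\]

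The hard part will be the sub-case $k=2$, which is essentially tight: here $l=\sqrt{n}$, the sum equals $a^2+b$, and Lemma~\ref{lemma:frob-schur} gives $a\leq 1+5^{(l-1)/6}\sqrt{l!}$. Expanding $a^2$ and invoking the trivial inequality $\sqrt{l!}\leq l!$ (valid for $l\geq 1$) will give
\[
i_2(S_l\wr S_2)\leq l!\bigl(1+5^{(l-1)/3}\bigr)+2\cdot 5^{(l-1)/6}\sqrt{l!}\leq l!\bigl(1+5^{(l-1)/6}\bigr)^2,
\]
which is precisely the target bound. For $k\geq 3$ the quantity $\lceil\sqrt{n}\rceil=\lceil l^{k/2}\rceil$ grows so much faster than $l$ that even the crude estimate $i_2(H)\leq|H|\leq(l!)^k k!$ is dominated by $\lceil\sqrt{n}\rceil!$ by a routine application of Stirling's formula, so the bound in the corollary holds with considerable slack.
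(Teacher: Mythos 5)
Your proposal is correct and follows essentially the same route as the paper: in both cases you reduce to the Frobenius--Schur bound of Lemma \ref{lemma:frob-schur} applied to $S_l$, handle the tight subcase $k=2$ of the product action exactly (your chain $i_2(S_l\wr S_2)\leqs l!(1+5^{(l-1)/3})+2\cdot 5^{(l-1)/6}\sqrt{l!}\leqs l!(1+5^{(l-1)/6})^2$ checks out), and dispose of $k\geqs 3$ via the crude order bound $i_2(H)\leqs |H|$ against $\lceil\sqrt{n}\rceil!$. The only cosmetic differences are your explicit involution-counting formula for $S_l\wr S_k$ and your uniform treatment of case (ii) via $l\leqs 1+\sqrt{2n}$, where the paper splits $k=2$ from $k\geqs 3$; neither changes the substance of the argument.
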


\begin{proof}
First assume that $H = (S_l \wr S_k) \cap G$, where $n = l^k$, $l \geqs 5$ and $k \geqs 2$, and note that it suffices to bound $i_2(S_l\wr S_k)$. For $k=2$,  Lemma \ref{lemma:frob-schur} implies that there are fewer than $(1 + 5^{(l - 1)/6}(l!)^{1/2})^2$ involutions in the base group $S_l \times S_l$ of the wreath product, and there are exactly $l!$ additional involutions in $H$. Therefore, 
\[
i_2(H) \leqs (1 + 5^{(l - 1)/6}(l!)^{1/2})^2 + l! < l!(1+5^{(l - 1)/6})^2.
\]
And for $k \geqs 3$ one can check that 
\[
i_2(H) \leqs |S_l \wr S_k| = (\sqrt[k]{n})!^k k! \leqs \lceil \sqrt{n}\rceil!(1+5^{(\sqrt{n} - 1)/6})^2 = f(n).
\]

Now assume $H = S_l \cap G$ and $n = \binom{l}{k}$, where $l \geqs 5$ and $2 \leqs k < l/2$. For $k=2$, Lemma \ref{lemma:frob-schur} implies that 
\[
i_2(H) \leqs i_2(S_l) \leqs 5^{(l - 1)/6}(l!)^{1/2} < 5^{\sqrt{2n}/6}\lceil \sqrt{2n} + 1\rceil!^{1/2} = g(n)
\]
and it is easy to check that $i_2(H) \leqs |H|  \leqs l! < g(n)$ for $k \geqs 3$ and $n\geqs 60$. The result now follows since $g(n) \leqs f(n)$ for all $n \geqs 60$.
\end{proof}

We are now in a position to prove Theorem \ref{t:main1}(ii). 

\begin{proof}[Proof of Theorem \ref{t:main1}(ii)]
In view of Proposition \ref{p:n40}, we may assume $n \geqs 60$, so $G = S_n$ or $A_n$, and our goal is to prove that $R_{\rm prim}(G) = 2$. To do this, it suffices to show that every primitive pair $\tau = (H_1,H_2)$ is regular, where $H_1,H_2 \in \mathcal{M}(G)$. Define 
\[
\what{Q}(G,\tau) = \sum_{i = 1}^t \frac{|x_i^G \cap H_1|\, |x_i^G\cap H_2|}{|x_i^G|}
\]
as in Lemma \ref{l:fpr}, where $x_1, \ldots, x_t$ is a set of representatives of the conjugacy classes in $G$ of elements of prime order, and recall that $\tau$ is regular if $\what{Q}(G,\tau)<1$.

First assume that $\mu(H_1) \geqs n/2-\sqrt{n}$. Then $|x_i^G\cap H_1| = 0$ if ${\rm supp}(x_i) < n/2 - \sqrt{n}$ and thus
\[
\what{Q}(G,\tau) = \sum_{i = 1}^s \frac{|y_i^G \cap H_1|\, |y_i^G\cap H_2|}{|y_i^G|}
\]
where $y_1, \ldots, y_s$ represent the classes of prime order elements with support at least $n/2 - \sqrt{n}$. By combining Lemmas \ref{l:favbound} and \ref{lemma:cclasses2}(ii) with Mar\'{o}ti's bound in Theorem \ref{thm:maroti} on the orders of $H_1$ and $H_2$, we deduce that 
\[
\what{Q}(G,\tau) \leqs n^{2 +2\lceil \log_2 n \rceil}\cdot \frac{2^{\ell/2}\lceil \ell/2 \rceil!(n - \ell)!}{n!},
\]
where $\ell = \lceil n/2 -\sqrt{n}\rceil$. One checks that this upper bound is less than $1$ for all $n \geqs 60$. 

By the same argument, $\tau$ is regular if $\mu(H_2) \geqs n/2-\sqrt{n}$, so to complete the proof we may assume that both $H_1$ and $H_2$ are described as in item (i) or (ii) in Theorem \ref{thm:maroti}. Write
\[
\what{Q}(G,\tau)  =\a+\b,
\]
where $\a$ is the contribution from involutions, and note that $|H_i| \leqs 2\lceil \sqrt{n}\rceil!^2$ for $i=1,2$. Then by combining Lemmas \ref{l:favbound} and \ref{lemma:cclasses2} with Corollary \ref{lemma:invols}, we obtain the bounds
\begin{align*}
\a & \leqs i_2(H_1)i_2(H_2) \cdot \frac{2^{m/2}\lceil m/2 \rceil!(n - m)!}{n!} \\
& \leqs \left(\lceil \sqrt{n}\rceil!(1+5^{(\sqrt{n} - 1)/6})^2\right)^2 \cdot \frac{2^{m/2}\lceil m/2\rceil!(n - m)!}{n!}
\end{align*}
and
\[
\b  \leqs |H_1||H_2| \cdot \frac{3^{m/3}\lceil m/3 \rceil!(n - m)!}{n!}  \leqs 4\lceil \sqrt{n}\rceil!^4 \cdot \frac{3^{m/3}\lceil m/3\rceil!(n - m)!}{n!},
\]
where $m = \lceil 2\sqrt{n}\rceil$. Together, these bounds imply that $\a+\b< 1$ and the result follows.
\end{proof}

Part (iii) of Theorem \ref{t:main1} now follows as an easy corollary. Recall that $R_{{\rm sol\, max}}(G)$ is the minimal integer $k$ such that every $k$-tuple of soluble maximal  subgroups of $G$ is regular.

\begin{proof}[Proof of Theorem \ref{t:main1}(iii)]
For $n \leqs 16$ we can use {\sc Magma} to verify the result (in Table \ref{tab:maxsol} we record all the groups $G$ with $R_{{\rm sol\, max}}(G) = k > 2$, together with a non-regular soluble maximal $(k-1)$-tuple of the form $(H, \ldots,H)$). And if $n \geqs 17$, then every soluble maximal subgroup of $G$ is primitive and therefore $R_{{\rm sol\, max}}(G) =2$ since $R_{{\rm prim}}(G) = 2$ by part (ii) of  Theorem \ref{t:main1}. 
\end{proof}

{\scriptsize
\begin{table}
\[
\begin{array}{c l l} \hline
R_{{\rm sol\, max}}(G) & G & H \\ \hline
5 & S_8 & S_4\wr S_2 \\
4 & S_5 & S_4 \\
& S_6 & S_3\wr S_2 \\
& A_6.2^2 & (S_3\wr S_2).2  \\
& A_8 & (S_4\wr S_2)\cap G\\
3 & A_5 & A_4 \\
& A_6 & (S_3\wr S_2)\cap G\\
& A_6.2 = {\rm PGL}_2(9) & {\rm D}_{20} \\
& A_6.2 = {\rm M}_{10} & 3^2{:}Q_8 \\
& A_7 & (S_4\times S_3)\cap G \\
& S_7& S_4\times S_3\\
& A_9 & (S_3\wr S_3)\cap G\\
& S_9 & S_3\wr S_3\\
& A_{12} & (S_4\wr S_3)\cap G\\
& S_{12} & S_4\wr S_3\\
& A_{16} & (S_4\wr S_4)\cap G\\
& S_{16} & S_4\wr S_4
\\ \hline
\end{array}
\]
\caption{Almost simple groups $G$ with socle $A_n$ and $R_{{\rm sol\, max}}(G)  > 2$}
\label{tab:maxsol}
\end{table}
}

\subsection{The regularity number}\label{ss:regnumber}

Let $G = S_n$ or $A_n$ with $n \geqs 5$. In this section we complete the proof of Theorem \ref{t:main1} by determining the regularity number of $G$. By considering the base size of the natural action of $G$ on $[n] = \{1, \ldots, n\}$, it suffices to show that $R(G) \leqs k$, where $k = n-1$ if $G = S_n$ and $k=n-2$ if $G = A_n$. So in view of Lemma \ref{lem:maxreduction}, we just need to verify that every $k$-tuple of subgroups in $\mathcal{M}(G)$ is regular, where $\mathcal{M}(G)$ is the set of core-free subgroups of $G$ that are maximal in $S_n$ or $A_n$.

The proof divides naturally into three cases since any subgroup in $\mathcal{M}(G)$ acts intransitively, imprimitively or primitively on $[n]$. Recall that we define $R_{{\rm intrans}}(G)$ to be the minimal integer $k$ such that every $k$-tuple of intransitive subgroups $(H_1, \ldots, H_k)$ of $G$ is regular. Similarly, $R_{{\rm imprim}}(G)$ is defined with respect to imprimitive subgroups (note that if $n$ is a prime number, then every transitive subgroup of $G$ is primitive, so $R_{{\rm imprim}}(G)$ is not defined in this case).

In Section \ref{ss:intrans} we begin by handling the intransitive tuples, presenting several lemmas that will allow us to prove that $R_{\rm intrans}(S_n) = n-1$ and $R_{\rm intrans}(A_n) = n-2$ via a direct construction (see Proposition \ref{p:rintrans}). The imprimitive tuples are then studied in Section \ref{ss:imprim}. Here the analysis is more complicated and we rely on two key results (Lemmas \ref{lem:pointwise} and \ref{lem:2sets}), which may be of independent interest. Roughly speaking, our strategy is to reduce the imprimitive case to a situation involving intransitive subgroups, which we have already handled in Section \ref{ss:intrans}. Finally, we complete the proof of Theorem \ref{t:main1} by combining our work on intransitive and imprimitive tuples with our results on primitive subgroups in Section \ref{ss:prim}. The details are presented in Section \ref{ss:final}.

\subsubsection{Intransitive tuples}\label{ss:intrans}

Our main result in this section is Proposition \ref{p:rintrans}, which gives 
\[
R_{\rm intrans}(S_n) = n-1 \; \mbox{ and } \; R_{\rm intrans}(A_n) = n-2.
\] 
Throughout this section we fix an integer $n \geqs 5$ and we define $W_n$ to be the set of subsets of $[n] = \{1, \ldots, n\}$ of size at most $n/2$. Following Halasi \cite{H12}, if $\alpha\in [n]$ and $\mathcal{A}\subseteq W_n$, then we refer to 
\[
N_{\mathcal{A}}(\alpha) = \{X\in \mathcal{A} \,:\, \alpha \in X\}
\]
as the \emph{neighbourhood} of $\alpha$ with respect to $\mathcal{A}$. 

\begin{lem}\label{lem:neighbourhood}
Let $\mathcal{A} = \{X_1, \ldots, X_k\} \subseteq W_n$ and set $H = \bigcap_{i=1}^kH_i$, where $H_i$ is the setwise stabiliser of $X_i$ in $S_n$. 

\vspace{1mm}

\begin{itemize}\addtolength{\itemsep}{0.3\baselineskip}
\item[\rm (i)] For all $\alpha, \beta\in [n]$, we have $N_{\mathcal{A}}(\alpha) \ne N_{\mathcal{A}}(\beta)$ if and only if $\alpha$ and $\beta$ are contained in different $H$-orbits. 
\item[\rm (ii)] In particular, $H=1$ if and only if $N_{\mathcal{A}}(\alpha) \neq N_{\mathcal{A}}(\beta)$ for all distinct $\alpha, \beta \in [n]$.
\end{itemize}
\end{lem}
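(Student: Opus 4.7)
\medskip

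\noindent The plan is to prove (i) first and then deduce (ii) as an immediate corollary. The forward direction of (i) is essentially tautological: if $\alpha^h = \beta$ for some $h \in H$, then since $h$ stabilises each $X_i$ setwise, we have $\alpha \in X_i$ if and only if $\beta \in X_i$, so $N_{\mathcal{A}}(\alpha) = N_{\mathcal{A}}(\beta)$. Contrapositively, distinct neighbourhoods implies distinct $H$-orbits.

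For the converse, suppose $N_{\mathcal{A}}(\alpha) = N_{\mathcal{A}}(\beta)$ with $\alpha \neq \beta$, and consider the transposition $t = (\alpha,\beta) \in S_n$. For each $X_i$, the hypothesis says that either both $\alpha,\beta \in X_i$ or both $\alpha,\beta \notin X_i$. In the first case $t$ preserves $X_i$ setwise (it just swaps two of its elements), and in the second case $t$ fixes $X_i$ pointwise. Hence $t \in H_i$ for every $i$, so $t \in H$, and $\alpha^t = \beta$ exhibits $\alpha$ and $\beta$ in the same $H$-orbit. This establishes (i).

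For (ii), observe that $H \leqs S_n$ acts faithfully on $[n]$, so $H = 1$ if and only if every $H$-orbit on $[n]$ is a singleton, which by (i) is equivalent to $N_{\mathcal{A}}(\alpha) \ne N_{\mathcal{A}}(\beta)$ for all distinct $\alpha,\beta \in [n]$.

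The only step requiring any thought is the transposition construction in the converse of (i); everything else is essentially unwinding definitions. There is no serious obstacle here, and I would not expect the write-up to be longer than a short paragraph.
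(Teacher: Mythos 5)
Your proof is correct and follows essentially the same route as the paper: both arguments hinge on the transposition $(\alpha,\beta)$ lying in $H$ precisely when $\alpha$ and $\beta$ have equal neighbourhoods, and on the fact that any element of $H$ mapping $\alpha$ to $\beta$ preserves membership in each $X_i$; you have merely stated the two implications in contrapositive form. Part (ii) is handled the same way, as an immediate consequence of (i).
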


\begin{proof}
Since (ii) follows immediately from (i), we just need to establish the claim in (i). 
First assume $\alpha$ and $\beta$ lie in distinct orbits of $H$. Then $H$ does not contain the transposition $g = (\alpha, \beta) \in S_n$, which means that $X_i^{g}\neq X_i$ for some $i$. Therefore, either  $X_i\in N_{\mathcal{A}}(\alpha) \setminus N_{\mathcal{A}}(\beta)$ or $X_i\in N_{\mathcal{A}}(\beta) \setminus N_{\mathcal{A}}(\alpha)$, and thus $N_{\mathcal{A}}(\alpha) \neq N_{\mathcal{A}}(\beta)$.

Conversely, suppose $N_{\mathcal{A}}(\alpha) \neq N_{\mathcal{A}}(\beta)$. Then we may assume there exists a set $X_i \in \mathcal{A}$ such that $\alpha \in X_i$ and $\beta\not\in X_i$. Consider an element $g \in S_n$ such that $\alpha^g = \beta$. Then $X_i^g \neq X_i$ and thus $g \not\in H$, which in turn implies that $\a$ and $\b$ are in distinct $H$-orbits.
\end{proof}

We now present a lemma that will play a key role in the proof of Proposition \ref{p:rintrans}. It will also be used repeatedly in the proof of Theorem \ref{t:main1} in Section \ref{ss:final}.

\begin{lem}\label{lem:orbits}
Let $a_1, \ldots, a_k$ be positive integers such that $a_1 \leqs a_2 \leqs \cdots \leqs a_k \leqs n/2$, where $2 \leqs k < n$. For each $i \in \{1, \ldots, k\}$, let $X_i$ be the following $a_i$-element subset of $[n]$ and let $H = \bigcap_{i=1}^k H_i$, where $H_i$ is the stabiliser of $X_i$ in $S_n$:
\begin{align}\label{e:xi}
\begin{split}
X_1 &= \{1, \ldots, a_1\}\\
X_2 &= \{2, \ldots, a_2 + 1\}\\
&\;\, \vdots\\
X_{k - 1} & =\left\{ \begin{array}{ll}
\{k - 1, \ldots, a_{k-1}+k-2\} & \mbox{if $a_{k-1} \leqs n-k+1$} \\
\{k - 1, \ldots, n - 1, 1, 2, \ldots, a_{k - 1} - (n - k + 1)\} & \mbox{otherwise}
\end{array}\right. \\
X_{k} & = \left\{ \begin{array}{ll}
\{k, \ldots, a_k+k-1\} & \mbox{if $a_{k} \leqs n-k$} \\
\{k,\ldots, n-1, 1, 2, \ldots, a_{k} - (n-k)\} & \mbox{otherwise}
\end{array}\right.
\end{split}
\end{align}
Then  $H \leqs {\rm Sym}(\{k, \ldots, m\})\times {\rm Sym}(\{m+1, \ldots, n\})$, where 
$m = \max{\{\a \, :\, \a \in X_k\}}$. 
\end{lem}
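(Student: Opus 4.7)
My plan is to apply Lemma~\ref{lem:neighbourhood} with $\mathcal{A}=\{X_1,\ldots,X_k\}$ and analyse the neighbourhoods $N_{\mathcal{A}}(\alpha)$ directly. The key combinatorial observation is that each $X_i$ is a cyclic arc of length $a_i$ starting at $i$ in the $(n-1)$-cycle $(1,2,\ldots,n-1)$, so in particular $n$ belongs to no $X_i$, and since $a_i\leqs n/2<n-1$ (using $n\geqs 5$) each such arc is proper. To establish the claimed inclusion it suffices, via Lemma~\ref{lem:neighbourhood}, to verify:
\begin{itemize}
\item[(A)] $N_{\mathcal{A}}(\alpha)\ne N_{\mathcal{A}}(\beta)$ for every $\alpha\in\{1,\ldots,k-1\}$ and $\beta\in[n]\setminus\{\alpha\}$, forcing each such $\alpha$ into a singleton $H$-orbit;
\item[(B)] $N_{\mathcal{A}}(\alpha)\ne N_{\mathcal{A}}(\beta)$ for every $\alpha\in\{k,\ldots,m\}$ and $\beta\in\{m+1,\ldots,n\}$, forcing $\{k,\ldots,m\}$ to be a union of $H$-orbits.
\end{itemize}

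For (A), fix $\alpha\in\{1,\ldots,k-1\}$. Since $\alpha\in X_\alpha$ one has $\alpha\in N_{\mathcal{A}}(\alpha)$, while $\alpha\notin X_{\alpha+1}$ because the arc $X_{\alpha+1}$ starts at $\alpha+1$ and its length $a_{\alpha+1}\leqs n/2<n-1$ prevents it from wrapping all the way back to $\alpha$; so $\alpha+1\notin N_{\mathcal{A}}(\alpha)$. If $\beta\notin X_\alpha$ then $\alpha\notin N_{\mathcal{A}}(\beta)$, distinguishing the two neighbourhoods. Otherwise $\beta$ lies in the arc $X_\alpha\setminus\{\alpha\}$ of length $a_\alpha-1$ starting at $\alpha+1$; the monotonicity $a_\alpha\leqs a_{\alpha+1}$ forces this arc to sit inside $X_{\alpha+1}$, and hence $\alpha+1\in N_{\mathcal{A}}(\beta)\setminus N_{\mathcal{A}}(\alpha)$. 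For (B), the single index $k$ suffices: both the wrap and no-wrap formulas for $X_k$ give $\{k,\ldots,m\}\subseteq X_k$, while $\beta>m$ combined with $n\notin X_k$ forces $\beta\notin X_k$, so $k\in N_{\mathcal{A}}(\alpha)\setminus N_{\mathcal{A}}(\beta)$.

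The main point requiring care is the containment $X_\alpha\setminus\{\alpha\}\subseteq X_{\alpha+1}$: both arcs start at $\alpha+1$, and since both lengths are strictly less than the cycle length $n-1$, the shorter arc is always a prefix of the longer, irrespective of whether either arc wraps through $n-1$. No further case split is then needed, and the same reasoning handles the explicitly defined sets $X_1,X_2,X_{k-1},X_k$ and the implicitly defined $X_i$ for $3\leqs i\leqs k-2$ uniformly.
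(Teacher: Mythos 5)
Your proposal is correct and follows essentially the same route as the paper: both reduce the lemma via Lemma~\ref{lem:neighbourhood} to checking that suitable neighbourhoods $N_{\mathcal{A}}(\alpha)$, $N_{\mathcal{A}}(\beta)$ differ, thereby pinning each $\alpha < k$ in a singleton $H$-orbit and separating $\{k,\ldots,m\}$ from $\{m+1,\ldots,n\}$. The only difference is in the verification details: where the paper rules out mutual containments $\alpha\in X_\beta$, $\beta\in X_\alpha$ using $a_i\leqs n/2$ twice and notes that $N_{\mathcal{A}}(\gamma)$ is empty for $\gamma>m$, you use the monotonicity $a_\alpha\leqs a_{\alpha+1}$ to get $X_\alpha\setminus\{\alpha\}\subseteq X_{\alpha+1}$ and distinguish via $X_{\alpha+1}$, which is a valid (and slightly more uniform) way to carry out the same checks.
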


\begin{proof}
Let $\mathcal{A} = \{X_1, \ldots, X_k\}$. First we show that $N_{\mathcal{A}}(\alpha) \neq N_{\mathcal{A}}(\beta)$ for all distinct $\alpha, \beta\in \{1, \ldots, k, n\}$. To do this, we may assume $\alpha < \beta$. 

If $\beta = n$, then  $N_{\mathcal{A}}(\b)$ is empty, whereas $X_{\alpha} \in N_{\mathcal{A}}(\alpha)$, whence  $N_{\mathcal{A}}(\alpha) \neq N_{\mathcal{A}}(\beta)$. Now assume $\beta \leqs k$ and suppose that $\alpha \in X_{\beta}$ and $\beta \in X_{\alpha}$. The containment of $\a$ in $X_{\b}$ implies that $(n-1)-\b+1+\a \leqs a_{\b}$ and thus $\b-\a \geqs n/2$ since $a_{\b} \leqs n/2$. Similarly, since $\b \in X_{\a}$ we have $\b-\a +1 \leqs a_{\a}$, so $\b-\a < n/2$ and we have reached a contradiction. It follows that either $\alpha \not\in X_{\beta}$ or $\beta \not\in X_{\alpha}$, whence $N_{\mathcal{A}}(\alpha) \neq N_{\mathcal{A}}(\beta)$ as required.

With a very similar argument, we can show that $N_{\mathcal{A}}(\alpha) \neq N_{\mathcal{A}}(\beta)$ for all $\alpha \in \{1, \ldots, k-1\}$ and $\beta\in \{k, \ldots, m\}$. In addition, we claim that $N_{\mathcal{A}}(\gamma)$ is empty for all $\gamma \geqs m+1$. This is clear if $m = n-1$, so we may assume $m < n-1$ and thus $X_k = \{k, k+1, \ldots, m\}$. For each $i \in \{1, \ldots, k\}$ we have  
\[
|X_i| = a_i \leqs a_k = m - k + 1 < \gamma - k + 1,
\]
so $\gamma \not\in X_i$ and $N_{\mathcal{A}}(\gamma)$ is indeed empty as claimed.
In particular, since $i \in X_i$ and $\delta \in X_k$ for all $\delta \in \{k, \ldots, m\}$, this means that $N_{\mathcal{A}}(\gamma) \ne N_{\mathcal{A}}(\delta)$ for all $\gamma \geqs m+1$ and all $\delta \in \{1, \ldots, m\}$.

Putting all of this together and applying Lemma \ref{lem:neighbourhood}, we deduce the following:

\vspace{1mm}

\begin{itemize}\addtolength{\itemsep}{0.3\baselineskip}
\item[\rm (a)] The points $1, \ldots, k, n$ are contained in distinct $H$-orbits.
\item[\rm (b)] Each $\a \in \{1, \ldots, k-1\}$ and $\beta\in \{k, \ldots, m\}$ are contained in distinct $H$-orbits.
\item[\rm (c)] Each $\a \in \{1, \ldots, m\}$ and $\beta\in \{m+1, \ldots, n\}$ are contained in distinct $H$-orbits.
\end{itemize} 

As a consequence, $H$ acts trivially on $\{1, \ldots, k - 1\}$ and it preserves the subsets $\{k, \ldots, m\}$ and $\{m + 1, \ldots, n\}$. In other words,
\[ 
H \leqs {\rm Sym}(\{k, \ldots, m\})\times{\rm Sym}(\{m+1, \ldots, n\})
\]
as required.
\end{proof}

We are now in a position to establish our main result on intransitive tuples.

\begin{prop}\label{p:rintrans}
For all $n \geqs 5$, we have $R_{\rm intrans}(S_n) = n-1$ and $R_{\rm intrans}(A_n) = n - 2$. 
\end{prop}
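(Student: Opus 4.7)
The plan is to prove the two bounds separately. For the lower bounds, we use point stabilisers. Taking $n-2$ copies of $H = {\rm Stab}_{S_n}(n) \cong S_{n-1}$, the intersection of any $n-2$ conjugates of $H$ fixes pointwise a subset of $[n]$ of size at most $n-2$, so it contains a transposition on the remaining points and is non-trivial. This shows $R_{\rm intrans}(S_n) \geqs n-1$. The same argument with $n-3$ copies of $A_{n-1}$ in $A_n$ produces an intersection containing a $3$-cycle, giving $R_{\rm intrans}(A_n) \geqs n-2$.

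For the upper bound, the plan is to reduce to a situation directly governed by Lemma~\ref{lem:orbits}. Any intransitive subgroup of $G$ has at least two orbits on $[n]$; the smallest orbit has size at most $n/2$, so the subgroup is contained in the $G$-setwise stabiliser of some subset of size $a \leqs n/2$. Hence it suffices to prove regularity for $k$-tuples $(H_1, \ldots, H_k)$ in which each $H_i$ is itself the setwise stabiliser in $G$ of an $a_i$-subset with $a_i \leqs n/2$, where $k = n-1$ if $G = S_n$ and $k = n-2$ if $G = A_n$. Reordering, we may assume $a_1 \leqs a_2 \leqs \cdots \leqs a_k \leqs n/2$.

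Next, I would apply Lemma \ref{lem:orbits} to construct subsets $X_1, \ldots, X_k$ of $[n]$ of the prescribed sizes. Since $A_n$ (and hence $G$) acts transitively on $a$-subsets of $[n]$ for every $1 \leqs a \leqs n-1$, we can pick elements $g_i \in G$ so that $H_i^{g_i}$ is the setwise stabiliser in $G$ of $X_i$, and it remains to show that $\bigcap_{i=1}^{k} H_i^{g_i} = 1$. By Lemma \ref{lem:orbits}, the intersection of the setwise stabilisers in $S_n$ is contained in
\[
K := {\rm Sym}(\{k, \ldots, m\}) \times {\rm Sym}(\{m+1, \ldots, n\}),
\]
where $m = \max X_k$, and $\bigcap_i H_i^{g_i} = K \cap G$.

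The proof is then completed by a short case-check on the defining formulas for $X_k$ in Lemma \ref{lem:orbits}. For $G = S_n$ and $k = n-1$, both cases in the definition of $X_k$ yield $m = n-1$, so $K = 1$. For $G = A_n$ and $k = n-2$, we find $m = n-2$ if $a_k = 1$ and $m = n-1$ if $a_k \geqs 2$; in either case $K$ is a copy of $S_2$, and $S_2 \cap A_n = 1$ since its non-identity element is an odd permutation. Thus $\bigcap_i H_i^{g_i} = 1$ in every case. The only real delicacy is this final case inspection, but the combinatorial heart of the argument has already been packaged into Lemma \ref{lem:orbits}, so no serious obstacle remains.
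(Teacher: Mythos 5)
Your argument is correct and is essentially the paper's proof: both reduce to tuples of setwise stabilisers of subsets of size at most $n/2$, use the same lower-bound examples coming from point stabilisers, and obtain the upper bound by conjugating to the explicit subsets of Lemma \ref{lem:orbits}. The only (harmless) deviations are that for $S_n$ you apply the lemma directly with $k=n-1$ (where $m=n-1$ forces the intersection to be trivial), whereas the paper applies it with $k=n-2$ and then chooses the last subset to separate $\a$ from $\b$ so as to kill the residual transposition, and your assertion $\bigcap_i H_i^{g_i} = K\cap G$ should strictly be the containment $\bigcap_i H_i^{g_i} \leqs K\cap G$, which is all that is needed.
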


\begin{proof}
First observe that the intransitive $(n-2)$-tuple $(S_{n-1}, \ldots, S_{n-1})$ is non-regular, so $R_{\rm intrans}(S_n) \geqs n-1$. Similarly, $R_{\rm intrans}(A_n) \geqs n-2$, so we just need to establish the upper bounds $R_{\rm intrans}(S_n) \leqs n-1$ and $R_{\rm intrans}(A_n) \leqs n-2$.

Suppose $G = S_n$ or $A_n$, and let $\tau = (K_1, \ldots, K_{n - 2})$ be an $(n-2)$-tuple of intransitive subgroups of $G$. We may embed each $K_i$ in the setwise stabiliser $L_i = S_{a_i} \times S_{n-a_i}$ in $S_n$ of some subset of $[n]$ of size $a_i \leqs n/2$, and by reordering we may assume that $a_i\leqs a_{i+1}$ for all $i$. We claim that there exist elements $g_i \in G$ such that 
\begin{equation}\label{e:intrans}
\bigcap_{i = 1}^{n - 2}L_i^{g_i} \leqs \<(\alpha, \beta)\>,
\end{equation}
where $\a = n-1$ and $\beta\in \{n-2, n\}$. Since $G$ acts transitively on the set of $k$-element subsets of $[n]$ for any fixed $k \leqs n/2$, we just need to define a collection of subsets $X_1, \ldots, X_{n-2}$ of $[n]$ such that $|X_i| = a_i$ and $H = \bigcap_{i=1}^{n-2}H_i \leqs \< (\a,\b) \>$, where $H_i$ is the setwise stabiliser of $X_i$ in $S_n$. To do this, we can define the sets $X_1, \ldots, X_{n-2}$ as in \eqref{e:xi} (with $k=n-2$), and then by applying Lemma \ref{lem:orbits} we deduce that $H \leqs \<(\alpha, \beta)\>$ as required. 

For $G = A_n$ we immediately deduce that $\tau$ is regular and thus $R_{\rm intrans}(A_n) \leqs n-2$. So for the remainder, let us assume $G = S_n$ and let $\tau' = (K_1, \ldots, K_{n - 1})$ be an intransitive $(n-1)$-tuple. By the above argument, we have $\bigcap_{i =1}^{n - 2}K_i^{g_i} \leqs \<(\alpha, \beta)\>$ for some $g_i \in G$. Now $K_{n - 1}$ is contained in the setwise stabiliser in $G$ of some $k$-element subset of $[n]$ with $k \leqs n/2$. Of course, we can choose a $k$-set $X_{n-1}$ containing $\a$ but not $\b$, in which case the stabiliser of $X_{n-1}$ in $G$ does not contain the transposition $(\a,\b)$. In other words, there exists an element $g_{n-1} \in G$ such that 
\[
\bigcap_{i =1}^{n - 1}K_i^{g_i} \leqs \<(\alpha, \beta)\> \cap K_{n-1}^{g_{n-1}} = 1
\]
and we conclude that $\tau'$ is regular. This completes the proof of the proposition.
\end{proof}

\subsubsection{Imprimitive tuples}\label{ss:imprim}

In this section we focus on imprimitive tuples and our main result is Proposition \ref{prop:rimprim}, which states that 
\[
R_{{\rm imprim}}(S_n) \leqs n-1\; \mbox{ and }\; R_{{\rm imprim}}(A_n) \leqs n-2
\]
for all composite integers $n \geqs 6$ (recall that if $n$ is a prime, then every transitive subgroup of $S_n$ is primitive). To establish this bound for $G = S_n$, we need to prove that every $(n-1)$-tuple of the form $(S_{a_1} \wr S_{b_1}, \ldots, S_{a_{n-1}} \wr S_{b_{n-1}})$ is regular, where $n = a_ib_i$ and $a_i \geqs 2$ for all $i$. And similarly for $G = A_n$ with respect to $(n-2)$-tuples with components of the form $(S_{a_i} \wr S_{b_i}) \cap G$. Throughout this section we assume $n \geqs 6$ is composite and we write $[n] = \{1, \ldots, n\}$.

\begin{rem}\label{r:imprim}
Let $G = S_n$ or $A_n$ with $n \geqs 6$ composite. We have not attempted to seek sharper estimates on $R_{{\rm imprim}}(G)$ because the above bounds (together with some of the auxiliary results presented below) will be sufficient for our proof of Theorem \ref{t:main1}(i). However, it seems reasonable to expect that the bounds in Proposition \ref{prop:rimprim} are far from best possible. Indeed, the main theorem of \cite{MS} yields  
\[
B_{{\rm imprim}}(G) \leqs \log_2n+2,
\]
which leads us to speculate that $R_{{\rm imprim}}(G) = O(\log_2 n)$. With the aid of 
{\sc Magma}, we have checked that $R_{{\rm imprim}}(G) \leqs 6$ for all $n < 32$, but here we do not pursue this any further. 
\end{rem}

We begin by defining some useful terminology that arises naturally in this setting. We will say that a partition $X$ of $[n]$ is \emph{uniform} if every part has size $\ell$, where $1 < \ell < n$, and we will refer to $X$ as an \emph{$\ell$-partition}. If $X$ is an $\ell$-partition, then the stabiliser of $X$ in $S_n$ is defined to be the largest subgroup $H \leqs S_n$ that preserves the partition, so we have $H = S_\ell\wr S_{n/\ell}$. Similarly, the stabiliser in $A_n$ is the subgroup $H\cap A_n$.

Let $X_1$ and $X_2$ be uniform partitions of $[n]$ with parts of size $a$ and $b$, respectively. Choose an ordering of the parts $P_1, \ldots, P_{k}$ comprising $X_1$, where $k = n/a$, and assign the colour $c_i$ to every point in $P_i$, where $c_1, \ldots, c_{k}$ are distinct. Then the corresponding \emph{colour function} is the map $\chi: [n] \to \{c_1, \ldots, c_{k}\}$. We say that a part $Q$ of $X_2$ has \emph{colour sequence} $(c_{q_1}, \ldots, c_{q_b})$ if there is a bijection from $Q$ to the multiset of colours $\{c_{q_1}, \ldots, c_{q_b}\}$ such that each point in $Q$ is mapped to its assigned colour. And if $R$ is another part of $X_2$ with colour sequence $(c_{r_1}, \ldots, c_{r_b})$, we will say that $Q$ and $R$ are \emph{colour-equivalent} if there exists a permutation of the colour set $\{c_1, \ldots, c_{k}\}$ mapping the multiset $\{c_{q_1}, \ldots, c_{q_b}
\}$ to the multiset $\{c_{r_1}, \ldots, c_{r_b}\}$. Finally, we say that a part of $X_2$ is \emph{monochrome} if all of its points have the same colour. 

We begin with an easy lemma.

\begin{lem}\label{lem:colour-equivalence}
Let $X_1, X_2$ be defined as above and let $H_i$ be the stabiliser of $X_i$ in $S_n$. If $g\in H_1\cap H_2$ and $Q$ is a part of $X_2$, then $Q$ and $Q^g$ are colour-equivalent.
\end{lem}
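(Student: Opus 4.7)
The plan is to exploit the fact that membership in $H_1$ forces $g$ to permute the parts of $X_1$ in a coherent way, which in turn induces a permutation of the colour set $\{c_1,\ldots,c_k\}$ via the colour function $\chi$. This permutation will then transport the colour sequence of $Q$ to that of $Q^g$.

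First I would observe that since $g \in H_1$, the element $g$ permutes the parts $P_1, \ldots, P_k$ of $X_1$, so there is a unique permutation $\sigma \in \mathrm{Sym}(\{1, \ldots, k\})$ with $P_i^g = P_{\sigma(i)}$ for all $i$. I would then translate this into a statement about the colour function: for every point $x \in [n]$, if $\chi(x) = c_i$ (equivalently $x \in P_i$), then $x^g \in P_{\sigma(i)}$, and hence $\chi(x^g) = c_{\sigma(i)}$. In other words, $\chi \circ g = \sigma \circ \chi$, so $g$ acts on colours precisely via $\sigma$.

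Next I would invoke the hypothesis $g \in H_2$ to note that $Q^g$ is again a part of $X_2$, so it makes sense to compare its colour sequence with that of $Q$. If $Q = \{x_1, \ldots, x_b\}$ has colour sequence $(c_{q_1}, \ldots, c_{q_b})$, so that $\chi(x_j) = c_{q_j}$, then by the previous step $\chi(x_j^g) = c_{\sigma(q_j)}$, and hence $Q^g$ has colour sequence $(c_{\sigma(q_1)}, \ldots, c_{\sigma(q_b)})$. The permutation $\sigma$ of the colour set $\{c_1, \ldots, c_k\}$ therefore carries the multiset $\{c_{q_1}, \ldots, c_{q_b}\}$ onto $\{c_{\sigma(q_1)}, \ldots, c_{\sigma(q_b)}\}$, which is exactly the definition of colour-equivalence between $Q$ and $Q^g$.

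There is essentially no obstacle here; the statement is a direct unwinding of the definitions once one notices that an element of $H_1$ acts on the colour palette by a genuine permutation. The only thing to keep straight is the distinction between the permutation $g$ acting on points and the induced permutation $\sigma$ acting on colour indices; writing the commutation relation $\chi \circ g = \sigma \circ \chi$ explicitly is the cleanest way to avoid confusion.
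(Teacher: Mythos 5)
Your proof is correct and follows the same route as the paper: the paper's (one-line) argument is precisely the observation that $g \in H_1$ sends points of equal colour to points of equal colour, i.e.\ induces a permutation of the colour set, which is exactly your $\sigma$. Your write-up simply makes that induced permutation and the relation $\chi \circ g = \sigma \circ \chi$ explicit.
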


\begin{proof}
Since $g \in H_1$, it follows that any two elements in $[n]$ with the same colour are mapped under $g$ to elements of the same colour. In particular, $Q$ and $Q^g$ are colour-equivalent.
\end{proof}

\begin{lem}\label{lem:swap}
Let $X$ be an $\ell$-partition of $[n]$ with parts $P_1, \ldots, P_k$, where $\ell \geqs 3$. Fix $\alpha \in P_i$ and $\beta \in P_j$, where $i \ne j$, and let $Y = X^t$ be the $\ell$-partition with parts $Q_i = P_i^t$, where $t = (\a,\b) \in S_n$. If $H_X$ and $H_Y$ denote the stabilisers of $X$ and $Y$ in $S_n$, then $H_X\cap H_Y$ fixes $\{\alpha, \beta\}$ setwise.
\end{lem}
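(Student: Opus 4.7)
The plan is to exploit the colouring framework introduced just before Lemma~\ref{lem:colour-equivalence}: colour each point of $P_m$ with colour $c_m$ for $m = 1, \ldots, k$, so that the parts of $Y = X^t$ acquire the following colour profiles. For $m \notin \{i,j\}$ the part $Q_m = P_m$ is monochrome of colour $c_m$; whereas $Q_i$ contains $\ell - 1$ points of colour $c_i$ together with the single point $\beta$ of colour $c_j$, and symmetrically $Q_j$ contains $\ell - 1$ points of colour $c_j$ together with the single point $\alpha$ of colour $c_i$. Any $g \in H_X$ induces a permutation $\sigma \in S_k$ on the colour set via $P_m^g = P_{\sigma(m)}$, so a point of colour $c_m$ is sent to a point of colour $c_{\sigma(m)}$.

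Now take $g \in H_X \cap H_Y$. By Lemma~\ref{lem:colour-equivalence} each $Q_m$ is colour-equivalent to $Q_m^g$. Since $\ell \geqs 3$ (so $\ell - 1 \geqs 2$), the only non-monochrome parts of $Y$ are $Q_i$ and $Q_j$, which forces $\{Q_i, Q_j\}^g = \{Q_i, Q_j\}$. I then split into two sub-cases. If $Q_i^g = Q_i$, then the $\ell - 1$ points of $Q_i$ of colour $c_i$ must map to $\ell - 1$ points of $Q_i$ sharing a single colour $c_{\sigma(i)}$; since $Q_i$ contains $\ell - 1$ points of only one colour (namely $c_i$), this forces $\sigma(i) = i$, and then $\beta$, the unique point of $Q_i$ of colour $c_j$, must map to the unique non-$c_i$-coloured point of $Q_i$, namely $\beta$ itself. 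The symmetric argument applied to $Q_j^g = Q_j$ yields $\alpha^g = \alpha$. If instead $Q_i^g = Q_j$, the same counting forces $\sigma(i) = j$ (since $c_j$ is the only colour appearing $\ell - 1$ times in $Q_j$), and then $\beta$ must be sent to the unique $c_i$-coloured point of $Q_j$, namely $\alpha$; symmetrically $\alpha^g = \beta$. In either case $g$ preserves $\{\alpha, \beta\}$ setwise, as required.

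The only subtle step is the counting argument using $\ell - 1 \geqs 2$, and this is precisely where the hypothesis $\ell \geqs 3$ is indispensable: for $\ell = 2$ every $Q_m$ would be a two-point set carrying at most two distinct colours, and one could no longer distinguish $\{Q_i, Q_j\}$ from the other parts by colour shape, nor pin down the images of $\alpha$ and $\beta$ uniquely. Apart from this, everything reduces to bookkeeping with the colour-equivalence relation, which should be routine.
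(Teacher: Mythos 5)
Your proposal is correct and follows essentially the same route as the paper: both use the colouring set-up and Lemma \ref{lem:colour-equivalence} to force $\{Q_i,Q_j\}^g = \{Q_i,Q_j\}$ and then pin down the images of $\alpha$ and $\beta$, with your version merely making explicit the counting (via $\ell-1\geqs 2$) that the paper leaves implicit in the phrase about same-coloured points mapping to same-coloured points.
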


\begin{proof}
For each $s \in \{1, \ldots,k\}$, assign the colour $c_s$ to the points in $P_s$, where $c_1, \ldots, c_k$ are distinct. Note that 
\[
Q_s = 
\left\{\begin{array}{ll}
P_s & \mbox{if $s\neq i, j$} \\
(P_{i}\setminus \{\alpha\})\cup \{\beta\} & \mbox{if $s = i$} \\
(P_{j}\setminus \{\beta\})\cup \{\alpha\} & \mbox{if $s = j$}
\end{array}\right.
\] 
for all $s \in \{1, \ldots, k\}$. Fix an element $g \in H_X\cap H_Y$.

Suppose $Q_i^g \neq Q_i$. Then since $Q_s = P_s$ is monochrome for all $s \neq i, j$, Lemma \ref{lem:colour-equivalence} implies that $Q_i^g = Q_j$ is the only option. Moreover, since any two points in $[n]$ of the same colour must be mapped to points of the same colour by any element in $H_X$, it follows that $\beta^g = \alpha$. Similarly, if $Q_i^g = Q_i$ then $\beta^g = \beta$.

If we now repeat the above argument, with $Q_i$ replaced by $Q_j$, we deduce that either $g$ interchanges $Q_i$ and $Q_j$, in which case $\beta^g = \alpha$ and $\alpha^g = \beta$, or $g$ fixes both $Q_i$ and $Q_j$, which means that $\a^g=\a$ and $\b^g = \b$. The result follows.
\end{proof}

As an application of Lemma \ref{lem:swap} we can now establish the following technical result, which will be an important ingredient in our proof of Proposition \ref{prop:rimprim}.

\begin{lem}\label{lem:pointwise}
Suppose $n = 2\ell$ is even and $A$ is an $m$-element subset of $[n]$ with $3\leqs m < n$. Then there exist $\ell$-partitions $X_1, \ldots, X_m$ of $[n]$ such that $\bigcap_{i=1}^m H_i$ fixes $A$ pointwise, where $H_i$ is the stabiliser of $X_i$ in $S_n$.
\end{lem}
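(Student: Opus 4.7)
My plan is to construct all $m$ partitions from a single base partition $X_1 = \{P_1, Q_1\}$ by setting $X_i := X_1^{t_i}$ for $i \geqs 2$, where each $t_i$ is a transposition swapping a point of $P_1$ with a point of $Q_1$. Lemma \ref{lem:swap} then forces $H_1 \cap H_i$ to fix the support of $t_i$ setwise, so the task reduces to choosing the transpositions so that their simultaneous setwise-fix conditions pin down every element of $A = \{a_1, \ldots, a_m\}$ individually.

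The construction splits into two cases according to whether $A \setminus \{a_1\}$ fits inside a single part of size $\ell$. If $m \leqs \ell + 1$, I take $X_1$ with $a_1 \in P_1$ and $\{a_2, \ldots, a_m\} \subseteq Q_1$, padding both parts up to size $\ell$ using elements of $[n] \setminus A$ (there is enough room since $|[n] \setminus A| = 2\ell - m$), and set $X_i := X_1^{(a_1, a_i)}$ for $i = 2, \ldots, m$. If instead $m > \ell + 1$, then $A$ cannot be packed this way, so I introduce an auxiliary pivot: place $a_2, \ldots, a_{\ell+1}$ in $Q_1$ and $\{a_1, a_{\ell+2}, \ldots, a_m\} \cup ([n] \setminus A)$ in $P_1$ (the sizes match exactly), and then set $X_i := X_1^{(a_1, a_i)}$ for $2 \leqs i \leqs \ell+1$ and $X_i := X_1^{(a_2, a_i)}$ for $\ell+2 \leqs i \leqs m$. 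The main subtlety lies in this second case: once $|A|$ outgrows a single part the naive one-pivot construction is no longer feasible, and the auxiliary pivot $a_2$ is needed to handle the overflow indices.

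To verify the construction, fix $g \in H := \bigcap_{i=1}^{m} H_i$. Lemma \ref{lem:swap} applied to each pair $(X_1, X_i)$ with $2 \leqs i \leqs \min(m, \ell+1)$ gives $g\{a_1, a_i\} = \{a_1, a_i\}$; the hypothesis $m \geqs 3$ guarantees at least two such conditions, and $\{a_1, a_2\} \cap \{a_1, a_3\} = \{a_1\}$ forces $g(a_1) = a_1$, whence also $g(a_i) = a_i$ for every $i$ in this first family. In the easy case this already covers $A$. In the remaining case we have just shown $g(a_2) = a_2$, and Lemma \ref{lem:swap} applied to each pair $(X_1, X_i)$ with $\ell+2 \leqs i \leqs m$ yields $g\{a_2, a_i\} = \{a_2, a_i\}$ setwise, which combined with $g(a_2) = a_2$ gives $g(a_i) = a_i$. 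In both cases $g$ fixes $A$ pointwise, completing the proof.
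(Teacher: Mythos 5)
Your proof is correct and takes essentially the same approach as the paper: both arguments rest on Lemma \ref{lem:swap} and split into the cases $m \leqs \ell+1$ and $m > \ell+1$, the only difference being that you obtain every partition from one fixed base partition via transpositions through a pivot $a_1$ (and a secondary pivot $a_2$), giving a ``star'' of setwise-fixed $2$-sets, whereas the paper chains consecutive transpositions and overlapping $2$-sets. Either configuration pins down $A$ pointwise, so your argument goes through (and, exactly as in the paper's proof, the appeal to Lemma \ref{lem:swap} implicitly requires the part size $\ell \geqs 3$).
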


\begin{proof}
It suffices to show that there exists a collection of $\ell$-partitions $Y_1, \ldots, Y_m$ with respective stabilisers $L_1, \ldots, L_m$ in $S_n$ such that $L = \bigcap_{i = 1}^m L_i$ fixes some $m$-set $B$ pointwise. Indeed, the partitions $X_i = Y_i^g$ will then have the required property, where $g \in S_n$ is chosen such that $B^g = A$.

We will define appropriate partitions $Y_i$, with parts labelled $P_{i,1}$ and $P_{i,2}$, and stabiliser $L_i$ in $S_n$. For $Y_1$ and $Y_2$ we set 
\[
P_{1, 1} = \{1, 2, \ldots, \ell\}, \;  P_{1, 2} = \{\ell+1, \ell + 2, \ldots, n\}
\]
and
\[
P_{2, 1} = \{2, 3, \ldots, \ell + 1\}, \; P_{2, 2} = \{1, \ell + 2, \ldots, n\}.
\]
We then define $Y_3, \ldots, Y_m$ iteratively, treating the cases $m \leqs \ell + 1$ and $m > \ell + 1$ separately.

First assume $m \leqs \ell + 1$. Here we define 
\begin{equation}\label{e:pi}
P_{i, 1} = \{1, \ldots, \ell + 1\}\setminus \{i - 1\},\; 
P_{i, 2} = \{\ell + 2, \ldots, n\} \cup \{i - 1\}
\end{equation}
for $i \geqs 3$, which means that  
\[
P_{2, 1} = (P_{1, 1}\setminus \{1\})\cup \{\ell + 1\}, \; P_{2, 2} = (P_{1, 2} \setminus \{\ell + 1\})\cup \{1\}
\]
and 
\[
P_{i , 1} = (P_{i - 1, 1} \setminus \{i - 1\})\cup \{i - 2\},\; P_{i , 2} = (P_{i - 1, 2} \setminus \{i - 2\})\cup \{i - 1\}
\]
for all $i \geqs 3$. Therefore, Lemma \ref{lem:swap} implies that $L_1 \cap L_2$ fixes $\{1, \ell + 1\}$ setwise, and similarly $L_{i - 1}\cap L_i$ fixes $\{i - 2, i - 1\}$ setwise for each $i \geqs 3$. In particular, $L = \bigcap_{i = 1}^m L_i$ fixes each of the following $2$-sets 
\[
\{1, \ell + 1\}, \{1, 2\}, \{2, 3\}, \ldots, \{m - 2, m - 1\},
\]
and hence $L$ fixes the $m$-set $\{1, 2, \ldots, m - 1, \ell + 1\}$ pointwise.

Finally, suppose $m > \ell + 1$. For $3\leqs i \leqs \ell + 1$ we define the two parts of $Y_i$ as in \eqref{e:pi}, and for $i> \ell + 1$ we set
\[
P_{i, 1} = \{2, \ldots, \ell\} \cup \{i\},\; 
P_{i, 2} = \{1, \ell + 1, \ldots, n\} \setminus \{i\}.
\]
As above, we deduce that $\bigcap_{i = 1}^{\ell+1} L_i$ fixes $\{1, \ldots, \ell+1\}$ pointwise. For $i = \ell + 2$, note that
\[
P_{i , 1} = (P_{2, 1} \setminus \{i - 1\})\cup \{i\},\; P_{i , 2} = (P_{2, 2} \setminus \{i\})\cup \{i - 1\}
\] 
and for $i > \ell + 2$ we have 
\[
P_{i , 1} = (P_{i - 1, 1} \setminus \{i - 1\})\cup \{i\},\; P_{i , 2} = (P_{i - 1, 2} \setminus \{i\})\cup \{i - 1\}.
\]
Therefore, by applying Lemma \ref{lem:swap}, we deduce that $\bigcap_{i = \ell+2}^{m} L_i$ fixes each of the $2$-sets 
\[
\{\ell + 1, \ell+2\}, \{\ell+2,\ell+3\}, \ldots, \{m - 1, m\}
\]
setwise and thus $L = \bigcap_{i = 1}^m L_i$ fixes $\{1, \ldots, m\}$ pointwise.
\end{proof}

We are now ready to present the following lemma, which will play an essential role in our proof of Proposition \ref{prop:rimprim}. It will also be used in Section \ref{ss:final}, where we complete the proof of Theorem \ref{t:main1}. The proof is long and technical, so we partition the argument into a number of separate and clearly defined cases.

\begin{lem}\label{lem:2sets}
Let $G = S_n$ or $A_n$, and let $X_i$ be an $a_i$-partition of $[n]$ for $i = 1,2$, where $a_1 \geqs a_2$. Let $\a,\b \in [n]$ be distinct and assume that either 

\vspace{1mm}

\begin{itemize}\addtolength{\itemsep}{0.3\baselineskip}
\item [{\rm (i)}] $n/2> a_1 \geqs 3$; or 
\item [{\rm (ii)}] $a_1 = n/2$, $a_2 \geqs 3$ and $\alpha$ and $\beta$ are contained in different parts of $X_1$.
\end{itemize}
Then there exists an element $g \in G$ such that $H_1\cap H_2^g$ fixes $\{\alpha, \beta\}$ setwise, where $H_i$ is the stabiliser of $X_i$ in $G$.
\end{lem}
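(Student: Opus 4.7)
The plan is to use the transitive action of $G$ (being $S_n$ or $A_n$) on the set of $a_2$-partitions of $[n]$: since the stabiliser of any such partition contains odd permutations when $a_2 \geqs 2$, both $S_n$ and $A_n$ act transitively, so it suffices to construct by hand some $a_2$-partition $Y$ of $[n]$ for which $H_1 \cap H_Y$ preserves $\{\alpha, \beta\}$ setwise; the required $g \in G$ is then any element with $X_2^g = Y$, giving $H_2^g = H_Y$. The analysis of $Y$ will use a colour-coded adaptation of Lemma \ref{lem:swap}: colour every point of $P_s$ by $c_s$, record the colour multiset $\chi(Q)$ of each $Y$-part $Q$, and observe that any $g \in H_1 \cap H_Y$ induces a permutation $\pi$ of $\{c_1, \ldots, c_k\}$ satisfying $\chi(g(Q)) = \pi(\chi(Q))$. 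The goal is to design $Y$ so that the distinguished $Y$-parts containing $\alpha$ and $\beta$ have multisets whose only multiset-preserving colour permutations fix or transpose the two colours hosting $\alpha$ and $\beta$.

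The main construction applies when $\alpha \in P_i$ and $\beta \in P_j$ lie in distinct parts of $X_1$, covering case (ii) and subcase (i.b). I would set
\[
Q_\alpha = \{\alpha\} \cup T_j, \qquad Q_\beta = \{\beta\} \cup T_i,
\]
where $T_j \subseteq P_j \setminus \{\beta\}$ and $T_i \subseteq P_i \setminus \{\alpha\}$ each have size $a_2 - 1$. For $a_2 \geqs 3$ the multisets $\chi(Q_\alpha) = \{c_i, c_j^{a_2-1}\}$ and $\chi(Q_\beta) = \{c_j, c_i^{a_2-1}\}$ are distinct and have the characteristic ``$(a_2-1,1)$'' shape. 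The remaining $n - 2a_2$ points are then split into $a_2$-parts whose multisets avoid this shape: monochrome parts when $a_2 \mid a_1$ (since then each $P_s \setminus (Q_\alpha \cup Q_\beta)$ has a size divisible by $a_2$), and otherwise balanced parts of multiset $\{c_i^{a_2/2}, c_j^{a_2/2}\}$ (feasible in case (ii) because $a_2 \mid n = 2a_1$ forces $a_2$ even in this sub-situation, giving $a_2 \geqs 4$), or rainbow parts involving additional colours in subcase (i.b) where $k \geqs 3$ supplies the extra freedom. A standard multiset analysis then forces $\pi$ to either fix $\{c_i, c_j\}$ pointwise, giving $g(\alpha) = \alpha$ and $g(\beta) = \beta$ via the singleton intersections $Q_\alpha \cap P_i = \{\alpha\}$ and $Q_\beta \cap P_j = \{\beta\}$, or to swap $c_i$ and $c_j$, giving $g(\alpha) = \beta$ and $g(\beta) = \alpha$. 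The extreme case $a_2 = a_1 = n/2$ of (ii) reduces directly to Lemma \ref{lem:swap} with $Y = X_1^{(\alpha,\beta)}$.

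The principal obstacle is subcase (i.a), where $\alpha, \beta \in P_1$ and $H_1$ contains the full symmetric group on $P_1$, so the two points cannot be separated using $X_1$ alone. Here I would place both $\alpha$ and $\beta$ in a single $Y$-part of the form
\[
Q = \{\alpha, \beta, q_3, \ldots, q_{a_2}\},
\]
choosing the $q_r$ so that $c_1$ appears the \emph{unique} maximum number of times in $\chi(Q)$ (simplest: $q_r \in P_{s_r}$ with $s_3, \ldots, s_{a_2}$ distinct and different from $1$, valid when $a_2 \leqs k+1$; when $a_2 > k+1$ one also inserts further $P_1$-elements into $Q$ and uses a secondary distinguished $Y$-part to pin $\alpha, \beta$ individually inside $Q \cap P_1$). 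The unique-maximum colour then forces $\pi(c_1) = c_1$, hence $g(P_1) = P_1$, and by designing the remaining $a_2$-parts of $Y$ to have colour multisets distinct (under colour permutation) from $\chi(Q)$ — which is where the delicate divisibility bookkeeping lives, and where the hypotheses $a_1 \geqs 3$ and $k \geqs 3$ are essential — we force $g(Q) = Q$ and thus $g(\{\alpha,\beta\}) = g(Q \cap P_1) = Q \cap P_1 = \{\alpha, \beta\}$. The residual situation $a_2 = 2$ (only in case (i), where the doubled-colour trick collapses) is handled separately by taking $Y$ to be a matching containing the edge $\{\alpha, \beta\}$ and choosing the remaining edges, again using $k \geqs 3$ and $a_1 \geqs 3$, so as to break the rainbow symmetry between $\{\alpha,\beta\}$ and other edges of the same colour type.
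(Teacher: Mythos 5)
Your overall framework (transitivity of $G$ on $a_2$-partitions, colouring by the parts of $X_1$, and forcing the induced colour permutation to respect $\{c_i,c_j\}$) matches the paper's, and your treatment of the case where $\alpha$ and $\beta$ lie in \emph{different} parts of $X_1$ is essentially the paper's Case 1(a) and case (ii). The genuine gap is in the subcase where $\alpha,\beta$ lie in the \emph{same} part $P_1$ of $X_1$ with $a_2\geqs 3$, which your proposal handles by placing both points in a single $Y$-part $Q=\{\alpha,\beta,q_3,\ldots,q_{a_2}\}$ and arguing that the unique doubled colour pins down $\{\alpha,\beta\}=Q\cap P_1$. This fails already for $n=9$, $a_1=a_2=3$, $P_1=\{1,2,3\}$, $P_2=\{4,5,6\}$, $P_3=\{7,8,9\}$, $\alpha=1$, $\beta=2$: if $Q=\{1,2,4\}$ (colour type $(2,1)$), then the six remaining points have colour multiset $\{c_1,c_2,c_2,c_3,c_3,c_3\}$, and \emph{every} way of splitting this into two $3$-sets produces another part of type $(2,1)$, so the "no other part of the same shape" design is unachievable; worse, for the completion $Y=\{\{1,2,4\},\{3,5,6\},\{7,8,9\}\}$ the element $x=(1\,5)(2\,6)(3\,4)$ (or $x\cdot(7\,8)$ in $A_9$) lies in $H_1\cap H_Y$ and sends $\alpha$ to $5$. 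Taking $Q=P_1$ instead fails too, since $(1\,3\,2)\in H_1\cap H_Y$ permutes $Q\cap P_1$ arbitrarily. In fact no $3$-partition putting $1$ and $2$ in a common part works here, so the basic configuration is wrong, not just the verification.

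The correct configuration in this subcase — used in the paper — is to separate $\alpha$ and $\beta$ into two different $Y$-parts, each as the \emph{unique} point of colour $c_1$ in its part, with distinct companion colours $c_2$ and $c_3$ (this is where $k\geqs 3$, i.e.\ $a_1<n/2$, is really used). Even then one cannot in general arrange that no other part has colour type $(1,a_2-1)$, and the paper's proof runs an iterative repair: whenever some $x$ maps $Q_1$ to a third part of the same type, one replaces $Y$ by $Y^{(\gamma,\delta)}$ for a carefully chosen transposition, and termination is guaranteed because the number of parts colour-equivalent to $Q_1$ strictly decreases. Your proposal contains no substitute for either ingredient, so the central subcase of part (i) is not established. (Your $a_2=2$ sketch is also only a sketch — the different-parts case needs a parity split on $a_1$ as in the paper — but the same-part, $a_2\geqs 3$ subcase is the fatal omission.)
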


\begin{proof}
Write $k = n/a_1$, $m = n/a_2$ and let $P_1, \ldots, P_k$ be the parts of $X_1$. We assign the colour $c_i$ to the points in $P_i$, where $c_1, \ldots, c_k$ are distinct, and we let $\chi$ be the corresponding colour function on $[n]$. In order to prove the lemma, we need to construct an $a_2$-partition $X_3$ of $[n]$ with stabiliser $H_3$ such that $H_1 \cap H_3$ fixes $\{\a,\b\}$ setwise. Indeed, the transitivity of $G$ on the set of $a_2$-partitions of $[n]$ implies that $H_3 = H_2^g$ for some $g \in G$ and the desired result follows.

First assume (i) holds, so $n/2> a_1 \geqs 3$ and $k = n/a_1 \geqs 3$. There are two different cases to consider, according to whether or not $a_2 = 2$.

\vs

\noindent \emph{Case 1.} $n/2 > a_1 \geqs a_2\geqs 3$

\vs

We will write $P_i = \{p_{i1}, \ldots, p_{ia_1}\}$ for each $i$ and we consider two subcases, according to the position of $\a$ and $\b$ in the parts of $X_1$.

\vs

\noindent \emph{Case 1(a).} \emph{$\alpha$ and $\beta$ are in different parts of $X_1$}

\vs

Without loss of generality, we may assume that $\alpha = p_{11} \in P_1$ and $\beta = p_{21} \in P_2$. We first construct an $a_2$-partition $Y$ of $[n]$ with parts $Q_1, \ldots, Q_m$ and stabiliser $H_Y$ in $G$ by setting 
\[
Q_1 = \{\alpha, p_{22}, \ldots, p_{2a_2}\},\;\; Q_2 = \{\beta, p_{12}, \ldots, p_{1a_2}\}
\]
and we define $Q_3, \ldots, Q_m$ by partitioning the remaining points in $[n]$ into sets of size $a_2$ in the following order:
\[
p_{1(a_2 + 1)}, \ldots, p_{1a_1}, p_{31}, \ldots, p_{3a_1}, p_{2(a_2 + 1)}, \ldots, p_{2a_1}, p_{41}, \ldots, p_{4a_1}, \ldots, p_{k1}, \ldots, p_{ka_1}.
\]
Notice that $Q_1$ and $Q_2$ are the only parts of $Y$ with respective colour sequences $(c_1, c_2, \ldots, c_2)$ and $(c_2, c_1, \ldots, c_1)$. Moreover, if $i \ne j$ then our construction implies that $Y$ has parts with colour sequences $(c_i, c_j, \ldots, c_j)$ and $(c_j, c_i, \ldots, c_i)$ if and only if $\{i, j\} = \{1, 2\}$.

Let $x \in H_1\cap H_Y$. Then Lemma \ref{lem:colour-equivalence} implies that  $Q_1^x$ has colour sequence $(c_i, c_j, \ldots, c_j)$ for some $i\neq j$. Since $a_2 \geqs 3$, we deduce that $P_1^x = P_i$ and $P_2^x = P_j$, which in turn implies that $Q_2^x$ has colour sequence $(c_j, c_i, \ldots, c_i)$. So as noted above, we have $(i,j) = (1,2)$ or $(2,1)$. If $(i,j) = (1,2)$ then $Q_1^x = Q_1$ and we deduce that $\a^x = \a$. On the other hand, if $(i,j) = (2,1)$ then $Q_1^x = Q_2$ and $\a^x = \b$. 

Similarly, by repeating the argument with $Q_1$ replaced by $Q_2$, we deduce that $\b^x \in \{\a,\b\}$. Therefore, $H_1 \cap H_Y$ fixes $\{\a,\b\}$ setwise, as required.

\vs 

\noindent \emph{Case 1(b).} \emph{$\alpha$ and $\beta$ are contained in the same part of $X_1$}

\vs

By relabelling, we may assume that $\a,\b \in P_1$, say $\alpha = p_{11}$ and $\beta = p_{12}$. We now define a new $a_2$-partition $Y$ of $[n]$ with parts $Q_1, \ldots, Q_m$ and stabiliser $H_Y$ in $G$. To construct $Y$, we first set  
\[
Q_1 = \{\alpha, p_{21}, \ldots, p_{2(a_2-1)}\},\;\; Q_2 = \{\beta, p_{31}, \ldots, p_{3(a_2 - 1)}\},
\]
so  $Q_1$ and $Q_2$ have respective colour sequences $(c_1,c_2, \ldots, c_2)$ and $(c_1, c_3, \ldots, c_3)$. We then divide the remaining points of $[n]$ into parts $Q_3, \ldots, Q_m$ of size $a_2$ in such a way that none of these parts has colour sequence $(c_1, c_i, \ldots, c_i)$ with $i \geqs 2$. 

Observe that we can always construct such a partition. To justify this claim, first assume $a_1 = 3$, so $a_2 = 3$ and $k =m \geqs 3$. Here we can set $Q_3 = \{p_{13}, p_{23}, p_{33}\}$ and then define $Q_4, \ldots, Q_m$ arbitrarily, noting that none of the latter parts will contain any points with colour $c_1$. Now assume $a_1 \geqs 4$ and write $a_1-2 = sa_2+r$, where $s \geqs 0$ and $0 \leqs r < a_2$. Here we can define $Q_3, \ldots, Q_{s+2}$ so that they all have colour sequence $(c_1, \ldots, c_1)$. If $r\ne 1$ then we can define $Q_{s+3}$ so that it contains exactly $r$ points with colour $c_1$, while the remaining parts $Q_{s+4}, \ldots, Q_m$ contain no points from $P_1$. And if $r=1$ then we can choose $Q_{s+3}$ so that it contains points from $P_1$, $P_2$ and $P_3$, while none of the remaining parts in $Y$ contain any points with colour $c_1$. This justifies the claim. 

Before continuing the analysis of Case 1(b), let us pause to provide a brief outline of the argument. We are seeking to construct an $a_2$-partition $W$ of $[n]$ with stabiliser $H_W$, containing the parts $Q_1$ and $Q_2$ defined above, with the property that $\{Q_1,Q_2\}^x = \{Q_1,Q_2\}$ for all $x \in H_1 \cap H_W$. Indeed, if we have such a partition, then $H_1 \cap H_W$ fixes $\{\a,\b\}$ and the result follows. Set $Y_0 = Y$, which is the $a_2$-partition defined above. If $Y_0$ does not have the desired property, then we will construct a new partition $Y_1 = Y_0^g$ with parts $Q_i^g$ for some carefully chosen transposition $g \in S_n$. We can then repeat this process and we will prove that it produces a partition $W = Y_t$ with the desired property after finitely many iterations.

We start by inspecting the partition $Y$ constructed above. Fix an element $x \in H_1\cap H_Y$ and consider the image of $Q_1$ under $x$. 

First assume $Q_1^x = Q_1$. Here $P_1^x = P_1$ and $P_2^x = P_2$ since $a_2 \geqs 3$, whence $\a^x =\a$ and $Q_2^x$ has colour sequence $(c_1, c_i, \ldots, c_i)$ for some $i \geqs 2$. Then by the construction of $Y$, it follows that $Q_2^x = Q_2$ and thus $\b^x = \b$. Similarly, if $Q_1^x = Q_2$ then $P_1^x = P_1$ and $\alpha^x = \beta$, which in turn implies that $Q_2^x = Q_1$ and $\b^x = \a$. 

So if $Q_1^x \in \{Q_1,Q_2\}$ for all $x \in H_1 \cap H_Y$, then $H_1 \cap H_Y$ fixes $\{\a,\b\}$ and we are done. So let us assume there exists an element $x \in H_1 \cap H_Y$ such that $Q_1^x = Q_a$ for some $a \geqs 3$. Now $Q_1^x$ has colour sequence $(c_i, c_j, \ldots, c_j)$, where $i \ne j$ and $i \geqs 2$. 
More precisely, $\chi(\alpha^x) = c_i$ and therefore $\chi(\beta^x) = c_i$ since $\a$ and $\b$ are in the same part of $X_1$. It follows that $Q_2^x$ has colour sequence $(c_i, c_\ell, \ldots, c_\ell)$ for some $\ell \ne i$, and we note that $\ell \ne j$ since $P_2^x \ne P_3^x$. Fix $\gamma_1\in Q_1^x$ and $\delta_1\in Q_2^x$ such that $\chi(\gamma_1) = c_j$ and $\chi(\delta_1) = c_\ell$, and let $g_1 \in S_n$ be the transposition $(\gamma_1, \delta_1)$. Note that the parts of $Y$ containing $\gamma_1$ and $\delta_1$ are both colour-equivalent to $Q_1$.

At this point, we now switch to the $a_2$-partition $Z = Y_1 = Y^{g_1}$, with parts $R_\ell = Q_\ell^{g_1}$ for $1 \leqs \ell \leqs m$. Let $H_{Z}$ be the stabiliser of $Z$ in $G$ and let $x \in H_1\cap H_Z$. Notice that $R_1 = Q_1$ and $R_2 = Q_2$ since $\gamma_1, \delta_1 \not\in Q_1\cup Q_2$. 

By arguing as above, we deduce that either $R_1^x = R_1$ and $\a^x = \a$, or $R_1^x = R_2$ and $\a^x = \b$, or $R_1^x = R_a$ for some $a \geqs 3$. Let us assume that we are in the latter situation. Then as above, we note that $R_1^x$ has colour sequence $(c_i, c_j, \ldots, c_j)$ for some $i\geqs 2$ with $i \ne j$, and it follows that $R_2^x$ has colour sequence $(c_i, c_\ell, \ldots, c_\ell)$ for some $\ell \ne i,j$. We now choose $\gamma_2\in R_1^x$ and $\delta_2\in R_2^x$ such that $\chi(\gamma_2) = c_j$ and $\chi(\delta_2) = c_\ell$, and we construct the $a_2$-partition $Y_2 = Y_1^{g_2} = Y^{g_1g_2}$, where $g_2 = (\gamma_2, \delta_2) \in S_n$.

We can now repeat this process until after $t$ steps we obtain an $a_2$-partition $W = Y_t = Y^{g}$ of $[n]$ containing the parts $Q_1$ and $Q_2$, where $g = g_1\cdots g_t$ is a product of transpositions and $Q_1^x \in \{Q_1,Q_2\}$ for all $x \in H_1\cap H_W$. We claim that this process does indeed produce such a partition after finitely many iterations. To see this, first observe that if  $a \in \{2, \ldots, t\}$, then the parts comprising the partition $Y_a = Y^{g_1\cdots g_a}$ are as follows:
\begin{equation}\label{eq:parts}
Q_j^{g_1\cdots g_{a}} = 
\begin{cases}
Q_j^{g_1\cdots g_{a - 1}} & \mbox{ if $\{\gamma_{a}, \delta_{a}\}\not\in Q_j^{g_1\cdots g_{a - 1}}$} \\
Q' = (Q_j^{g_1\cdots g_{a - 1}}\setminus \{\gamma_{a}\})\cup \{\delta_{a }\} &\mbox{ if $\gamma_{a }\in Q_j^{g_1\cdots g_{a - 1}}$} \\
Q'' = (Q_j^{g_1\cdots g_{a - 1}}\setminus \{\delta_{a }\})\cup \{\gamma_{a }\} & \mbox{ if $\delta_{a }\in Q_j^{g_1\cdots g_{a - 1}}$}
\end{cases}
\end{equation}
In particular, notice that the parts of $Y_{a-1} = Y^{g_1\cdots g_{a - 1}}$ containing $\gamma_{a}$ and $\delta_{a}$ are both colour-equivalent to $Q_1 = Q_1^{g_1\cdots g_{a - 1}}$, whereas $Q'$ and $Q''$ are not. Therefore, the number of parts that are colour-equivalent to $Q_1$ strictly decreases with each iteration, and this justifies the claim since our initial partition $Y$ has only finitely many parts.

So we now have an $a_2$-partition $W$ containing the parts $Q_1$ and $Q_2$ such that $Q_1^x \in \{Q_1,Q_2\}$ for all $x \in H_1 \cap H_W$. In particular, this implies that $P_1^x = P_1$ and $\a^x \in \{\a,\b\}$ for all $x \in H_1 \cap H_W$. So to complete the analysis of Case 1(b), it suffices to show that $Q_1$ and $Q_2$ are the only parts in $W$ with colour sequence $(c_1, c_i, \ldots, c_i)$ for some $i\geqs 2$. Indeed, if this property is satisfied then $Q_2^x \in \{Q_1, Q_2\}$ and $\beta^x \in \{\alpha, \beta\}$ for all $x \in H_1\cap H_W$, which means that every element in $H_1 \cap H_W$ fixes $\{\a,\b\}$ setwise, as required.

So it just remains to justify the colour sequence claim, which we will do via induction on $t$, recalling that $W = Y_t = Y^g$ and $g = g_1 \cdots g_t$ as defined above. For the base case $t=0$ we have $W = Y$ and we know that $Y$ has the desired property by construction. Now assume $t \geqs 1$ and suppose that $Q_1$ and $Q_2$ are the only parts in $Y_{t-1}$ with colour sequence $(c_1, c_i, \ldots, c_i)$ for some $i \geqs 2$. From \eqref{eq:parts}, we see that every part in $Y_t$, other than $Q'$ and $Q''$, is also a part of $Y_{t-1}$, so $Q_1$ and $Q_2$ are the only parts in $Y_t \setminus \{Q', Q''\}$ with colour sequence $(c_1, c_i, \ldots, c_i)$ for some $i \geqs 2$. The claim now follows since it is clear to see that neither $Q'$ nor $Q''$ is colour-equivalent to $Q_1$ or $Q_2$.

This completes the proof of the lemma in Case 1.

\vs 

\noindent \emph{Case 2.} $n/2 > a_1 \geqs 3$, $a_2 = 2$

\vs 

In order to complete the proof of the lemma in case (i), we may assume $a_2 = 2$. As above, set $k = n/a_1$ and let $P_1, \ldots, P_k$ be the parts of $X_1$. For each $i$, it will be convenient to write $p_{(i - 1)a_1 + 1}, \ldots, p_{ia_1}$ for the points in $P_i$, to which we assign the colour $c_i$ as before. Let $\chi:[n] \to \{c_1, \ldots, c_k\}$ be the corresponding colour function.

\vs 

\noindent \emph{Case 2(a).} \emph{$\alpha$ and $\beta$ are contained in the same part of $X_1$}

\vs

We may assume $\a,\b \in P_1$, say $\a = p_1$ and $\b = p_2$. Consider the $2$-partition $Y$ of $[n]$ with parts
\[
\{p_1, p_2\}, \{p_3, p_{n/2 + 2}\}, \{p_4, p_{n/2 + 3}\}, \ldots, \{p_{n/2 + 1}, p_n\}
\]
and let $H_Y$ be the stabiliser of $Y$ in $G$. Note that every part of $Y$, other than $\{p_1, p_2\}$, is of the form $\{p_i, p_{n/2 - 1 + i}\}$ for some $i \geqs 3$. Since the parts in $X_1$ have size $a_1 < n/2$, it follows that $p_i$ and $p_{n/2 -1 + i}$ cannot both be contained in the same part of $X_1$. Therefore, no part of $Y$, other than $\{p_1, p_2\}$, is colour-equivalent to $\{\alpha, \beta\}$, and by applying Lemma \ref{lem:colour-equivalence} we deduce that $\{\alpha, \beta\}$ is fixed by $H_1\cap H_Y$. 

\vs 

\noindent \emph{Case 2(b).} \emph{$\alpha$ and $\beta$ are contained in different parts of $X_1$}

\vs

We may assume $\a \in P_1$ and $\b \in P_2$, say $\a = p_1$ and $\b = p_{a_1 + 1}$. We now consider two subcases, according to the parity of $a_1$.

First assume $a_1 \geqs 3$ is odd, which means that $k = n/a_1$ is even. Define a $2$-partition $Y$ of $[n]$ with the following parts 
\[
\begin{array}{l}
\{p_1, p_{a_1 + 1}\}, \{p_2, p_3\}, \ldots, \{p_{a_1-1}, p_{a_1}\},\{p_{a_1+2}, p_{a_1+3}\},\ldots, \{p_{2a_1 - 1},p_{2a_1}\},\\ 
\{p_{2a_1 + 1}, p_{3a_1 + 1}\}, \ldots, \{p_{3a_1}, p_{4a_1}\}, \ldots,  
\{p_{n - 2a_1+1}, p_{n-a_1+1}\},\ldots, \{p_{n - a_1}, p_n\}
\end{array}
\]
Note that every part in the second row is of the form $\{p_i, p_{a_1 + i}\}$ for some $i\geqs 2a_1 + 1$. We claim that each $x \in H_1 \cap H_Y$ fixes $\{\a,\b\}$ setwise.

First note that $\{\a,\b\}$ cannot be mapped by $x$ to any of the following parts
\[
\{p_2, p_3\}, \ldots, \{p_{a_1 - 1}, p_{a_1}\}, \{p_{a_1 + 2}, p_{a_1 + 3}\}, \ldots, \{p_{2a_1 - 1}, p_{2a_1}\}
\]
since the latter are all monochrome (see Lemma \ref{lem:colour-equivalence}).  Now assume $\{\alpha, \beta\}^x = \{p_i, p_{a_1 + i}\}$ for some $i\geqs 2a_1 + 1$. Then since $\alpha\in P_1$ and $\chi(\alpha^x)\neq c_1, c_2$, it follows that $P_1^x = P_j$ for some $j \geqs 3$ and thus $\{p_2, p_3\} \subseteq P_1$ must be mapped by $x$ to some part of $Y$ with colour sequence $(c_j, c_j)$. But no part of $Y$ has such a colour sequence, so this possibility cannot arise and we conclude that $H_1 \cap H_Y$ fixes $\{\a,\b\}$ setwise.

To complete the argument in Case 2(b), we may assume $a_1\geqs 4$ is even. Here we define a $2$-partition $Y$ with parts 
\[
\{p_1, p_{2a_1+1}\}, \{p_{a_1 + 1}, p_{2a_1 + 2}\}, \{p_2, p_{a_1 + 2}\}, \ldots, \{p_{a_1}, p_{2a_1}\}, \{p_{2a_1 + 3}, p_{2a_1 + 4}\}, \ldots, \{p_{n - 1}, p_n\}
\]
and our goal is to show that $H_1 \cap H_Y$ fixes $\{\a,\b\}$ setwise. Let $x \in H_1 \cap H_Y$ and recall that $\a=p_1$ and $\b = p_{a_1+1}$. We proceed by considering the images of the parts $\{p_1, p_{2a_1+1}\}$ and $\{p_{a_1 + 1}, p_{2a_1 + 2}\}$ under $x$.

First observe that $\{p_1, p_{2a_1+1}\}$ and $\{p_{a_1 + 1}, p_{2a_1 + 2}\}$ are non-monochrome, so neither of them can be moved by $x$ to any of the monochrome parts $\{p_{2a_1 + 3}, p_{2a_1 + 4}\}, \ldots, \{p_{n - 1}, p_n\}$.

Next assume $\{p_1, p_{2a_1 + 1}\}^x = \{p_i, p_{a_1 + i}\}$ for some $i\in \{2, \ldots, a_1\}$. This implies that the point $p_{2a_1 + 1} \in P_3$ has colour $c_1$ or $c_2$, so $P_3^x \in \{P_1, P_2\}$ and thus $\{p_{2a_1 + 3}, p_{2a_1 + 4}\}^x$ has colour sequence $(c_j, c_j)$ for some $j \in \{1, 2\}$. However, there is no such part in $Y$, so this situation does not arise. This means that $\{p_1, p_{2a_1 + 1}\}^x \ne \{p_i, p_{a_1 + i}\}$ with $i\in \{2, \ldots, a_1\}$, and similarly we deduce that the same conclusion holds for $\{p_{a_1+1}, p_{2a_1 + 2}\}^x$.

We have now shown that $x$ either fixes the parts $\{p_1, p_{2a_1 + 1}\}$ and $\{p_{a_1 + 1}, p_{2a_1 + 2}\}$, or it interchanges them. 

We claim that if $x$ fixes $\{p_1, p_{2a_1 + 1}\}$, then it must fix $p_1 = \a$. Suppose otherwise, so $\alpha^x = p_{2a_1 + 1}$. Since $x\in H_1$, it follows that $P_1^x = P_3$, so the parts $\{p_2, p_{a_1 + 2}\}, \ldots, \{p_{a_1}, p_{2a_1}\}$ must all be mapped by $x$ to some part of $Y$ with colour sequence $(c_3, c_i)$ for some $i\neq 3$. However, there is only one such part (other than $\{p_1, p_{2a_1 + 1}\}$ itself), namely $\{p_{a_1 + 1}, p_{2a_1 + 2}\}$, so we have reached a contradiction since $a_1\geqs 3$. Similarly, since $x$ fixes $\{p_{a_1 + 1}, p_{2a_1 + 2}\}$, we deduce that $\b^x = \b$. 

Finally, let us assume $x$ interchanges the parts $\{p_1, p_{2a_1 + 1}\}$ and $\{p_{a_1 + 1}, p_{2a_1 + 2}\}$. As in the previous paragraph, we know that $P_1^x \neq P_3$, so we must have $\alpha^x = p_1^x = p_{a_1 + 1} = \beta$. This implies that $P_3^x = P_3$ and therefore $\b^x = \a$. 

In conclusion, every element in $H_1 \cap H_Y$ fixes $\{\a,\b\}$ setwise and this completes the proof of the lemma in case (i).

\vs

Finally, let us assume (ii) holds, so $a_1 = n/2$, $a_2 \geqs 3$ and $\alpha$ and $\beta$ lie in different parts of $X_1$. If $a_2 = n/2$ then the desired conclusion follows from Lemma \ref{lem:swap}, so we may assume that $a_2 < n/2$. As before, set $m = n/a_2$ and note that $m \geqs 3$.

Write $P_1 = \{p_1, \ldots, p_{n/2}\}$ and $P_2 = \{p_{n/2 + 1}, \ldots, p_n\}$ for the parts of $X_1$ and assign the colours $c_1$ and $c_2$, respectively. We may assume that $\alpha = p_1$ and $\beta = p_{n/2 + 1}$. We now construct an $a_2$-partition  $Y$ of $[n]$ with parts $Q_1, \ldots, Q_m$ and stabiliser $H_Y$ in $G$, where 
\[
Q_1 = \{p_1, p_{n/2 + 2}, \ldots, p_{n/2 + a_2}\}, \;\; Q_2 = \{p_{n/2 + 1}, p_2, \ldots, p_{a_2}\}
\]
and we define $Q_3, \ldots, Q_m$ by partitioning the remaining points in $[n]$ into subsets of size $a_2$ with respect to the ordering $p_{a_2+1}, \ldots, p_{n/2}, p_{n/2 +a_2+1}, \ldots, p_n$. To complete the proof, it suffices to show that every element in $H_1 \cap H_Y$ fixes $\{\a,\b\}$ setwise.

To see this, we first claim that $Q_2$ is the only part of $Y$ which is colour-equivalent to $Q_1$ (other than $Q_1$ itself). Seeking a contradiction, suppose that $Q_i$ is colour-equivalent to $Q_1$ for some $i\geqs 3$. From the construction of $Y$, this is only possible if $n/2 \equiv \pm 1 \imod{a_2}$, which means that $a_2$ divides $n \mp 2$ and $n$. But this is absurd since $a_2 \geqs 3$. This justifies the claim. Similarly, $Q_1$ is the only part of $Y$, other than $Q_2$, which is colour-equivalent to $Q_2$.

Let $x \in H_1 \cap H_Y$ and suppose that $Q_1^x \ne Q_1$. From the previous claim we see that $Q_1^x = Q_2$ and $Q_2^x = Q_1$, which in turn implies that $\a^x = \b$ and $\b^x = \a$ since $a_2 \geqs 3$. Finally, let us assume $Q_1^x = Q_1$. Then $Q_2^x = Q_2$ and we deduce that $\alpha^x = \alpha$ and $\b^x = \b$. So in all cases, we conclude that $x$ fixes $\{\a,\b\}$ setwise and the proof of the lemma is complete.
\end{proof}

\begin{rem}
Notice that in Lemma \ref{lem:2sets}(ii) we assume the points $\a$ and $\b$ are contained in different parts of $X_1$. Indeed, the conclusion is sometimes false if we drop this condition (for instance, it is straightforward to check that it fails to hold when $a_1 = a_2 = n/2$).
\end{rem}

We need one more ingredient before we are in position to prove our main result on imprimitive tuples. This is a special case of a more general result on base sizes for the action of $S_n$ and $A_n$ on partitions (see \cite[Proposition 2.4 and Remark 2.8]{BGL2}). 

\begin{prop}\label{p:msbase}
Let $G = S_n$ or $A_n$, where $n \geqs 8$ is even, and let $H = S_2 \wr S_{n/2}$ be the stabiliser in $S_n$ of a $2$-partition of $[n]$. Then there exist $x,y \in G$ such that $H \cap H^x \cap H^y = 1$.
\end{prop}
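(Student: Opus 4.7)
The plan is to construct three explicit $2$-partitions $X_1, X_2, X_3$ of $[n]$ whose joint setwise stabiliser in $S_n$ is trivial, and then to verify that the corresponding conjugating elements can be taken in $G$. Since $G$ is transitive on the set of $2$-partitions of $[n]$ (the $S_n$-orbit has size $|S_n|/|H|$, and $H$ contains odd permutations so the $A_n$-orbit is the same), it suffices to exhibit $X_2 = X_1^x$ and $X_3 = X_1^y$ with $H \cap H^x \cap H^y = 1$; one then adjusts $x, y$ to lie in $A_n$ by multiplying on the right by an odd element of $H$ (for example the transposition $(1,2)$) if necessary.

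First I would set $X_1 = \{\{2i-1, 2i\} : 1 \leqs i \leqs n/2\}$, whose stabiliser in $S_n$ is $H = H_1$. Next I would take $X_2 = \{\{2,3\}, \{4,5\}, \ldots, \{n-2, n-1\}, \{n,1\}\}$, chosen so that $X_1 \cup X_2$, viewed as a $2$-regular graph on $[n]$, is a single Hamiltonian $n$-cycle $1 - 2 - 3 - \cdots - n - 1$ whose edges alternate between the two matchings. Writing $H_i$ for the stabiliser of $X_i$ in $S_n$, any $g \in H_1 \cap H_2$ must induce an automorphism of this $n$-cycle that preserves the alternating $2$-colouring of edges. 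Such automorphisms form a dihedral group of order $n$, generated by rotation by two positions around the cycle together with a suitable reflection; in particular $|H_1 \cap H_2| = n$.

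For $X_3$ I would choose a matching whose $n/2$ pairs, viewed as chords of the Hamiltonian cycle above, form an asymmetric configuration with respect to the dihedral group $H_1 \cap H_2$. A natural recipe is to include chords of several distinct cycle-distances placed so that the resulting multiset of (labelled) chord-positions is fixed by no non-trivial rotation or reflection of the cycle; a convenient anchor is the unique antipodal chord $\{1, n/2+1\}$, together with further chords of distinct lengths positioned so as to break both the cyclic and the reflective symmetries. Any $g \in H_1 \cap H_2 \cap H_3$ then lies in the dihedral group $H_1 \cap H_2$ and permutes the chord-set of $X_3$, hence is forced to be the identity.

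The main obstacle is producing a uniform, explicit $X_3$ and verifying its asymmetry for every even $n \geqs 8$. The verification is likely to split into subcases according to the residue of $n$ modulo $4$, since the antipodal chord $\{1, n/2+1\}$ interacts differently with $X_1$ and $X_2$ depending on the parity of $n/2$, and small values of $n$ (notably $n = 8$) may require a slightly different choice of $X_3$ from the generic construction; these low cases can if necessary be handled directly by exhibiting the three partitions and computing the intersection in $S_n$.
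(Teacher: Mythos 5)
Your reduction is sound as far as it goes: taking $X_1$ to be the standard matching and $X_2$ its shift, the union is a Hamiltonian cycle whose edges alternate between the two matchings; the joint stabiliser $H_1\cap H_2$ is then the dihedral group of order $n$ generated by rotation by two steps and an edge-midpoint reflection, and the passage from $S_n$ to $A_n$ by multiplying $x$ and $y$ by an odd element of $H$ is legitimate. But the argument stops exactly where the content of the proposition lies. You never write down the third $2$-partition $X_3$, and you never verify that no nontrivial element of this dihedral group preserves it; you yourself flag this as ``the main obstacle''. The step is not routine bookkeeping: $X_3$ must be a perfect matching of all $n$ points, and the antipodal chord $\{1,n/2+1\}$ you propose as an anchor is itself fixed by a nontrivial element of $H_1\cap H_2$ for every even $n$ (the rotation by $n/2$ when $n\equiv 0 \pmod 4$, and an edge-midpoint reflection when $n\equiv 2 \pmod 4$), so the whole burden of killing the surviving rotations and reflections falls on the unspecified remaining chords, uniformly in $n$ and with the mod $4$ case distinctions you anticipate. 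Until a concrete $X_3$ is exhibited and $H_1\cap H_2\cap H_3=1$ is checked for every even $n\geqs 8$, what you have is a plausible programme rather than a proof.

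For context, the paper does not give a direct construction either: it deduces the proposition as a special case of \cite[Proposition 2.4 and Remark 2.8]{BGL2}, a more general result on base sizes for the actions of $S_n$ and $A_n$ on uniform partitions. So your cycle-plus-asymmetric-chord-diagram approach, if completed, would yield a short self-contained alternative for this special case; as it stands, the decisive construction and verification are missing.
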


We are now ready to prove our main result on imprimitive tuples.

\begin{prop}\label{prop:rimprim}
Let $n \geqs 6$ be a composite integer. Then 
\[
R_{\rm imprim}(S_n) \leqs n-1,\;\; R_{\rm imprim}(A_n) \leqs n-2.
\]
\end{prop}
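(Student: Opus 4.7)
The plan is to convert imprimitive constraints into setwise $2$-set constraints by applying Lemma~\ref{lem:2sets} with a fixed base subgroup, and then to collapse the resulting intersection via a path-of-$2$-sets argument analogous to the one at the end of the proof of Proposition~\ref{p:rintrans}.

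I would first reduce to the case where $K_i = H_i$ is the full stabiliser in $G$ of a uniform $a_i$-partition $X_i$ of $[n]$, with $2 \leqs a_i \leqs n/2$, reordered so that $a_1 \geqs a_2 \geqs \cdots \geqs a_k$ (where $k = n-1$ for $S_n$ and $k = n-2$ for $A_n$). The core step is to keep $H_1$ fixed as a base, conjugating it if necessary so that, in the case $a_1 = n/2$, the partition $X_1$ is the bipartition of $[n]$ into odd and even points. For each $i \geqs 2$ I would apply Lemma~\ref{lem:2sets} to the pair $(X_1, X_i)$ with $(\alpha_i, \beta_i) = (i, i+1)$ to produce $g_i \in G$ for which $H_1 \cap H_i^{g_i}$ fixes the $2$-set $\{i, i+1\}$ setwise. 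The resulting $2$-sets $\{2,3\}, \{3,4\}, \ldots$ form a path, and any element of $S_n$ preserving every set in such a path must fix all but at most two points pointwise (by exactly the argument that concludes the proof of Proposition~\ref{p:rintrans}). For $S_n$ with $k = n-1$ the path reaches $\{n-1, n\}$, so $\bigcap H_i^{g_i}$ fixes $\{2, \ldots, n\}$ pointwise and is trivial; for $A_n$ with $k = n-2$ the path stops at $\{n-2, n-1\}$, and the intersection collapses to $\langle (1,n)\rangle \cap A_n = 1$.

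This clean plan applies directly whenever condition~(i) or condition~(ii) of Lemma~\ref{lem:2sets} is satisfied for every pair $(H_1, H_i)$ with $i \geqs 2$. Condition~(i) covers $3 \leqs a_1 < n/2$ (automatic, since $a_i \leqs a_1$), and condition~(ii) covers $a_1 = n/2$ with every $a_i \geqs 3$ (each prescribed $2$-set $\{i, i+1\}$ straddles the odd/even bipartition of $X_1$). The main obstacle is the mixed scenario $a_1 = n/2$ with $\ell \geqs 1$ indices satisfying $a_i = 2$, for which Lemma~\ref{lem:2sets}(ii) fails on those pairs. Here I would apply the lemma only to the $k - 1 - \ell$ non-base subgroups with $a_i \geqs 3$, producing a shorter path of $2$-sets that leaves $\bigcap H_i^{g_i}$ contained in the symmetric group $S_T$ on a set $T \subseteq [n]$ of size $\ell + 1$ (for $S_n$) or $\ell + 2$ (for $A_n$). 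The $\ell$ pairing subgroups then serve as finishers: a transposition $(\alpha, \beta)$ lies in the stabiliser of a pairing $X$ precisely when $\{\alpha, \beta\}$ is a pair of $X$, and no $3$-cycle on three moved points can preserve any pairing (the three pairs permuted cyclically would share a common partner, a contradiction). Choosing the conjugate of one pairing subgroup so that no pair of $T$ lies inside its partition---possible whenever $|T| \leqs n - |T|$, which holds throughout the range $n \geqs 8$ for $\ell \leqs 2$---excludes every transposition of $S_T$, and together with parity in the alternating case this reduces the intersection to the identity. If instead $\ell \geqs 3$ and $n \geqs 8$, then Proposition~\ref{p:msbase} already supplies three conjugates of $S_2 \wr S_{n/2}$ with trivial intersection, and the remaining $g_i$ may be taken arbitrarily.

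Finally, the case $a_1 = 2$ (so every $a_i = 2$) is handled by Proposition~\ref{p:msbase} when $n \geqs 8$. The boundary degree $n = 6$, together with any residual small-$n$ mixed configurations in which the finisher step breaks down because $|T| > n - |T|$, is verified directly by the computations mentioned in Remark~\ref{r:imprim}.
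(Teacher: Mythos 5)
Your proposal is correct in its overall architecture and takes a genuinely different route from the paper. The paper also reduces to a path of setwise-fixed $2$-sets via Lemma \ref{lem:2sets}, but it chains \emph{consecutively}: it constructs the partitions one at a time so that $H_{i-1} \cap H_i$ fixes $\{i-1,i\}$, which forces a four-way case division according to how many of the leading $a_i$ equal $n/2$, with Lemma \ref{lem:swap} and a separate technical result (Lemma \ref{lem:pointwise}) needed to get the chain started when several $n/2$-partitions are present, plus ad hoc trailing $2$-partitions such as $\{\{1,n-2\},\{2,n-1\},\{3,n\},\ldots\}$ to finish. Your radial scheme --- fixing $H_1$ once and for all (normalised to the odd/even bipartition when $a_1 = n/2$) and applying Lemma \ref{lem:2sets} to each pair $(X_1,X_i)$ with target $\{i,i+1\}$ --- collapses most of this: which of conditions (i), (ii) applies is governed by $a_1$ alone, and since (ii) only requires $a_2 \geqs 3$ it applies uniformly even when several $a_i$ equal $n/2$, so Lemma \ref{lem:pointwise} is not needed at all. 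Your matching-based finisher for the residual pairing subgroups is likewise cleaner than the paper's explicit constructions, and your deferral of $n=6$ to computation matches the paper, which checks $n \leqs 7$ with {\sc Magma}.

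One point needs repair. In the finisher step for $G = A_n$ with $\ell = 2$ you have $|T| = 4$, and ${\rm Sym}(T) \cap A_n \cong A_4$ contains double transpositions; these are neither transpositions (so your criterion that a transposition preserves a pairing precisely when its support is a pair does not apply) nor $3$-cycles, and parity does not remove them. This configuration is realised, for instance, by $(a_1,\ldots,a_6) = (4,4,4,4,2,2)$ in $A_8$. Fortunately your own construction already handles it: if every point of $T$ is matched to a distinct partner outside $T$, then any non-identity $g \in {\rm Sym}(T)$ sends some matched pair $\{\alpha,\alpha'\}$ (with $\alpha \in T$ and $\alpha' \notin T$ fixed by $g$) to $\{\alpha^g,\alpha'\}$ with $\alpha^g \ne \alpha$, which would force $\alpha$ and $\alpha^g$ to share the partner $\alpha'$ --- impossible in a perfect matching. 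So the chosen pairing excludes \emph{every} non-identity element of ${\rm Sym}(T)$, not just transpositions and $3$-cycles, and the argument goes through once stated in this form.
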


\begin{proof}
Let $G = S_n$ or $A_n$ and let $\tau = (J_1, \ldots, J_k)$ be a $k$-tuple of imprimitive subgroups of $G$ with $k = n-|S_n:G|$. We need to show that $\tau$ is regular.

To do this, we may first embed each $J_i$ in a maximal imprimitive subgroup $L_i = S_{a_i} \wr S_{b_i}$ of $S_n$, where $n = a_ib_i$ and $a_i \geqs 2$ for all $i$, and we may assume that 
\[
n/2 \geqs a_1\geqs a_2 \geqs \cdots \geqs a_{n - 2} \geqs 2.
\]
It suffices to show that there exist elements $g_i \in G$ such that 
\begin{equation}\label{e:li}
\left(\bigcap_{i = 1}^{k}L_i^{g_i}\right) \cap G  = 1.
\end{equation}
The result for $n \leqs 7$ can be checked with the aid of {\sc Magma}, so we will assume $n \geqs 8$. Our goal is to prove the following claim:

\vs

\noindent \textbf{Claim $(\star)$.} \emph{There exist elements $g_i \in G$ such that $\bigcap_{i = 1}^{n - 2}L_i^{g_i} \leqs \<(n-1, n)\>$}.

\vs

Indeed, suppose the claim is true. For $G = A_n$ we have $k = n-2$, whence \eqref{e:li} holds and the result follows. Now assume $G = S_n$ and let $X_{n-1}$ be any $a_{n-1}$-partition of $[n]$ with the property that $n-1$ and $n$ are contained in different parts. Then the stabiliser of $X_{n-1}$ in $G$ is of the form $L_{n-1}^{g_{n-1}}$ for some $g_{n-1} \in G$ and we deduce that \eqref{e:li} holds, which completes the proof of the proposition.

So it just remains to establish Claim $(\star)$. In view of the transitivity of $G$ on the set of $\ell$-partitions of $[n]$ for any $\ell$, it suffices to construct $a_i$-partitions $X_{i}$ of $[n]$ for $1 \leqs i \leqs n-2$ such that 
\[
H = \bigcap_{i = 1}^{n - 2}H_i \leqs \<(n-1, n)\>,
\]
where $H_i$ is the stabiliser of $X_i$ in $S_n$. (In the course of the proof, we will define various $a_i$-partitions $X_i$ and we will always write $H_i$ for the stabiliser of $X_i$ in $S_n$.)

First observe that if $a_{n-4} = 2$, then each $L_i$ with $i \in \{n-4,n-3,n-2\}$ is of the form $S_2 \wr S_{n/2}$, in which case Proposition \ref{p:msbase} implies that 
\[
L_{n-4} \cap L_{n-3}^x \cap L_{n-2}^y = 1
\]
for some $x,y \in G$. So for the remainder of the proof, we may assume that $a_{n - 4} \geqs 3$. We now divide the argument into four cases.

\vs 

\noindent \emph{Case 1.} $a_1 < n/2$

\vs

The argument in this case follows very easily from Lemma \ref{lem:2sets}. Consider the $a_1$-partition
\[
X_1 = \{\{1, 2, \ldots, a_1\}, \{a_1 + 1, \ldots, 2a_1\}, \ldots, \{n - a_1 + 1, \ldots, n\}\}.
\]

Suppose $a_{n - 3} \geqs 3$. By applying Lemma \ref{lem:2sets}(i), we can define an $a_2$-partition $X_2$ such that $H_1 \cap H_2$ fixes $\{1,2\}$. The same lemma also implies that there is an $a_3$-partition $X_3$ so that $H_2 \cap H_3$ fixes $\{2,3\}$. And by iteratively applying the same lemma, we can define an $a_i$-partition $X_i$ such that $H_{i - 1}\cap H_i$ fixes $\{i - 1, i\}$ for all $4 \leqs i \leqs n-2$. It follows that $H = \bigcap_{i = 1}^{n - 2} H_i$ fixes $\{1, \ldots, n-2\}$ pointwise and thus $H \leqs \<(n-1, n)\>$ as required. 

The case $a_{n - 3}  = 2$ is very similar. By arguing as above, using $a_{n-4} \geqs 3$, we can define an $a_i$-partition $X_i$ for $2 \leqs i \leqs n-3$ such that $\bigcap_{i=1}^{n-3}H_i$ acts trivially on $\{1, \ldots, n-3\}$. We now define the $2$-partition 
\[
X_{n - 2} = \{\{1, n - 2\}, \{2, n-1\}, \{3, n\}, \{4, 5\}, \ldots, \{n - 4, n - 3\}\}.
\]
Since $\bigcap_{i = 1}^{n - 3} H_i$ fixes $1$, $2$ and $3$, it follows that $\bigcap_{i = 1}^{n - 2}H_i$ must also fix $n-2$, $n-1$ and $n$, which forces $\bigcap_{i = 1}^{n - 2}H_i = 1$.

\vs

\noindent \emph{Case 2.} \emph{$a_1 = n/2$ and $a_2 < n/2$}

\vs

Set 
\[
X_1 = \{\{1, 3, \ldots, n/2+1\}, \{2, n/2+2, \ldots, n\}\}
\]
and assume $a_{n - 3} \geqs 3$ for now. Since the points $1$ and $2$ are contained in different parts of $X_1$, Lemma \ref{lem:2sets}(ii) implies that we can find an $a_2$-partition $X_2$ such that $H_1\cap H_2$ fixes $\{1, 2\}$. We can then appeal to Lemma \ref{lem:2sets}(i), as in Case 1, to iteratively construct an  $a_i$-partition $X_i$ for $3 \leqs i \leqs n-2$ such that 
$\bigcap_{i = 2}^{n - 2}H_i$ fixes each of the subsets 
\[
\{2, 3\}, \{3,4\}, \ldots, \{n - 3, n - 2\}.
\]
Consequently, $\bigcap_{i = 1}^{n - 2}H_i$ fixes $\{1, \ldots, n-2\}$ pointwise and the desired result follows.

The case $a_{n - 3} = 2$ is very similar. Here we define $X_2, \ldots, X_{n - 3}$ as above, so that $\bigcap_{i = 1}^{n - 3} H_{i}$ fixes $\{1, \ldots, n - 3\}$ pointwise. By setting   
\[
X_{n - 2} = \{\{1, n - 2\}, \{2, n-1\}, \{3, n\}, \{4, 5\}, \ldots, \{n - 4, n-3\}\}
\]
as in Case 1, we deduce that $\bigcap_{i = 1}^{n - 2}H_i = 1$.

\vs

\noindent \emph{Case 3.} \emph{$a_1 = a_2 = n/2$ and $a_3 < n/2$}

\vs

First note that $a_3 \geqs 3$ since $n \geqs 8$ and $a_{n-4} \geqs 3$. We define two $(n/2)$-partitions 
\begin{align*}
X _1 &= \{\{1, 4, \ldots, n/2 + 2\}, \{2, 3, n/2 + 3, \ldots, n\}\}\\
X_2 &= \{\{2, 4, \ldots, n/2 + 2\}, \{1, 3, n/2 + 3, \ldots, n\}\}
\end{align*}
and we note that $H_1\cap H_2$ fixes $\{1, 2\}$ by Lemma \ref{lem:swap}. In addition, since $2$ and $3$ are contained in different parts of $X_2$, Lemma \ref{lem:2sets}(ii) allows us to construct an $a_3$-partition $X_3$ such that $H_2\cap H_3$ fixes $\{2, 3\}$. 

Suppose $a_{n - 3} \geqs 3$. By applying Lemma \ref{lem:2sets}(i), we can iteratively construct an $a_i$-partition $X_i$ for $4 \leqs i \leqs n - 2$ such that $H_{i - 1}\cap H_{i}$ fixes $\{i - 1, i\}$ for all such $i$. It follows that $\bigcap_{i = 1}^{n - 2}H_i$ fixes the $2$-sets
\[
\{1, 2\}, \{2, 3\}, \ldots, \{n - 3, n - 2\}
\]
and thus $\bigcap_{i = 1}^{n - 2}H_i \leqs \<(n-1, n)\>$.

Now assume $a_{n - 3} = 2$. As above, we can appeal to Lemma \ref{lem:2sets}(i) in order to construct partitions $X_4, \ldots, X_{n - 3}$ such that $H_{i - 1}\cap H_i$ fixes $\{i - 1, i\}$ for all $i\in \{4, \ldots, n-3\}$. Then $\bigcap_{i = 1}^{n - 3} H_i$ fixes $\{ 1, \ldots, n - 3\}$ pointwise and we complete the argument by defining the $2$-partition
\[
X_{n - 2} = \{\{1, n -2\}, \{2, n-1\}, \{3, n\}, \{4, 5\}, \ldots, \{n - 4, n - 3\}\},
\]
which forces $\bigcap_{i = 1}^{n - 2} H_i = 1$.

\vs

\noindent \emph{Case 4.} \emph{$a_i = n/2$ for some $i\geqs 3$}

\vs

Let $k \in \{1, \ldots, n-2\}$ be maximal such that $a_k = n/2$. By appealing to 
Lemma \ref{lem:pointwise}, there exist $(n/2)$-partitions $X_1, \ldots, X_k$ such that $\bigcap_{i = 1}^k H_i$ fixes $\{1, \ldots, k\}$ pointwise. If $k = n - 2$ we are done, so we may assume $k \leqs n - 3$. 

Suppose $a_{k + 1} = 2$. Here $k + 1 \in \{n-3,n-2\}$ and we define 
\[
X_{k + 1} = \{\{1, n -3\}, \{2, n-2\}, \{3, n-1\}, \{4, n\}, \{5,6\}, \ldots, \{n - 5, n - 4\}\}.
\]
Since $n\geqs 8$, we note that $\bigcap_{i = 1}^{k}H_i$ fixes $1$, $2$, $3$ and $4$, whence $\bigcap_{i = 1}^{k + 1}H_i = 1$. 

Now assume $a_{k + 1} \geqs 3$. Since $k$ is fixed by $\bigcap_{i = 1}^k H_i$, there must exist some $j\in \{1, \ldots, k\}$ such that $k$ and $k + 1$ lie in different parts of $X_j$ (if not, then $\bigcap_{i = 1}^k H_i$ would contain the transposition $(k,k+1)$). Then Lemma \ref{lem:2sets}(ii) implies that we can construct an $a_{k+1}$-partition $X_{k + 1}$ such that $H_j \cap H_{k + 1}$ fixes $\{k, k + 1\}$. If $k = n-3$, then the argument is complete, so we may assume that $k \leqs n-4$ for the remainder of the proof.

First assume $a_{n - 3} \geqs 3$. Here we use Lemma \ref{lem:2sets}(i) to  construct appropriate partitions $X_{k + 2}, \ldots, X_{n - 2}$ so that $H_{i - 1} \cap H_{i}$ fixes $\{i - 1, i\}$ for all $i\in \{k + 2, \ldots, n - 2\}$. It follows that $\bigcap_{i = 1}^{n - 2}H_i$ fixes the $2$-sets $\{k, k + 1\}, \ldots, \{n - 3, n - 2\}$ 
and thus $\bigcap_{i = 1}^{n - 2}H_i \leqs \<(n-1, n)\>$, as claimed.

Finally, suppose $a_{n - 3} = 2$. Since $a_{n-4} \geqs 3$, we can construct partitions $X_{k + 2}, \ldots, X_{n - 3}$ via Lemma \ref{lem:2sets}(i) so that $H_{i - 1}\cap H_i$ fixes $\{i - 1, i\}$ for all $i\in \{k + 2, \ldots, n-3\}$. Then $\bigcap_{i = 1}^{n - 3} H_i$ fixes $\{1, \ldots, n - 3\}$ pointwise and so by setting 
\[
X_{n-2} = \{\{1, n - 2\}, \{2, n-1\}, \{3, n\}, \{4, 5\}, \ldots, \{n - 4, n - 3\}\}
\]
we conclude that $\bigcap_{i = 1}^{n-2}H_i = 1$.

This completes the proof of Claim $(\star)$ and the result follows. 
\end{proof}

\subsubsection{Proof of Theorem \ref{t:main1}}\label{ss:final}

We are now ready to complete the proof of Theorem \ref{t:main1} by showing that 
$R(S_n) = n-1$ and $R(A_n) = n-2$ as in part (i). To do this, we will bring together our earlier work in Sections \ref{ss:prim}, \ref{ss:intrans} and \ref{ss:imprim}; the key ingredient is the following theorem.

\begin{thm}\label{thm:transpositions}
Let $G = S_n$ or $A_n$ with $n \geqs 13$, and let $\tau = (H_1, \ldots, H_{n-2})$ be a core-free tuple of subgroups of $G$. Then there exist elements $g_i \in G$ and $\a,\b \in [n]$ such that 
\[
\bigcap_{i = 1}^{n - 2}H_i^{g_i} \leqs \<(\alpha, \beta)\>.
\]
\end{thm}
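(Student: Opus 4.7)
The plan is to invoke Lemma \ref{lem:maxreduction} to reduce to the case where each $H_i$ lies in $\mathcal{M}(G)$, and then to case-split on how many of the $H_i$ act primitively on $[n]$. An ingredient that will be used throughout is the classical fact that every proper primitive subgroup of $S_n$ contains no transposition; this ensures that a primitive component cannot contribute a transposition to any intersection.

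In the first case, if at least two of the $H_i$ are primitive, say $H_i$ and $H_j$, I apply Theorem \ref{t:main1}(ii), which gives $R_{\mathrm{prim}}(G) = 2$ for $n \geqs 13$. This produces $g_i, g_j \in G$ with $H_i^{g_i} \cap H_j^{g_j} = 1$; choosing the remaining $g_k$ arbitrarily makes the full intersection trivial, and in particular contained in $\<(\alpha,\beta)\>$ for any distinct $\alpha, \beta \in [n]$.

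In the second case, at most one $H_i$ is primitive, so at least $n-3$ of the $H_i$ are non-primitive, i.e.\ each is the stabiliser either of a proper non-empty subset of $[n]$ (intransitive component) or of a non-trivial uniform partition of $[n]$ (imprimitive component). The strategy is an iterative combinatorial construction that blends the techniques from the proofs of Propositions \ref{p:rintrans} and \ref{prop:rimprim} --- driven by Lemmas \ref{lem:orbits}, \ref{lem:2sets} and \ref{lem:pointwise} --- and picks conjugates of the non-primitive components so that the resulting intersection fixes a long chain of overlapping $2$-element subsets of $[n]$ and is thereby forced into a subgroup of $\<(\alpha, \beta)\>$ for some $\alpha \ne \beta$. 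If a single primitive $H_j$ is present, the transposition-freeness of any conjugate $H_j^{g_j}$ means that no matter how $g_j$ is chosen, intersecting with $H_j^{g_j}$ cannot introduce the transposition $(\alpha, \beta)$, so the final intersection remains contained in $\<(\alpha, \beta)\>$ (in fact it becomes trivial whenever the partial non-primitive intersection equals $\<(\alpha,\beta)\>$).

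The main obstacle will be the mixed non-primitive setting, where some components are intransitive and others are imprimitive, and where only $n-3$ rather than $n-2$ non-primitive components may be available when one slot is occupied by a primitive subgroup. Propositions \ref{p:rintrans} and \ref{prop:rimprim} each treat only a single-type $(n-2)$-tuple, so combining the two constructions cleanly will require a hybrid version of Lemma \ref{lem:2sets} that handles pairs involving one subset and one uniform partition, together with careful bookkeeping so that the chain of controlled $2$-sets still spans enough of $[n]$ when one non-primitive slot is sacrificed to accommodate a primitive component.
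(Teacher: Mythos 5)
Your overall skeleton matches the paper's proof: reduce to $\mathcal{M}(G)$ via Lemma \ref{lem:maxreduction}, dispose of tuples with two primitive components using $R_{\rm prim}(G)=2$ from Theorem \ref{t:main1}(ii), and then run a combinatorial construction on the non-primitive components using Lemmas \ref{lem:orbits}, \ref{lem:2sets} and \ref{lem:pointwise}. But there is a genuine gap in how you handle the case of exactly one primitive component. Your only tool there is that a proper primitive subgroup contains no transposition, and you claim the non-primitive partial intersection can be pushed inside $\<(\alpha,\beta)\>$ before the primitive slot is intersected. With only $n-3$ non-primitive components this is false in general: for instance if $H_1=\cdots=H_{n-3}=S_{n-1}$ and $H_{n-2}$ is primitive, then for any choice of conjugates $\bigcap_{i=1}^{n-3}H_i^{g_i}$ contains the full symmetric group on the three unstabilised points, hence $3$-cycles and not just a transposition; more generally Lemma \ref{lem:orbits} only forces the residual into something like ${\rm Sym}(X)\times{\rm Sym}(Y)$ on a handful of points, which contains double transpositions and elements of support up to $7$. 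Transposition-freeness of the primitive component does not kill such elements, so your argument does not close.

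What the paper uses instead, and what your proposal is missing, is a minimal degree bound: by Theorem \ref{thm:maroti} (Guralnick--Magaard) for $n\geqs 40$, and a computation for $13\leqs n<40$, every nontrivial element of a primitive subgroup in $\mathcal{M}(G)$ has support at least $8$. Since the conjugates of the $n-3$ non-primitive components can be arranged (via Lemma \ref{lem:orbits} and the imprimitive machinery) so that every element of their intersection moves at most $4$ (or, in the mixed case with $k\geqs n-6$, at most $7$) points, the intersection with the primitive conjugate is automatically trivial; no control of the residual by a single transposition is needed. Relatedly, your "hybrid" version of Lemma \ref{lem:2sets} for subset--partition pairs is not actually required: the paper first applies Lemma \ref{lem:orbits} to the intransitive block, which pins down an initial segment of $[n]$ pointwise, and then feeds the imprimitive block through Lemmas \ref{lem:2sets}, \ref{lem:pointwise} and Proposition \ref{p:msbase} with a careful case division on how many parts have size $2$ or $n/2$; the mixed bookkeeping you flag as the "main obstacle" is indeed where the bulk of the work lies, and your proposal leaves it unresolved.
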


\begin{proof}
In view of Lemma \ref{lem:maxreduction}, we may assume that $H_i \in \mathcal{M}(G)$ for all $i$.

Suppose $H_1$ and $H_2$ both act primitively on $\{1, \ldots, n\}$. Since $n \geqs 13$, Theorem \ref{t:main1}(ii) implies that $R_{{\rm prim}}(S_n) = 2$ and thus 
$H_1 \cap H_2^{g_2} = 1$ for some $g_2 \in G$. Therefore, for the remainder, we may assume that there is at most one primitive subgroup in $\tau$. By reordering the subgroups, if necessary, we may also assume that if $H_i$ is intransitive, $H_j$ is imprimitive and $H_k$ is primitive, then $i < j$ and $k = n-2$. 

Set $\s = (a,b,c)$, where $a$, $b$ and $c$ denote the number of intransitive, imprimitive and primitive subgroups in $\tau$, respectively. Then the possibilities for $\s$ are as follows, where $k \geqs 1$:
\[
(n-2,0,0), \, (0,n-2,0), \, (n-3,0,1), \, (0,n-3,1), \, (k,n-k-2,0), \, (k,n-k-3,1).
\]
We now consider each possibility in turn.

\vs

\noindent \emph{Case 1. $\s = (n-2,0,0)$ or $(0,n-2,0)$} 

\vs

First assume $\s = (n-2,0,0)$. Here $\tau$ is an intransitive tuple and this case was handled in the proof of Proposition \ref{p:rintrans} (see \eqref{e:intrans}). Similarly, $\tau$ is an imprimitive tuple if $\s = (0,n-2,0)$ and we can appeal to Claim $(\star)$ in the proof of Proposition \ref{prop:rimprim}.

\vs

\noindent \emph{Case 2. $\s = (n-3,0,1)$ or $(0,n-3,1)$} 

\vs

Next assume $\s = (n-3,0,1)$, so $H_{n-2}$ is primitive and we have $H_i = (S_{a_i} \times S_{n-a_i}) \cap G$ for $1 \leqs i \leqs n-3$, where $1 \leqs a_i \leqs n/2$ and we may assume that $a_1\leqs a_2\leqs \cdots \leqs a_{n - 3}$. For $i \in \{1, \ldots, n-3\}$, we define the specific $a_i$-sets $X_i$ in \eqref{e:xi} (see the proof of Lemma \ref{lem:orbits}) and we write $L_i$ for the stabiliser of $X_i$ in $G$. Then $L_i = H_i^{g_i}$ for some $g_i \in G$ and Lemma \ref{lem:orbits} implies that $\bigcap_{i = 1}^{n - 3} L_i$ fixes $\{1, \ldots, n-4\}$ pointwise. If $n \geqs 40$, then the lower bound on $\mu(H_{n-2})$ in Theorem \ref{thm:maroti} implies that ${\rm supp}(x) \geqs 8$ for every nontrivial element $x \in H_{n-2}$, and with the aid of {\sc Magma} it is easy to check that the same conclusion holds for all $n$ in the range $13 \leqs n < 40$. This immediately implies that 
\[
\left(\bigcap_{i = 1}^{n - 3}L_i\right)\cap H_{n - 2} = 1
\]
and the result follows.

A very similar argument applies when $\s = (0,n-3,1)$. Here $H_i = (S_{a_i}\wr S_{b_i}) \cap G$ for $i \in \{1, \ldots, n-3\}$, where $n = a_ib_i$ and $a_i,b_i \geqs 2$. We may assume that $a_1\geqs a_2\geqs \cdots \geqs a_{n - 3}$. By arguing as in the proof of Proposition \ref{prop:rimprim}, we can construct an $a_i$-partition $X_i$ for $i \in \{1, \ldots, n-3\}$ with stabiliser $L_i$ in $G$ such that $\bigcap_{i = 1}^{n - 3} L_i$ fixes $\{1, \ldots, n-3\}$ pointwise. We now complete the argument as above, appealing to the fact that ${\rm supp}(x) \geqs 8$ for all $1 \ne x \in H_{n-2}$.

\vs

\noindent \emph{Case 3. $\s = (k,n-k-2,0)$}

\vs

Here $k \in \{1, \ldots, n-3\}$ and recall that we arrange the component subgroups of $\tau$ so that for $i \leqs k$ we have $H_i = (S_{a_i}\times S_{n - a_i}) \cap G$ with $a_i \leqs n/2$, and $H_i = (S_{b_i}\wr S_{c_i}) \cap G$ for $i>k$, where $n = b_ic_i$ and $b_i \geqs 2$. In addition, we may assume that $a_1\leqs a_2\leqs \cdots \leqs a_k$ and $b_{k + 1}\geqs b_{k + 2}\geqs \cdots \geqs b_{n - 2}$. 

We start by defining the sets $X_1, \ldots, X_k$ as in \eqref{e:xi} and we write $L_i$ for the stabiliser of $X_i$ in $G$. Then for each $i \in \{k+1, \ldots, n-2\}$ we need to construct a $b_i$-partition $Y_i$ with stabiliser $L_i$ such that $\bigcap_{i = 1}^{n - 2}L_i  \leqs \<(\alpha, \beta)\>$ for some $\a,\b \in [n]$. To do this, we will handle the special case $k \in \{n-4,n-3\}$ separately.

\vs 

\noindent \emph{Case 3(a). $k \in \{n - 4,n-3\}$}

\vs

First assume $k = n-3$ and recall that $m = \max\{\a \, :\, \a \in X_k\}$, so $m \in \{n-3,n-2,n-1\}$. Set $L = \bigcap_{i=1}^{n-3}L_i$. Then by applying Lemma \ref{lem:orbits} we deduce that either $L \leqs {\rm Sym}(X)$ with $X = \{n-3,n-2,n-1\}$ or $\{n-2,n-1,n\}$, or $L \leqs {\rm Sym}(X) \times {\rm Sym}(Y)$ with $X = \{n - 3, n-2\}$ and $Y = \{n-1,n\}$.

Suppose $L \leqs {\rm Sym}(X)$ with $X = \{n-3,n-2,n-1\}$ and let $X_{n-2}$ be a $b_{n-2}$-partition of $[n]$ such that the part $P$ containing $n - 1$ does not contain $n-2$ nor $n-3$. Let $L_{n-2}$ be the stabiliser of $X_{n-2}$ in $G$. Then $L$ fixes every point in $P$, with the possible exception of $n-1$, so 
$\bigcap_{i = 1}^{n - 2}L_i$ fixes $\{1, \ldots, n-4,n-1,n\}$ pointwise and so it is contained in $\<(n - 3, n-2)\>$. An entirely similar argument applies if $L \leqs {\rm Sym}(X)$ with $X = \{n-2,n-1,n\}$, so let us assume $L \leqs {\rm Sym}(X) \times {\rm Sym}(Y)$ with $X = \{n - 3, n-2\}$ and $Y = \{n-1,n\}$. Here we choose a $b_{n-2}$-partition $X_{n - 2}$ so that the part containing $n$ does not intersect $\{n-3,n-2,n-1\}$. If $L_{n-2}$ denotes the stabiliser of $X_{n-2}$, then by arguing as above, we deduce that $\bigcap_{i = 1}^{n - 2}L_i$ fixes $n$, so it must also fix $n-1$ and it is therefore contained in $\<(n - 3, n-2)\>$.

A very similar argument applies when $k = n - 4$. Set $L = \bigcap_{i=1}^{n-4}L_i$, so Lemma \ref{lem:orbits} implies that either $L \leqs {\rm Sym}(X)$ with $X = \{n - 4, n-3, n-2, n-1\}$ or $\{n-3, n-2, n-1, n\}$, or $L \leqs {\rm Sym}(X) \times {\rm Sym}(Y)$ with $X = \{n - 4, n-3, n-2\}$ and $Y = \{n-1,n\}$, or $X = \{n - 4, n-3\}$ and $Y = \{n - 2, n - 1, n\}$. 

Suppose $L \leqs {\rm Sym}(X)$ with $X = \{n - 4, n-3, n-2, n-1\}$. Choose a $b_{n-3}$-partition $X_{n-3}$ with stabiliser $L_{n-3}$ such that the part containing $n-1$ does not intersect $\{n-4,n-3,n-2\}$. Then by arguing as above, we deduce that $\bigcap_{i = 1}^{n - 3}L_i$ fixes $n - 1$ and is therefore contained in ${\rm Sym}(Y)$ for $Y =\{n - 4, n -3, n-2\}$. We can now complete the argument as in the case $k=n-3$. Similarly, if $L \leqs {\rm Sym}(X) \times {\rm Sym}(Y)$ with $X = \{n - 4, n-3, n-2\}$ and $Y = \{n-1,n\}$, then we define a $b_{n-3}$-partition $X_{n-3}$ so that the part containing $n$ does not meet $\{n - 4, n-3, n-2, n-1\}$. Then $\bigcap_{i = 1}^{n - 3}L_i$ fixes $n$ and is therefore contained in ${\rm Sym}(Z)$ for $Z = \{n - 4, n-3, n-2\}$. By choosing a $b_{n-2}$-partition $X_{n-2}$ so that the part containing $n-2$ does not contain $n-4$ nor $n-3$, we conclude that $\bigcap_{i = 1}^{n - 2}L_i \leqs \<(n - 4, n-3)\>$, as required.

The other two possibilities (with $k = n-4$) are entirely similar and we omit the details. 

\vs

\noindent \emph{Case 3(b). $k \leqs n - 5$}

\vs

Now assume $k \leqs n - 5$. Set $L = \bigcap_{i = 1}^{k}L_i$ and note that $L$ fixes $\{1, \ldots, k-1\}$ pointwise by Lemma \ref{lem:orbits}. In addition, the same lemma implies that $k$ and $n$ are contained in distinct $L$-orbits. Therefore, it suffices to construct appropriate partitions $X_{k + 1}, \ldots, X_{n - 2}$ of $[n]$ with stabilisers $L_{k+1}, \ldots, L_{n-2}$ such that $\bigcap_{i = k+1}^{n - 2}L_i$ fixes $\{k + 1, \ldots, n - 2\}$ pointwise. Indeed, this will imply that $\bigcap_{i = 1}^{n - 2}L_i \leqs \< g\>$, where $g = (k,n-1)$ or $(n-1,n)$.

Suppose $b_{n-4}=2$, in which case $H_{i} = (S_2 \wr S_{n/2}) \cap G$ for all $i \in \{n-4,n-3,n-2\}$. Here Proposition \ref{p:msbase} implies that there exists $g_i \in G$ such that $\bigcap_{i=n-4}^{n-2}H_i^{g_i} = 1$. Therefore, for the remainder of Case 3(b) we may assume that $b_{n-4} \geqs 3$.

Let us first consider the special case where $b_i \in \{2, n/2\}$ for all $i$, in which case $n \geqs 14$ is even. Here we have $b_{k+1} = b_{n-4} = n/2$ and we let $j \in \{n-4,n-3,n-2\}$ be maximal such that $b_j = n/2$. We now consider several separate cases.

Suppose $j \leqs k + 2$. Since $j \geqs n-4$, this means that $k \in\{ n-6, n-5\}$ and thus $b_{n - 2} = 2$. Since $n/2 \geqs 7$, we can define a $2$-partition $X_{n-2}$ with the property that all of the points $k, \ldots, n$ lie in different parts. For $\ell \in \{k, \ldots, n\}$, write $P_\ell$ for the part of $X_{n-2}$ containing $\ell$. Since $P_\ell$ contains at least one point fixed by $L$, it follows that $L \cap L_{n - 2}$ fixes $P_\ell$, where $L_{n-2}$ is the stabiliser of $X_{n-2}$. In turn, this implies that $L \cap L_{n - 2}$ fixes every $\ell \in \{k, \ldots, n\}$ and thus $L \cap L_{n-2} = 1$. 

Now assume $j \geqs k + 3$. Here Lemma \ref{lem:pointwise} implies that we can construct a $b_i$-partition $X_i$ for each $i \in \{k+1, \ldots, j\}$ such that $\bigcap_{i = k +1}^{j} L_i$ fixes $\{k+1, \ldots, j\}$ pointwise. If $j = n-2$, then we are done, so let us assume $j =n-4$ or $n-3$. Note that $b_{n-2} = 2$, so $n \geqs 14$ is even.

Since $j + 1\geqs n - 3$, we deduce that $\bigcap_{i = 1}^{j}L_i \leqs {\rm Sym}(\{k, n-3, \ldots, n\})$. Since $n/2 \geqs 7$, we can construct a $2$-partition $X_{j + 1}$ such that each point $\ell \in \{k, n- 3, \ldots, n\}$ is contained in a distinct part $P_{\ell}$. If $L_{j+1}$ denotes the stabiliser of $X_{j+1}$, then it is easy to see that $\bigcap_{i=1}^{j+1}L_i = 1$ and the result follows.

To complete the argument in Case 3(b), we may assume $b_r \not\in \{2,n/2\}$ for some $r$. If $b_{n - 2} \geqs 3$, then we can proceed as in the proof of Proposition \ref{prop:rimprim} to produce a $b_i$-partition $X_i$ for each $i \in \{k+1, \ldots, n-2\}$ such that $\bigcap_{i = k + 1}^{n - 2} L_i$ fixes $\{k + 1, \ldots, n - 2\}$ pointwise, where $L_i$ is the stabiliser of $X_i$. Finally, suppose $b_{n-2} = 2$ and let $j \in \{n-3, n-2\}$ be minimal such that $b_j = 2$. As in the proof of Proposition \ref{prop:rimprim}, we can construct a $b_i$-partition $X_i$ for all $i \in \{k+1,\ldots, j\}$ such that $\bigcap_{i = k + 1}^j L_i$ fixes $\{k + 1, \ldots, j\}$ pointwise, whence the desired result follows if $j = n - 2$. On the other hand, if $j = n-3$ then $\bigcap_{i = 1}^{j}L_i \leqs {\rm Sym}(\{k, n-2, n - 1, n\})$ and we obtain $\bigcap_{i = 1}^{n-2}L_i = 1$ by taking $L_{n-2}$ to be the stabiliser of a $2$-partition of $[n]$ such that the points $k$, $n-2$, $n-1$ and $n$ all lie in distinct parts.

\vs

\noindent \emph{Case 4. $\s = (k,n-k-3,1)$}

\vs

In order to complete the proof of the theorem, we may assume $\s = (k,n-k-3,1)$ for some $k \in \{1, \ldots, n-4\}$. As before, we order the subgroups in $\tau$ so that 
$H_i = (S_{a_i}\times S_{n - a_i}) \cap G$ for $i \leqs k$ and $H_i = (S_{b_i}\wr S_{c_i}) \cap G$ for $i \in \{k+1, \ldots, n-3\}$, where $a_i \leqs n/2$, $n = b_ic_i$ and $b_i \geqs 2$. In addition, $H_{n-2}$ is primitive and we assume that $a_1\leqs a_2\leqs \cdots \leqs a_k$ and $b_{k + 1}\geqs b_{k + 2}\geqs \cdots \geqs b_{n - 3}$. As noted in Case 2, we have ${\rm supp}(x) \geqs 8$ for all $1 \ne x\in H_{n - 2}$.
We define the sets $X_1, \ldots, X_k$ as in the statement of Lemma \ref{lem:orbits}, with stabiliser $L_i$, and we set $L = \bigcap_{i = 1}^{k} L_i$. By the lemma, we note that $L$ fixes $\{1, \ldots, k-1\}$ pointwise.

If $k \geqs n-6$ then Lemma \ref{lem:orbits} implies that every element in $L = \bigcap_{i = 1}^{k} L_i$ moves at most $7$ points in $[n]$, whence $L \cap H_{n-2} = 1$ and the result follows. Now assume $k \leqs n-7$. Here we can proceed as in Case 2 to produce a $b_i$-partition $X_i$ with stabiliser $L_i$ for $i \in \{k+1, \ldots, n-3\}$ such that $\bigcap_{i=k+1}^{n-3}L_i$ fixes $\{k+1, \ldots, n-3\}$ pointwise. Therefore, every element in $\bigcap_{i=1}^{n-3}L_i$ moves at most $4$ points and we conclude that
\[
\left(\bigcap_{i = 1}^{n - 3}L_i\right) \cap H_{n - 2} = 1
\]
as required.
\end{proof}

\begin{rem}
One can check that the conclusion to Theorem \ref{thm:transpositions} also holds for $G = S_n$ or $A_n$ with $5 \leqs n \leqs 12$, with the exception of $G = S_6$. Indeed, if $G = S_6$ and $\tau = (H,H,H,H)$, where $H = S_5$ is primitive, then $\tau$ is non-regular and $H$ does not contain a transposition.
\end{rem}

Finally, we are now in a position to complete the proof of Theorem \ref{t:main1}. 

\begin{proof}[Proof of Theorem \ref{t:main1}]
It just remains to prove that $R(S_n) \leqs n-1$ and $R(A_n) \leqs n-2$. The groups with $n \leqs 12$ can be handled using {\sc Magma} (see \cite{AB_comp} for the details), so we may assume $n \geqs 13$. For $G = A_n$, the result now follows immediately from Theorem \ref{thm:transpositions}, so for the remainder we will assume $G = S_n$. Let $\tau = (H_1, \ldots, H_{n-1})$ be a core-free tuple of subgroups of $G$. By  Theorem \ref{thm:transpositions}, there exists $g_i \in G$ such that $\bigcap_{i = 1}^{n - 2}H_i^{g_i} \leqs \<(\alpha, \beta)\>$ for some $\alpha, \beta \in [n]$, so we just need to show that 
\[
H_{n-1}^{g_{n-1}} \cap \<(\alpha, \beta)\> = 1
\]
for some $g_{n-1} \in G$. This is clear if $H_{n-1}$ is primitive (no core-free primitive subgroup of $G$ contains a transposition). And if $H_{n-1} \leqs S_k \times S_{n-k}$ is intransitive, then the result follows because we can always find a $k$-element subset of $[n]$ that contains $\a$, but not $\b$. Similarly, if $H_{n - 1} \leqs S_a \wr S_b$ is imprimitive with $a,b \geqs 2$, then the result follows because we can find an $a$-partition of $[n]$ with the property that $\a$ and $\b$ are contained in distinct parts. This completes the proof of the theorem.
\end{proof}

With the proof of Theorem \ref{t:main1} in hand, we conclude this section by briefly considering the problem of determining all the non-regular $(n-2)$-tuples for $S_n$, and the analogous problem for $A_n$ with respect to $(n-3)$-tuples. Clearly, if $G = S_n$ and $H = S_{n-1}$ is the stabiliser of a point in $\{1, \ldots, n\}$, then the maximal intransitive $(n-2)$-tuple $(H, \ldots, H)$ is non-regular. In the following result, we determine all the maximal intransitive $(n-2)$-tuples for $S_n$.

\begin{prop}\label{p:snmax}
Let $G = S_n$ with $n \geqs 5$ and let $\tau$ be a maximal intransitive $(n-2)$-tuple. Then up to conjugacy and reordering, $\tau$ is non-regular if and only if 
\[
\tau = (H, \ldots, H, K),
\]
where $H = S_{n-1}$ and $K = S_k \times S_{n-k}$ for some $1 \leqs k \leqs n/2$.
\end{prop}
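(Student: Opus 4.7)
The plan is to establish the two directions of the biconditional separately. For the ``if'' direction, suppose $\tau=(H,\ldots,H,K)$ (up to reordering) with $H=S_{n-1}$ in $n-3$ of the slots and $K=S_k\times S_{n-k}$, and let $H^{g_i}$ and $K^g$ be arbitrary conjugates in $G=S_n$. Each $H^{g_i}$ is the stabiliser of a single point $p_i\in[n]$, so $\bigcap_{i=1}^{n-3}H^{g_i}$ pointwise fixes at most $n-3$ points and therefore contains $\mathrm{Sym}(T)\cong S_3$ for some $3$-subset $T\subseteq[n]$. Since $K^g$ is the setwise stabiliser of some $k$-subset $Y\subseteq[n]$, the intersection $\mathrm{Sym}(T)\cap K^g$ coincides with the setwise stabiliser of $T\cap Y$ inside $\mathrm{Sym}(T)$; this is non-trivial for each value of $|T\cap Y|\in\{0,1,2,3\}$ (it is all of $S_3$ when $|T\cap Y|\in\{0,3\}$ and contains the relevant transposition otherwise), so $\tau$ is non-regular.

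For the converse, write $H_i=S_{a_i}\times S_{n-a_i}$ with $1\leqs a_i\leqs n/2$, and after reordering assume $a_1\leqs\cdots\leqs a_{n-2}$. Since a tuple with $a_{n-3}=1$ already has the stated form, it suffices to show that every $\tau$ with $a_{n-3}\geqs 2$ is regular; by transitivity of $S_n$ on $a_i$-subsets this amounts to exhibiting subsets $X_i\subseteq[n]$ with $|X_i|=a_i$ and $\bigcap_{i=1}^{n-2}\mathrm{Stab}_{S_n}(X_i)=1$. Letting $s=|\{i:a_i=1\}|\leqs n-4$ and taking $X_i=\{i\}$ for $i\leqs s$, the remaining task is to choose $X_{s+1},\ldots,X_{n-2}$ (of sizes $a_{s+1},\ldots,a_{n-2}\geqs 2$) whose restrictions to $\{s+1,\ldots,n\}$ have trivial joint setwise stabiliser in $\mathrm{Sym}(\{s+1,\ldots,n\})$.

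When $a_{n-3}=2$ I take a chain $X_{s+\ell}=\{s+\ell,s+\ell+1\}$ for $1\leqs\ell\leqs n-3-s$, which by the iterated intersection argument used in the proof of Proposition~\ref{p:rintrans} forces $\bigcap_{i\leqs n-3}\mathrm{Stab}(X_i)=\langle(n-1,n)\rangle$; then $X_{n-2}$ of size $a_{n-2}\geqs 2$ chosen to contain exactly one of $\{n-1,n\}$ kills this transposition. When $a_{n-3}\geqs 3$ I apply Lemma~\ref{lem:orbits} with $k=n-2$ to bound $\bigcap_{i=1}^{n-2}\mathrm{Stab}(X_i)\leqs\langle(n-2,n-1)\rangle$ and rule out equality by arranging that some $X_i$ meets $\{n-2,n-1\}$ in a single point: if $a_{n-4}\geqs 2$ this is achieved by modifying $X_{n-4}$; if $a_1=\cdots=a_{n-4}=1$, the first $n-4$ singletons reduce the problem to verifying inside $\mathrm{Sym}(\{n-3,n-2,n-1,n\})\cong S_4$ that $X_{n-3},X_{n-2}$ can be chosen of sizes $\geqs 3$ with intersections $\{n-3,n-2\}$ and $\{n-3,n-1\}$ with $\{n-3,\ldots,n\}$ respectively, assigning the four points distinct indicator patterns.

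I expect the main obstacle to be the subcase $a_{n-3}\geqs 3$ with $a_{n-4}\geqs 2$: the default intervals produced by Lemma~\ref{lem:orbits} tend to contain both of $n-2$ and $n-1$ whenever the subset has size $\geqs 3$, and one must modify a smaller subset $X_i$ (with $i\leqs n-4$) so as to split this pair while preserving the neighbourhood-based bound $\langle(n-2,n-1)\rangle$. Verifying that such a modification is compatible with all of the size constraints $|X_i|=a_i\leqs n/2$, and that the resulting intersection indeed remains trivial, is the most delicate part of the argument.
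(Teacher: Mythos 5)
Your ``if'' direction is correct and is essentially the paper's argument: the intersection of the $n-3$ conjugates of $S_{n-1}$ is ${\rm Sym}(X)$ for some $X$ with $|X|\geqs 3$, and the setwise stabiliser of any $k$-set meets ${\rm Sym}(X)$ nontrivially. The reduction of the converse to proving regularity whenever $a_{n-3}\geqs 2$ is also right. The problem is that the central case of the converse --- $a_{n-3}\geqs 3$ with $a_{n-4}\geqs 2$ --- is not actually proved. You correctly observe that the default sets of Lemma \ref{lem:orbits} (with $k=n-2$) only yield $\bigcap_i{\rm Stab}(X_i)\leqs\la(n-2,n-1)\ra$, and that one must force some $X_i$ to contain exactly one of $n-2,n-1$; but the proposed remedy (``modify $X_{n-4}$'') is never specified, and as you acknowledge yourself this is precisely where the difficulty sits: once you perturb one of the nested intervals, the neighbourhood computation underlying Lemma \ref{lem:orbits} no longer applies and the bound $\la(n-2,n-1)\ra$ would have to be re-established for the modified family. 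Flagging this as ``the most delicate part'' does not discharge it. There is also a concrete slip in the case $a_{n-3}=2$: if $s=n-4$ the chain degenerates to the single $2$-set $\{n-3,n-2\}$, so $\bigcap_{i\leqs n-3}{\rm Stab}(X_i)=\la(n-3,n-2)\ra\times\la(n-1,n)\ra$ rather than $\la(n-1,n)\ra$, and a final set meeting $\{n-1,n\}$ in one point but supported otherwise on $\{1,\ldots,n-4\}$ leaves $(n-3,n-2)$ alive.

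Both issues are resolved in the paper by a single uniform device which you in fact already use in your ``all singletons'' subcase but do not exploit in general: apply Lemma \ref{lem:orbits} only to $X_1,\ldots,X_{n-4}$, so that $H=\bigcap_{i=1}^{n-4}K_i$ fixes $\{1,\ldots,n-5\}$ pointwise and lies in a small symmetric group on the last few points, and then pad the final two sets with already-fixed points, taking
\[
X_{n-3}=\{n-3,n-2\}\cup\{1,\ldots,a_{n-3}-2\},\qquad X_{n-2}=\{n-2,n-1\}\cup\{1,\ldots,a_{n-2}-2\}.
\]
Since the filler points are individually fixed by $H$, preserving these sets forces preserving $\{n-3,n-2\}$ and $\{n-2,n-1\}$ setwise, hence fixing $n-2$ and then every remaining point. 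This works uniformly for all $a_{n-3},a_{n-2}\geqs 2$, and it replaces both your unfinished modification in the $a_{n-3}\geqs 3$ case and your chain argument (including its $s=n-4$ edge case) in one stroke.
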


\begin{proof}
First assume $\tau = (H_1, \ldots, H_{n-2})$ has the given form, so $H_{n-2} = S_k \times S_{n-k}$ and $H_i = S_{n-1}$ for all $1 \leqs i \leqs n-3$. Then for any $g_i \in G$ we observe that 
\[
\bigcap_{i=1}^{n-3} H_i^{g_i} = {\rm Sym}(X),
\]
where $X$ is a subset of $[n]=\{1,\ldots, n\}$ with $|X| \geqs 3$. And so if $Y$ is any subset of $[n]$ of size $k$, then either $|X \cap Y| \geqs 2$ or $|X \cap ([n] \setminus Y)| \geqs 2$ and thus 
$\bigcap_{i=1}^{n-2} H_i^{g_i}$
always contains a transposition. In particular, $\tau$ is non-regular.

To complete the proof, suppose $\s = (L_1, \ldots, L_{n-2})$ is a maximal intransitive $(n-2)$-tuple, where $L_i = S_{a_i} \times S_{n-a_i}$ and 
\[
1 \leqs a_1 \leqs a_2 \leqs \cdots \leqs a_{n-2} \leqs n/2
\]
with $a_{n-3} \geqs 2$. We need to show that $\sigma$ is regular. To do this, define the $a_i$-sets $X_1, \ldots, X_{n-4}$ as in the statement of Lemma \ref{lem:orbits}, with respective stabilisers $K_i$. Set $H = \bigcap_{i=1}^{n-4}K_i$. Then Lemma \ref{lem:orbits} implies that either 

\vspace{1mm}

\begin{itemize}\addtolength{\itemsep}{0.3\baselineskip}
\item [{\rm (a)}] $H \leqs {\rm Sym}(X)$ with $X=\{n-4, n-3, n-2, n-1\}$ or $\{n-3, n-2, n-1, n\}$;
\item[{\rm (b)}] $H \leqs {\rm Sym}(X) \times {\rm Sym}(Y)$ with $X=\{n-4, n-3, n-2\}$ and $Y=\{n-1, n\}$, or $X = \{n-4, n-3\}$ and $Y=\{n-2, n-1, n\}$. 
\end{itemize}
We now define the sets
\[
X_{n-3} = \{n-3, n-2, 1, 2, \ldots, a_{n-3}-2 \},\;\; X_{n-2} = \{n-2, n-1, 1, 2, \ldots, a_{n-2}-2\}
\]
with stabilisers $K_{n-3}$ and $K_{n-2}$ respectively (so if $a_{n-3} = 2$, then $X_{n-3} = \{n-3,n-2\}$). Then $\bigcap_{i=1}^{n-2} K_i = 1$ and we are done.
\end{proof}

\begin{rem}
For $G = A_n$, an entirely similar argument shows that the maximal intransitive $(n-3)$-tuples for $G$ are precisely those of the form $(H, \ldots, H,K)$, up to conjugacy and reordering, where $H = A_{n-1}$ and $K = (S_k \times S_{n-k}) \cap G$ for some $1 \leqs k \leqs n/2$. 
\end{rem}

\begin{rem}\label{r:snmax}
It would be interesting to extend Proposition \ref{p:snmax} by giving a complete  classification of all the non-regular $(n-2)$-tuples for $G = S_n$. Notice that if $n=2m$ is even, $H = S_{n-1}$ and $K = S_m \wr S_2$, then it is easy to see that $(H, \ldots, H,K)$ is a non-regular $(n-2)$-tuple with a transitive component subgroup. And by replacing $K$ by the index-two subgroup $L = S_m \times S_m$, we obtain a non-regular $(n-2)$-tuple containing a non-maximal subgroup. For $n \geqs 5$ odd, we are not aware of any non-regular $(n-2)$-tuples for $G$ that are not of the form described in Proposition \ref{p:snmax}.
\end{rem}

\section{Sporadic groups}\label{s:spor}

For the remainder of the paper, we assume $G$ is an almost simple sporadic group with socle $T$ and our aim is to prove Theorem \ref{t:main2}. We begin in Section \ref{ss:spor_reg} by computing the exact regularity number of $G$, which establishes part (i) of Theorem \ref{t:main2}. We then prove part (ii) in Section \ref{ss:spor_sol}, where we show that every soluble triple of subgroups of $G$ is regular. We apply computational methods throughout this section, working extensively with {\sc Magma} \cite{magma} and \textsf{GAP} \cite{GAP}. We refer the reader to Section \ref{ss:comp} for a brief overview of some of the main methods, which complements the more detailed discussion in the supplementary file \cite{AB_comp}.

As explained in Wilson's survey article \cite{Wilson_survey}, the maximal subgroups of $G$ are known up to conjugacy, apart from the problem of determining the status of a handful of candidate almost simple maximal subgroups of the Monster $\mathbb{M}$ (and the latter ambiguity appears to have been recently resolved by Dietrich et al. in \cite{DLP}). In addition, the character table of $G$ is available in the \textsf{GAP} Character Table Library \cite{GAPCTL}, together with the character tables of all the maximal subgroups of $G$ and the associated fusion maps on conjugacy classes (excluding $G = \mathbb{M}$). A great deal of useful information about the sporadic groups is available in the Web Atlas \cite{WebAt}. 

\subsection{The regularity number}\label{ss:spor_reg}

Let $G$ be an almost simple sporadic group with socle $T$, so $G = T$ or $T.2$. Recall that we define $\mathcal{M}(G)$ to be the set of core-free subgroups of $G$ that are maximal in either $G$ or $T$ (so if $G = T$, then $\mathcal{M}(G)$ is just the set of maximal subgroups of $G$). As noted in Remark \ref{r:large}, if $G = T.2$ and $H<T$ is maximal with $N_G(H) \ne H$, then $H$ is contained in a core-free maximal subgroup of $G$. Therefore, if we define $\mathcal{M}'(G) = \mathcal{A} \cup \mathcal{B}$, where
\begin{align*}
\mathcal{A} & = \{\mbox{core-free maximal subgroups of $G$}\} \\
\mathcal{B} & = \{\mbox{maximal subgroups $H$ of $T$ with $N_G(H) = H$}\}
\end{align*}
then 
\[
B(G) = \max\{b(G,H) \,:\, H \in \mathcal{M}'(G)\}
\]
and $R(G)$ is the smallest positive integer $k$ such that every $k$-tuple of subgroups in $\mathcal{M}'(G)$ is regular. For the remainder of this section, it will be convenient to work with the following definition (note that if $G = T$, then every \emph{large} tuple is \emph{maximal}, and vice versa).

\begin{defn}\label{d:large}
A $k$-tuple of subgroups $(H_1, \ldots, H_k)$ of $G$ is \emph{large} if each component subgroup $H_i$ is contained in the set $\mathcal{M}'(G)$ defined above.
\end{defn}

Recall that $B(G) \geqs 3$ by Proposition \ref{p:low}. We begin by recording the exact base number, which is easily obtained from \cite{BOW}.

\begin{prop}\label{p:spor_bG}
Let $G$ be an almost simple sporadic group with socle $T$. Then the base number $B(G)$ is recorded in Table \ref{tab:RG_sporadic}.
\end{prop}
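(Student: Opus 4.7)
The plan is to apply the identity $B(G) = \max\{b(G,H) : H \in \mathcal{M}'(G)\}$ established immediately before the proposition, where $\mathcal{M}'(G) = \mathcal{A} \cup \mathcal{B}$ consists of the core-free maximal subgroups of $G$ together with the maximal self-normalising subgroups of $T$ (when $G = T.2$). For every $H \in \mathcal{A}$, the action of $G$ on $G/H$ is faithful and primitive, and the exact value of $b(G,H)$ is provided by the main theorem of \cite{BOW}. I would simply read off these values for each almost simple sporadic group and take their maximum.

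For the remaining subgroups $H \in \mathcal{B}$, which arise only when $G = T.2$, the action of $G$ on $G/H$ is imprimitive with the $T$-orbit through $H$ forming a block of size $|T:H|$, and we have the elementary bound $b(G,H) \leqs b(T,H)$, since any collection of $T$-conjugates of $H$ whose intersection is trivial is also a collection of $G$-conjugates with trivial intersection. The quantity $b(T,H)$ is itself listed in \cite{BOW} applied to the simple group $T$, so combining the two sources of data provides an upper bound on $b(G,H)$ for each $H \in \mathcal{B}$. One then verifies, group by group, that this bound (and hence the actual base size) does not exceed $\max\{b(G,H) : H \in \mathcal{A}\}$; this last value is the one appearing in Table \ref{tab:RG_sporadic}.

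The main obstacle is essentially bookkeeping: collating the base sizes from \cite{BOW} applied to both $G$ and $T$, and then checking that the maximum over $\mathcal{M}'(G)$ matches the tabulated entry. In any case where the crude inequality $b(G,H) \leqs b(T,H)$ ties or fails to rule out a contribution from $\mathcal{B}$, I would finish by computing $b(G,H)$ directly in {\sc Magma} via the random search method described in Section \ref{ss:comp}, which is feasible for every sporadic group involved since the relevant subgroup $H$ lies in $T$ and can be constructed from the explicit permutation representations used throughout Section \ref{s:spor}.
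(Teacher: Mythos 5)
Your strategy is essentially the paper's: split $\mathcal{M}'(G)$ into $\mathcal{A}$ and $\mathcal{B}$, read off $b(G,H)$ for $H \in \mathcal{A}$ from the main theorem of \cite{BOW}, and control $H \in \mathcal{B}$ via the elementary bound $b(G,H) \leqs b(T,H)$, with $b(T,H)$ again taken from \cite{BOW}. Two caveats, though. First, your intermediate assertion that the tabulated value of $B(G)$ is always $\max\{b(G,H) : H \in \mathcal{A}\}$ is false: inspecting \cite{BOW} shows there is exactly one exception, $G = {\rm M}_{12}.2$, where the maximum of $b(G,H)$ over core-free maximal subgroups of $G$ is smaller than $B(T) = 5$, and the entry $B(G)=5$ in Table \ref{tab:RG_sporadic} is realised only by $H = {\rm M}_{11} \in \mathcal{B}$. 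Your fallback clause does catch this case, but note that what must be verified there is a \emph{lower} bound, namely $b(G,{\rm M}_{11}) \geqs 5$, i.e.\ the non-existence of a base of size $4$; random search can only certify upper bounds (it exhibits bases), so it cannot complete this step. You would instead need an exhaustive or orbit-style computation (trivially feasible, since the action of ${\rm M}_{12}.2$ on the cosets of ${\rm M}_{11}$ has degree $24$), which yields $b(G,{\rm M}_{11}) = 5$ and hence $B({\rm M}_{12}.2) = 5$, exactly as in the paper's proof. With these two repairs your argument coincides with the published one.
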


\begin{proof}
If $G = T$, then we can immediately read off $B(G)$ from the main theorem of \cite{BOW}, so let us assume $G = T.2$ and $H \in \mathcal{M}'(G)$. If $H$ is maximal in $G$, then $b(G,H)$ is computed in \cite{BOW}, so we may assume that $H<T$ is maximal and $N_G(H) = H$. Here $b(T,H)$ is recorded in \cite{BOW}, which is clearly an upper bound for $b(G,H)$. 

By inspecting \cite{BOW}, we see that $G = {\rm M}_{12}.2$ is the only group that arises where $B(T)$ is greater than the maximal value of $b(G,H)$ over all core-free maximal subgroups $H$ of $G$. Here it is easy to check that if $H = {\rm M}_{11} < T$, then $b(G,H) = 5$ and we deduce that $B(G) = 5$.
\end{proof}

Now let us turn to the regularity number of $G$. Our main result is the following, which establishes part (i) of Theorem \ref{t:main2}. See Remark \ref{r:spor_tab} below for comments on the information recorded in the final column of Table 
\ref{tab:RG_sporadic}. In (ii), we write $\mathbb{B}$ and $\mathbb{M}$ for the Baby Monster and Monster groups, respectively.

\begin{prop}\label{p:spor_RG}
Let $G$ be an almost simple sporadic group with socle $T$. 

\vspace{1mm}

\begin{itemize}\addtolength{\itemsep}{0.3\baselineskip}
\item [{\rm (i)}] The regularity number $R(G) = k$ is recorded in Table \ref{tab:RG_sporadic}.
\item [{\rm (ii)}] For $G \ne \mathbb{B},\mathbb{M}$, a complete list of the large non-regular $(k-1)$-tuples, up to ordering and conjugacy, is presented in the final column of Table \ref{tab:RG_sporadic}.
\end{itemize}
\end{prop}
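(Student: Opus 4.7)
The plan is to proceed sporadic group by sporadic group, leveraging the framework of Remark \ref{r:large}: by Lemma \ref{lem:maxreduction}, it suffices to examine tuples whose components belong to $\mathcal{M}'(G)$, i.e.\ large tuples in the sense of Definition \ref{d:large}. For each $G$ the argument splits into a lower bound $R(G) \geqs k$, established by exhibiting one non-regular $(k-1)$-tuple, and an upper bound $R(G) \leqs k$, established by verifying that every large $k$-tuple is regular. Since $B(G) \leqs R(G)$ and $B(G)$ is known from Proposition \ref{p:spor_bG}, this already pins down $R(G)$ up to a small window in many cases, and in fact $R(G) = B(G)$ for most of the groups in Table \ref{tab:RG_sporadic}.

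For the lower bound, if $B(G) = k$ then by definition there is a core-free maximal subgroup $H$ with $b(G,H)= k$, giving the non-regular conjugate $(k-1)$-tuple $(H, \ldots, H)$ at once. To detect the genuine gaps $R(G) > B(G)$ (for example $R({\rm M}_{11}) = 5$ while $B({\rm M}_{11}) = 4$), I would enumerate \emph{all} large $(k-1)$-tuples up to ordering and conjugacy, and for each one attempt to prove non-regularity either via Lemma \ref{l:triv} (the bound $\prod |H_i| > |G|^{k-1}$) or, failing that, by constructing $G$ and the $H_i$ in {\sc Magma} and computing the $G$-orbits on $G/H_1 \times \cdots \times G/H_{k-1}$ directly. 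This orbit calculation also produces the concrete representatives listed in the final column of Table \ref{tab:RG_sporadic}, thereby handling part (ii).

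For the upper bound, my first weapon is random search (Section \ref{ss:random}): construct $G \leqs S_{m(G)}$ via \texttt{AutomorphismGroupSimpleGroup} (or as a matrix group for $G \in \{{\rm J}_4, {\rm Ly}, {\rm Th}\}$), access $\mathcal{M}'(G)$ via \texttt{MaximalSubgroups} (or via generators from \cite{WebAt} for ${\rm Fi}_{24}'$, ${\rm Fi}_{24}$ and ${\rm Th}$), and for each large $k$-tuple $\tau = (H_1, \ldots, H_k)$ try to hit $\bigcap H_i^{g_i} = 1$ with random $g_i$. When this fails, fall back on an orbit computation of $H_1$ on $G/H_2 \times \cdots \times G/H_k$ via \texttt{CosetAction}, or in the $k=2$ case on \texttt{DoubleCosetRepresentatives}. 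When indices are too large for either, invoke the probabilistic criterion of Lemma \ref{l:fpr}: the character table of $G$ and those of its maximal subgroups, together with class fusion maps, are available in the \textsf{GAP} Character Table Library \cite{GAPCTL}, so $\widehat{Q}(G,\tau)$ can be computed exactly, and $\widehat{Q}(G,\tau) < 1$ certifies regularity.

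The main obstacle will be the handful of pairs where none of these methods apply directly: a few maximal pairs for $G \in \{{\rm J}_4, {\rm Ly}, {\rm Th}\}$ where random search is inconclusive and both indices are too large to form the coset action, and the two largest sporadic groups $\mathbb{B}$ and $\mathbb{M}$, which admit no workable permutation or matrix representation. For the former I would rely on the ad hoc treatments announced before Section \ref{s:spor} (to appear in Lemma \ref{l:derek}), including the \textsf{GAP} package \textsc{Orb} to handle the non-regular pair for ${\rm J}_4$ with $H_1 = 2^{3+12}.(S_5 \times {\rm L}_3(2))$ and $H_2 = 2^{1+12}.3.{\rm M}_{22}{:}2$. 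For $\mathbb{B}$ and $\mathbb{M}$ I would run the character-theoretic test of Lemma \ref{l:fpr} on every large triple using \cite{GAPCTL}, handling the few missing fusion maps for $\mathbb{M}$ via \texttt{NamesOfFusionSources}; combined with $B(G) = 3$ from \cite{BOW} this will give $R(G) = 3$ and simultaneously justify the omission of these two groups from part (ii).
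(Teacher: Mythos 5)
Your overall strategy is the one the paper follows: reduce to large tuples via Lemma \ref{lem:maxreduction} and Remark \ref{r:large}, get lower bounds from $B(G)$ and explicit non-regular tuples (via Lemma \ref{l:triv} or orbit computations), get upper bounds by random search, \texttt{CosetAction} orbit calculations and the probabilistic criterion of Lemma \ref{l:fpr}, and defer the intractable pairs for ${\rm J}_4$, ${\rm Ly}$, ${\rm Th}$ to the ad hoc computations of Lemma \ref{l:derek}. However, your plan for the two largest groups has genuine gaps. For the Baby Monster you propose to run the character-theoretic test on every large \emph{triple} and conclude $R(G)=3$ "combined with $B(G)=3$"; but $B(\mathbb{B})=4$ and $R(\mathbb{B})=4$, so this cannot work: there exist non-regular maximal triples (e.g.\ three copies of $2.{}^2E_6(2){:}2$, since $b(G,H)=4$), and the correct argument is to identify the short list of maximal triples $\s$ with $\what{Q}(G,\s)\geqs 1$ and then verify $\what{Q}(G,\tau)<1$ for every maximal $4$-tuple $\tau$. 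One also has to deal with the maximal subgroup $(2^2\times F_4(2)){:}2$, whose fusion map is not stored in \cite{GAPCTL}; the paper resolves this with \texttt{PossibleClassFusions}, checking that the fixed point ratios are independent of the choice of fusion.

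For the Monster the gap is more substantial. \texttt{NamesOfFusionSources} does not supply the "missing" fusion maps: it only gives access to the $31$ of the $46$ classes of maximal subgroups whose character tables and fusions are already in \cite{GAPCTL}. For the remaining $15$ classes you cannot compute $\what{Q}(G,\tau)$ directly, and a separate argument is required. The paper handles these by first computing, for the $31$ known subgroups, the worst-case fixed point ratios $b_i$ for each class of prime order elements, and then showing that each of the other $15$ subgroups $H$ satisfies $i_r(H)\leqs c_r=\min\{|x_i^G|\,b_i : |x_i|=r\}$ for every prime $r$, where $i_r(H)$ is counted from a permutation representation of $H$ in the Web Atlas (or bounded via \cite[Proposition 3.8]{BOW}); this forces ${\rm fpr}(x_i,G/H)\leqs b_i$ and hence $\what{Q}(G,\tau)<1$ for every maximal triple. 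Even then there is an exception: for $H=3^8.{\rm P\Omega}_8^-(3).2$ the inequality fails at $r=41$, and one must redefine the relevant bound as $i_{41}(H)/|x^G|$ and recheck that the total sum is still less than $1$. Without this extra layer your argument does not establish $R(\mathbb{M})=3$.
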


As an immediate corollary, we obtain a strong form of Conjecture \ref{con:main}(ii) for almost simple sporadic groups.

\begin{cor}\label{c:spor_RG}
We have $R(G) \leqs 7$, with equality if and only if $G ={\rm M}_{24}$. Moreover, if $G = {\rm M}_{24}$ then a $6$-tuple is non-regular if and only if every component subgroup is conjugate to ${\rm M}_{23}$. 
\end{cor}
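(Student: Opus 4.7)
The plan is to deduce both assertions directly from Proposition \ref{p:spor_RG}, without any further work beyond inspection. First I would note that part (i) of the proposition supplies the exact value of $R(G)$ for every almost simple sporadic group $G$ via Table \ref{tab:RG_sporadic}. Thus the inequality $R(G) \leqs 7$ and the characterisation of the equality case reduce to reading off the relevant column of that table: we verify that every listed value of $R(G)$ is at most $7$, and that the value $7$ occurs for a unique row, namely $G = {\rm M}_{24}$.

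For the second assertion, since ${\rm M}_{24} \notin \{\mathbb{B}, \mathbb{M}\}$, part (ii) of the proposition applies and the final column of Table \ref{tab:RG_sporadic} records a complete list of the large non-regular $6$-tuples of ${\rm M}_{24}$, up to ordering and conjugacy. I would then simply observe that the only entry appearing in that row is the $6$-tuple $(H, H, H, H, H, H)$ with $H \cong {\rm M}_{23}$. Combined with Lemma \ref{lem:maxreduction} (which reduces the question of regularity for arbitrary core-free tuples to large tuples), this yields the stated classification: a $6$-tuple in ${\rm M}_{24}$ is non-regular if and only if each of its components is conjugate to ${\rm M}_{23}$.

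There is no substantial obstacle here, as the corollary is essentially a repackaging of the data delivered by Proposition \ref{p:spor_RG}. The only point worth flagging is the dependence on Lemma \ref{lem:maxreduction}: to conclude that every non-regular $6$-tuple of core-free subgroups of ${\rm M}_{24}$ (not just every large one) has all components conjugate to ${\rm M}_{23}$, one uses that ${\rm M}_{23}$ is a maximal subgroup of ${\rm M}_{24}$, so a $6$-tuple of arbitrary core-free subgroups is non-regular only if the corresponding tuple of maximal overgroups is non-regular, forcing each overgroup to be conjugate to ${\rm M}_{23}$; since ${\rm M}_{23}$ is simple (hence contains no proper core-free subgroup of ${\rm M}_{24}$ below it other than itself up to conjugacy of overgroups), the original components must themselves coincide with conjugates of ${\rm M}_{23}$. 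All other cases of Conjecture \ref{con:main}(ii) for sporadic socles are covered by the bound $R(G) \leqs 6$ visible in Table \ref{tab:RG_sporadic}.
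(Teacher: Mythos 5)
Your treatment of the first assertion, and of the ``if'' direction of the second, is fine and is exactly how the paper proceeds: the corollary is presented as an immediate consequence of Proposition \ref{p:spor_RG}, with the bound $R(G) \leqs 7$, the equality case $G = {\rm M}_{24}$, and the non-regularity of the $6$-tuple $({\rm M}_{23}, \ldots, {\rm M}_{23})$ all read off Table \ref{tab:RG_sporadic}.

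The gap is in your ``only if'' argument for ${\rm M}_{24}$. The reduction to maximal overgroups gives only the following: if $(H_1, \ldots, H_6)$ is non-regular, then for \emph{every} choice of maximal overgroups $M_i \geqs H_i$ the tuple $(M_1, \ldots, M_6)$ is non-regular, hence every maximal overgroup of each $H_i$ is conjugate to ${\rm M}_{23}$. This shows each $H_i$ is \emph{contained in} a conjugate of ${\rm M}_{23}$, not that it equals one. Your bridging claim --- that since ${\rm M}_{23}$ is simple it ``contains no proper core-free subgroup of ${\rm M}_{24}$ below it other than itself'' --- is false: ${\rm M}_{24}$ is simple, so every proper subgroup (for instance ${\rm M}_{22} < {\rm M}_{23}$) is core-free, and the simplicity of ${\rm M}_{23}$ is irrelevant here. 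What is actually needed, and what you omit, is an argument ruling out non-regular $6$-tuples with a component properly contained in a point stabiliser. This does follow from the classification of large tuples, via the extra observation that every maximal subgroup of ${\rm M}_{23}$ --- namely ${\rm M}_{22}$, ${\rm L}_3(4){:}2$, $2^4{:}A_7$, $A_8$, ${\rm M}_{11}$ and $23{:}11$ --- lies in a maximal subgroup of ${\rm M}_{24}$ that is \emph{not} conjugate to ${\rm M}_{23}$ (respectively ${\rm M}_{22}{:}2$, ${\rm L}_3(4){:}S_3$, $2^4{:}A_8$, $2^4{:}A_8$, ${\rm M}_{12}{:}2$ and ${\rm L}_2(23)$). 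Consequently, if some $H_i$ is a proper subgroup of a conjugate of ${\rm M}_{23}$, one may choose a maximal overgroup $M_i$ outside the class of ${\rm M}_{23}$; the resulting large $6$-tuple is then regular by Proposition \ref{p:spor_RG}(ii), so $(H_1, \ldots, H_6)$ is regular. Only with this additional step (or an equivalent direct verification) does the stated classification of non-regular $6$-tuples follow.
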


\begin{rem}\label{r:spor_tab}
Suppose $G \ne \mathbb{B},\mathbb{M}$ and $R(G) = k$. In the final column of Table \ref{tab:RG_sporadic} we list all the large non-regular $(k-1)$-tuples for $G$, up to conjugacy and ordering. To do this, we use the notation $(a_1, \ldots, a_{k-1}) \in \mathbb{N}^{k-1}$ to encode each tuple $\tau = (H_{1}, \ldots, H_{k-1})$, noting that the corresponding subgroups are recorded in Tables \ref{tab:sporadic_key} and \ref{tab:sporadic_key2}. So for example, if $G = {\rm J}_3.2$, then $(H_1,H_2) = ( {\rm L}_2(17) \times 2, (3 \times {\rm M}_{10}){:}2 )$ is the non-regular pair labelled $(3,4)$ in Table \ref{tab:RG_sporadic}. We refer the reader to Remark \ref{r:RG_spor} for comments on the  groups $\mathbb{B}$ and $\mathbb{M}$ we are excluding here.
\end{rem}

{\scriptsize
\begin{table}
\[
\begin{array}{lccl} \hline
G & B(G) & R(G) & \\ \hline
{\rm M}_{11} & 4 & 5 & (1,1,2,2), (1,2,2,2) \\
{\rm M}_{12} & 5 & 6 & (1,1,1,2,2), (1,1,2,2,2) \\
{\rm M}_{12}.2 & 5 & 5 & (1,1,1,1), (1,1,1,2) \\
{\rm M}_{22} & 5 & 5 & (1,1,1,1), (1,1,1,2) \\
{\rm M}_{22}.2 & 5 & 5 & ( 1, 1, 1, i ), ( 1, 1, 2, 2 ), ( 1, 1, 2, 4 ), \, 1 \leqs i \leqs 5 \\
{\rm M}_{23} & 6 & 6 & (1,1,1,1,1) \\
{\rm M}_{24} & 7 & 7 & (1,1,1,1,1,1) \\
{\rm J}_1 & 3 & 3 & (1,1), (1,2) \\
{\rm J}_2 & 4 & 4 & (1,1,1) \\
{\rm J}_2.2 & 4 & 4 & (1,1,i), \, 1 \leqs i \leqs 4 \\
{\rm J}_3 & 3 & 3 & ( 1, i ), \, 1 \leqs i \leqs 8 \\
{\rm J}_3.2 & 3 & 3 & ( 1, i ), ( 3, 4 ), ( 3, 6 ), ( 4, 4 ), \, 1 \leqs i \leqs 7 \\
{\rm HS} & 4 & 4 & ( 1, 1, 1, i ), ( 1, 1, 2, 2 ), ( 1, 1, 3, 3 ), \, 1 \leqs i \leqs 3 \\
{\rm HS}.2 & 5 & 5 & ( 1, 1, 1, i ), \, 1 \leqs i \leqs 4 \\
{\rm McL} & 5 & 5 & (1,1,1,1) \\
{\rm McL}.2 & 5 & 5 & (1,1,1,1) \\
{\rm He} & 4 & 4 & (1,1,1), (1,1,2) \\
{\rm He}.2 & 4 & 4 & (1,1,1), (1,1,2) \\
{\rm Suz} & 4 & 4 & ( 1, 1, i ), ( 1, 2, 2 ), ( 1, 2, 3 ), ( 1, 3, 3 ), \, 1 \leqs i \leqs 7 \\ 
{\rm Suz}.2 & 4 & 4 & ( 1, 1, i ),( 1, 2, 2 ),( 1, 2, 3 ), ( 1, 3, 3 ),\, 1 \leqs i \leqs 9 \\
{\rm O'N} & 3 & 3 & (1,i_1), (2,i_2), \, j \leqs i_j \leqs 4 \\
{\rm O'N}.2 & 3 & 3 & (1,i), (2,3), (3,3),\, 1 \leqs i \leqs 3 \\
{\rm HN} & 3 & 3 & (1,i_1), (2,i_2), (3,3),\,  j \leqs i_j \leqs 12  \\
{\rm HN}.2 & 3 & 3 & (1,i), (2,j), (2,12), (3,3), (3,4), \, 1 \leqs i \leqs 12, \, 2 \leqs j \leqs 10 \\
{\rm Co}_3 & 6 & 6 & (1,1,1,1,1) \\
{\rm Co}_2 & 6 & 6 & (1,1,1,1,1) \\
{\rm Co}_1 & 5 & 5 & (1,1,1,1) \\
{\rm Ru} & 4 & 4 & (1,1,1) \\
{\rm Fi}_{22} & 5 & 5 & (1,1,1,1), (1,1,1,2) \\
{\rm Fi}_{22}.2 & 6 & 6 & (1,1,1,1,1) \\
{\rm Fi}_{23} & 5 & 5 & ( 1, 1, 1, 1 ),( 1, 1, 1, 2 ),( 1, 1, 2, 2 ),( 1, 2, 2, 2 ) \\
{\rm Fi}_{24}' & 5 & 5 & (1,1,1,1), (1,1,1,2) \\
{\rm Fi}_{24} & 5 & 5 & (1,1,1,i), 1 \leqs i \leqs 3 \\
{\rm J}_4 & 3 & 3 & (1,i), (2,j), (3,3), \, 1\leqs i \leqs 5, \, 2 \leqs j \leqs 4  \\
{\rm Ly} & 3 & 3 &  (1,i_1), (2,i_2), \, j \leqs i_j \leqs 6 \\
{\rm Th} & 3 & 3 & (1,i), (2,2), \, 1 \leqs i \leqs 4  \\
\mathbb{B} & 4 & 4 & \mbox{See Remark \ref{r:RG_spor}}\\ 
\mathbb{M} & 3 & 3 & \mbox{See Remark \ref{r:RG_spor}}\\ \hline
\end{array}
\]
\caption{The base and regularity numbers for sporadic groups}
\label{tab:RG_sporadic}
\end{table}}

{\scriptsize
\begin{table}
\[
\begin{array}{l|llllll}
 & 1 & 2 & 3 & 4 & 5 & 6 \\ \hline
{\rm M}_{11} & {\rm M}_{10} & {\rm L}_{2}(11) & & & &  \\
{\rm M}_{12} & {\rm M}_{11} & {\rm M}_{11} & & & & \\
{\rm M}_{12}.2 & {\rm M}_{11} & A_6.2^2 && & & \\
{\rm M}_{22} & {\rm L}_3(4) & 2^4{:}A_6 & & & & \\
{\rm M}_{22}.2 & {\rm L}_3(4).2 & 2^4{:}S_6 & 2^5{:}S_5 & 2^3{:}{\rm L}_3(2) \times 2 & A_6.2^2 &  \\
{\rm M}_{23} & {\rm M}_{22} & & & & &  \\
{\rm M}_{24} & {\rm M}_{23} & & & & &  \\
{\rm J}_1 & {\rm L}_2(11) & 19{:}6 & & & & \\
{\rm J}_2 & {\rm U}_3(3) & & & & & \\
{\rm J}_2.2 & {\rm U}_3(3){:}2 & 3.A_6.2^2 & 2^{1+4}.S_5 & 2^{2+4}{:}(3 \times S_3).2 & & \\
{\rm J}_3 & {\rm L}_2(16){:}2 & {\rm L}_2(19) & {\rm L}_2(19) & 2^4{:}(3 \times A_5) & {\rm L}_2(17) & (3 \times A_6).2 \\
{\rm J}_3.2 & {\rm L}_2(16){:}4 & 2^4{:}(3 \times A_5).2 & {\rm L}_2(17) \times 2 & (3 \times {\rm M}_{10}){:}2 & 3^{2+1+2}{:}8.2 & 2^{1+4}{:}S_5 \\
{\rm HS} & {\rm M}_{22} & {\rm U}_3(5){:}2 & {\rm U}_3(5){:}2 & & &  \\
{\rm HS}.2 & {\rm M}_{22}.2 & {\rm L}_3(4){:}2^2 & S_8 \times 2 & {\rm U}_3(5){:}2 & & \\
{\rm McL} & {\rm U}_4(3) & & & & & \\
{\rm McL}.2 & {\rm U}_4(3){:}2 & & & & & \\ 
{\rm He} & {\rm Sp}_4(4){:}2 & 2^2.{\rm L}_3(4).S_3 & & & & \\
{\rm He}.2 & {\rm Sp}_4(4){:}4 & 2^2.{\rm L}_3(4).{\rm D}_{12} & & & & \\
{\rm Suz} & G_2(4) & 3.{\rm U}_4(3).2 & {\rm U}_5(2) & 2^{1+6}.{\rm U}_4(2) & 3^5{:}{\rm M}_{11} & {\rm J}_2{:}2 \\
{\rm Suz}.2 & G_2(4){:}2 & 3.{\rm U}_4(3).2^2 & {\rm U}_5(2){:}2 & 2^{1+6}.{\rm U}_4(2).2 & 3^5{:}({\rm M}_{11} \times 2) & {\rm J}_2{:}2 \times 2 \\
{\rm O'N} & {\rm L}_3(7){:}2 & {\rm L}_3(7){:}2 & {\rm J}_1 & 4.{\rm L}_3(4){:}2 & & \\
{\rm O'N}.2 & {\rm L}_3(7){:}2 & 7^{1+2}{:}(3 \times {\rm D}_{16}) & {\rm J}_1 \times 2 & & &\\
{\rm HN} & A_{12} & 2.{\rm HS}.2 & {\rm U}_3(8){:}3 & 2^{1+8}.(A_5 \times A_5).2 & ({\rm D}_{10} \times {\rm U}_3(5)).2 & 5^{1+4}.2^{1+4}.5.4 \\
{\rm HN}.2 & S_{12} & 4.{\rm HS}.2 & {\rm U}_3(8){:}6 & 2^{1+8}.(A_5 \times A_5).2^2 & 5{:}4 \times {\rm U}_3(5) & 5^{1+4}.2^{1+4}.5.4.2 \\
{\rm Co}_3 & {\rm McL}.2 & & & & & \\
{\rm Co}_2 & {\rm U}_6(2){:}2 & & & & & \\
{\rm Co}_1 & {\rm Co}_2 & & & & & \\
{\rm Ru} & {}^2F_4(2) & & & & & \\
{\rm Fi}_{22} & 2.{\rm U}_6(2) & 2^{10}.{\rm M}_{22} & & & & \\
{\rm Fi}_{22}.2 & 2.{\rm U}_6(2).2 & & & & & \\
{\rm Fi}_{23} & 2.{\rm Fi}_{22} & {\rm P\O}_{8}^{+}(3){:}S_3 & & & & \\
{\rm J}_4 & 2^{11}{:}{\rm M}_{24} & 2^{1+12}.3.{\rm M}_{22}{:}2 & 2^{10}{:}{\rm L}_5(2) &  2^{3+12}.(S_5 \times {\rm L}_3(2)) & {\rm U}_3(11){:}2 & \\
{\rm Ly} & G_2(5) & 3.{\rm McL}{:}2 & 5^3.{\rm L}_3(5) & 2.A_{11} & 5^{1+4}.4S_6 & 3^5{:}(2 \times {\rm M}_{11}) \\
{\rm Th} & {}^3D_4(2){:}3 & 2^5.{\rm L}_5(2) & 2^{1+8}.A_9 & {\rm U}_3(8){:}6 & & \\
{\rm Fi}_{24}' & {\rm Fi}_{23} & 2.{\rm Fi}_{22}.2 & & & \\
{\rm Fi}_{24} & {\rm Fi}_{23} \times 2 & (2 \times 2.{\rm Fi}_{22}){:}2 & S_3 \times {\rm P\O}_{8}^{+}(3){:}S_3 & &\\
\end{array}
\]
\caption{The subgroups recorded in Table \ref{tab:RG_sporadic}, Part I}
\label{tab:sporadic_key}
\end{table}}

{\scriptsize
\begin{table}
\[
\begin{array}{l|llllll}
 & 7 & 8 & 9 & 10 & 11 & 12 \\ \hline
{\rm M}_{12}.2 & S_4 \times S_3 & S_5 & & & & \\
{\rm J}_3 & 3^{2+1+2}{:}8 & 2^{1+4}{:}A_5 & & & & \\
{\rm J}_3.2 & {\rm L}_{2}(19) & & & & & \\ 
{\rm Suz} & 2^{2+8}{:}(A_5 \times S_3) & &  & & & \\
{\rm Suz}.2 & 2^{4+6}{:}3.S_6 & (A_4 \times {\rm L}_3(4){:}2){:}2 & 2^{2+8}{:}(S_5 \times S_3) & & & \\
{\rm HN} & 2^6.{\rm U}_4(2)  & (A_6 \times A_6).{\rm D}_8 & 2^{3+2+6}.(3 \times {\rm L}_3(2)) & 5^{2+1+2}.4.A_5 & {\rm M}_{12}.2 & {\rm M}_{12}.2 \\
{\rm HN}.2 & 2^6.{\rm U}_4(2).2  & (S_6 \times S_6).2^2 & 2^{3+2+6}.(3 \times {\rm L}_3(2)).2 & 5^{2+1+2}.4.A_5.2 & 3^4{:}2.(S_4 \times S_4).2 & {\rm M}_{12}.2 \\
\end{array}
\]
\caption{The subgroups recorded in Table \ref{tab:RG_sporadic}, Part II}
\label{tab:sporadic_key2}
\end{table}}

Let $G$ be an almost simple sporadic group with socle $T$. Our proof of Proposition \ref{p:spor_RG} is entirely computational, working with {\sc Magma} (version 2.28-4) \cite{magma} and \textsf{GAP} (version 4.11.1) \cite{GAP}. In particular, the information available in the \textsf{GAP} Character Table Library \cite{GAPCTL} is essential for handling some of the larger groups. We begin by dividing the $26$ possibilities for $T$ into the following four collections, and we will handle each one in turn:
\begin{align*}
\mathcal{A}_1 & = \{ {\rm M}_{11}, {\rm M}_{12}, {\rm M}_{22}, {\rm M}_{23}, {\rm M}_{24}, {\rm J}_{1}, {\rm J}_{2}, {\rm J}_{3}, {\rm HS}, {\rm McL}, {\rm He}, {\rm Suz}, {\rm O'N}, {\rm Ru}, {\rm Co}_{3}, {\rm Co}_{2},  {\rm Fi}_{22}, {\rm Fi}_{23} \} \\
\mathcal{A}_2 & = \{ {\rm Co}_1, {\rm HN}, {\rm Fi}_{24}' \} \\
\mathcal{A}_3 & = \{ {\rm J}_4, {\rm Ly}, {\rm Th}\} \\
\mathcal{A}_4 &  = \{\mathbb{B}, \mathbb{M}\}
\end{align*}

\begin{lem}\label{l:rG1}
The conclusion to Proposition \ref{p:spor_RG} holds for the groups with $T \in \mathcal{A}_1$.
\end{lem}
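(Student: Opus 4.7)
The plan is to verify Proposition \ref{p:spor_RG} for each group with socle $T \in \mathcal{A}_1$ via direct computation in {\sc Magma}, following the computational framework laid out in Section \ref{ss:comp}. For every such $T$, the socle and any almost simple extension $G \in \{T, T.2\}$ admit a faithful permutation representation of manageable degree $m(G)$, which can be constructed via \texttt{AutomorphismGroupSimpleGroup}. Using \texttt{MaximalSubgroups}, we obtain representatives of the conjugacy classes of maximal subgroups of $G$ and of $T$; filtering these produces the set $\mathcal{M}'(G)$ from Definition \ref{d:large}, since we discard any $H<T$ whose normaliser $N_G(H)$ strictly contains $H$ (in that case $H$ is contained in some core-free maximal subgroup of $G$ and can be absorbed).

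Having fixed $\mathcal{M}'(G)$, I would first use Lemma \ref{l:triv} as a sieve: for any candidate large $k$-tuple $\tau = (H_1, \ldots, H_k)$ with $\prod_i |H_i| > |G|^{k-1}$, the tuple is automatically non-regular. Starting with $k=2$, I iterate as follows. For each pair $(H_1,H_2)$, I determine regularity either by applying \texttt{DoubleCosetRepresentatives} and checking whether some double coset has size $|H_1||H_2|$, or by random search for $g \in G$ with $H_1 \cap H_2^g = 1$; if neither succeeds, I compute the full orbit structure of $H_1$ on $G/H_2$ via \texttt{CosetAction}. This yields the list of non-regular pairs, and by inspection confirms the value of $B(G)$ from Proposition \ref{p:spor_bG}. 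Extending to $k=3,4,\ldots$, I form all tuples $\tau$ whose $(k-1)$-element sub-tuples were not already shown regular, and apply the same regularity tests (random search for $g_i \in G$ with $\bigcap_i H_i^{g_i}=1$, or orbit computation of $H_1$ on $G/H_2 \times \cdots \times G/H_k$). The process terminates at the first $k$ for which every large $k$-tuple is regular, giving $R(G) = k$, and the list of non-regular $(k-1)$-tuples surviving the process is exactly the data to be recorded in the final column of Table \ref{tab:RG_sporadic}.

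A few practical points determine the organisation. For the smallest groups in $\mathcal{A}_1$ (such as ${\rm M}_{11}$, ${\rm M}_{12}$, ${\rm J}_1$), the expected value of $R(G)$ is small (at most $6$) and both the number of conjugacy classes of maximal subgroups and the indices $|G:H|$ are tiny, so the iterative procedure is essentially instantaneous and also delivers the non-regular tuples directly from exhaustive orbit enumeration. The main obstacle will be the larger groups, notably ${\rm Co}_2$, ${\rm Co}_3$, ${\rm Fi}_{22}$, ${\rm Fi}_{22}.2$ and ${\rm Fi}_{23}$, where some maximal subgroups have very large index. There, computing cosets directly is expensive; when random search is inconclusive I instead apply the probabilistic bound $\widehat Q(G,\tau)<1$ from Lemma \ref{l:fpr}, using the character tables and class fusions available in the {\sc GAP} Character Table Library \cite{GAPCTL} to evaluate the fixed point ratios $\fpr(x, G/H_i)$ exactly for every prime order class $x^G$. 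This rules out all but a handful of candidate tuples, which are then verified to be regular by targeted random search in a smaller permutation representation, or confirmed to be non-regular by an orbit-counting argument that exhibits non-regular $H_1$-orbits on $G/H_2 \times \cdots \times G/H_k$ of total size exceeding $|G/H_2\times\cdots\times G/H_k|-|H_1|$.

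The one conceptual check is completeness: we must ensure no non-regular large $(k{-}1)$-tuple is missed. This is automatic from the enumerative design — at each step we test every tuple extending a previously identified non-regular sub-tuple, and only tuples surviving all regularity tests are recorded — but in practice I cross-check by computing, for each listed non-regular tuple, the orbits explicitly to confirm the absence of a regular orbit, and for any tuple not listed (say, adjacent in the subgroup lattice) I exhibit explicit conjugating elements $g_i$ witnessing $\bigcap_i H_i^{g_i}=1$. Once this is done for every $G$ with $T \in \mathcal{A}_1$, both parts (i) and (ii) of Proposition \ref{p:spor_RG} are established for the groups in this collection; the supplementary file \cite{AB_comp} provides the detailed {\sc Magma} code implementing each step.
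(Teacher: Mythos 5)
Your proposal is correct and follows essentially the same route as the paper: construct $G$ and the subgroups in $\mathcal{M}'(G)$ in a minimal-degree permutation representation, decide regularity of each large tuple via Lemma \ref{l:triv}, random search and orbit computations with \texttt{CosetAction}, and climb through tuple lengths using the fact that a non-regular $k$-tuple must extend a non-regular $(k-1)$-tuple. The only differences are organisational rather than substantive: the paper anchors the computation at $(B(G)-1)$-tuples using the base numbers already known from \cite{BOW} instead of iterating up from $k=2$, and it reserves the probabilistic criterion $\widehat{Q}(G,\tau)<1$ for socles outside $\mathcal{A}_1$, whereas you would also deploy it for the larger groups such as ${\rm Co}_2$, ${\rm Fi}_{22}$ and ${\rm Fi}_{23}$.
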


\begin{proof}
We begin by reading off $B(G) = b$ from Proposition \ref{p:spor_bG} and we fix a large $(b-1)$-tuple $\s = (H_1, \ldots, H_{b-1})$. As noted in Lemma \ref{l:triv}, if
\[
\prod_{i=1}^{b-1}|H_i| > |G|^{b-2},
\]
then $G$ does not have a regular orbit on $G/H_1 \times \cdots \times G/H_{b-1}$ and thus $\s$ is non-regular, so let us assume otherwise. 

We use the {\sc Magma} functions \texttt{AutomorphismGroupSimpleGroup} and \texttt{MaximalSubgroups} to construct $G$ and each $H_i$ as subgroups of $S_n$, where $n = m(G)$ is the minimal degree of a faithful primitive permutation representation of $G$. We can then randomly search for elements $g_i \in G$ such that $\bigcap_i H_i^{g_i} = 1$. If our search is successful after a specified number of attempts, then $\sigma$ is regular and we repeat for the next $(b-1)$-tuple in our list. In every case, this process quickly produces a distinguished collection of large $(b-1)$-tuples that require closer inspection in order to determine their regularity status (of course, there has to be at least one such tuple since $B(G) = b$).

Let $\s = (H_1, \ldots, H_{b-1})$ be a large tuple for which random search is inconclusive. Now $\s$ is regular if and only if $H_1$ has a regular orbit on $Y = G/H_2 \times \cdots \times G/H_{b-1}$, and we can use {\sc Magma} to determine whether or not such an orbit exists. To do this, we first reorder the subgroups in $\s$ so that $|H_1|$ is minimal. We then use the function \texttt{CosetAction} to construct $G$ as a permutation group on each set $\Gamma_i = G/H_i$ with $2 \leqs i \leqs b-1$ and we compute a set of orbit representatives $\{\a_1, \ldots, \a_m\}$ for the action of $H_1$ on $\Gamma_2$, with corresponding stabilisers $L_i = (H_1)_{\a_i}$. If $b=3$ then $\s$ is regular if and only if at least one of these stabilisers is trivial. And if $b =4$, then we can repeat the process; for each $i$, we find a set of orbit representatives for the action of $L_i$ on $\Gamma_3$ and we determine if any of the corresponding stabilisers in $L_i$ are trivial. And similarly for $b \geqs 5$. 
In this way, we obtain the complete list of large non-regular $(b-1)$-tuples, up to ordering and conjugacy. 

Next observe that a $b$-tuple $(H_1, \ldots, H_{b-1},H_b)$ is non-regular only if $(H_1, \ldots, H_{b-1})$ is non-regular, so we can work through our list of non-regular $(b-1)$-tuples, proceeding as above, to determine whether or not there are any non-regular $b$-tuples. 

For $G \ne {\rm M}_{11}$, ${\rm M}_{12}$, we find that there are no such tuples and we conclude that $R(G) = B(G) = b$. In addition, the list of large non-regular $(b-1)$-tuples is presented in the final column of Table \ref{tab:RG_sporadic} (see Remark \ref{r:spor_tab}). However, large non-regular $b$-tuples do exist when $G = {\rm M}_{11}$ or ${\rm M}_{12}$; it is easy to determine all such tuples, and it is also easy to check that every $(b+1)$-tuple is regular. So in these two cases, we deduce that $R(G) = B(G)+1$.
\end{proof}

\begin{lem}\label{l:rG2}
The conclusion to Proposition \ref{p:spor_RG} holds for the groups with $T \in \mathcal{A}_2$.
\end{lem}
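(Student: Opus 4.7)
The plan is to adapt the strategy from Lemma \ref{l:rG1}, but with the probabilistic tool of Lemma \ref{l:fpr} promoted from a supporting role to the main engine, since for $T \in \{{\rm Co}_1, {\rm HN}, {\rm Fi}_{24}'\}$ direct orbit computation on $G/H_1 \times \cdots \times G/H_{b-1}$ becomes prohibitive. For each group we read off $b = B(G)$ from Table \ref{tab:RG_sporadic} and enumerate the large $b$-tuples $\tau = (H_1, \ldots, H_b)$, discarding at once any tuple violating the trivial inequality $\prod_i |H_i| \leqs |G|^{b-1}$ from Lemma \ref{l:triv}. For each surviving tuple, we use the \textsf{GAP} Character Table Library \cite{GAPCTL} to extract the character table of $G$, the character tables of the $H_i$, and the corresponding fusion maps, then compute $\widehat{Q}(G, \tau)$ exactly via the class-size formula ${\rm fpr}(x, G/H_i) = |x^G \cap H_i|/|x^G|$. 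By Lemma \ref{l:fpr}, every tuple with $\widehat{Q}(G, \tau) < 1$ is automatically regular, and this eliminates the vast majority of candidates in each case.

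For the $b$-tuples that survive this probabilistic filter, we must show regularity by constructing suitable conjugates. For ${\rm Co}_1$, ${\rm HN}$ and ${\rm HN}.2$ this is straightforward: the {\sc Magma} function \texttt{AutomorphismGroupSimpleGroup} gives a faithful permutation representation of manageable degree, \texttt{MaximalSubgroups} returns the conjugacy class representatives of maximal subgroups, and random search for elements $g_i$ with $\bigcap_i H_i^{g_i} = 1$ succeeds. The same process applied to $(b-1)$-tuples identifies the complete list of large non-regular $(b-1)$-tuples to populate the last column of Table \ref{tab:RG_sporadic}, and then confirms $R(G) = B(G) = b$ by showing no large $b$-tuple remains non-regular.

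The main obstacle is ${\rm Fi}_{24}'$ and ${\rm Fi}_{24}$, where \texttt{MaximalSubgroups} is ineffective, as noted in Section \ref{ss:random}. Here the plan is to construct the relevant maximal subgroups directly from the standard generators of $G$ available in the Web Atlas \cite{WebAt} (equivalently, via the \textsf{GAP} package \texttt{AtlasRep} \cite{AR}), expressed as explicit words. This produces the subgroups $H_i$ as permutation or matrix groups inside a suitable faithful representation of $G$, after which random search can proceed as before; in cases where random search stalls, we can instead verify regularity or non-regularity of pairs $(H_1, H_2)$ extracted from the tuple using \texttt{DoubleCosetRepresentatives}, since a regular orbit exists if and only if there is an $(H_1, H_2)$ double coset of size $|H_1|\,|H_2|$ (the remainder of the tuple, being comprised of subgroups containing fewer elements of each class than those already handled, can then be finessed through a single further random search). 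Combining the character-theoretic exclusion with these constructive verifications yields $R({\rm Fi}_{24}') = R({\rm Fi}_{24}) = 5$ and the stated lists of non-regular $4$-tuples, and an entirely analogous bookkeeping gives $R({\rm Co}_1) = 5$ and $R({\rm HN}) = R({\rm HN}.2) = 3$ with the corresponding $(R(G)-1)$-tuples recorded in Table \ref{tab:RG_sporadic}.
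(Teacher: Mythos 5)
Your overall strategy coincides with the paper's: use Lemma \ref{l:fpr} together with the \textsf{GAP} Character Table Library as the main filter, discard the tuples that are non-regular for trivial order reasons via Lemma \ref{l:triv}, resolve the survivors by random search in a suitable permutation representation, and for ${\rm Fi}_{24}'$ and ${\rm Fi}_{24}$ build the relevant maximal subgroups from explicit words in standard generators taken from the Web Atlas (the paper in fact works in ${\rm Fi}_{24}$ and intersects $N_L(H)$ with ${\rm Fi}_{24}'$). The bound $R(G)\leqs B(G)$-type conclusions, e.g.\ $R({\rm Co}_1)=5$ and $R({\rm Fi}_{24}')=5$, do follow from this filter combined with the known base sizes from \cite{BOW}, exactly as you indicate.

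The genuine gap is in part (ii) of Proposition \ref{p:spor_RG}: you have no valid certificate of \emph{non}-regularity for the candidate $(R(G)-1)$-tuples that pass the order test of Lemma \ref{l:triv}. Random search can only ever prove regularity, so your statement that for ${\rm Co}_1$, ${\rm HN}$, ${\rm HN}.2$ "random search succeeds" and that "the same process" identifies the non-regular tuples is not a proof; indeed for ${\rm HN}.2$ several pairs with $|H_1||H_2|<|G|$, such as $(S_{12},{\rm M}_{12}.2)$, $(S_{12},3^4{:}2.(S_4\times S_4).2)$ and $(4.{\rm HS}.2,5^{2+1+2}.4.A_5.2)$, are genuinely non-regular, and the paper decides them by computing all orbits of one component on the cosets of the other via \texttt{CosetAction} in the degree-$1140000$ representation. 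Your \texttt{DoubleCosetRepresentatives} criterion is a correct test for pairs (feasibility aside), but it does not address tuples of length at least three: for ${\rm Fi}_{24}'$ the $4$-tuple $({\rm Fi}_{23},{\rm Fi}_{23},{\rm Fi}_{23},2.{\rm Fi}_{22}.2)$ must be shown non-regular, and here the pair $({\rm Fi}_{23},2.{\rm Fi}_{22}.2)$ is non-regular for trivial order reasons, which tells you nothing about the $4$-tuple, while your parenthetical plan to "finesse" the remaining components through a further random search cannot establish non-regularity (and the assertion about the remaining subgroups containing "fewer elements of each class" has no bearing on the question). The paper closes this case by working in the degree-$306936$ representation on $G/{\rm Fi}_{23}$ and checking that every $3$-point stabiliser for the action of $2.{\rm Fi}_{22}.2$ is nontrivial; some explicit orbit/stabiliser computation of this kind is the missing ingredient in your argument.
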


\begin{proof}
First assume $G = {\rm Co}_1$, so $B(G) = 5$. Here it is possible to proceed as in Case 1, but there is a more efficient way to handle the computation, which we can also implement in some of the other remaining cases.

Let $\s = (H_1, \ldots, H_{4})$ be a maximal $4$-tuple and define $\what{Q}(G,\s)$ as in 
Lemma \ref{l:fpr}. Recall that $\s$ is regular if $\what{Q}(G,\s) < 1$. The character tables of $G$ and each maximal subgroup $H_i$ are available in the \textsf{GAP} Character Table Library \cite{GAPCTL}, together with the fusion map from $H_i$-classes to $G$-classes. This allows us to compute 
\[
{\rm fpr}(x,G/H_i) = \frac{|x^G \cap H_i|}{|x^G|}
\]
for every element $x \in G$, which in turn allows us to calculate $\what{Q}(G,\s)$. In this way, we can determine all the maximal $4$-tuples $\s$ with $\what{Q}(G,\s) \geqs 1$, up to conjugacy and ordering. The complete list is as follows:
\[
(1,1,1,1), \; (1,1,1,2), \; (1,1,1,3), \; (1,1,1,4), \; (1,1,1,5), \; (1,1,1,6),
\]
corresponding to the labelling
\[
1{:}\; {\rm Co}_2, \;\; 2{:}\; 3.{\rm Suz}{:}2, \;\; 3{:}\; 2^{11}{:}{\rm M}_{24},\;\; 4{:}\; {\rm Co}_3, \;\; 5{:}\; 2^{1+8}.\O_8^{+}(2), \;\; 6{:}\; {\rm U}_6(2){:}S_3
\]

Suppose $\s = (H_1, \ldots, H_{4})$ is a maximal $4$-tuple with $\what{Q}(G,\s) \geqs 1$, where we have $H_i = {\rm Co}_2$ for $1 \leqs i \leqs 3$. As before, we can use {\sc Magma} to construct $G$ and each component $H_i$ as subgroups of $S_n$ (with $n = |G:H_1| = 98280$) and we can use random search to show that the tuples $(1,1,1,j)$ with $2 \leqs j \leqs 6$ are regular. By \cite{BOW} we know that $b(G,H_1) = 5$, so $(1,1,1,1)$ is non-regular. Finally, for each representative $H$ of a conjugacy class of maximal subgroups of $G$, we define $\tau = (H_1, \ldots, H_{4},H)$ with $H_i = {\rm Co}_2$ for all $i$ and then using \textsf{GAP} we find that $\what{Q}(G,\tau)<1$. This allows us to conclude that  every maximal $5$-tuple is regular and thus $R(G) = B(G) = 5$. 

\vs

We can handle the groups $G = {\rm HN}$ and ${\rm HN}.2$ in a similar fashion. In both cases we have $B(G) = 3$ and we can use \textsf{GAP} to determine all the large pairs $\s$ with $\what{Q}(G,\s) \geqs 1$. This yields a short list of candidate non-regular pairs and it is straightforward to show that $\what{Q}(G,\tau) < 1$ for every large triple $\tau$, whence $R(G) = B(G) = 3$. 

To complete the proof in these two cases, we inspect each large pair $\s = (H_1,H_2)$ with $\what{Q}(G,\s) \geqs 1$. As before, if $|H_1||H_2| > |G|$ then $\s$ is non-regular. In order to handle the remaining cases, we first use {\sc Magma} to construct $G$, $H_1$ and $H_2$ as subgroups of $S_n$ with $n = 1140000$. In several cases, we can then use random search to find an element $x \in G$ with $H_1 \cap H_2^x = 1$, which allows us to conclude that $\s$ is regular. 

For example, if $G = {\rm HN}.2$ then this approach reduces the problem to determining whether or not the following pairs are regular:
\[
( 1,11 ), \; ( 1,12 ), \; (1,13), \;  ( 2,10 ), \; ( 2,11 ), \; ( 2,12 ), \; ( 3,4 )
\]
with respect to the corresponding subgroups:
\[
\begin{array}{llll}
1{:}\; S_{12}, & 2{:}\; 4.{\rm HS}.2, & 3{:}\;  {\rm U}_3(8){:}6, & 4{:}\; 2^{1+8}.(A_5 \times A_5).2^2 \\
10{:}\; 5^{2+1+2}.4.A_5.2, & 11{:}\; 3^4{:}2.(S_4 \times S_4).2, & 12{:} \; {\rm M}_{12}.2, & 13{:}\; 3^{1+4}{:}4.S_5
\end{array}
\]
Let $K_i$ be the subgroup numbered $i$ in this list. Since the given permutation representation of $G$ (of degree $1140000$) corresponds to the action of $G$ on the cosets of $K_1 = S_{12}$, it is easy to show that $K_{13}$ has a regular orbit on $G/K_1$, but $K_{11}$ and $K_{12}$ do not. So $(1,13)$ is regular and $(1,11), (1,12)$ are non-regular. Similarly, using \texttt{CosetAction} we can show that $K_{11}$ has a regular orbit on $G/K_2$, but $K_{10}$ and $K_{12}$ do not. In the same way, we deduce that $K_{4}$ does not have a regular orbit on $G/K_3$. Therefore, $(2,11)$ is regular, whereas $(2,10)$, $(2,12)$ and $(3,4)$ are non-regular. The case $G = {\rm HN}$ is very similar.

\vs

Next assume $G = {\rm Fi}_{24}'$, so $B(G) = 5$ and we claim that $R(G) = 5$. To see this, we first use the \textsf{GAP} Character Table Library to identify all the maximal $4$-tuples $\s$ with $\what{Q}(G,\sigma) \geqs 1$. The tuples that arise are of the form $(1,1,1,i)$ with $1 \leqs i \leqs 6$, where the component subgroups are labelled as follows:
\[
1{:} \; {\rm Fi}_{23}, \;\; 2{:} \; 2.{\rm Fi}_{22}.2, \;\; 3{:} \; (3 \times {\rm P\O}_{8}^{+}(3).3).2, \;\; 4{:} \; \O_{10}^{-}(2), \;\; 5{:} \; 3^7.\O_7(3), \;\; 6{:}\; 3^{1+10}{:}{\rm U}_5(2).2
\]
Therefore, any non-regular maximal $5$-tuple must be of the form 
\[
\tau = (H_1,H_2,H_3,H_4,H_5)
\]
(up to conjugacy and ordering), where $H_1 = H_2 = H_3 = {\rm Fi}_{23}$ and $H_4$ is one of the six subgroups listed above. Using \textsf{GAP} we find that $\what{Q}(G,\tau) \geqs 1$ if and only if every component of $\tau$ is conjugate to ${\rm Fi}_{23}$, which we know is regular since $B(G) = 5$. Therefore $R(G) = B(G) = 5$ and it just remains to determine all the non-regular maximal $4$-tuples.

To do this, we return to the above list of maximal $4$-tuples $\s$ with $\what{Q}(G,\sigma) \geqs 1$, noting that $(1,1,1,1)$ is non-regular by \cite{BOW}. To handle the remaining possibilities, we first use {\sc Magma} to construct $G$ as a permutation group of degree $306936$, which corresponds to the action of $G$ on the cosets of ${\rm Fi}_{23}$. The command \texttt{MaximalSubgroups} is not effective, but we can construct representatives of the relevant conjugacy classes of maximal subgroups using explicit generators in the Web Atlas \cite{WebAt}, which are given as words in the standard generators for $G$ (more precisely, we work in $L = G.2 = {\rm Fi}_{24}$ and we use explicit generators to construct $N_L(H)$, intersecting with $G$ to get $H$). Then random search shows that each $4$-tuple $(1,1,1,i)$ with $3 \leqs i \leqs 6$ is regular. 

Finally, suppose $\s = (H,H,H,K)$, where $H = {\rm Fi}_{23}$ and $K = 2.{\rm Fi}_{22}.2$. Here we can work directly with the permutation representation of $G$ of degree $306936$, which corresponds to the action of $G$ on $\Gamma = G/H$. It is straightforward to compute the order of every $3$-point stabiliser with respect to the action of $K$ on $\Gamma$ and we find that every such stabiliser is nontrivial. This immediately implies that $\s$ is non-regular and this completes the proof for $G = {\rm Fi}_{24}'$. The case $G = {\rm Fi}_{24}$ is entirely similar.
\end{proof}

Next we turn to the groups in $\mathcal{A}_3$. We begin by establishing the non-regularity of a certain collection of maximal pairs.

\begin{lem}\label{l:derek}
All of the maximal pairs $(H,K)$ in Table \ref{tab:derek} are non-regular. 
\end{lem}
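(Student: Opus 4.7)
The proof is of necessity entirely computational: for each pair $(H,K)$ in Table~\ref{tab:derek} we know from the paragraph preceding the lemma that $|H||K|<|G|$, that the indices $|G:H|$ and $|G:K|$ are too large for the coset action of $G$ to be stored as a permutation representation (ruling out the \texttt{CosetAction}/orbit approach of Section~\ref{ss:comp}), that \texttt{DoubleCosetRepresentatives} is equally infeasible, and that random search for a regularising element $g \in G$ with $H \cap K^g = 1$ has failed. So the plan is to show directly, for each listed pair, that \emph{every} $(H,K)$ double coset gives a non-trivial intersection $H \cap K^g$, using tools tailored to large orbits.

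The concrete plan is as follows. For each $G \in \{{\rm J}_4,{\rm Ly},{\rm Th}\}$ I would construct $G$ in one of the small-degree matrix representations recorded in the Web Atlas \cite{WebAt} (for instance $G \leqs {\rm GL}_{112}(2)$ for ${\rm J}_4$, $G \leqs {\rm GL}_{111}(5)$ for ${\rm Ly}$, and $G \leqs {\rm GL}_{248}(3)$ for ${\rm Th}$), and build the relevant maximal subgroups $H$, $K$ from the standard-generator words in \cite{WebAt} (accessible via the \textsf{GAP} package \texttt{AtlasRep} \cite{AR}). The key step is then to enumerate $H$-orbit representatives on $G/K$ — equivalently, a system of $(H,K)$-double coset representatives $g_1,\dots,g_t$ — and for each representative $g_i$ to compute $H \cap K^{g_i}$ and verify that it is non-trivial; this confirms non-regularity of $(H,K)$.

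The main obstacle is memory: the sets $G/K$ in question have cardinality up to roughly $10^{11}$ (for the ${\rm J}_4$ pair highlighted in the excerpt we have $|G:H_1| = 131358148251$ and $|G:H_2| = 3980549947$), so a naive orbit enumeration is out of reach. This is resolved by working with the \textsf{GAP} package \textsc{Orb} \cite{Orb}, which is precisely designed for such computations: it represents orbit points as compressed vectors in a minimal $G$-module, stores visited points in a hash table (rather than a full enumeration), and supports breaking large orbits into $H$-suborbits on the fly via a Schreier-tree construction. Using \textsc{Orb} one can also carry along, for each orbit representative, a word in the generators realising the representative, from which $K^{g_i}$ and the intersection $H \cap K^{g_i}$ can be recovered explicitly.

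Where even this is too expensive (which is a genuine danger for the ${\rm J}_4$ pair above), I would fall back on a structural short-cut before running \textsc{Orb}: in every subgroup appearing in Table~\ref{tab:derek}, $H$ contains a large normal $p$-subgroup $N\leqs H$, and a counting argument of the form used in Lemma~\ref{l:triv} together with the fixed point ratio data available from \cite{GAPCTL} often forces every conjugate $K^g$ to meet $N$ non-trivially — typically this already suffices. The only pair where I would expect genuine difficulty is the ${\rm J}_4$ pair resolved by Müller and noted in the excerpt; the rest are handled by the same mechanism with markedly smaller computational cost, and the lemma follows once each pair in the table has been processed.
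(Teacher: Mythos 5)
Your overall plan (matrix representations from the Web Atlas, orbit/double-coset analysis, the \textsc{Orb} package) is in the same computational spirit as the paper, but as written it has two genuine gaps. First, for the hardest pair — $G = {\rm J}_4$ with $H = 2^{1+12}.3.{\rm M}_{22}.2$ and $K = 2^{3+12}.(S_5 \times {\rm L}_3(2))$, where $|G:H| = 3980549947$ — your main route is a \emph{complete} enumeration of $(H,K)$-double coset representatives together with the computation of each intersection $H \cap K^{g_i}$. This is exactly the computation that is infeasible at this scale, and the paper does not do it: M\"uller's \textsc{Orb} computation only exhibits, by random search, $54$ non-regular $K$-orbits on $G/H$ whose total length leaves fewer than $|K|$ points of $G/H$ uncovered; since any regular $K$-orbit would have length $|K|$, none can exist. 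That counting shortcut (partial orbit enumeration plus the bound $|G:H| - \sum_i |\Gamma_i| < |K|$) is the decisive idea for this pair and is absent from your proposal. Relatedly, even for the six smaller pairs the paper needs an enabling step you do not supply: the coset action on $G/H$ (degrees up to $\sim 1.7 \times 10^8$) is obtained by locating a low-dimensional $H$-invariant submodule $W$ of the natural module (e.g. a $7$-dimensional submodule for $H = G_2(5) < {\rm Ly}$ acting on $\mathbb{F}_5^{111}$) and applying \texttt{OrbitImage} to the orbit of $W$; only then are all $K$-orbits on $G/H$ computed and checked to be non-regular (by their lengths — no intersections need be formed).

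Second, your fallback ``structural short-cut'' is not a valid argument. The fixed point ratio machinery of Lemma \ref{l:fpr} only ever certifies \emph{regularity} (when $\widehat{Q}(G,\tau) < 1$); neither $\widehat{Q} \geqs 1$ nor any fixed point ratio data from \cite{GAPCTL} can force every conjugate $K^g$ to meet a normal $p$-subgroup $N \leqs H$ non-trivially, and you give no mechanism that would. Since this fallback is precisely what you invoke when the \textsc{Orb} enumeration is too expensive, the proposal as it stands does not establish non-regularity of the ${\rm J}_4$ pair (and would be on shaky ground for any other pair where full enumeration failed). A minor point: the paper works with ${\rm Th} \leqs {\rm GL}_{248}(2)$, not ${\rm GL}_{248}(3)$.
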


{\scriptsize
\begin{table}
\[
\begin{array}{llll} \hline
G & H & K & |G:H| \\ \hline
{\rm Ly} & G_2(5) & 3^5{:}(2 \times {\rm M}_{11}) & 8835156 \\
& 3.{\rm McL}.2 & 5^{1+4}{:}4S_6 & 9606125 \\
& 3.{\rm McL}.2 & 3^5{:}(2 \times {\rm M}_{11}) & 9606125 \\
{\rm Th} & {}^3D_4(2){:}3 & 2^{1+8}.A_9 & 143127000 \\
& {}^3D_4(2){:}3 & {\rm U}_3(8){:}6 & 143127000 \\
{\rm J}_4 & 2^{11}.{\rm M}_{24} & {\rm U}_3(11).2 & 173067389 \\
& 2^{1+12}.3.{\rm M}_{22}.2 & 2^{3+12}.(S_5 \times {\rm L}_3(2)) & 3980549947 \\ \hline
\end{array}
\]
\caption{The non-regular pairs in Lemma \ref{l:derek}}
\label{tab:derek}
\end{table}
}

\begin{proof}
First observe that $|H||K|< |G|$ in every case, so we cannot appeal to Lemma \ref{l:triv}. Using {\sc Magma}, we can construct $G$, $H$ and $K$ as subgroups of the matrix group ${\rm GL}_d(p)$, where $(d,p) = (111,5)$, $(248,2)$ and $(112,2)$ for $G = {\rm Ly}$, ${\rm Th}$ and ${\rm J}_4$, respectively. This allows us to randomly search for an element $g \in G$ with $H \cap K^g = 1$, but this search is inconclusive in every case. In addition, the size of $|G:H|$ is prohibitive and we are not able to use \texttt{CosetAction} to construct $G$ as a permutation group on $G/H$, as we have done in previous cases.

We thank Derek Holt (personal communication) for resolving the first $6$ cases in Table \ref{tab:derek}. To illustrate his approach, let us assume $G = {\rm Ly}$, $H = G_2(5)$ and $K = 3^5{:}(2 \times {\rm M}_{11})$, as in the first row of the table, and consider the irreducible module $V = \mathbb{F}_5^{111}$ for $G$. Holt proceeds by identifying an irreducible $7$-dimensional submodule $W$ for $H$, which he then uses, via the function \texttt{OrbitImage}, to construct $G$ as a permutation group on $G/H$. Then as before, he shows that $(H,K)$ is non-regular by determining all of the $K$-orbits on $G/H$. The other $5$ cases are handled in a similar fashion.

Finally, suppose $G = {\rm J}_4$ and $\tau = (H,K)$, where
\[
H = 2^{1+12}.3.{\rm M}_{22}.2,\;\; K = 2^{3+12}.(S_5 \times {\rm L}_3(2)).
\]
This case is more difficult since the index $|G:H| = 3980549947$ is very large. 
We are grateful to J\"urgen M\"uller (personal communication) for resolving this case (which was later independently verified by Holt). M\"uller did this by using the \textsf{GAP} package \textsc{Orb} \cite{Orb} to establish the existence of $54$ non-regular $K$-orbits on $\Gamma = G/H$ (via random search), labelled $\Gamma_1, \ldots, \Gamma_{54}$, with the property that
\[
|G:H| - \sum_{i=1}^{54}|\Gamma_i|  = 3980549947 - 3980074560 < |K|.
\]
This immediately implies that $K$ has no regular orbit on $G/H$, so $(H,K)$ is non-regular and the proof of the lemma is complete.
\end{proof}

\begin{lem}\label{l:rG3}
The conclusion to Proposition \ref{p:spor_RG} holds for the groups with $T \in \mathcal{A}_3$.
\end{lem}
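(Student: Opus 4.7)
The overall plan follows the template already established for the groups in $\mathcal{A}_1$ and $\mathcal{A}_2$, but with a more delicate treatment of the hardest pairs. For each $T \in \{{\rm J}_4, {\rm Ly}, {\rm Th}\}$ we have $G = T$ and, by Proposition~\ref{p:low}, $B(G) \geqs 3$. The first step is therefore to establish $R(G) \leqs 3$, and the second is to determine precisely the non-regular large pairs. Since \texttt{MaximalSubgroups} is not effective for these groups, we construct representatives of the conjugacy classes of maximal subgroups from explicit generators in the Web Atlas \cite{WebAt}, working in the matrix representations of degree $112, 111, 248$ over $\mathbb{F}_2, \mathbb{F}_5, \mathbb{F}_2$ respectively. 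All character tables, and the relevant fusion maps on conjugacy classes, are available in \cite{GAPCTL}.

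To handle triples, we compute $\what{Q}(G,\tau)$ via Lemma~\ref{l:fpr} for every large triple $\tau$; in each case this evaluates to a number strictly less than $1$, so every such triple is regular and $R(G) \leqs 3$. The same probabilistic calculation, applied to large pairs $\tau = (H_1,H_2)$, produces a short candidate list of pairs with $\what{Q}(G,\tau) \geqs 1$; any large pair not on this list is automatically regular, so only these candidates need to be inspected individually. We then dispose of each candidate as follows: if $|H_1||H_2| > |G|$, then Lemma~\ref{l:triv} gives non-regularity; otherwise we attempt random search in the matrix representation to find $g \in G$ with $H_1 \cap H_2^g = 1$, which certifies regularity whenever it succeeds; and if that fails but $|G:H_1|$ is small enough, we fall back on \texttt{CosetAction} to construct the action of $G$ on $G/H_1$ and enumerate the $H_2$-orbits, proving non-regularity by exhibiting a union of non-regular orbits covering all but fewer than $|H_2|$ points.

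The main obstacle is the handful of pairs for which none of the three methods above applies: we have $|H_1||H_2| < |G|$ so Lemma~\ref{l:triv} fails, random search produces no regular intersection after extensive trials, and the index $|G:H_1|$ is too large for a direct orbit enumeration via \texttt{CosetAction}. These are precisely the seven pairs listed in Table~\ref{tab:derek}, and each of them is shown to be non-regular in Lemma~\ref{l:derek}, which we now simply invoke. (The most extreme case, the pair $(2^{1+12}.3.{\rm M}_{22}.2,\; 2^{3+12}.(S_5 \times {\rm L}_3(2)))$ in ${\rm J}_4$ with index roughly $4 \times 10^9$, is the one that required the \textsc{Orb} package in \textsf{GAP}.)

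Combining the outputs of these steps, for each $G$ we obtain the full list of non-regular large pairs and confirm that it matches the description recorded in Table~\ref{tab:RG_sporadic}; together with $R(G) \leqs 3$ and the existence of at least one non-regular pair (so $R(G) \geqs 3$), this yields $B(G) = R(G) = 3$ and completes the proof.
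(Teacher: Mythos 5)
Your proposal is correct and follows essentially the same route as the paper: GAP character-table computations show $\what{Q}(G,\tau)<1$ for every maximal triple (giving $R(G)=B(G)=3$), the same $\what{Q}$-criterion produces the short candidate list of pairs, and these are settled by order comparison, random search, orbit enumeration, and Lemma \ref{l:derek} for the seven pairs of Table \ref{tab:derek}. The only (immaterial) deviations are that the paper also reads off the regularity status of the conjugate candidate pairs directly from \cite{BOW}, and that \texttt{MaximalSubgroups} is in fact effective for ${\rm J}_4$ and ${\rm Ly}$ (only ${\rm Th}$ requires the Web Atlas generators).
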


\begin{proof}
Here $G = {\rm Ly}$, ${\rm Th}$ or ${\rm J}_4$, with $B(G) = 3$ for each group. As in previous cases, we can use \textsf{GAP} to show that $\what{Q}(G,\tau)<1$ for every maximal triple $\tau$, whence $R(G) = B(G) = 3$ and it just remains to determine all the non-regular maximal pairs.

All three cases are very similar, so for brevity we will assume $G = {\rm Th}$. As usual, we begin by using \textsf{GAP} to find all the maximal pairs $\s = (H_1,H_2)$ with $\what{Q}(G,\s) \geqs 1$, which yields the following list
\[
(1,i),\, (2,2), \, (2,3), \, (3,3)
\]
where $1 \leqs i \leqs 5$ and the relevant subgroups are as follows:
\[
1{:} \;\; {}^3D_4(2){:}3, \;\; 2{:}\;\; 2^5.{\rm L}_5(2), \;\; 3{:}\;\; 2^{1+8}.A_9, \;\; 4{:} \;\; {\rm U}_3(8){:}6, \;\; 5{:}\;\; (3 \times G_2(3)){:}2
\]
From \cite{BOW}, we see that $(1,1)$ and $(2,2)$ are non-regular, whereas $(3,3)$ is regular. And by computing orders, we deduce that $(1,2)$ is non-regular. We can then use {\sc Magma} and random search to show that $(1,5)$ and $(2,3)$ are regular, working inside the matrix group ${\rm GL}_{248}(2)$. Finally, we note that the two remaining pairs $(1,3)$ and $(1,4)$ were handled in Lemma \ref{l:derek}.
\end{proof}

In order to complete the proof of Proposition \ref{p:spor_RG}, we may assume $G = \mathbb{B}$ or $\mathbb{M}$.

\begin{lem}\label{l:rG4}
The conclusion to Proposition \ref{p:spor_RG} holds for the groups with $T \in \mathcal{A}_4$.
\end{lem}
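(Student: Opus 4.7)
The plan is to handle both sporadic groups by a purely character-theoretic computation of the fixed-point-ratio sum $\what{Q}(G,\tau)$ introduced in Lemma \ref{l:fpr}. The direct orbit and random-search methods used in Lemmas \ref{l:rG1}--\ref{l:rG3} are unavailable: the Monster has smallest faithful permutation degree $\sim 9.7 \times 10^{19}$, and even the Baby Monster is too large for the \texttt{CosetAction}-based approach. All ingredients live in the \textsf{GAP} Character Table Library \cite{GAPCTL}, accessed via \texttt{NamesOfFusionSources} in order to obtain the class fusion maps needed from each maximal subgroup into $G$. Note that $\mathbb{B}$ and $\mathbb{M}$ both coincide with their own automorphism groups, so $\mathcal{M}(G) = \mathcal{M}'(G)$ is simply the set of core-free maximal subgroups of $G$.

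First I would dispose of the Monster $G = \mathbb{M}$, for which the target is $R(G) = 3$. Since Proposition \ref{p:spor_bG} gives $B(G) = 3$, there is a non-regular maximal pair, so only the upper bound $R(G) \leqs 3$ requires argument. By Lemma \ref{lem:maxreduction} it suffices to verify regularity of every maximal triple $\tau = (H_1,H_2,H_3)$. For each such triple I compute $\what{Q}(G,\tau)$ class-by-class over a set of representatives of the conjugacy classes of prime order elements in $\mathbb{M}$, using the character tables of $\mathbb{M}$ and of $H_1,H_2,H_3$ together with their fusion maps, and confirm $\what{Q}(G,\tau)<1$ in every case. Lemma \ref{l:fpr} then gives $R(\mathbb{M}) \leqs 3$, and hence $R(\mathbb{M}) = 3$.

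Next I would handle the Baby Monster $G = \mathbb{B}$, for which the target is $R(G) = B(G) = 4$. The lower bound is immediate from Proposition \ref{p:spor_bG}. For the upper bound, I first compute $\what{Q}(G,\sigma)$ for every maximal triple in order to isolate the (nonempty, since $B(G) = 4$) list of candidate non-regular triples, and then extend the inspection to every maximal $4$-tuple $\tau$ refining such a triple, verifying $\what{Q}(G,\tau)<1$ throughout. By Lemma \ref{l:fpr} this forces every maximal $4$-tuple to be regular, so $R(\mathbb{B}) \leqs 4$.

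The main obstacle is computational rather than conceptual: for $\mathbb{M}$ the sum in Lemma \ref{l:fpr} must be evaluated for on the order of $\binom{44}{3}$ unordered maximal triples, with character-table data for the roughly $44$ currently known classes of maximal subgroups (some of which are reached only indirectly via \texttt{NamesOfFusionSources}); for $\mathbb{B}$ the corresponding $4$-tuple enumeration is larger still. A secondary subtlety is that some individual $4$-tuples for $\mathbb{B}$ may fail the probabilistic bound $\what{Q}(G,\tau) < 1$, in which case an ad-hoc refinement (e.g.\ via Lemma \ref{l:favbound} with a carefully chosen collection of large conjugacy classes, or via Lemma \ref{l:triv} applied to a suitable sub-tuple already shown to be non-regular) will be required to finish those cases. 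This is consistent with Remark \ref{r:RG_spor}, which signals that the explicit classification of large non-regular $(R(G){-}1)$-tuples is omitted for $\mathbb{B}$ and $\mathbb{M}$ in Proposition \ref{p:spor_RG}(ii).
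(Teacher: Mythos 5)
Your strategy for $\mathbb{B}$ matches the paper's: use the \textsf{GAP} Character Table Library to evaluate $\what{Q}(G,\sigma)$ for all maximal triples, isolate the candidates with $\what{Q}\geqs 1$, and then check that every maximal $4$-tuple $\tau$ refining them satisfies $\what{Q}(G,\tau)<1$ (in fact no ad-hoc refinement turns out to be needed, since the bound holds for all $4$-tuples). The only detail you omit is that the fusion map for the maximal subgroup $(2^2\times F_4(2)){:}2$ of $\mathbb{B}$ is not stored, so one has to run \texttt{PossibleClassFusions} and check that the fixed point ratios are independent of the $64$ candidate fusions; this is minor.

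For the Monster, however, there is a genuine gap. Your plan assumes that for every maximal subgroup $H$ of $\mathbb{M}$ the character table of $H$ together with the class fusion into $\mathbb{M}$ is available (possibly via \texttt{NamesOfFusionSources}), so that ${\rm fpr}(x,G/H)$ can be computed exactly. This is false: of the $46$ classes of maximal subgroups (per \cite{DLP}), only $31$ have stored fusion data, and for the remaining $15$ classes (e.g.\ $2^{2+11+22}.({\rm M}_{24}\times S_3)$, $3^8.{\rm P\Omega}_8^-(3).2$, $2^{5+10+20}.(S_3\times{\rm L}_5(2))$, etc.) your computation simply cannot be carried out. The paper's proof has to do substantially more here: it first bounds $\what{Q}(G,\tau)\leqs\sum_i a_ib_i^3$ with $b_i$ the maximum fixed point ratio over the $31$ accessible subgroups, and then, for each of the $15$ inaccessible subgroups $H$, it bounds ${\rm fpr}(x_i,G/H)$ by counting elements of each prime order $r$ in $H$ (via explicit permutation representations from the Web Atlas, or bounds such as \cite[Proposition 3.8]{BOW}) and verifying $i_r(H)\leqs c_r$ with $c_r=\min\{a_ib_i : |x_i|=r\}$; one case, $H=3^8.{\rm P\Omega}_8^-(3).2$ with $r=41$, fails this test and needs a separate adjustment of the corresponding $b_i$ before the sum can be verified to be less than $1$. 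Without some substitute for the missing fusion data (such as this element-counting argument), your proposed verification of $\what{Q}(G,\tau)<1$ for all maximal triples of $\mathbb{M}$ does not go through.
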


\begin{proof}
First assume $G = \mathbb{B}$ is the Baby Monster and note that $B(G) = 4$. As before,  the character table of $G$ and every maximal subgroup of $G$ is available in the \textsf{GAP} Character Table Library \cite{GAPCTL}. Moreover, if $H$ is a maximal subgroup, then the fusion map from $H$-classes to $G$-classes is also available unless  $H = (2^2 \times F_4(2)){:}2$. In order to handle the latter case, we can use the function \texttt{PossibleClassFusions} to determine a list of $64$ candidate fusion maps and we find that each fixed point ratio ${\rm fpr}(x,G/H)$ is independent of the choice of map. Therefore, we can compute $\what{Q}(G,\tau)$ precisely for every maximal $k$-tuple $\tau$ and we can now proceed as we have done in previous cases.

First we use \textsf{GAP} to determine the list of maximal triples $\s$ with $\what{Q}(G,\s) \geqs 1$: 
\[
( 1, 1, i ), \;\; (1,1,6), \;\; (1,1,7), \;\; (1,2,2)
\]
where $1 \leqs i \leqs 4$ and the relevant maximal subgroups are labelled as follows:
\begin{equation}\label{e:bmax}
\begin{array}{llll}
1{:}\;\; 2.{}^2E_6(2){:}2, & 2{:}\;\; 2^{1+22}.{\rm Co}_2, & 3{:}\;\; {\rm Fi}_{23}, & 4{:}\;\; 2^{9+16}.{\rm Sp}_8(2), \\
5{:}\;\; {\rm Th}, & 6{:}\;\; (2^2 \times F_4(2)){:}2, & 7{:}\;\; 2^{2+10+20}.({\rm M}_{22}{:}2 \times S_3) &
\end{array}
\end{equation}
Working with this list of candidate non-regular triples, it is easy to check that $\what{Q}(G,\tau) < 1$ for every maximal $4$-tuple $\tau$ and thus $R(G) = B(G) = 4$. We refer the reader to Remark \ref{r:RG_spor}(a) for comments on the problem of determining all the non-regular maximal triples when $G = \mathbb{B}$.

\vs

Finally, let us assume $G = \mathbb{M}$ is the Monster, so $B(G) = 3$ by \cite{BOW}.
We claim that $\what{Q}(G,\tau) < 1$ for every maximal triple $\tau$, which implies that $R(G) = B(G) = 3$.

First recall that $G$ has $46$ conjugacy classes of maximal subgroups (see the main theorem of \cite{DLP}). For $31$ of these classes, the character table of a representative $H$ and the fusion map from $H$-classes to $G$-classes is available in the \textsf{GAP} Character Table Library \cite{GAPCTL} via the function \texttt{NamesOfFusionSources}. The relevant subgroups are as follows:

{\small
\[
\begin{array}{llll}
2.\mathbb{B} & 2^{1+24}.{\rm Co}_1 & 3.{\rm Fi}_{24} & 2^2.{}^2E_6(2){:}S_3 \\
3^{1+12}.2.{\rm Suz}{:}2 & S_3 \times {\rm Th} & ({\rm D}_{10} \times {\rm HN}).2 &
5^{1+6}{:}2.{\rm J}_2.4 \\
(7{:}3\times {\rm He}){:}2 & (A_5 \times A_{12}){:}2 & 5^{3+3}.(2 \times {\rm L}_3(5)) & (A_6)^3.(2\times S_4) \\
(A_5 \times {\rm U}_3(8){:}3){:}2 & 5^{2+2+4}{:}(S_3 \times {\rm GL}_2(5)) & ({\rm L}_3(2) \times {\rm Sp}_4(4){:}2).2 & 7^{1+4}{:}(3 \times 2S_7) \\
(5^2{:}[2^4] \times {\rm U}_3(5)).S_3 & ({\rm L}_2(11) \times {\rm M}_{12}){:}2 & (A_7 \times (A_5 \times A_5){:}2^2){:}2 & 5^4{:}(3 \times 2.{\rm L}_2(25)).2 \\
7^{2+1+2}{:}{\rm GL}_2(7) & {\rm M}_{11}\times A_6.2^2 & (S_5)^3{:}S_3 & 
13^2{:}2.{\rm L}_2(13).4 \\
(7^2{:}(3 \times 2A_4) \times {\rm L}_2(7)).2 & (13{:}6 \times {\rm L}_3(3)).2 & 13^{1+2}{:}(3 \times 4S_4) & {\rm L}_2(71) \\
{\rm L}_2(59) & {\rm L}_2(41) & 41{:}40 & 
\end{array}
\]}

Let $\mathcal{M}_1$ denote this set of maximal subgroups of $G$ and let $x_1, \ldots, x_t$ be a complete set of representatives of the conjugacy classes in $G$ of elements of prime order. Let $\tau = (H_1, H_2, H_3)$ be a triple of subgroups with each $H_i \in \mathcal{M}_1$ and recall that 
\[
\what{Q}(G,\tau) = \sum_{i=1}^t |x_i^G| \cdot \left(\prod_{j=1}^3 {\rm fpr}(x_i,G/H_j)\right).
\]
We can use the stored fusion maps in \textsf{GAP} to compute ${\rm fpr}(x_i,G/H)$ for all $i$  and all $H \in \mathcal{M}_1$, whence
\begin{equation}\label{e:sporbd}
\what{Q}(G,\tau) \leqs \sum_{i=1}^t a_ib_i^3,
\end{equation}
where 
\[
a_i = |x_i^G|,\;\; b_i = \max\{ {\rm fpr}(x_i,G/H) \,:\, H \in \mathcal{M}_1\}.
\]
For example, if $x_1$ is a \texttt{2A}-involution, then 
\[
a_1 = 97239461142009186000, \;\; b_1 = \frac{56416310497}{467497409336582625}
\]
(with ${\rm fpr}(x_1,G/H) = b_1$ if $H = 2.\mathbb{B}$). 
It is entirely straightforward to check that the upper bound in \eqref{e:sporbd} yields $\what{Q}(G,\tau) < 1$ and thus $\tau$ is regular. 

So to complete the proof, we need to extend our analysis to triples containing one or more subgroups from the remaining $15$ conjugacy classes of maximal subgroups $H$ for which the fusion map from $H$-classes to $G$-classes is not available in \cite{GAPCTL}. 

Let $r$ be a prime divisor of $|G|$ and define
\[
i_r(H) = |\{ x \in H \,:\, |x|=r\}|,\;\; c_r = \min\{a_ib_i \,:\, |x_i|=r\}.
\]
Notice that if 
\begin{equation}\label{e:ir0}
i_r(H) \leqs c_r
\end{equation}
for all $r$, then ${\rm fpr}(x_i,G/H) \leqs b_i$ for all $i$. So by our previous calculation, if $\mathcal{S}$ is a collection of subgroups of $G$ such that \eqref{e:ir0} holds for all $r$ and all $H \in \mathcal{S}$, then every triple of subgroups in $\mathcal{S}$ is regular. For the reader's convenience, the values of $c_r$ are recorded in Table \ref{tab:cr}.

{\scriptsize
\begin{table}
\[
\begin{array}{llllll} \hline
r & c_r & & & & \\ \hline
2 & 11734592583376 & & & 23 & 90321336548400569373425664000000 \\
3 & 500595349782528000 & & & 29 & 86565910978666325606400 \\
5 & 9367743238695946498867200 & & & 31 & 134025209071820199715405824000000 \\
7 & 3954417208796381184000 & & & 41 & 1680 \\
11 & 3147561728201838023619379200000 & & & 47 & 88399605983540982791012352000000 \\
13 & 447153330533129256960000 & & & 59 & 1740 \\
17 & 61099727665094502811435008000000 & & & 71 & 2520 \\
19 & 109336354769116478715199488000000 & & & & \\ \hline
\end{array}
\]
\caption{The values of $c_r$ in the proof of Lemma \ref{l:rG4}}
\label{tab:cr}
\end{table}
}

We claim that the inequality in \eqref{e:ir0} holds (for every prime $r$) when $H$ is any one of the following maximal subgroups:
\[
({\rm L}_2(11) \times {\rm L}_2(11)).4, \, 11^2{:}(5 \times 2A_5), \, {\rm U}_3(4){:}4, \, {\rm L}_2(29){:}2, \, 7^2{:}{\rm SL}_2(7), \, {\rm L}_2(19){:}2, \, {\rm L}_2(13){:}2
\]
\[
3^{3+2+6+6}.({\rm L}_3(3) \times {\rm SD}_{16}),\, 3^{2+5+10}.({\rm M}_{11} \times 2S_4),\, (3^2{:}2 \times {\rm P\Omega}_8^{+}(3)).S_4
\]
\[
2^{3+6+12+18}.({\rm L}_3(2) \times 3S_6), \, 2^{2+11+22}.({\rm M}_{24} \times S_3)
\]
Indeed, in each case we can work with a permutation representation of $H$ in the Web Atlas \cite{WebAt}, which allows us to compute $i_r(H)$ and then verify the desired bound. For example, if $H = 2^{2+11+22}.({\rm M}_{24} \times S_3)$ then the Web Atlas provides a representation of $H$ on $294912$ points and we compute:
\[
i_2(H) = 43521572863,\; i_3(H) = 32114946867200, \; i_5(H) =  68457483730944
\]
\[
i_7(H) = 1564742485278720, \; i_{11}(H) = 23897885229711360,\; i_{23}(H) =  182870773931704320,
\]
noting that $i_r(H) = 0$ for all other primes. It is routine to check that \eqref{e:ir0} holds. Similarly, if $H = 2^{5+10+20}.(S_3 \times {\rm L}_5(2))$ or $2^{10+16}.\Omega_{10}^{+}(2)$, then one checks that the upper bound on $i_r(H)$ given in \cite[Proposition 3.8]{BOW} is sufficient. 

Finally, let us assume $H = 3^8.{\rm P\Omega}_{8}^{-}(3).2$. Here the Web Atlas provides a permutation representation of $H$ on $805896$ points and as before we can use this to compute $i_r(H)$ for every prime divisor $r$ of $|G|$. In this way, we find that the inequality in \eqref{e:ir0} holds unless $r=41$. More precisely, 
$G$ has a unique class of elements of order $41$ and we get
\[
i_{41}(H) = 16245625881139200 > c_{41} = 1680.
\]
However, if we now redefine $b_i$ to be $i_{41}(H)/a_i$ (for the unique $i$ such that $|x_i| = 41$) then it is easy to check that the inequality $\sum_i a_ib_i^3 < 1$ is still satisfied and we conclude that any maximal triple involving $H$ is regular. 

Putting all of this together, it follows that every maximal triple is regular and thus $R(G) = B(G) = 3$.
\end{proof}

This completes the proof of Proposition \ref{p:spor_RG}. In particular, this concludes our proof of Theorem \ref{t:main2}(i).

\begin{rem}\label{r:RG_spor}
Let us briefly comment on the problem of determining all the non-regular maximal $k$-tuples for $G = \mathbb{B}, \mathbb{M}$ with $k=R(G)-1$.

\vspace{1mm}

\begin{itemize}\addtolength{\itemsep}{0.3\baselineskip}
\item[{\rm (a)}] For $G = \mathbb{B}$ we have proved that $R(G) = B(G) = 4$, but we have not been able to classify all the non-regular maximal triples. Let $\s$ be a maximal triple and recall that we can compute $\what{Q}(G,\s)$ with the aid of the \textsf{GAP} Character Table Library \cite{GAPCTL}. In this way, we deduce that $\s$ is non-regular only if it is one of the following:
\[
(1,1,1), \; (1,1,2), \; (1,1,3), \; (1,1,4), \; (1,1,6), \; (1,1,7), \; (1,2,2),
\]
where we are using the numbering in \eqref{e:bmax}. We know that $(1,1,1)$ is non-regular by \cite{BOW}, and it is also easy to see that $(1,1,2)$ is non-regular by comparing orders. It remains an open problem to determine the regularity status of the following triples:
\[
(1,1,3), \; (1,1,4), \; (1,1,6), \; (1,1,7),\; (1,2,2).
\]
     
\item[{\rm (b)}] For the Monster $G = \mathbb{M}$ we have $R(G) = B(G) = 3$ and we have not attempted to determine the non-regular maximal pairs for $G$. From \cite{BOW}, we see that $(2.\mathbb{B},2.\mathbb{B})$ is the only non-regular conjugate maximal pair. In fact, \cite[Theorem 3.1]{B23} implies that this is the only conjugate non-regular pair for $G$ since $b(G,H)=2$ for every nontrivial proper subgroup $H \ne 2.\mathbb{B}$. By comparing orders, we also note that $(2.\mathbb{B}, 2^{1+24}.{\rm Co}_1)$ is non-regular. 
\end{itemize}
\end{rem}

\subsection{Soluble subgroups}\label{ss:spor_sol}
 
In this final section, we complete the proof of Theorem \ref{t:main2} by establishing the bound $R_{{\rm sol}}(G) \leqs 3$ for all almost simple sporadic groups. This generalises the main theorem of \cite{B23}, where the weaker bound $B_{{\rm sol}}(G) \leqs 3$ is established. It also establishes a strong form of Conjecture \ref{con:main}(iii) in this setting. Our main result is the following.

\begin{prop}\label{p:nonregsol}
We have $R_{{\rm sol}}(G) \leqs 3$ for every almost simple sporadic group $G$. More precisely, 
\[
R_{\rm sol}(G) = \left\{\begin{array}{ll}
2 & \mbox{if $G \in \mathcal{A}_1$} \\
3 & \mbox{if $G \in \mathcal{A}_2$} \\
\mbox{$2$ or $3$} & \mbox{if $G \in \mathcal{A}_3$}
\end{array}\right.
\]
where
\begin{align*}
\mathcal{A}_1 & = \{ {\rm J}_1, {\rm J}_3, {\rm J}_3.2, {\rm HS}, {\rm Suz}, {\rm Suz}.2, {\rm McL}, {\rm McL}.2, {\rm Ru}, {\rm He}, {\rm He}.2, {\rm Co}_3, {\rm O'N}, {\rm O'N}.2 \} \\
\mathcal{A}_2 & = \{ {\rm M}_{11}, {\rm M}_{12}, {\rm M}_{12}.2, {\rm M}_{22}, {\rm M}_{22}.2, {\rm M}_{23}, {\rm M}_{24}, {\rm J}_2, {\rm J}_2.2, {\rm HS}.2, {\rm Co}_2, {\rm Fi}_{22}, {\rm Fi}_{22}.2, {\rm Fi}_{23} \} \\
\mathcal{A}_3 & = \{ {\rm Co}_{1}, {\rm HN}, {\rm HN}.2, {\rm J}_{4}, {\rm Ly}, {\rm Th}, {\rm Fi}_{24}', {\rm Fi}_{24}, \mathbb{B}, \mathbb{M} \}
\end{align*}
\end{prop}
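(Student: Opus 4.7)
The overall strategy is a stratified computation: first exploit the trivial bound $R_{\rm sol}(G) \leqs R(G)$ and the results of Proposition \ref{p:spor_RG}, then use the probabilistic machinery of Lemma \ref{l:fpr}, the reduction in Lemma \ref{l:easy}, and direct calculation in {\sc Magma} to handle the remaining cases. From Table \ref{tab:RG_sporadic}, the groups with $R(G) \leqs 3$ are precisely those with socle in $\{{\rm J}_1, {\rm J}_3, {\rm O'N}, {\rm HN}, {\rm J}_4, {\rm Ly}, {\rm Th}, \mathbb{M}\}$ together with their $.2$ extensions, and for these $R_{\rm sol}(G) \leqs 3$ is immediate. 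For every other sporadic $G$ the bound must be established by a separate argument exploiting the solubility of the components.

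For $G \in \mathcal{A}_1 \cup \mathcal{A}_2$ I would proceed entirely computationally. Constructing $G$ as a permutation group of minimal degree via \texttt{AutomorphismGroupSimpleGroup}, I would enumerate representatives of the conjugacy classes of maximal soluble subgroups, either by calling \texttt{SolubleSubgroups} directly or, when this is infeasible, by iteratively applying \texttt{MaximalSubgroups} inside each maximal subgroup with nontrivial soluble radical. By Lemma \ref{lem:maxreduction} (in its obvious soluble variant, since every soluble subgroup lies in a maximal soluble overgroup) it suffices to test pairs, and eventually triples, of maximal soluble subgroups. Each candidate pair is resolved by random search, followed by a \texttt{CosetAction} orbit computation for the inconclusive cases (cf.\ Section \ref{ss:random}); this either exhibits a non-regular soluble pair (so $R_{\rm sol}(G) \geqs 3$, in which case one then checks that every soluble triple extending a non-regular pair is regular) or proves $R_{\rm sol}(G) = 2$. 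This pattern should deliver $R_{\rm sol}(G) = 2$ for $G \in \mathcal{A}_1$ and $R_{\rm sol}(G) = 3$ for $G \in \mathcal{A}_2$, with the explicit non-regular pairs for the latter being collated in a table (referenced as Table \ref{tab:spor_sol_pairs}).

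For $G \in \mathcal{A}_3$ direct enumeration of maximal soluble subgroups breaks down, and here the key reduction is Lemma \ref{l:easy}. Given a soluble triple $\tau = (H_1, H_2, H_3)$, each $H_i$ lies in some maximal subgroup $M_i$ of $G$. Whenever some $M_i$ is almost simple (or more generally has trivial soluble radical) with $R_{\rm sol}(M_i) \leqs 3$, Lemma \ref{l:easy} forces $\tau$ to be regular; and for the relevant large sporadics, the maximal subgroups of this form are overwhelmingly almost simple with socle a smaller sporadic or a group of Lie type for which the required $R_{\rm sol}$-bound is either already known or computable in {\sc Magma}. The residual triples are those in which every $H_i$ lies in a maximal subgroup with nontrivial soluble radical (typically a $p$-local subgroup); here each $H_i$ must lie in a maximal soluble subgroup of that $p$-local, and there are only finitely many conjugacy classes of such overgroups. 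For these residual triples I would use the \textsf{GAP} Character Table Library \cite{GAPCTL} together with the fusion information on conjugacy classes to evaluate $\what{Q}(G,\tau)$ as in Lemma \ref{l:fpr}; whenever $\what{Q}(G,\tau) < 1$, the triple is regular.

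The principal obstacle is the Baby Monster, since $R(\mathbb{B}) = 4$, so the trivial bound fails, and computation inside $\mathbb{B}$ is not directly accessible. The plan is to show that every soluble triple in $\mathbb{B}$ falls into the Lemma \ref{l:easy} framework: the maximal subgroups of $\mathbb{B}$ are known, and for each $p$-local maximal subgroup $M$ the maximal soluble subgroups of $M$ can be enumerated explicitly by working inside $M$, while every almost simple maximal subgroup has $R_{\rm sol} \leqs 3$ either by induction on the list of sporadics already handled or by existing results for groups of Lie type (for example the main theorem of \cite{B21} for soluble maximal subgroups). Combining this structural reduction with character-table evaluation of $\what{Q}(\mathbb{B},\tau)$ for the finitely many residual candidate triples, as in the proof of Lemma \ref{l:rG4} for $\mathbb{M}$, should close the argument; verifying that no such residual triple gives $\what{Q}(\mathbb{B},\tau) \geqs 1$ (or, in the exceptional event that some does, confirming its regularity by an ad hoc construction inside the maximal overgroups) is the delicate final step.
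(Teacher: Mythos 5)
Your overall skeleton matches the paper for most of the groups: direct enumeration of soluble subgroups plus random search and \texttt{CosetAction} for the smaller socles, the trivial bound $R_{\rm sol}(G)\leqs R(G)$ for the socles with $R(G)=3$, and Lemma \ref{l:easy} combined with $\what{Q}$-estimates from the \textsf{GAP} Character Table Library for ${\rm Co}_1$, ${\rm Fi}_{24}'$ and ${\rm Fi}_{24}$. One procedural flaw in the middle range: for groups such as ${\rm Co}_2$, ${\rm Fi}_{22}$ and ${\rm Fi}_{23}$, where \texttt{SolubleSubgroups} is unavailable, you propose to descend via \texttt{MaximalSubgroups} only inside maximal subgroups with nontrivial soluble radical. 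That is not exhaustive, since a maximal soluble subgroup can lie only inside an almost simple (typically Lie type) maximal subgroup for which no $R_{\rm sol}\leqs 3$ bound is known; the paper instead descends through \emph{all} maximal subgroups using the collections $\mathcal{S}_3(H)$ of Definition \ref{d:SH}. This is fixable, but as written your enumeration could miss soluble triples.

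The genuine gap is the Baby Monster. After the $\what{Q}$-sieve over maximal triples, the only problematic maximal subgroup is $L=2.{}^2E_6(2){:}2$, and your plan to absorb it into the Lemma \ref{l:easy} framework does not work: no bound $R_{\rm sol}\leqs 3$ (nor even $B_{\rm sol}\leqs 3$) is known for ${}^2E_6(2)$ or its extensions --- the main theorem of \cite{B21} only gives $b(G,H)\leqs 5$ for \emph{soluble maximal} point stabilisers, and the Vdovin-type problem for Lie type groups is open --- and in any case $L$ has nontrivial soluble radical ($Z(L)=2$), so Lemma \ref{l:easy} does not apply as stated. Nor can the residual triples be settled by character-table evaluation of $\what{Q}$ as in Lemma \ref{l:rG4}: their components are maximal subgroups of $L$ and of $2.{}^2E_6(2)$, for which fusion maps into $\mathbb{B}$ are not stored in \cite{GAPCTL}, and direct computation inside $\mathbb{B}$ or inside $L$ at this scale is infeasible. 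The paper's proof of Lemma \ref{l:solspor4} instead establishes, by hand, the bound ${\rm fpr}(x_i,G/H)\leqs b_i$ for every maximal subgroup $H$ of $L$ other than $2.{}^2E_6(2)$, using Wilson's classification \cite{Wilson} of the maximal subgroups of ${}^2E_6(2){:}2$, explicit counts $i_r(H)$ of prime order elements (drawing on the tables of \cite{B23}), the structure of the parabolic and algebraic subgroups of the ambient algebraic group, and the fixed-point-ratio bounds of \cite{LLS} for semisimple elements; only then does the $\what{Q}$-argument close. This analysis is the substantive content of the $\mathbb{B}$ case, and the step you describe as ``the delicate final step'' cannot be carried out by the tools you list.
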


\begin{rem}\label{r:sporsol}
Let us record some comments on the statement of Proposition \ref{p:nonregsol}.

\vspace{1mm}

\begin{itemize}\addtolength{\itemsep}{0.3\baselineskip}
\item[{\rm (a)}] Clearly, we have $R_{{\rm sol}}(G) \leqs R(G)$, so the main bound $R_{\rm sol}(G) \leqs 3$ follows immediately from Proposition \ref{p:spor_RG} when $G$ is one of the following:
\[
{\rm J}_1, \, {\rm J}_3, \, {\rm J}_3.2, \, {\rm O'N}, \, {\rm O'N}.2, \, {\rm HN}, \, {\rm HN}.2, \, {\rm J}_4, \, {\rm Ly}, \, {\rm Th}, \, \mathbb{M}.
\]

\item[{\rm (b)}] We can determine all the non-regular soluble pairs for the following groups
\begin{equation}\label{e:list0}
{\rm M}_{11}, \, {\rm M}_{12}, \, {\rm M}_{12}.2, \, {\rm M}_{22}, \, {\rm M}_{22}.2, \, {\rm M}_{23}, \, {\rm M}_{24}, \, {\rm J}_2, \, {\rm J}_2.2, \, {\rm HS}.2
\end{equation}
In particular, if $G = {\rm M}_{23}$, then $(H,H)$ is the only non-regular soluble pair, up to conjugacy, where 
\[
H = 2^4{:}(3 \times A_4){:}2 < 2^4{:}(3 \times A_5){:}2 < G
\]
is a second maximal subgroup of $G$. Similarly, $G = {\rm HS}.2$ has a unique non-regular soluble pair, up to ordering and conjugacy, namely $(H,K)$, where
\begin{align*}
H & = 2^{1+6}.S_4 < 2^{1+6}.S_5 < G \\
K & = 4^3{:}(2 \times 7{:}3) < 4^3{:}(2 \times {\rm L}_3(2)) < G
\end{align*}
are both second maximal subgroups. In Table \ref{tab:spor_sol_pairs}, for each of the remaining groups in \eqref{e:list0}, we record a non-regular conjugate  pair $(H,H)$ with $|H|$ maximal, and also a non-regular non-conjugate  pair $(H,K)$ with $|H||K|$ maximal (in every case,  $K$ is a second maximal subgroup of $G$, and $H$ is either maximal or second maximal). 
 
\item[{\rm (c)}] For the remaining groups in $\mathcal{A}_2$, namely ${\rm Co}_2$, ${\rm Fi}_{22}$, ${\rm Fi}_{22}.2$ and ${\rm Fi}_{23}$, a non-regular conjugate soluble pair $(H,H)$ is recorded in \cite[Table 1]{B23}. But in these cases we have not classified all the non-regular soluble pairs. By inspecting \cite[Table 2]{B23}, we observe that $(H,K)$ is a non-regular non-conjugate pair for $G = {\rm Fi}_{22}.2$, where the first component $H = 3^{1+6}.2^{3+4}{:}3^2{:}2.2$ is maximal and $K = 3^{1+6}.2^{3+4}{:}3^2{:}2 < H$ is an index-two subgroup. Similarly, if $G = {\rm Fi}_{23}$ then $(H,K)$ is non-regular, where $H = 3^{1+8}.2^{1+6}.3^{1+2}.2S_4$ is maximal and $K$ is the unique index-two subgroup of $H$.

\item[{\rm (d)}] We have not been able to determine $R_{{\rm sol}}(G)$ precisely for the groups in $\mathcal{A}_3$. By \cite[Theorem 2]{B23}, none of these groups, with the possible exception of the Baby Monster $\mathbb{B}$, has a non-regular conjugate soluble pair. Determining the existence (or otherwise) of such a pair for $\mathbb{B}$ remains an open problem.

\item[{\rm (e)}] We refer the reader to the end of Section \ref{ss:spor_sol} for some brief comments regarding the regularity of nilpotent pairs $(H,K)$ in almost simple sporadic groups, as well as the pairs $(H,K)$ with $H$ nilpotent and $K$ soluble.
\end{itemize}
\end{rem}

{\scriptsize
\begin{table}
\[
\begin{array}{llll} \hline
G & H & K & \mbox{Comments} \\ \hline
{\rm M}_{11} & {\rm U}_3(2){:}2 & {\rm U}_3(2) & \\
{\rm M}_{12} & {\rm AGL}_2(3) & {\rm ASL}_2(3) & \\
{\rm M}_{12}.2 & {\rm AGL}_2(3) & 4^2{:}{\rm D}_{12}.2 & H < {\rm M}_{12} < G, \; K < G \\  
{\rm M}_{22} & 2^4{:}(3^2{:}4) & 2^4{:}S_4 & H < 2^4{:}A_6 < G,\; K < 2^4{:}A_6 < G \\
{\rm M}_{22}.2 & 2^4{:}(S_3 \wr S_2) & 2^4{:}(S_4 \times S_2) & H < 2^4{:}S_6 < G, \; K < 2^4{:}S_6 < G \\
{\rm M}_{24} & 2^6{:}3.(S_3 \wr S_2) & 2^6{:}(7{:}3 \times S_3) & H < 2^6{:}3.S_6 < G,\; K < 2^6{:}({\rm L}_3(2) \times S_3) < G \\
{\rm J}_2 & 2^{2+4}{:}(3 \times S_3) & 2^{2+4}{:}3^2 & \\
{\rm J}_2.2 & 2^{2+4}{:}(3 \times S_3).2 & 2^{2+4}{:}(3 \times S_3) & \\ \hline
\end{array}
\]
\caption{The subgroups $H$ and $K$ in Remark \ref{r:sporsol}(b)}
\label{tab:spor_sol_pairs}
\end{table}
}

Let $G$ be an almost simple sporadic group with socle $T$. In order to prove Proposition \ref{p:nonregsol} we will adopt a computational approach, working with {\sc Magma} (version 2.28-4) \cite{magma} and \textsf{GAP} (version 4.11.1). It will be convenient to divide up the possibilities for $T$ as follows:
\begin{align*}
\mathcal{B}_1 & = \{ {\rm M}_{11}, {\rm M}_{12}, {\rm M}_{22}, {\rm M}_{23}, {\rm M}_{24}, {\rm J}_{1}, {\rm J}_{2}, {\rm J}_{3}, {\rm HS}, {\rm McL}, {\rm He}, {\rm Suz}, {\rm Ru}, {\rm Co}_{3}, {\rm O'N} \} \\
\mathcal{B}_2 & = \{ {\rm Co}_2, {\rm Fi}_{22}, {\rm Fi}_{23}, {\rm HN}, {\rm J}_4, {\rm Ly}, {\rm Th}, \mathbb{M} \} \\
\mathcal{B}_3 & = \{ {\rm Co}_1, {\rm Fi}_{24}' \} \\
\mathcal{B}_4 & = \{ \mathbb{B}\}
\end{align*}

Before we begin the proof, it will be helpful to introduce the following notation. For a positive integer $k$, let $\mathcal{C}_k(G)$ be the set of subgroups $H$ of $G$ with the property that there exists a chain of subgroups
\[
H= H_0 < H_1 < H_2 < \cdots < H_{k-1} < H_k = G
\]
where $H_1$ is insoluble and each $H_i$ is a maximal subgroup of $H_{i+1}$. For instance, $\mathcal{C}_1(G)$ is the set of maximal subgroups of $G$. In addition, let $\mathcal{D}_k(G)$ be the set of soluble subgroups in $\mathcal{C}_k(G)$, and let $\mathcal{P}(G)$ be the set of all non-Sylow $p$-subgroups of $G$, ranging over all prime divisors $p$ of $|G|$. 

\begin{defn}\label{d:SH}
Let $H \leqs G$ be a subgroup and let $d$ be a positive integer. If $H$ is soluble, we define 
$\mathcal{S}_d(H) = \{ H \}$. Otherwise, we take $\mathcal{S}_d(H)$ to be a set of representatives of the $G$-classes of subgroups in the set
\[
\left(\mathcal{C}_d(H) \cup \bigcup_{k=1}^{d-1} \mathcal{D}_k(H) \right) \setminus \mathcal{P}(H)
\]
\end{defn}

For example, if $G = H = {\rm M}_{11}$, then $\mathcal{S}_2(H)$ contains the soluble maximal subgroups ${\rm U}_3(2){:}2$ and $2S_4$, together with a representative of each conjugacy class of maximal subgroups of ${\rm M}_{10} = A_6.2$, ${\rm L}_{2}(11)$ and $S_5$.

The key property here is that for any positive integer $d$, every soluble subgroup of $H$ is $G$-conjugate to a subgroup of at least one group in $\mathcal{S}_d(H)$. It follows that $R_{\rm sol}(G) \leqs 3$ if there exist positive integers $d_j$ such that every triple $\tau = (H_1,H_2,H_3)$ is regular, where the components of $\tau$ range over the subgroups in 
\[
\bigcup_{j=1}^t \mathcal{S}_{d_j}(M_j)
\]
with respect to a complete set of representatives $\{M_1, \ldots, M_t\}$ of the conjugacy classes of maximal subgroups of $G$.

\begin{lem}\label{l:solspor1}
The conclusion to Proposition \ref{p:nonregsol} holds for the groups with $T \in \mathcal{B}_1$.
\end{lem}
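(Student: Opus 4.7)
The plan is to proceed computationally, following the framework established in Section \ref{ss:comp} and the discussion immediately preceding the statement. For each $T \in \mathcal{B}_1$, I would consider in turn both $G = T$ and $G = T.2$ when the latter exists, constructing $G$ as a permutation group on $m(G)$ points via \texttt{AutomorphismGroupSimpleGroup} in {\sc Magma}. In every case, the degree $m(G)$ is small enough that \texttt{MaximalSubgroups} is effective and representatives of the conjugacy classes of core-free maximal subgroups are immediately available, so Lemma \ref{lem:maxreduction} applies and the task reduces to analysing finitely many tuples of subgroups in $\mathcal{M}(G)$.

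First I would handle the groups for which the proposition asserts $R_{\rm sol}(G) = 2$, namely those with $T \in \{{\rm J}_1, {\rm J}_3, {\rm HS}, {\rm McL}, {\rm He}, {\rm Suz}, {\rm Ru}, {\rm Co}_3, {\rm O'N}\}$ together with the corresponding extensions in $\mathcal{A}_1$. Here I need to show that every core-free soluble pair $(H_1, H_2)$ is regular. Appealing iteratively to Lemma \ref{l:easy}, it suffices to enumerate, for each conjugacy class representative $M$ of a maximal subgroup of $G$, the collection $\mathcal{S}_d(M)$ of Definition \ref{d:SH} for some $d$ large enough that every $G$-class of core-free soluble subgroups of $G$ is represented as a subgroup of some member of some $\mathcal{S}_d(M_j)$. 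For each resulting pair of representatives I would first apply the order test of Lemma \ref{l:triv}; if $|H_1||H_2| < |G|$, I would carry out random search for $g \in G$ with $H_1 \cap H_2^g = 1$, reverting to a direct orbit computation on $G/H_2$ via \texttt{CosetAction} when random search is inconclusive.

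Next I would turn to the groups with $R_{\rm sol}(G) = 3$, namely $T \in \{{\rm M}_{11}, {\rm M}_{12}, {\rm M}_{22}, {\rm M}_{23}, {\rm M}_{24}, {\rm J}_2\}$ and their degree-two extensions lying in $\mathcal{A}_2$. For the upper bound I would repeat the enumeration above at the level of triples, using random search to locate $g_2, g_3 \in G$ with $H_1 \cap H_2^{g_2} \cap H_3^{g_3} = 1$ and falling back on orbit methods on the smaller factor $G/H_2$ when needed. For the matching lower bound I would exhibit the explicit non-regular soluble pairs listed in Remark \ref{r:sporsol}(b) and Table \ref{tab:spor_sol_pairs} (and the specific subgroups of ${\rm M}_{23}$ and ${\rm HS}.2$ recorded there), certifying non-regularity either by $|H_1||H_2| > |G|$ via Lemma \ref{l:triv} or by checking that every $H_1$-orbit on $G/H_2$ has nontrivial stabiliser.

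The main obstacle I anticipate is the size of the soluble subgroup lattice for the larger groups in $\mathcal{B}_1$, particularly ${\rm Suz}$, ${\rm Suz}.2$, ${\rm Co}_3$, ${\rm Ru}$ and ${\rm O'N}.2$, where a direct call to \texttt{SolubleSubgroups} is not viable. Here the iterated \texttt{MaximalSubgroups} descent encoded in the $\mathcal{S}_d$ machinery is essential, and one must keep the depth $d$ as small as possible while still capturing every $G$-class of core-free soluble subgroups (the subtraction of $\mathcal{P}(H)$ in Definition \ref{d:SH} is crucial since Sylow $p$-subgroups proliferate at deeper levels but are conjugate and thus can be handled uniformly). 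Fortunately, for most insoluble maximal subgroups $M$ in this list one only needs $d = 2$ or $3$ before every further maximal descendant is soluble, so the enumeration terminates within a tractable number of cases. Sample code for each group, together with the complete list of non-regular pairs summarised in Remark \ref{r:sporsol}(b), is provided in the supplementary file \cite{AB_comp}.
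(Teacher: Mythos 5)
Your proposal would prove the lemma, but it follows a genuinely different computational route from the paper's. The reason the paper isolates $\mathcal{B}_1$ in the first place is that for exactly these groups the {\sc Magma} function \texttt{SolubleSubgroups} is effective: the paper's proof calls it once to obtain a complete set of conjugacy class representatives of soluble subgroups, discards the non-Sylow $p$-subgroups to form $\mathcal{S}(G)$, settles every pair from $\mathcal{S}(G)$ by random search backed up by \texttt{CosetAction}, and finally, for each non-regular pair found, checks all triples extending it. You instead run the iterated \texttt{MaximalSubgroups} descent through the collections $\mathcal{S}_d$ of Definition \ref{d:SH}, which is precisely the machinery the paper reserves for $\mathcal{B}_2$ and $\mathcal{B}_3$ (Lemmas \ref{l:solspor2} and \ref{l:solspor3}), where \texttt{SolubleSubgroups} fails. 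Your method is valid here too and scales better, but it is heavier: the members of $\mathcal{S}_d(M)$ need not be soluble, so a failed regularity test forces further refinement, whereas the direct enumeration yields exactly the soluble classes and, as a by-product, the complete list of non-regular soluble pairs underlying Remark \ref{r:sporsol}(b). Your claim that \texttt{SolubleSubgroups} is not viable for ${\rm Suz}$, ${\rm Co}_3$, ${\rm Ru}$ and ${\rm O'N}$ runs contrary to the paper; these groups sit in $\mathcal{B}_1$ precisely because it is viable for them.

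Three small slips, none fatal. First, your explicit case split for the $R_{\rm sol}(G)=3$ computations omits ${\rm HS}.2$ (socle ${\rm HS}\in\mathcal{B}_1$ but ${\rm HS}.2\in\mathcal{A}_2$), although your opening paragraph and your lower-bound discussion do cover it. Second, the covering property of $\mathcal{S}_d$ holds for every $d$; taking $d$ larger does not capture more soluble classes, it merely replaces the test subgroups by smaller ones so that the regularity checks have a chance of succeeding. Third, the reduction for soluble tuples is justified directly by that covering property, not by Lemma \ref{lem:maxreduction} (whose maximal subgroups are typically insoluble) nor by an iterative appeal to Lemma \ref{l:easy}.
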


\begin{proof}
Let $G$ be an almost simple sporadic group with socle $T \in \mathcal{B}_1$. Working with {\sc Magma}, we first use the function \texttt{AutomorphismGroupSimpleGroup} to construct a faithful primitive permutation representation of $G$ of minimal degree. We  then use \texttt{SolubleSubgroups} to obtain a complete set of representatives of the conjugacy classes of soluble subgroups of $G$. Let $\mathcal{S}(G)$ be the set obtained by removing all the non-Sylow $p$-subgroups, ranging over all the prime divisors $p$ of $|G|$. We then use random search to determine a list of candidate non-regular soluble pairs $(H_1,H_2)$, where $|H_1| \geqs |H_2|$ and $H_i \in \mathcal{S}(G)$ for $i=1,2$.

Clearly, if random search reveals that every pair $(H_1,H_2)$ with $H_i \in \mathcal{S}(G)$ is regular, then $R_{\rm sol}(G) = 2$. So let us assume that there is at least one such pair $(H_1,H_2)$ for which our random search is inconclusive. Then as in the proof of Proposition \ref{p:spor_RG}, we can determine the regularity status of $(H_1,H_2)$ by computing the orbits of $H_2$ on $G/H_1$ via the function \texttt{CosetAction}. In this way, we obtain the complete list of non-regular soluble pairs, up to ordering and conjugacy, and we can easily extract the information highlighted in part (b) of Remark \ref{r:sporsol}. Of course, if we find that there are no such pairs, then $R_{\rm sol}(G) = 2$.

To complete the argument, let us assume $G$ has at least one non-regular soluble pair. In order to show that every soluble triple is regular, we consider each non-regular soluble pair $(H_1,H_2)$ in turn, and we construct the triples $\tau = (H_1,H_2,H_3)$, where $H_3$ runs through the set $\mathcal{S}(G)$. Using random search, it is straightforward to verify that $\tau$ is regular and we conclude that $R_{{\rm sol}}(G) = 3$ in each of these cases.
\end{proof}

\begin{lem}\label{l:solspor2}
The conclusion to Proposition \ref{p:nonregsol} holds for the groups with $T \in \mathcal{B}_2$.
\end{lem}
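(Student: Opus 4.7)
The plan divides $\mathcal{B}_2$ into two parts. For socles $T \in \{{\rm HN}, {\rm J}_4, {\rm Ly}, {\rm Th}, \mathbb{M}\}$, as well as for $G = {\rm HN}.2$, Proposition \ref{p:spor_RG} already gives $R(G) \leqs 3$, so the trivial inequality $R_{\rm sol}(G) \leqs R(G)$ immediately yields the desired bound. This reduces the remaining work to the four groups $G \in \{{\rm Co}_2, {\rm Fi}_{22}, {\rm Fi}_{22}.2, {\rm Fi}_{23}\}$.

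For these four groups the function \texttt{SolubleSubgroups} is not effective, so the strategy is to enumerate candidate soluble overgroups using the descent sets $\mathcal{S}_d(H)$ from Definition \ref{d:SH}. Any core-free soluble subgroup of $G$ is contained in some member of $\mathcal{S}_d(M)$ for an appropriate maximal subgroup $M \leqs G$ and sufficiently large depth $d \geqs 1$. Hence, to establish $R_{\rm sol}(G) \leqs 3$, it suffices to show that every triple $(H_1, H_2, H_3)$ with components drawn from $\bigcup_{j} \mathcal{S}_{d_j}(M_j)$ is regular, where the $M_j$ range over representatives of the conjugacy classes of maximal subgroups of $G$ and the depths $d_j$ are chosen case by case. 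We construct $G$ as a permutation group of minimal degree using \texttt{AutomorphismGroupSimpleGroup} and build each $\mathcal{S}_{d_j}(M_j)$ by iterating \texttt{MaximalSubgroups} inside $M_j$.

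The regularity of each triple is then tested in three increasingly expensive layers: first, random search for $g_2, g_3 \in G$ with $H_1 \cap H_2^{g_2} \cap H_3^{g_3} = 1$; then direct orbit analysis via \texttt{CosetAction} on $G/H_2$ and $G/H_3$; and finally, for any triples with prohibitive indices, the character-theoretic bound $\widehat{Q}(G,\tau) < 1$ from Lemma \ref{l:fpr} combined with data from the \textsf{GAP} Character Table Library \cite{GAPCTL}. A crucial pruning step is Lemma \ref{l:easy}: once $R_{\rm sol}(M) \leqs 3$ has been verified for a maximal subgroup $M$, every soluble triple with one component contained in $M$ is automatically regular, so the remaining enumeration can be restricted to triples whose components lie in maximal subgroups that have not yet been cleared in this way.

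The main obstacle will be combinatorial explosion in $\mathcal{S}_d(M)$ for the larger maximal subgroups, notably $2.{\rm U}_6(2)$ inside ${\rm Fi}_{22}$ and $2.{\rm Fi}_{22}$ inside ${\rm Fi}_{23}$, where even depth $d = 2$ can produce a large number of conjugacy classes to examine. Choosing $d_j$ adaptively for each $M_j$, exploiting the symmetry of $\tau$ under reordering, and repeatedly invoking Lemma \ref{l:easy} as more maximal subgroups are cleared should keep the computation feasible. Full details of the verification will be recorded in the supplementary file \cite{AB_comp}.
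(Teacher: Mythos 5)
Your approach is essentially the paper's: the same reduction of ${\rm HN}$, ${\rm HN}.2$, ${\rm J}_4$, ${\rm Ly}$, ${\rm Th}$, $\mathbb{M}$ via $R(G)=3$ from Proposition \ref{p:spor_RG}, and the same treatment of the remaining groups by replacing \texttt{SolubleSubgroups} with the descent sets $\mathcal{S}_d(M)$ of Definition \ref{d:SH} (the paper fixes $d=3$), random search over the resulting triples, and Lemma \ref{l:easy} to discard triples with a component inside the socle when $G={\rm Fi}_{22}.2$.

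There is, however, one missing step. For $G \in \{{\rm Co}_2, {\rm Fi}_{22}, {\rm Fi}_{22}.2, {\rm Fi}_{23}\}$ the conclusion of Proposition \ref{p:nonregsol} is the exact equality $R_{\rm sol}(G) = 3$ (these groups lie in $\mathcal{A}_2$), whereas your plan only establishes the upper bound $R_{\rm sol}(G) \leqs 3$: nowhere do you produce, or cite, a non-regular soluble pair for these four groups. The paper closes this by invoking the main theorem of \cite{B23}, which gives $B_{\rm sol}(G)=3$, so some core-free soluble subgroup $H$ has $b(G,H)=3$ and the conjugate pair $(H,H)$ is non-regular, yielding the lower bound $R_{\rm sol}(G) \geqs 3$. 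You should add this citation (or exhibit such a pair directly, as in \cite[Table 1]{B23}); with that in place your argument matches the paper's.
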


\begin{proof}
If $T \in \{{\rm HN}, {\rm J}_4, {\rm Ly}, {\rm Th}, \mathbb{M}\}$ then $R(G) = 3$ by Proposition \ref{p:spor_RG} and thus $R_{{\rm sol}}(G) \leqs 3$, as required. For the remainder, let us assume $T \in \{ {\rm Co}_2, {\rm Fi}_{22}, {\rm Fi}_{23}\}$. In these cases, the main theorem of \cite{B23} gives $B_{{\rm sol}}(G) = 3$, so we just need to show that every soluble triple is regular in order to conclude that $R_{{\rm sol}}(G) = 3$. To do this, we need to modify the computational approach adopted in the proof of Lemma \ref{l:solspor1} since the {\sc Magma} function \texttt{SolubleSubgroups} is ineffective for the groups we are working with here. We proceed as follows.

For now, let us assume $G \ne {\rm Fi}_{22}.2$, so $G = T$.  As before, we begin by constructing $G$ as a permutation group and we use the {\sc Magma} function \texttt{MaximalSubgroups} to obtain a complete set of representatives of the conjugacy classes of maximal subgroups of $G$. Then for each maximal subgroup $H$, we construct the set of subgroups $\mathcal{S}_3(H)$ as defined in Definition \ref{d:SH}. In this way, we obtain a set of subgroups with the property that every soluble subgroup of $G$ is contained in a conjugate of at least one member of the set. We can then use random search to show that every triple of the form $(J_1,J_2,J_3)$ is regular, where each $J_i$ is contained in $\mathcal{S}_3(H_i)$ for some maximal subgroup $H_i$ of $G$, and this allows us to conclude that $R_{\rm sol}(G) \leqs 3$. 

In practice, to improve the efficiency of the latter computation, we first run through triples of the form $(J_1,H_2,H_3)$, where $J_1 \in \mathcal{S}_3(H_1)$ and the $H_i$ are representatives of the classes of maximal subgroups of $G$. Given $H_2$ and $H_3$, if random search is inconclusive for at least one  triple of the form $(J_1,H_2,H_3)$, then we run through the set of triples $\tau = (J_1,J_2,H_3)$ with $J_i \in \mathcal{S}_3(H_i)$ for $i=1,2$, and in every case we find that $\tau$ is regular. 

Finally, suppose $G = {\rm Fi}_{22}.2$. We know that $R_{{\rm sol}}(T) = 3$ by the above argument, so Lemma \ref{l:easy} implies that every soluble triple of the form $(H,K,L)$ with $H < T$ is regular. So we can proceed as above, using the additional fact that we can discard any triples with a component contained in $T$. 
\end{proof}

\begin{lem}\label{l:solspor3}
The conclusion to Proposition \ref{p:nonregsol} holds for the groups with $T \in \mathcal{B}_3$.
\end{lem}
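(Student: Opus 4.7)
The plan is to adapt the computational strategy of Lemma \ref{l:solspor2} to the almost simple groups $G$ with socle $T \in \{{\rm Co}_1, {\rm Fi}_{24}'\}$ (together with $G = {\rm Fi}_{24}$). These groups are separated out because either the {\sc Magma} function \texttt{SolubleSubgroups} is ineffective, or (in the case of ${\rm Fi}_{24}'$ and ${\rm Fi}_{24}$) the function \texttt{MaximalSubgroups} itself is unavailable and the maximal subgroups must be accessed individually via the standard generators in the Web Atlas \cite{WebAt}.

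The first step is to exploit Lemma \ref{l:easy} to cut the problem down substantially. For $G = {\rm Co}_1$, several maximal subgroups $M$ have socles $S$ for which $R_{\rm sol}(S) \leqs 3$ has already been established (possibly via the trivial bound $R_{\rm sol}(S) \leqs R(S)$ combined with Proposition \ref{p:spor_RG}); this includes ${\rm Co}_2, {\rm Co}_3, {\rm Suz}, {\rm J}_2, {\rm M}_{24}$, and so on. A short argument then promotes this to $R_{\rm sol}(M) \leqs 3$ for the corresponding almost simple $M$, and Lemma \ref{l:easy} implies that any soluble triple $(H_1,H_2,H_3)$ with $H_1 \leqs M$ is regular. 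The same reduction applies to $G \in \{{\rm Fi}_{24}', {\rm Fi}_{24}\}$ via the maximal subgroups with socles ${\rm Fi}_{23}$ and ${\rm Fi}_{22}$, which dispose of most triples.

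The second step is to handle the residual maximal subgroups $M$ not covered by the reduction, typically the soluble maximal subgroups and a small number of maximal subgroups whose socle falls outside our inductive base. For each such $M$, I would construct the set $\mathcal{S}_d(M)$ of Definition \ref{d:SH} for a small choice of $d$, using iterated calls to \texttt{MaximalSubgroups}. For $G = {\rm Co}_1$ this can be carried out inside the permutation representation of degree $98280$ produced by \texttt{AutomorphismGroupSimpleGroup}; for ${\rm Fi}_{24}'$ and ${\rm Fi}_{24}$ the maximal subgroups must first be constructed from explicit generators in the Web Atlas, exactly as in the proof of Lemma \ref{l:rG2}, after which \texttt{MaximalSubgroups} can be applied internally to descend the subgroup lattice.

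Having assembled a finite collection $\mathcal{S}$ such that every non-Sylow-$p$ soluble subgroup of $G$ is $G$-conjugate into some member of $\mathcal{S}$, the final step is to certify that every triple drawn from $\mathcal{S}$ is regular. Random search in the chosen representation resolves the majority of triples; for the rest, I would either compute orbits directly via \texttt{CosetAction}, or, when the indices $|G:H_i|$ are prohibitive, fall back on the probabilistic bound $\widehat{Q}(G,\tau) < 1$ of Lemma \ref{l:fpr}, evaluated using the character tables and class fusion maps available in \cite{GAPCTL} (both groups lie comfortably in the range covered by that library).

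The main obstacle is purely computational: for ${\rm Fi}_{24}'$ the maximal subgroups have to be rebuilt one at a time from words in the standard generators, and the depth $d$ used to form $\mathcal{S}_d(M)$ must be chosen conservatively, since iterating \texttt{MaximalSubgroups} inside a large subgroup can be very expensive. The reduction via Lemma \ref{l:easy} is therefore essential, not merely convenient. A mild pleasant feature, however, is that we only need to certify $R_{\rm sol}(G) \leqs 3$ rather than compute $R_{\rm sol}(G)$ exactly, so in contrast to Lemma \ref{l:solspor1} we do not need to enumerate the non-regular soluble pairs explicitly --- it suffices to verify the regularity of every triple.
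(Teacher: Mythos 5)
Your overall toolkit (Lemma \ref{l:easy}, the sets $\mathcal{S}_d$ of Definition \ref{d:SH}, random search, and $\widehat{Q}$ as a fallback) is the same as the paper's, but your pivotal first step rests on a misapplication of Lemma \ref{l:easy}. That lemma requires the subgroup $H$ to have \emph{trivial soluble radical}, and the maximal subgroups you want to feed into the reduction do not: in ${\rm Co}_1$ the subgroups with "socle" ${\rm Suz}$, ${\rm M}_{24}$, ${\rm J}_2$ are $3.{\rm Suz}{:}2$, $2^{11}{:}{\rm M}_{24}$ and $(A_5\times {\rm J}_2){:}2$, which are not almost simple and (in the first two cases) have nontrivial soluble radical; likewise $2.{\rm Fi}_{22}.2 < {\rm Fi}_{24}'$ and ${\rm Fi}_{23}\times 2$, $(2\times 2.{\rm Fi}_{22}){:}2 < {\rm Fi}_{24}$ have nontrivial centres. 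There is also no "short argument promoting $R_{\rm sol}(S)\leqs 3$ to $R_{\rm sol}(M)\leqs 3$": a soluble subgroup of $3.{\rm Suz}{:}2$ containing the centre lies in no copy of ${\rm Suz}$ (indeed ${\rm Suz}$ is not even a subgroup of the non-split extension $3.{\rm Suz}$), and $R_{\rm sol}(M)$ is in any case only usable through Lemma \ref{l:easy}, which these $M$ do not satisfy. The only legitimate reductions of this kind are via the simple maximal subgroups ${\rm Co}_2$ and ${\rm Co}_3$ of ${\rm Co}_1$, via ${\rm Fi}_{23}<{\rm Fi}_{24}'$, and, for $G={\rm Fi}_{24}$, via the socle $T={\rm Fi}_{24}'$ itself once $R_{\rm sol}(T)\leqs 3$ is known — which is exactly what the paper uses.

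This matters because you declare the reduction "essential, not merely convenient": once the subgroups above fall back into your residual pile, your plan amounts to building $\mathcal{S}_d(M)$ for essentially every class of maximal subgroups of ${\rm Co}_1$ and ${\rm Fi}_{24}'$ and checking all triples, a computation you yourself flag as likely infeasible (and for ${\rm Fi}_{24}'$ even constructing the maximal subgroups is nontrivial). The missing organising idea is to run the probabilistic bound \emph{first}, at the level of maximal triples, using the \textsf{GAP} Character Table Library: for ${\rm Fi}_{24}'$ one finds $\widehat{Q}(G,\sigma)\geqs 1$ only when a component is ${\rm Fi}_{23}$, so Lemma \ref{l:easy} finishes immediately with no descent at all; for ${\rm Co}_1$, after discarding triples with a component in ${\rm Co}_2$ or ${\rm Co}_3$, the $\widehat{Q}$-computation leaves only twelve candidate maximal triples (involving six classes), random search in degree $98280$ kills ten of them, and the $\mathcal{S}_3$-descent is needed only for the two surviving triples built from $3.{\rm Suz}{:}2$ and $2^{11}{:}{\rm M}_{24}$; for ${\rm Fi}_{24}$ one reduces via $T$ and then repeats the same $\widehat{Q}$/random-search/$\mathcal{S}_3$ scheme around ${\rm Fi}_{23}\times 2$. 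In your write-up $\widehat{Q}$ appears only as a last-resort fallback inside the triple-checking loop, so as stated the argument either relies on the false reduction or collapses into a computation you have not shown to be feasible.
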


\begin{proof}
First assume $G = {\rm Fi}_{24}'$. Here we proceed as in the proof of Proposition \ref{p:spor_RG}, using the information in the \textsf{GAP} Character Table Library \cite{GAPCTL} to determine all the maximal triples $\s$ with $\what{Q}(G,\s) \geqs 1$. It turns out that this inequality holds only if at least one of the component subgroups is ${\rm Fi}_{23}$. But we know that $R_{{\rm sol}}({\rm Fi}_{23}) =3$ by Lemma \ref{l:solspor2}, so Lemma \ref{l:easy} implies that every soluble triple is regular and thus $R_{\rm sol}(G) \leqs 3$.

\vs

Next assume $G = {\rm Co}_1$. Here we observe that the largest maximal subgroup of $G$ is ${\rm Co}_2$, and we have $R_{\rm sol}({\rm Co}_2) = 3$ by Lemma \ref{l:solspor2}. Similarly, $R_{\rm sol}({\rm Co}_3) = 2$ by Lemma \ref{l:solspor1}. Therefore, Lemma \ref{l:easy} implies that a soluble triple is regular if it has a component contained in one of the maximal subgroups ${\rm Co}_2$ or ${\rm Co}_3$ of $G$. We can now use \textsf{GAP}, as in the previous case, to determine all the maximal triples $\s = (H_1,H_2,H_3)$, up to ordering and conjugacy, such that $\what{Q}(G,\s) \geqs 1$ and $H_i \ne {\rm Co}_2, {\rm Co}_3$ for all $i$. We find that there are just $12$ such triples, up to conjugacy and reordering, which only involve subgroups from $6$ distinct conjugacy classes of maximal subgroups of $G$. 

At this point, we switch to {\sc Magma} and we construct $G$ and each of the $6$ relevant maximal subgroups inside $S_n$, where $n = |G:{\rm Co}_2| = 98280$. Using random search, we find that $10$ of the $12$ relevant maximal triples are regular and it just remains to consider the maximal triples $(H_1,H_2,H_3)$, where
\[
H_1 = H_2 = 3.{\rm Suz}{:}2, \;\; H_3 = \mbox{$3.{\rm Suz}{:}2$ or $2^{11}{:}{\rm M}_{24}$}.
\]
We now complete the argument by proceeding as in the proof of Lemma \ref{l:solspor2}, working with the subgroup collections $\mathcal{S}_3(H_i)$ and random search to show that each triple of the form $(J_1,H_2,H_3)$ with $J_1 \in \mathcal{S}_3(H_1)$ is regular. In this way, we conclude that every soluble triple of subgroups of $G$ is regular and thus $R_{\rm sol}(G) \leqs 3$.

\vs

Finally, let us assume $G = {\rm Fi}_{24}$. Since we have already established the bound  $R_{{\rm sol}}(T) \leqs 3$, Lemma \ref{l:easy} implies that every soluble triple containing a subgroup of $T$ is regular. 

We begin by using \textsf{GAP} to determine all the core-free maximal triples $\s$ with $\what{Q}(G,\s) \geqs 1$. Switching to {\sc Magma}, we then use the Web Atlas \cite{WebAt} to construct $G$ and each relevant maximal subgroup $H_i$ as subgroups of $S_n$, where $n = 306936$. By applying random search, we can then reduce the candidate non-regular core-free maximal triples $(H_1,H_2,H_3)$ to a specific list of $18$ possibilities (up to conjugacy and reordering), all of which include the component $H_1 = {\rm Fi}_{23} \times 2$. Next we construct the set $\mathcal{S}_3(H_1)$ defined in Definition \ref{d:SH}. By removing any subgroups that are contained in $T$, we obtain a subset $\mathcal{S}_3(H_1)'$ of size $103$. Then by applying random search, we can show that every triple of the form $(J_1,H_2,H_3)$ with $J_1 \in \mathcal{S}_3(H_1)'$ is regular, unless $H_2=H_1$ and $H_3$ is one of the following:
\[
{\rm Fi}_{23} \times 2, \; (2 \times 2.{\rm Fi}_{22}){:}2, \; S_3 \times {\rm P\O}_{8}^{+}(3){:}S_3, \; {\rm O}_{10}^{-}(2).
\]
Finally, we handle these cases by showing that every triple of the form $(J_1,J_2,H_3)$ is regular (via random search), where $J_i \in \mathcal{S}_3(H_1)'$ for $i=1,2$.
\end{proof}

To complete the proof of Proposition \ref{p:nonregsol}, we may assume $G = \mathbb{B}$ is the Baby Monster.

\begin{lem}\label{l:solspor4}
The conclusion to Proposition \ref{p:nonregsol} holds for the groups with $T \in \mathcal{B}_4$.
\end{lem}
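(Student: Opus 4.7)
The plan is to combine the probabilistic technique of Lemma \ref{l:fpr} with Lemma \ref{l:easy} and the results already established for the smaller sporadic groups, following a strategy parallel to the one used for the Monster in Lemma \ref{l:rG4} but taking advantage of the stored fusion maps for every maximal subgroup of $\mathbb{B}$ in the \textsf{GAP} Character Table Library. Concretely, the goal is to show that every soluble triple $\tau = (H_1,H_2,H_3)$ of subgroups of $G = \mathbb{B}$ is regular, which by Lemma \ref{lem:maxreduction} reduces to the case where each $H_i$ is contained in a maximal subgroup of $G$.

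The first step is to dispose of a large class of soluble triples via Lemma \ref{l:easy}. Inspecting the list of maximal subgroups of $\mathbb{B}$, many are almost simple with socle a sporadic group whose regularity invariants we have already bounded: for instance ${\rm Fi}_{23}$ and (inside $S_3\times {\rm Fi}_{22}{:}2$) ${\rm Fi}_{22}$ satisfy $R_{\rm sol}\leqs 3$ by Lemma \ref{l:solspor2}, while ${\rm Th}$, ${\rm HN}.2 \times 2$ (via ${\rm HN}$), $5{:}4 \times {\rm HS}{:}2$ (via ${\rm HS}$), $S_5\times {\rm M}_{22}{:}2$ and $5\times {\rm M}_{22}{:}2$ inherit $R_{\rm sol}\leqs 3$ from Proposition \ref{p:spor_RG} and Lemmas \ref{l:solspor1}--\ref{l:solspor2}. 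Whenever at least one $H_i$ lies in such a maximal subgroup $M$ (or more generally in any subgroup of $G$ with trivial soluble radical and $R_{\rm sol}\leqs 3$), Lemma \ref{l:easy} immediately yields regularity.

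For triples avoiding those cases, I would switch to the character-theoretic approach. The character table of $\mathbb{B}$, of every maximal subgroup, and all class fusion maps (in the ambiguous case $(2^2\times F_4(2)){:}2$, as in Lemma \ref{l:rG4}, one uses \texttt{PossibleClassFusions} and observes that the fixed point ratios are independent of the candidate map) are all available in \cite{GAPCTL}. This permits the exact evaluation of $\widehat{Q}(G,\tau)$ for every maximal triple $\tau$. As noted in the proof of Lemma \ref{l:rG4}, the only maximal triples with $\widehat{Q}(G,\tau)\geqs 1$ are the ones listed in \eqref{e:bmax}, and every component appearing there (most notably $2.{}^2E_6(2){:}2$, $2^{1+22}.{\rm Co}_2$, ${\rm Fi}_{23}$, $2^{9+16}.{\rm Sp}_8(2)$, ${\rm Th}$, $(2^2 \times F_4(2)){:}2$, $2^{2+10+20}.({\rm M}_{22}{:}2 \times S_3)$) is insoluble, so any \emph{soluble} maximal triple already satisfies $\widehat{Q}(G,\tau)<1$ and is therefore regular.

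What remains, and is the main obstacle, is the case of soluble triples $\tau = (H_1,H_2,H_3)$ in which some $H_i$ is a proper soluble subgroup of an insoluble maximal subgroup $M$ with non-trivial soluble radical, e.g.\ $2^{1+22}.{\rm Co}_2$, $2^{9+16}.{\rm Sp}_8(2)$, or $[2^{30}].{\rm L}_5(2)$: here Lemma \ref{l:easy} does not apply directly. For these, I would descend into the subgroup lattice of $M$ in the style of Lemmas \ref{l:solspor2} and \ref{l:solspor3}, forming the sets $\mathcal{S}_d(M)$ of Definition \ref{d:SH} and computing $\widehat{Q}(G,\cdot)$ on triples whose first component ranges over $\mathcal{S}_d(M)$. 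The bound $|x^G\cap H|\leqs |x^G\cap M|$ together with the strict insolubility of the problematic maximal triples should suffice to push $\widehat{Q}$ below $1$ after finitely many refinement steps; any residual cases can be handled by random search inside an explicit representation of the relevant $M$ taken from the Web Atlas \cite{WebAt}, since the subgroups $\mathcal{S}_d(M)$ themselves are much smaller than $G$ and admit manageable coset actions. Combining these steps establishes $R_{\rm sol}(\mathbb{B})\leqs 3$, completing Proposition \ref{p:nonregsol}.
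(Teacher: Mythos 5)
Your opening reductions are broadly sound, and in fact can be streamlined: since ${\rm fpr}(x,G/H)\leqs {\rm fpr}(x,G/M)$ whenever $H\leqs M$, the \textsf{GAP} computation showing that every maximal triple with $\widehat{Q}\geqs 1$ has $L=2.{}^2E_6(2){:}2$ as a component already proves that \emph{every} soluble triple with no component inside a conjugate of $L$ is regular; the preliminary appeal to Lemma \ref{l:easy} is unnecessary (and, as written, not legitimate for maximal subgroups such as $S_3\times {\rm Fi}_{22}{:}2$, ${\rm HN}{:}2\times 2$ or $S_5\times{\rm M}_{22}{:}2$, which have nontrivial soluble radical, so the hypothesis of Lemma \ref{l:easy} fails). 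The genuine gap is in your identification and treatment of what remains. The subgroups you single out ($2^{1+22}.{\rm Co}_2$, $2^{9+16}.{\rm Sp}_8(2)$, $[2^{30}].{\rm L}_5(2)$) are not an obstacle at all: any soluble triple whose components sit inside maximal subgroups other than $L$ is covered by the monotonicity argument above, because the enveloping maximal triple has $\widehat{Q}<1$. The real problem, which your proposal never engages with, is a soluble component contained in $L$ itself. There the bound $|x^G\cap H|\leqs|x^G\cap L|$ is useless, since $\widehat{Q}(G,(L,L,L))\geqs 1$; one must prove a genuinely sharper estimate, namely that for every maximal subgroup $H$ of $L$ (and of $2.{}^2E_6(2)$) one has ${\rm fpr}(x_i,G/H)\leqs b_i$, where $b_i$ is the maximum fixed point ratio over the maximal subgroups of $G$ \emph{other than} $L$, and then check $\sum_i a_ib_i^3<1$. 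Establishing these bounds is the technical heart of the argument: it rests on Wilson's determination \cite{Wilson} of the maximal subgroups of ${}^2E_6(2){:}2$ (parabolics, subgroups of algebraic type, ${\rm SO}_7(3)$, ${\rm Fi}_{22}{:}2$, $F_4(2)\times 2$, etc.), explicit counts or upper bounds for the numbers of prime order elements in each, data imported from \cite{B23}, and for semisimple elements in the parabolic cases the fixed point ratio bounds of \cite{LLS}. None of this is present, or replaceable, in your plan.

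Your fallback for the residual cases also would not work in practice. Computing $\widehat{Q}(G,\cdot)$ exactly for triples drawn from $\mathcal{S}_d(M)$ requires character tables of, and fusion maps from, non-maximal subgroups of $\mathbb{B}$, which are not available in \cite{GAPCTL}; and random search or coset-action verification is ruled out because $\mathbb{B}$ admits no permutation or matrix representation suitable for direct computation (as noted in Section \ref{ss:comp}) — in particular, regularity of a triple of subgroups of $G$ cannot be tested "inside $M$", since the required conjugating elements and intersections live in $G$. So the proposal correctly frames the probabilistic strategy but leaves the decisive step — the fixed point ratio analysis for the maximal subgroups of $2.{}^2E_6(2){:}2$ — unaddressed.
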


\begin{proof}
Here $G = \mathbb{B}$ and we begin by considering a maximal triple $\s$. As noted in the proof of Lemma \ref{l:rG4}, we can use the \textsf{GAP} Character Table Library \cite{GAPCTL} to show that $\what{Q}(G,\s)< 1$ unless $\s$ is of the form $(1,1,i)$ or $(1,2,2)$, where $i \in \{1,2,3,4,6,7\}$ and the relevant subgroups are labelled as in \eqref{e:bmax}. We immediately deduce that if $\tau$ is a soluble triple and none of the components are contained in a maximal subgroup $2.{}^2E_6(2){:}2$, then $\tau$ is regular. 

For the remainder of the proof, set 
\[
L = 2.{}^2E_6(2){:}2, \;\; Z = Z(L) = \la z \ra,\;\; \bar{L} = L/Z = {}^2E_6(2){:}2.
\]
As in \cite[Section 4.1]{B23}, write $\bar{L}' = {}^2E_6(2) = (X_{\s})'$, where $X = E_6$ is a simple algebraic group of adjoint type over an algebraically closed field of even characteristic and $\s$ is a Steinberg endomorphism of $X$ such that 
\[
X_{\s} = \{ x \in X \,:\, x^{\s} =x \} = {\rm Inndiag}({}^2E_6(2)) = {}^2E_6(2){:}3.
\]

Let $\mathcal{M}_0$ be a set of representatives of the conjugacy classes of maximal subgroups of $G$, excluding $L$, and define 
\[
\mathcal{M} = \mathcal{M}_1 \cup \mathcal{M}_2,
\]
where
\begin{align*}
\mathcal{M}_1 & = \{ \mbox{maximal subgroups of $L$, other than $2.{}^2E_6(2)$}   \} \\
\mathcal{M}_2 & = \{ \mbox{maximal subgroups of $2.{}^2E_6(2) < L$}  \}
\end{align*}
Note that every soluble subgroup of $L$ is contained in a subgroup $K \in \mathcal{M}$. Then in view of the above \textsf{GAP} calculation, it suffices to prove the following claim:

\vs

\noindent \textbf{Claim.} \emph{Let $\s = (H_1,H_2,H_3)$ be a triple of subgroups of $G$, where $H_1 \in \mathcal{M}$ and $H_2,H_3 \in \mathcal{M}_0 \cup \mathcal{M}$. Then $\what{Q}(G,\s) < 1$.}

\vs

To prove the claim, we proceed as in the proof of Lemma \ref{l:rG4}, where we showed that  $R(\mathbb{M}) = 3$. First observe that we can use the \textsf{GAP} Character Table Library \cite{GAPCTL} to compute ${\rm fpr}(x,G/H)$ for every maximal subgroup $H$ and every element $x \in G$. Let $x_1, \ldots, x_t$ be a complete set of representatives of the conjugacy classes in $G$ of elements of prime order and set $a_i = |x_i^G|$ and
\begin{equation}\label{e:bii}
b_i = \max\{ {\rm fpr}(x_i,G/H) \,:\, H \in \mathcal{M}_0\}
\end{equation}
for all $1 \leqs i \leqs t$. It follows immediately that the bound in \eqref{e:sporbd} holds for every maximal triple $\tau = (H_1,H_2,H_3)$ with $H_i \in \mathcal{M}_0$ for all $i$. Moreover, we check that this upper bound yields $\what{Q}(G,\tau) < 1$ and thus $\tau$ is regular. So in order to prove the claim, it suffices to show that 
\begin{equation}\label{e:fbi}
{\rm fpr}(x_i,G/H) \leqs b_i
\end{equation}
for all $1 \leqs i \leqs t$ and all $H \in \mathcal{M}$. For the reader's convenience, the values of $b_i$ are recorded in Table \ref{tab:bi}, with respect to the standard labelling of the conjugacy classes in $G$ (see \cite{Atlas}).

{\scriptsize
\begin{table}
\[
\begin{array}{lll} \hline
i & x_i & b_i \\ \hline
1 & \texttt{2A} &  
793/2950425 \\
2 & \texttt{2B} &  
269689951/11707448673375 \\
3 & \texttt{2C} &  
13/33634845 \\
4 & \texttt{2D} &  
303281/780496578225 \\
5 & \texttt{3A} &  
11/105887475 \\
6 & \texttt{3B} &  
2/5353200125 \\
7 & \texttt{5A} &  
11/22299902235 \\
8 & \texttt{5B} & 
2/93659589387 \\
9 & \texttt{7A} &  
1/37166503725 \\
10 & \texttt{11A} &  
1/780496578225 \\
11 & \texttt{13A} &  
1/126996316160000 \\
12 & \texttt{17A} &  
1/253992632320000 \\
13 & \texttt{19A} &  
1/22892381208576000 \\
14 & \texttt{23A} &  
1/11707448673375 \\
15 & \texttt{23B} &  
1/11707448673375 \\
16 & \texttt{31A} &  
1/45784762417152000 \\
17 & \texttt{31B} &  
1/45784762417152000 \\
18 & \texttt{47A} &  
  1/3843461129719173164826624000000 \\
19 & \texttt{47B} &  
  1/3843461129719173164826624000000 \\ \hline
\end{array}
\]
\caption{The $b_i$ values in \eqref{e:bii}}
\label{tab:bi}
\end{table}
}

Fix a subgroup $H \in \mathcal{M}_2$, so $H$ is a maximal subgroup of $J = 2.{}^2E_6(2)$. Then $Z \leqs H$ and thus $\bar{H} = H/Z$ is a maximal subgroup of the simple group $\bar{J} = J/Z = {}^2E_6(2)$. The possibilities for $\bar{H}$ (up to conjugacy) have been determined by Wilson \cite{Wilson}, which confirms that the list of maximal subgroups presented in the {\sc Atlas} \cite{Atlas} is complete (also see \cite{Craven}). We observe that 
$\bar{H} = \bar{K} \cap \bar{J}$ for some core-free maximal subgroup $\bar{K} = K/Z$ of $\bar{L} = L/Z = {}^2E_6(2){:}2$, whence $H \leqs K$ and $K$ is contained in $\mathcal{M}_1$. Therefore, we only need to prove that \eqref{e:fbi} holds for each subgroup $H \in \mathcal{M}_1$.

Suppose $H \in \mathcal{M}_1$, so $H \ne 2.{}^2E_6(2)$ is a maximal subgroup of $L = 2.{}^2E_6(2){:}2$. Then $Z \leqs H$ and $\bar{H} = H/Z$ is a maximal subgroup of the almost simple group $\bar{L}$. As above, the possibilities for $\bar{H}$ have been determined by Wilson \cite{Wilson} and we deduce that $\bar{H}$ is one of the following:
\[
\begin{array}{rl}
\mbox{Parabolic:} & P_{1,6}, \, P_2, \, P_{3,5}, \, P_4 \\
\mbox{Algebraic:} & {\rm O}_{10}^{-}(2), \, S_3 \times {\rm U}_{6}(2){:}2, \, S_3 \times \O_{8}^{+}(2){:}S_3, \, {\rm U}_{3}(8){:}6, \, ({\rm L}_{3}(2) \times {\rm L}_{3}(4){:}2).2,  \\
&  3^{1+6}{:}2^{3+6}{:}3^2{:}2^2,\, {\rm U}_{3}(2){:}2 \times G_2(2), \, F_4(2) \times 2 \\
\mbox{Almost simple:} & {\rm SO}_{7}(3), \, {\rm Fi}_{22}{:}2
\end{array}
\]
Here we adopt the standard notation for the maximal parabolic subgroups of $\bar{L}$, which corresponds to the usual labelling of the nodes of the Dynkin diagram of type $E_6$.  
The algebraic subgroups are of the form $N_{\bar{L}}(Y_{\s})$, where $Y$ is a positive dimensional non-parabolic $\s$-stable closed subgroup of the ambient algebraic group $X$. For example, if $\bar{H} = 3^{1+6}{:}2^{3+6}{:}3^2{:}2^2$ then the connected component of $Y$ is of type $A_2^3$ and we will refer to ${\rm SU}_3(2)^3$ as the \emph{type} of $\bar{H}$. We extend this usage of type to the other algebraic subgroups, which provides an approximate description of the given subgroup's structure. 

Let $\pi(\bar{H})$ be the set of prime divisors of $|\bar{H}|$ and recall that our goal is to verify the bound in \eqref{e:fbi} for all $i$. Let $i_r(H)$ be the number of elements of order $r$ in $H$ and observe that
\[
i_2(H) = 2i_2(\bar{H})+1, \;\; i_r(H) = i_r(\bar{H})
\]
for every odd prime $r$. We now consider each possibility for $\bar{H}$ in turn. 

\vs

\noindent \emph{Case (a).} $\bar{H} = ({\rm L}_{3}(2) \times {\rm L}_{3}(4){:}2).2$

\vs

Let $x \in G$ be an element of prime order $r \in \pi(\bar{H}) = \{2,3,5,7\}$ and note that $|H| = 2|\bar{H}| = 27095040$. It is easy to check that the trivial bound $|x^G \cap H| \leqs |H|$ is sufficient unless $x \in \texttt{2A}$. Since $\bar{H} \leqs A$, where $A = {\rm Aut}({\rm L}_3(2) \times {\rm L}_3(4))$, we deduce that $i_2(\bar{H}) \leqs i_2(A) = 98199$ and thus 
\[
|x^G \cap H| \leqs i_2(H) \leqs 196399
\]
for $x \in \texttt{2A}$. This bound is sufficient.

\vs

\noindent \emph{Case (b).} $\bar{H} = 3^{1+6}{:}2^{3+6}{:}3^2{:}2^2$

\vs

Here $\pi(\bar{H}) = \{2,3\}$ and once again we find that the trivial bound $|x^G \cap H| \leqs |H|$ is good enough unless $x \in \texttt{2A}$. So let us assume $x \in \texttt{2A}$. Then by inspecting the proof of \cite[Lemma 4.12]{EP}, noting that $i_2({\rm SU}_3(2)) = 9$, we deduce that $i_2(\bar{H}) \leqs \a+\b$, where
\begin{align*}
\a & = \binom{3}{1}9 + \binom{3}{1}9^2 + 9^3 + \binom{3}{2}|{\rm SU}_3(2)|\cdot \frac{|{\rm SU}_3(2)|}{|{\rm SL}_2(2)|} \\
\b & = 3(1+9)|{\rm SU}_3(2)| + \left(\frac{|{\rm SU}_3(2)|}{|{\rm SL}_2(2)|}\right)^3 
\end{align*}
This yields $|x^G \cap H| \leqs i_2(H) \leqs 154927$ and the desired bound follows.

\vs

\noindent \emph{Case (c).} $\bar{H} = S_3 \times {\rm U}_{6}(2){:}2$, $S_3 \times \O_{8}^{+}(2){:}S_3$, ${\rm U}_{3}(8){:}6$ or ${\rm U}_{3}(2){:}2 \times G_2(2)$

\vs

In each of these cases, with the aid of {\sc Magma}, it is easy to compute $i_r(H)$ for all $r \in \pi(H)$ and we find that the bound $|x^G \cap H| \leqs i_r(H)$ is sufficient unless $\bar{H} = S_3 \times {\rm U}_{6}(2){:}2$ and $x \in \texttt{2A}$. In the latter case, the proof of \cite[Lemma 4.6]{B23} gives $|x^G \cap H| \leqs 7033$ and this bound is good enough.

\vs

\noindent \emph{Case (d).} $\bar{H} = {\rm O}_{10}^{-}(2)$, ${\rm SO}_7(3)$, ${\rm Fi}_{22}{:}2$ or $F_4(2) \times 2$

\vs

Let $x \in G$ be an element of prime order $r$. If $r=2$ then $|x^G \cap H|$ is recorded in \cite[Table 5]{B23} and the desired bound quickly follows (see Remark \ref{r:corr}). Now assume $r$ is odd. Here we can compute $i_r(H)$ from the character table of $\bar{H}$, which is available in \cite{GAPCTL}, and in every case it is easy to check that the bound $|x^G \cap H| \leqs i_r(H)$ is sufficient.

\vs

\noindent \emph{Case (e).} $\bar{H} = P_{1,6}$, $P_2$, $P_{3,5}$ or $P_4$

\vs

To complete the proof, we may assume $\bar{H}$ is a maximal parabolic subgroup of $\bar{L}$. Note that the precise structure of $\bar{H}$ is recorded in \cite[Table 6]{B23}. In addition, let us observe that $\pi(H) = \{2,3,5,7\} \cup \mathcal{P}$, where $\mathcal{P} = \{17\}$ for $P_1$, $\mathcal{P} = \{11\}$ for $P_{1,6}$, and $\mathcal{P}$ is empty for $P_{3,5}$ and $P_4$. Let $x \in G$ be an element of order $r \in \pi(H)$. 

If $r = 2$ or $3$, then $|x^G \cap H|$ is recorded in \cite[Tables 5 and 6]{B23} and it is easy to check that \eqref{e:fbi} holds for all relevant $i$. Now assume $r \in \{5,7\} \cup \mathcal{P}$. Here one can check that the trivial bound
\[
|x^G \cap H| \leqs |H| \leqs 2|P_2| = 2^{23}|{\rm U}_6(2)|
\]
is sufficient for $r \geqs 7$, so we may assume $r=5$ and thus $x \in \texttt{5A}$ or $\texttt{5B}$. Now $L$ has a unique conjugacy class of elements of order $5$ and the fusion map from $L$-classes to $G$-classes shows that this class is contained in the $G$-class labelled $\texttt{5A}$. So we may assume $x \in \texttt{5A}$ and thus
\[
|x^G \cap H| = i_5(H) = i_5(\bar{H}).
\]
In addition, let us observe that if $\bar{H} = P_{3,5}$ or $P_4$, then $|H| \leqs 2|P_4| = 2^{32}3|{\rm L}_3(4)|$ and the crude bound $|x^G \cap H| \leqs 2|P_4|$ is good enough. Therefore, we are free to assume that $\bar{H} = P_{1,6}$ or $P_2$. 

Let $x \in H$ be an element of order $5$ and let $\bar{x}$ be the image of $x$ in $\bar{H} = H/Z$. Then
\[
{\rm fpr}(\bar{x},\bar{L}/\bar{H}) = \frac{i_5(\bar{H})}{|\bar{x}^{\bar{L}}|}
\]
and by applying \cite[Theorem 2]{LLS}, noting that $\bar{x} \in \bar{L}$ is semisimple, we deduce that 
\[
|x^G \cap H| = i_5(\bar{H}) \leqs |\bar{x}^{\bar{L}}| \cdot \frac{1}{3\cdot 2^6}.
\]
Now
\[
|\bar{x}^{\bar{L}}| = 759250790513639424,\;\; |x^G| = 9367743238695946498867200
\]
and we conclude that
\[
{\rm fpr}(x,G/H) \leqs \frac{3954431200591872}{9367743238695946498867200}  < 
\frac{11}{22299902235} = b_7,
\]
as required.
\end{proof}

This completes the proof of Theorem \ref{t:main2}.

\vs

\begin{rem}\label{r:corr}
In the proof of Lemma \ref{l:solspor4} we appealed to the information in \cite[Table 5]{B23}. Here we take the opportunity to correct three inaccuracies in this table:

\vspace{1mm}

\begin{itemize}\addtolength{\itemsep}{0.3\baselineskip}
\item[{\rm (a)}] For $\bar{H} = {\rm O}_{10}^{-}(2)$ we have $|\texttt{2D} \cap H| = 79942500$, rather than $79943000$.
\item[{\rm (b)}] And for $\bar{H} = F_4(2) \times 2$, the stated sizes of $\texttt{2A} \cap H$ and $\texttt{2C} \cap H$ are out by $1$; the correct values are $139232$ and $355284576$, respectively. 
\end{itemize}
\end{rem}

We finish by presenting Proposition \ref{p:spor_maxsol} below, which gives the exact value of $R_{\rm sol \, max}(G)$ for every almost simple sporadic group $G$ with a soluble maximal subgroup. By inspecting \cite{Wilson_survey}, for example, it is easy to see that $G$ has such a subgroup unless $G = {\rm M}_{22}$, ${\rm M}_{22}.2$, ${\rm M}_{24}$ or ${\rm HS}$.

\begin{prop}\label{p:spor_maxsol}
Let $G$ be an almost simple sporadic group with socle $T$ and assume $G$ has a soluble maximal subgroup. Then
\[
R_{\rm sol\, max}(G) = \left\{ \begin{array}{ll}
3 & \mbox{if $T = {\rm M}_{11}$, ${\rm M}_{12}$, ${\rm J}_2$, ${\rm Fi}_{22}$ or ${\rm Fi}_{23}$} \\
2 & \mbox{otherwise.}
\end{array}\right.
\]
Moreover, if $R_{\rm sol\, max}(G) = 3$ then every soluble maximal non-regular pair $(H,K)$ is recorded in Table \ref{table-sporadics}, up to conjugacy and ordering.
\end{prop}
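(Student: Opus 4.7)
The plan is to decide, for each almost simple sporadic group $G$ with a soluble maximal subgroup, whether $R_{\rm sol\, max}(G)$ equals $2$ or $3$. The upper bound $R_{\rm sol\, max}(G) \leqs R_{\rm sol}(G) \leqs 3$ is already secured by Proposition \ref{p:nonregsol}, so the task reduces entirely to deciding whether a non-regular pair of soluble maximal subgroups exists, and when it does, enumerating all such pairs to populate Table \ref{table-sporadics}. In particular, for every $G$ whose socle lies in the set $\mathcal{A}_1$ of Proposition \ref{p:nonregsol} we already have $R_{\rm sol}(G) = 2$, which forces $R_{\rm sol\, max}(G) = 2$ with no further computation, so the cases that actually require work are those with socle in $\mathcal{A}_2 \cup \mathcal{A}_3$.

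For each remaining $G$ I would first assemble a complete list of conjugacy-class representatives of soluble maximal subgroups, using the Web Atlas \cite{WebAt}, the \textsf{GAP} Character Table Library \cite{GAPCTL}, and the \textsc{Magma} function \texttt{MaximalSubgroups} (applied to the minimal degree permutation representation) wherever that function is effective. For each unordered pair $(H,K)$ of soluble maximal subgroups I would then apply, in sequence, the three standard techniques of Section \ref{ss:comp}: (a) Lemma \ref{l:triv}, which certifies non-regularity whenever $|H||K| > |G|$; (b) random search in the given permutation representation for an element $g$ with $H \cap K^g = 1$, which certifies regularity; and, if both fail, (c) an exact orbit enumeration via \texttt{CosetAction} or \texttt{DoubleCosetRepresentatives}, which decides regularity definitively. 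This is exactly the strategy used in the proofs of Lemmas \ref{l:solspor1}--\ref{l:solspor3}, and for most groups it dispatches every pair immediately, since soluble maximal subgroups are typically of modest order relative to $|G|$.

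The only cases that require a different treatment are $G \in \{\mathbb{B}, \mathbb{M}\}$ and a handful of pairs in ${\rm J}_4$, ${\rm Ly}$, ${\rm Th}$ and ${\rm Fi}_{24}$ where the permutation-orbit method of (c) is infeasible because the indices are too large. For these I would rely on the probabilistic bound of Lemma \ref{l:fpr}, computing
\[
\what{Q}(G,(H,K)) = \sum_{i=1}^{t} \frac{|x_i^G \cap H|\,|x_i^G \cap K|}{|x_i^G|}
\]
exactly via the stored class fusions in \cite{GAPCTL}, and concluding regularity whenever this quantity is less than $1$. This is the approach taken in Lemmas \ref{l:rG4} and \ref{l:solspor4}. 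For $\mathbb{M}$ the only soluble maximal subgroup is $41{:}40$, so the analysis is essentially trivial; for $\mathbb{B}$ the soluble maximal subgroups are easily listed and each candidate pair can be resolved by the same fixed-point ratio bookkeeping that appears in Case (e) of the proof of Lemma \ref{l:solspor4}, refining $b_i$ using the structural bounds on $|x^G \cap H|$ recorded in \cite[Tables 5 and 6]{B23}.

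The main obstacle I anticipate is not any individual hard pair, but rather the completeness claim that Table \ref{table-sporadics} captures \emph{every} non-regular soluble maximal pair. For groups such as ${\rm HN}.2$, ${\rm Fi}_{24}$ and the Baby Monster with many classes of soluble maximal subgroups, exhaustive pairwise verification must be organised carefully, and one must guard against overlooking non-conjugate pairs $(H,K)$ whose non-regularity arises from a structural coincidence (such as $K$ being contained in a maximal overgroup of $H$), as happens in Remark \ref{r:sporsol}(c) for ${\rm Fi}_{22}.2$ and ${\rm Fi}_{23}$. Once the exhaustive verification is complete, the statement that $R_{\rm sol\, max}(G) = 3$ precisely when $T \in \{{\rm M}_{11}, {\rm M}_{12}, {\rm J}_2, {\rm Fi}_{22}, {\rm Fi}_{23}\}$ is read off from the output, and Table \ref{table-sporadics} is assembled accordingly.
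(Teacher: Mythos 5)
Your overall strategy (reduce via $R_{\rm sol\,max}(G) \leqs R_{\rm sol}(G)$, then decide every pair of soluble maximal subgroups computationally) is reasonable, but it misses the observation that makes the paper's proof nearly computation-free for the large groups, and this omission leaves a genuine hole. For ${\rm M}_{23}$, ${\rm HS}.2$, ${\rm McL}$, ${\rm Suz}$, ${\rm Suz}.2$, ${\rm Ru}$, ${\rm Co}_3$, ${\rm Co}_2$, ${\rm O'N}$, ${\rm Fi}_{22}$, ${\rm Fi}_{22}.2$, ${\rm Fi}_{23}$ and ${\rm Fi}_{24}'$ there is a \emph{unique} conjugacy class of soluble maximal subgroups, so every soluble maximal pair is conjugate and its regularity is precisely the statement $b(G,H) \leqs 2$, which is read off from the main theorem of \cite{BOW}; in particular the non-regular pairs for the Fischer groups in Table \ref{table-sporadics} come for free. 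Your toolkit, by contrast, has no workable certificate of \emph{non}-regularity for these groups: random search and the bound $\what{Q}(G,\tau)<1$ only ever prove regularity; the order criterion of Lemma \ref{l:triv} fails for ${\rm Fi}_{22}$ and ${\rm Fi}_{22}.2$ (with $H$ the soluble maximal subgroup one has $|H|^2 < |G|$ in both cases, though the criterion does succeed for ${\rm Fi}_{23}$); and the orbit enumeration you treat as the definitive fallback would have to run at degree $|G:H| = 12812800 \approx 1.3 \times 10^{7}$ for ${\rm Fi}_{22}$ and ${\rm Fi}_{22}.2$, far beyond the degrees used elsewhere in the paper and not among the cases you flag as infeasible (you list only $\mathbb{B}$, $\mathbb{M}$, ${\rm J}_4$, ${\rm Ly}$, ${\rm Th}$, ${\rm Fi}_{24}$, which are exactly groups where only regularity certificates are needed). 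As written, then, your plan cannot establish $R_{\rm sol\,max}(G)=3$ for ${\rm Fi}_{22}$ and ${\rm Fi}_{22}.2$, nor the completeness of Table \ref{table-sporadics} there; the repair is simply to note that all soluble maximal pairs are conjugate and quote $b(G,H)=3$ from \cite{BOW} (or \cite{B23}), which is what the paper does (it also uses \cite{BOW} to settle the conjugate pairs in ${\rm He}.2$ and ${\rm J}_3.2$ where $\what{Q} \geqs 1$).

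Two smaller points. First, ``socle lies in $\mathcal{A}_1$'' conflates $G$ with $T$: the sets $\mathcal{A}_i$ in Proposition \ref{p:nonregsol} are sets of groups, and ${\rm HS} \in \mathcal{A}_1$ while ${\rm HS}.2 \in \mathcal{A}_2$, so ${\rm HS}.2$ must go through your pairwise check rather than being dismissed at the outset (it is fine: it has a unique class of soluble maximal subgroups with $b(G,H)=2$). Second, the Monster has more than one class of soluble maximal subgroups (for instance $13^{1+2}{:}(3 \times 4S_4)$ as well as $41{:}40$); this does not affect the outcome, since $\what{Q}<1$ for all such pairs using the stored fusion data, but the claim as stated is inaccurate.
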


\begin{proof}
As noted above, the assumption that $G$ has a soluble maximal subgroup means that $G \not \in \{{\rm M}_{22}, {\rm M}_{22}.2, {\rm M}_{24}, {\rm HS} \}$. In addition, if  
\[
G \in \{ {\rm M}_{23}, {\rm HS}.2, {\rm McL}, {\rm Suz}, {\rm Suz}.2, {\rm Ru}, {\rm Co}_3, {\rm Co}_2, {\rm O'N}, {\rm Fi}_{22}, {\rm Fi}_{22}.2, {\rm Fi}_{23}, {\rm Fi}_{24}'\},
\]
then $G$ has a unique class of soluble maximal subgroups, so every soluble maximal pair is conjugate and the result follows from \cite{BOW}. 

Next assume $G \in \{{\rm M}_{11}, {\rm M}_{12}, {\rm M}_{12}.2, {\rm J}_2, {\rm J}_2.2\}$ is one of the remaining groups appearing in Table \ref{table-sporadics}. In each of these cases, an entirely straightforward {\sc Magma} computation shows that a soluble maximal pair $(H,K)$ is non-regular if and only if it is one of the cases recorded in the table (up to conjugacy and ordering).

In each of the remaining cases, we can use the \textsf{GAP} Character Table Library \cite{GAPCTL} to compute $\what{Q}(G,\tau)$ precisely for every soluble maximal pair $\tau = (H,K)$. In this way, we deduce that $\what{Q}(G,\tau) < 1$, which implies that $\tau$ is regular, unless $G ={\rm He}.2$ and $H = K = 2^{4+4}.(S_3 \times S_3).2$, or $G = {\rm J}_3.2$ and $H = K = 2^{2+4}{:}(S_3 \times S_3)$. In both cases, the main theorem of \cite{BOW} implies that $\tau$ is regular.
\end{proof}

{\scriptsize
\begin{table}
\[
\begin{array}{lll} \hline
G & H & K \\ \hline
{\rm M}_{11} & {\rm U}_{3}(2){:}2 & {\rm U}_{3}(2){:}2, \, 2S_4 \\ 
{\rm M}_{12} & 3^2{:}2S_4 & 3^2{:}2S_4, \, 2^{1+ 4}{:}S_3, \, 4^2{:}{\rm D}_{12} \\ 
& 2^{1 + 4}{:}S_3 & 2^{1 + 4}{:}S_3,\, 4^2{:}{\rm D}_{12} \\  
& 4^2{:}{\rm D}_{12} & 4^2{:}{\rm D}_{12} \\
{\rm M}_{12}.2 & 2^{1 + 4}{:}S_3.2 & 2^{1 + 4}{:}S_3.2, \, 4^2{:}{\rm D}_{12}.2, \, 3^{1 + 2}{:}{\rm D}_8 \\
& 4^2{:}{\rm D}_{12}.2 & 4^2{:}{\rm D}_{12}.2 \\
 & 3^{1 + 2}{:}{\rm D}_8 & 3^{1 + 2}{:}{\rm D}_8 \\
{\rm J}_2& 2^{2 + 4}{:}(3 \times S_3) &  2^{2 + 4}{:}(3 \times S_3),\, 5^2{:}{\rm D}_{12} \\
{\rm J}_2.2 & 2^{2 + 4}{:}(3 \times S_3).2  & 2^{2 + 4}{:}(3 \times S_3).2,\, 5^2{:}(4 \times S_3)  \\
{\rm Fi}_{22} &3^{1 + 6}{:}2^{3 + 4}{:}3^2{:}2& 3^{1 + 6}{:}2^{3 + 4}{:}3^2{:}2 \\
{\rm Fi}_{22}.2 & 3^{1 + 6}{:}2^{3 + 4}{:}3^2.2.2& 3^{1 + 6}{:}2^{3 + 4}{:}3^2.2.2 \\
{\rm Fi}_{23}& 3^{1 + 8}.2^{1 + 6}.3^{1 + 2}.2S_4& 3^{1 + 8}.2^{1 + 6}.3^{1 + 2}.2S_4 \\ \hline
\end{array}
\]
\caption{The pairs $(H,K)$ in Proposition \ref{p:spor_maxsol}}
\label{table-sporadics}
\end{table}
}

\begin{rem}
The group ${\rm M}_{12}$ has two distinct classes of soluble maximal subgroups isomorphic to $3^2{:}2S_4$; in Table \ref{table-sporadics}, we write $3^2{:}2S_4$ for a  representative of either one of these classes.
\end{rem}

\begin{rem}\label{r:sol_nip}
Zenkov's main theorem in \cite{Zen20} states that every nilpotent pair of subgroups in an almost simple sporadic group is regular. Here we briefly comment on the existence (or otherwise) of non-regular pairs $(H,K)$, where $H$ is nilpotent and $K$ is soluble.

\vspace{1mm}

\begin{itemize}\addtolength{\itemsep}{0.3\baselineskip}
\item[{\rm (a)}] Clearly, if $R_{\rm sol}(G) = 2$, then every nilpotent-soluble pair is regular, so we may assume $G \in \mathcal{A}_2 \cup \mathcal{A}_3$ as defined in the statement of Proposition \ref{p:nonregsol}.

\item[{\rm (b)}] If $G = {\rm M}_{22}.2$ then there are precisely $7$ non-regular pairs $(H,K)$, up to ordering and conjugation, where $H$ is nilpotent and $K$ is soluble. For example, we can take $H$ to be a Sylow $2$-subgroup and $K$ to be the following second maximal subgroup
\[
K = 2^4{:}(S_3 \wr S_2) < 2^4{:}S_6 < G.
\]
For each such pair $(H,K)$ we find that $K$ has Fitting length $3$, which is the maximal Fitting length of all soluble subgroups of $G$.

\item[{\rm (c)}] Similarly, $G = {\rm J}_2.2$ has a unique non-regular pair $(H,K)$ of this form. Here $H$ is a Sylow $2$-subgroup and $K = 2^{2+4}{:}(3 \times S_3).2$ is a maximal subgroup of $G$. Once again, the Fitting length of $K$ is $3$ and one checks this is the maximal Fitting length of all soluble subgroups of $G$.
\end{itemize}
\end{rem}

We have checked computationally that the examples in (b) and (c) are the only non-regular nilpotent-soluble pairs associated with any of the groups in $\mathcal{A}_2$. We are not aware of any additional examples for the groups in $\mathcal{A}_3$.

\end{document}